\title{Dirichlet form analysis of the Jacobi process}
\date{\today}
\author{Martin Grothaus \textsuperscript{1}}
	\thanks{\textsuperscript{1} Department of Mathematics, TU Kaiserslautern, PO Box 3049, 67653 Kaiserslautern}
\author{Max Sauerbrey  \textsuperscript{2}} 
\thanks{\textsuperscript{2} Delft Institute of Applied Mathematics, TU Delft, Mekelweg 4,
	2628 CD  Delft. \\\textit{Email addresses}: \href{mailto:Grothaus@mathematik.uni-kl.de}{Grothaus@mathematik.uni-kl.de} , \href{mailto:M.Sauerbrey@tudelft.nl}{M.Sauerbrey@tudelft.nl}}
\keywords{Jacobi process, Dirichlet form, Wright-Fisher Diffusion, Hypergeometric Functions}
\subjclass[2010]{ 60J46,  	46E35,  	92D25,  	33C05} 
\theoremstyle{plain}
\newtheorem{thm}{Theorem}[section]
\newtheorem{cor}[thm]{Corollary}
\newtheorem{lemma}[thm]{Lemma}
\newtheorem{prop}[thm]{Proposition}
\theoremstyle{definition}
\newtheorem{defi}[thm]{Definition}
\newtheorem{rem}[thm]{Remark}
\DeclareMathOperator{\loc}{loc}
\DeclareMathOperator{\codim}{codim}
\DeclareMathOperator{\Ca}{Cap}
\DeclareMathOperator{\spa}{span}
\DeclareMathOperator{\EE}{\mathcal{E}}
\DeclareMathOperator{\FF}{\mathcal{F}}
\DeclareMathOperator{\RR}{\mathbb{R}}
\DeclareMathOperator{\sgn}{sign}
\numberwithin{equation}{section}
\begin{document}
	\maketitle
	
	\vspace{3cm}
	\begin{center}
		\textbf{
		Abstract
}	\end{center}
	We construct and analyze the Jacobi process - in mathematical biology referred to as Wright-Fisher diffusion - using a Dirichlet form. 
	The corresponding Dirichlet space takes the form of a Sobolev space with different weights for the function itself and its derivative.
	Depending on the parameters we characterize the boundary behavior of the functions in the Dirichlet space, show density results, derive Sobolev embeddings and verify functional inequalities of Hardy type. Since the generator is a hypergeometric differential operator, many of the proofs can be carried out by explicit calculations involving hypergeometric functions. We deduce corresponding properties for the associated semigroup and Markov process and show that the latter is up to minor technical modifications a solution to the Jacobi SDE.
	
	\vspace{1cm}

	\tableofcontents
	
	\newpage
	
	\section{Introduction}
	The Jacobi process is a $[0,d]$-valued solution to the stochastic differential equation 
	\begin{equation}\label{eq3n}
		dY_t\,=\,(a-bY_t) \,dt\,+\,\sigma \sqrt{Y_t(d-Y_t)}\,dW_t.
	\end{equation}
	Here, $a,b$ and $\sigma, d>0$ are parameters and $W$ a Brownian motion. The Jacobi process arises in different applications, most prominently as a model for allele frequencies in mathematical biology, see \cite[Section 10.2, pp.415-426]{ethier2005markov}, where it is commonly referred to as Wright-Fisher diffusion. Moreover, the Jacobi processes can be used as a model for membrane depolarization \cite{neural_model} and interest rates \cite{interest_rates} or in the modeling of electricity prices \cite{electricity}. 
	
	The parameters 	\begin{equation}\label{EqAlphaBeta}
		\alpha=\frac{2b}{\sigma^2}-\frac{2a}{\sigma^2d}-1\;\;\text{and}\;\;\beta=
		\frac{2a}{\sigma^2d}-1
	\end{equation}
	capture the behavior of the process close to the boundary points. Indeed, if $Y_t$ is close to $0$, the drift is  approximately given by $a dt$ and the stochastic fluctuation by  $\sigma (Y_td )^\frac{1}{2} dW_t$ which has variance $\sigma^2 (Y_td)  dt$. Therefore, the ratio $\frac{2a}{\sigma^2d}$ quantifies how much the drift pushes the process  back into the state space $[0,d]$ compared to the stochastic fluctuations. This demonstrates the importance of the parameter $\beta$ and an analogous argument shows that $\alpha$ quantifies the boundary behavior near d
	
	If $\alpha, \beta >-1$, the Jacobi process is one of  three types of real-valued diffusions, which are associated to a family of orthogonal polynomials, see \cite{mazet}. This allows for an explicit expression of the transition semigroup, which can be used as in \cite
	{interest_rates} to analyze the process. The Jacobi process can be constructed by its associated Feller semigroup \cite{ethier2005markov} and belongs to the larger class of  Pearson diffusions \cite{sorensen2008}. For an extensive study of the associated differential operator we refer to \cite{Epstein_Mazzeo2009}.
Examples of how to analyze the Jacobi process using classical methods for one-dimensional diffusions can be found in  \cite{Huillet_2007}. There are many other works on the Jacobi process, but we hope that this  selection gives an overview of the main tools, which were used to construct and analyze the Jacobi process up to now. In the current article, we take the new approach to construct and analyze solutions to \eqref{eq3n} by Dirichlet form methods. We stress that we  allow for the case $\alpha \le -1\vee \beta \le -1$ which leads to additional mathematical challenges.

As state space of the Dirichlet form we define $X$ as the union of the interval $(0,d)$ with the right boundary point $\{d\}$ if $\alpha>-1$ and with the left boundary $\{0\}$ if $\beta>-1$. 
We write $\mathfrak{B}$ for the Borel $\sigma$-field on $X$ and $dx$ for the Lebesgue measure. 
The  generator of \eqref{eq3n} is given by
\begin{equation}\label{eq12}
	Gf(x)\,=\,\frac{1}{2}\sigma^2x(d-x)f''(x)+(a-bx)f'(x)
\end{equation} for $f\in C^2([0,d])$. The stationary solution to the corresponding Kolmogorov forward equation on $(0,d)$ is
\[
m(x)=\frac{x^{\beta}(d-x)^{\alpha}}{d^{\alpha+\beta+1}}\] and is the natural candidate for the invariant measure of the Jacobi process. Therefore, we equip $X$ with the measure with density $m$
with respect to $dx$. Then,
$dm=mdx$ is  a positive Radon measure on $X$ with full support and we define $dcm=cmdx$ using the additional density  $
c(x)\,=\,\frac{1}{2}\sigma^2x(d-x)$. The calculation
\begin{align}\begin{split}\label{eq37}
		(cm)'(x)\,=\,
		\frac{1}{2}\sigma^2\left[\frac{2a}{\sigma^2d}(d-x)-{\left(\frac{2b}{\sigma^2}-\frac{2a}{\sigma^2d}\right)}x\right]\frac{x^\beta(d-x)^{\alpha}}{d^{\alpha+ \beta+1}}\,=\,(a-bx)m(x)
	\end{split}
\end{align}
implies that
\[
(f'cm)'(x)\,=\,\big[c(x)f''(x)+(a-bx)f'(x)\big]m(x)\,=\, m(x)Gf(x)
\]
for  $f\in C_c^\infty((0,d))$. Consequently,
the operator  $(G, C_c^\infty((0,d)))$ is of the form \cite[Equation (3.3.17), p.134]{fukushima2011}. By \cite[Theorem 3.3.1, p.135]{fukushima2011}
\begin{gather}\begin{split}\label{eq72}
		D(\mathcal{E})\,=\,\big\{
		f\in L^2(X,dm)\big|\, f'\in L^2( X,dcm)
		\big\},\\
		\mathcal{E}\colon 	D(\mathcal{E})\times 	D(\mathcal{E})\to \mathbb{R}, (f,g)\mapsto \,\int_X f'(x)g'(x)\, dcm(x)
	\end{split}
\end{gather}
defines a strongly local Dirichlet form corresponding to the maximal markovian self-adjoint extension $(L,D(L))$ of  $(G, C_c^\infty((0,d)))$ in $L^2(X,dm)$. 
We denote the markovian, symmetric, strongly continuous semigroup of contractions generated by $(L,D(L))$ by $(T_t)_{t>0}$ and write
\[
\EE_\lambda (f,g)\,=\, \mathcal{E}(f,g)+\lambda (f,g)_{L^2(X,dm)}.
\]
for $\lambda>0$ and $f,g\in D(\EE)$. We equip $D(\EE)$ with the topology induced by any of the inner products $\EE_\lambda$ and, if considered as a Hilbert space, we  equip it with $\mathcal{E}_1$ unless stated otherwise. 
We write  $\FF$ for the closure of $C_c^\infty((0,d))$ in $D(\EE) $.

In the preliminary Section \ref{Sec_local_sol} we introduce and analyze a notion of  solutions to \eqref{eq3n}, which will serve later on to translate our findings on Hunt processes to statements on  general solutions to \eqref{eq3n}. 
In Section \ref{Sec_Dirichlet_space} we consider  the Dirichlet form $(\EE,D(\EE))$, prove basic properties and characterize under which assumptions on the parameters we have $D(\EE)=\FF$. We also answer the related question of how functions from $D(\EE)$ behave near the boundary points and  prove embedding theorems and  functional inequalities in the space $D(\EE)$. 
In Section \ref{Sec_Semigroup} we analyze  $(T_t)_{t>0}$ in the context of Markovian semigroups.
Moreover, we quantify the spectral gap for  special cases of $\alpha, \beta$.
Finally, in Section \ref{Sec_process} we translate the findings on  $(\EE,D(\EE))$ and $(T_t)_{t>0}$ into properties of an associated Hunt process. We show under which assumption on the parameters, the process is recurrent, ergodic, conservative or a variant of transitive.  
Furthermore, we show how a Markov process associated to $(\EE, D(\EE))$ as well as its restriction to $(0,d)$ is related to general solutions to \eqref{eq3n}. This allows to transfer the gathered results from this article as well as future findings on  $(\EE, D(\EE))$ into statements on \eqref{eq3n}.

 \section{Local solutions to the Jacobi SDE
	 }\label{Sec_local_sol}
	Throughout this article we fix parameters $a,b\in\RR $ and $\sigma,d>0$. We note, that equality of random variables is meant almost surely unless stated otherwise.
	\begin{defi}\label{local_sol}
		A  local solution to the stochastic differential equation
		\eqref{eq3n} is  a quadruple, consisting of a filtered probability space $(\Omega, \mathfrak{A}, P, \mathfrak{F})$,  a Brownian motion $W$, a real-valued, adapted process $Y$ and a stopping time  $\zeta$, such that  $\mathfrak{F}$ satisfies the usual conditions
		and the following conditions are satisfied.
		\begin{enumerate}[label=(\roman*)]\item The mapping $Y_\cdot(\omega)\colon[0,\zeta(\omega)]\to \mathbb{R}$ is $[0,d]$-valued and continuous for every $\omega \in \Omega$.
			\item For all $t\ge 0$ we have that \begin{equation}\label{eq6n}
				Y_{t\wedge \zeta}\, =\, Y_0 \,+\, \int_0^{t\wedge \zeta} (a-bY_s) \,ds \, +\, 
				\int_0^{t\wedge \zeta} \sigma\sqrt{Y_s(d-Y_s)} \,dW_s.
			\end{equation} 
			\item It holds $P(\{Y_\zeta \notin \{0,d\}\}\cap \{\zeta<\infty\})=0$.
		\end{enumerate}
	\end{defi}
	We sometimes call $(Y, \zeta)$ a local solution and $(\Omega, \mathfrak{A}, P, \mathfrak{F},W)$ its stochastic basis or do not even specify the latter. 
	\begin{rem}\label{wdef_rem}We understand 
		the first integral in \eqref{eq6n} as the continuous and adapted process
		\[
		\int_0^\cdot \mathbbm{1}_{[0, \zeta]}(s)(a-bY_s) \,ds.
		\]at time $t$.
		The second integral is meant as the stochastic integral
		\[
		\int_0^t \mathbbm{1}_{[0,\zeta]}(s)\sigma\sqrt{Y_s(d-Y_s)}\,dW_s,
		\]
		which is well-defined, since the integrand is left-continuous, adapted and bounded.
	\end{rem}
	
	In the next definition $(Y, \zeta)$ and  $(\tilde{Y}, \tilde{\zeta})$ are local solutions on the same stochastic basis.
	\begin{defi}
	The solution $(\tilde{Y}, \tilde{\zeta})$ is an extension of  $(Y, \zeta)$, if $\tilde{\zeta}\ge \zeta$ and $Y_{t\wedge \zeta}=\tilde{Y}_{t\wedge \zeta}$ for all $t\ge 0$.  If additionally $\zeta=\tilde{\zeta}$, we identify the local solutions.
\end{defi}
If $(\tilde{Y}, \tilde{\zeta})$ is an extension of  $(Y, \zeta)$, we write $(Y, \zeta)\lesssim (\tilde{Y}, \tilde{\zeta})$. In the case of equality we write $(Y, \zeta)= (\tilde{Y}, \tilde{\zeta})$. The relation $\lesssim$ is then a partial ordering on the  equivalence classes of local solutions on a fixed stochastic basis.
	\begin{defi}
		A local solution $(Y,\zeta)$ is called minimal (maximal), if it is a minimal (maximal) element with respect to the ordering $\lesssim$ of local solutions.
	\end{defi}
	To use the theory of stochastic differential equations on $\RR$ we
	let $\mu\in C_c^\infty(\RR)$ such that $\mu(x)= (a-bx)$ on a neighborhood of $[0,d]$ and $\nu(x)= \mathbbm{1}_{[0,d]}(x)\sigma\sqrt{x(d-x)}$ for $x\in \mathbb{R}$. Then $\mu$ is Lipschitz continuous and $\nu$ is $\nicefrac{1}{2}$-H\"older continuous. Therefore, the stochastic differential equation
	\begin{equation}\label{eq4n}
		dZ_t\, = \,\mu(Z_t) dt\,+\,\nu (Z_t) dW_t
	\end{equation}
	is well-posed. Indeed, existence of weak solutions follows by the Skohorod existence theorem \cite[Theorem 18.7, p.341; Theorem 18.9, p.342]{kallenberg1997foundations} and pathwise uniqueness holds due to the Yamada-Watanabe condition \cite[Theorem 20.3, p.374]{kallenberg1997foundations}. Consequently, the Yamada-Watanabe theorem \cite[Lemma 18.17]{kallenberg1997foundations} implies that  strong existence and uniqueness in law holds for arbitrary initial distributions on $\RR$. 
	\begin{thm}\label{extension_thm}
		Let $(Y,\zeta)$ be a local solution with respect to a stochastic basis
		$(\Omega, \mathfrak{A}, P, \mathfrak{F},W)$. Then the unique solution $Z$ to \eqref{eq4n} with inital value $Y_0$ satisfies
		\begin{equation}\label{eq5n}
			Z_{t\wedge \zeta}=Y_{t\wedge \zeta}
		\end{equation}
		for all $t\ge 0$. Moreover, 
		\begin{enumerate}[label=(\roman*)]
			\item $(Y,\zeta)$ is minimal iff $\zeta= \inf\{t\ge 0| Z_t\in \{0,d\}\}$,   
			\item  $(Y,\zeta)$ is maximal iff $\zeta= \inf\left\{t\ge 0| Z_t\notin  [0,d] \right\}$.
		\end{enumerate}
	\end{thm}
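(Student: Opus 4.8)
The plan is to establish the identity \eqref{eq5n} first by a localization-and-uniqueness argument, and then read off characterizations (i) and (ii) from it. To prove \eqref{eq5n}, I would stop the process $Z$ at the exit time $\tau = \inf\{t \ge 0 \mid Z_t \notin [0,d]\}$ and observe that on $[0,\tau]$ the coefficients $\mu,\nu$ agree with $(a-by)$ and $\sigma\sqrt{y(d-y)}$ respectively, since $\mu = (a-bx)$ on a neighborhood of $[0,d]$ and $\nu$ coincides with the Jacobi diffusion coefficient on $[0,d]$. Hence $Z^{\tau}$ is, on the same stochastic basis, a solution of the stopped Jacobi equation; more precisely $(Z_{\cdot \wedge \tau}, \tau)$ is a local solution in the sense of Definition \ref{local_sol} — condition (iii) holds because by continuity $Z_\tau \in \{0,d\}$ on $\{\tau < \infty\}$. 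Likewise $(Y,\zeta)$ itself, viewed through the extended coefficients, solves \eqref{eq4n} up to time $\zeta$: by Remark \ref{wdef_rem} and the fact that $Y$ stays in $[0,d]$ on $[0,\zeta]$, we have $Y_{t\wedge\zeta} = Y_0 + \int_0^{t\wedge\zeta}\mu(Y_s)\,ds + \int_0^{t\wedge\zeta}\nu(Y_s)\,dW_s$. Now apply pathwise uniqueness for \eqref{eq4n} (the Yamada–Watanabe condition, already invoked in the excerpt): the process $\hat Z$ defined by $\hat Z_t = Y_t$ for $t \le \zeta$ and $\hat Z_t = Z'_t$ for $t > \zeta$, where $Z'$ is the unique solution of \eqref{eq4n} started at $Y_\zeta$ at time $\zeta$ (using the strong Markov property / a regular version), is a global solution of \eqref{eq4n} with initial value $Y_0$ on the given basis, so it must equal the strong solution $Z$; restricting to $t \le \zeta$ gives \eqref{eq5n}.

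For the characterizations, set $\sigma_0 = \inf\{t \ge 0 \mid Z_t \in \{0,d\}\}$ and $\tau = \inf\{t \ge 0 \mid Z_t \notin [0,d]\}$, and note $\sigma_0 \le \tau$ always, with $Z_{\sigma_0} \in \{0,d\}$ on $\{\sigma_0 < \infty\}$ (by continuity and the definition of $\sigma_0$) and $Z_{\tau} \in \{0,d\}$ on $\{\tau < \infty\}$ (since $Z$ cannot jump across $\{0,d\}$). By \eqref{eq5n} any local solution satisfies $\zeta \le \sigma_0$: if $\zeta(\omega) < \infty$ then $Y_{\zeta} = Z_{\zeta} \in \{0,d\}$ by Definition \ref{local_sol}(iii), forcing $\zeta \ge \sigma_0$ is false — rather, I should argue the reverse containment carefully, so let me instead proceed via the ordering. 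For (i): the candidate $(Z_{\cdot\wedge\sigma_0},\sigma_0)$ is a local solution (check (i)–(iii) as above, using $Z_{\sigma_0}\in\{0,d\}$), and for any local solution $(Y,\zeta)$ with the same $Y_0$ on the same basis, \eqref{eq5n} gives $Y_{t\wedge\zeta} = Z_{t\wedge\zeta}$, so $Z_{\cdot\wedge\sigma_0}$ restricted to $[0,\zeta]$ agrees with $Y$ once we check $\zeta \le \sigma_0$; the latter holds because on $[0,\zeta)$ one has $Y_s = Z_s \notin\{0,d\}$ — wait, that is not given, so the correct route is: $Y_t = Z_t$ on $[0,\zeta]$ and $Y$ is $[0,d]$-valued, hence $Z_t \in [0,d]$ on $[0,\zeta]$, hence $\zeta \le \tau$; combined with $Z_\zeta \in \{0,d\}$ from (iii) when $\zeta<\infty$, one gets $\sigma_0 \le \zeta$ on $\{\zeta<\infty\}$, so $(Z_{\cdot\wedge\sigma_0},\sigma_0) \lesssim (Y,\zeta)$ is false in general; rather $(Z_{\cdot\wedge\sigma_0},\sigma_0)$ is dominated by every local solution, i.e. it is minimal, and uniqueness of the minimal element follows since any minimal $(Y,\zeta)$ must then satisfy $\zeta = \sigma_0$. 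For (ii): $(Z_{\cdot\wedge\tau},\tau)$ is a local solution (again check (iii) via $Z_\tau\in\{0,d\}$), it extends every local solution because $\zeta \le \tau$ and \eqref{eq5n} gives agreement on $[0,\zeta]$, hence it is the maximum; a maximal $(Y,\zeta)$ must therefore have $\zeta = \tau$.

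The main obstacle — and the step needing the most care — is the gluing argument producing a global solution of \eqref{eq4n} from a local solution of \eqref{eq3n}: one must invoke a suitably measurable/adapted version of the solution flow of \eqref{eq4n} started at the random time $\zeta$ from $Y_\zeta$, so that the concatenated process is genuinely adapted to $\mathfrak{F}$ (which satisfies the usual conditions) and solves \eqref{eq4n} on all of $[0,\infty)$; this uses the strong Markov / cocycle property of the unique strong solution together with the fact that the stochastic and Lebesgue integrals over $[0,\zeta]$ computed with $\mu,\nu$ coincide with those in \eqref{eq6n} (Remark \ref{wdef_rem}). Once that concatenation is justified, pathwise uniqueness for \eqref{eq4n} finishes \eqref{eq5n} immediately, and the ordering statements are then bookkeeping with the stopping times $\sigma_0 \le \tau$ and conditions (i)–(iii) of Definition \ref{local_sol}.
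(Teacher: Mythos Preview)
Your approach to \eqref{eq5n} differs from the paper's. You propose to glue $Y$ on $[0,\zeta]$ with a solution $Z'$ of \eqref{eq4n} restarted at the random time $\zeta$ from $Y_\zeta$, and then invoke global pathwise uniqueness for \eqref{eq4n}. The paper avoids this restart entirely: it proves a \emph{localized} Yamada--Watanabe lemma (Lemma~\ref{yamada_watanabe} in the appendix) stating that if two adapted processes with the same initial value both satisfy
\[
X_{t\wedge\zeta} \,=\, X_0 + \int_0^{t\wedge\zeta}\mu(X_s)\,ds + \int_0^{t\wedge\zeta}\nu(X_s)\,dW_s
\]
for the \emph{same} stopping time $\zeta$, then they agree on $[0,\zeta]$. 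Since $Y$ is $[0,d]$-valued on $[0,\zeta]$, it satisfies this stopped equation with the extended coefficients $\mu,\nu$; the global solution $Z$ satisfies it trivially. Hence \eqref{eq5n} drops out in one line. Your gluing argument can be made rigorous, but it carries exactly the adaptedness/measurability burden you flag as ``the main obstacle'', and the paper's route shows that burden is unnecessary.

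For (i) and (ii) your outline matches the paper's: one checks that $(Z,\sigma_0)$ and $(Z,\tau)$ are themselves local solutions, then uses \eqref{eq5n} together with Definition~\ref{local_sol}(i),(iii) to obtain $\sigma_0 \le \zeta \le \tau$ a.s., whence $(Z,\sigma_0)\lesssim(Y,\zeta)\lesssim(Z,\tau)$ for every local solution $(Y,\zeta)$ with the given initial value. Your exposition here is momentarily self-contradictory --- you write that ``$(Z_{\cdot\wedge\sigma_0},\sigma_0)\lesssim(Y,\zeta)$ is false in general'' and then immediately assert the equivalent statement that $(Z,\sigma_0)$ is dominated by every local solution --- but the underlying argument, once cleaned up, is the paper's.
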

	\begin{proof}
		Let $(Y,\zeta)$ be a local solution, then the first part of the claim follows by a localized Yamada-Watanabe condition, see Theorem \ref{yamada_watanabe}. We  define $\tilde{\zeta}$ as the infinum in (i),  which defines a stopping time. In particular $(Z,\tilde{\zeta})$ is a local solution to \eqref{eq3n}.
		 Due to condition (iii) of Definition \ref{local_sol} we have that
		\begin{align*}
			P(\{\tilde{\zeta}>\zeta\})=P(\{\tilde{\zeta}>\zeta\}\cap \{Y_\zeta\in \{0,d\}\}).
		\end{align*}
		By \eqref{eq5n} it follows that $P(\{Z_\zeta \ne Y_\zeta\}\cap\{\zeta<\infty\})=0$ and therefore $P(\{\tilde{\zeta}>\zeta\})$  is dominated by
		\[
		P(\{\tilde{\zeta}>\zeta\}\cap \{Z_\zeta\in \{0,d\}\})=0.
		\]
	Hence $(Z,\tilde{\zeta})\lesssim (Y,\zeta)$.
		Therefore, if $\zeta \ne \tilde{\zeta}$, the local solution $(Y,\zeta)$ is not minimal. Conversely, if  $\zeta=\tilde{\zeta}$, it holds $(Z,\tilde{\zeta}) = (Y, \zeta) $ and by our previous considerations it follows $(Z, \tilde{\zeta})\lesssim (\hat{Y}, \hat{\zeta})$ for every other local solution $(\hat{Y}, \hat{\zeta})$.
		
		Next, let $\tilde{\zeta}$ be the infimum from (ii) instead. 
		The identity \eqref{eq5n} implies that
		\[
		P(\{\tilde{\zeta}<\zeta\})\le 
		P(\{\inf \{t|Z_{t\wedge \zeta}\notin [0,d]\}<\infty\})\le P(\{\exists t\ge 0: Y_{t\wedge \zeta}\notin [0,d]\})=0.
		\]
		In the latter equality we used condition (i) of Definition \ref{local_sol}.
		We conclude $(Y,\zeta)\lesssim (Z, \tilde{\zeta})$  and obtain (ii)  analogously to (i).
	\end{proof}
	The previous statement implies pathwise existence and uniqueness of minimal and maximal local solutions to \eqref{eq3n} with a prescribed initial value. To formulate a uniqueness statement concerning their laws, we introduce the space $[0,d]_{\Delta}$ as the set $[0,d]\cup\{\Delta\}$, where $\Delta$ is topologically adjoined  as a separate point. We equip the space $\mathcal{X}=([0,d]_{\Delta})^{[0,\infty)}$  with the corresponding product of Borel $\sigma$-fields. 
	\begin{cor}\label{uniq_laws}
		Let $((\Omega^{(i)}, \mathfrak{A}^{(i)}, P^{(i)}), \mathfrak{F}^{(i)},W^{(i)}, Y^{(i)}, \zeta^{(i)} )$ for $i\in \{1,2\}$ be two minimal (maximal) local solutions to \eqref{eq3} with the same inital distribution  on $[0,d]$. Then the
		laws $P^{(i)}\circ (\tilde{Y}^{(i)})^{-1}$ on $\mathcal{X}$ coincide, where
		\[
		\tilde{Y}^{(i)}_t(\omega)=\begin{cases}
			Y_t(\omega), &t\le \zeta^{(i)}(\omega)
			\\\Delta, &t> \zeta^{(i)}(\omega),
		\end{cases} \;\; \;\;\omega \in \Omega^{(i)}.
		\] In particular the laws of their lifetimes $P^{(i)}\circ (\zeta^{(i)})^{-1} $ coincide.
	\end{cor}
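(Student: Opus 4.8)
The plan is to reduce the statement to the uniqueness in law already available for the globally defined SDE \eqref{eq4n}. First I would invoke Theorem \ref{extension_thm}: for each $i$, letting $Z^{(i)}$ be the unique strong solution to \eqref{eq4n} with initial value $Y_0^{(i)}$, we have $Z^{(i)}_{t\wedge\zeta^{(i)}} = Y^{(i)}_{t\wedge\zeta^{(i)}}$ for all $t\ge 0$, and in the minimal case $\zeta^{(i)} = \inf\{t\ge 0 \mid Z^{(i)}_t \in \{0,d\}\}$, while in the maximal case $\zeta^{(i)} = \inf\{t\ge 0 \mid Z^{(i)}_t \notin [0,d]\}$. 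The key observation is that both $\zeta^{(i)}$ and the killed path $\tilde Y^{(i)}$ are \emph{measurable functionals of the path of $Z^{(i)}$ alone}: the hitting/exit time of a closed, resp.\ open, set is a measurable function on path space (for continuous paths, using that $\{0,d\}$ is closed and $\mathbb{R}\setminus[0,d]$ is open), and $\tilde Y^{(i)}_t = Z^{(i)}_t$ on $\{t\le\zeta^{(i)}\}$ and $=\Delta$ otherwise. Hence there is a single measurable map $\Phi\colon C([0,\infty),\mathbb{R}) \to \mathcal{X}$, the same for $i=1,2$ and depending only on whether we are in the minimal or maximal case, such that $\tilde Y^{(i)} = \Phi(Z^{(i)})$.

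Next I would use that \eqref{eq4n} satisfies uniqueness in law for arbitrary initial distributions on $\mathbb{R}$ — as recorded in the excerpt, this follows from the Yamada--Watanabe theorem together with pathwise uniqueness. Since $Y_0^{(1)}$ and $Y_0^{(2)}$ have the same distribution on $[0,d]\subset\mathbb{R}$, the laws of $Z^{(1)}$ and $Z^{(2)}$ on $C([0,\infty),\mathbb{R})$ coincide. Pushing these equal laws forward under the common measurable map $\Phi$ yields
\[
P^{(1)}\circ(\tilde Y^{(1)})^{-1} \;=\; P^{(1)}\circ(\Phi\circ Z^{(1)})^{-1} \;=\; P^{(2)}\circ(\Phi\circ Z^{(2)})^{-1} \;=\; P^{(2)}\circ(\tilde Y^{(2)})^{-1}
\]
as measures on $\mathcal{X}$. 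The statement about the lifetimes then follows immediately, since $\zeta^{(i)}$ is itself a measurable functional of $\tilde Y^{(i)}$ (it equals $\inf\{t\ge 0 \mid \tilde Y^{(i)}_t = \Delta\}$, with the convention that this is $+\infty$ if $\Delta$ is never attained), so $P^{(i)}\circ(\zeta^{(i)})^{-1}$ is a further pushforward of the already-identified law on $\mathcal{X}$.

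The main technical point — and the only place requiring genuine care — is the measurability of $\Phi$, i.e.\ checking that first-hitting and first-exit times, and the associated killed trajectory, are measurable with respect to the product $\sigma$-field on $\mathcal{X}$ and the appropriate $\sigma$-field on $C([0,\infty),\mathbb{R})$. For hitting times of closed sets by continuous processes this is standard (approximation by countable infima over rationals), and the map sending a path to its stopped-and-killed version is measurable by a similar rational-time approximation; one should also note that the product $\sigma$-field on $\mathcal{X}=([0,d]_\Delta)^{[0,\infty)}$ is generated by the coordinate projections, so it suffices to check that $\omega\mapsto \tilde Y^{(i)}_t(\omega)$ is measurable for each fixed $t$, which is clear from $\{\tilde Y_t \in A\} = (\{t\le\zeta\}\cap\{Z_t\in A\}) \cup (\{t>\zeta\}\cap\{\Delta\in A\})$ together with measurability of $\zeta$ and of $Z_t$. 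I do not expect any deeper obstacle; the content of the corollary is essentially the bookkeeping that uniqueness in law for the auxiliary equation on $\mathbb{R}$ transfers through a fixed measurable functional.
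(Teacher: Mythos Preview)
Your proposal is correct and follows essentially the same argument as the paper: both reduce to uniqueness in law for the auxiliary SDE \eqref{eq4n}, express $\tilde Y^{(i)}$ as the image of $Z^{(i)}$ under a single measurable map $\Phi$ (the paper calls it $\phi$) built from the first-exit or first-hitting time, and push forward the common law of $Z^{(i)}$. Your discussion of the measurability of $\Phi$ and of recovering $\zeta^{(i)}$ from $\tilde Y^{(i)}$ is slightly more detailed than the paper's, but the strategy is identical.
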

	\begin{proof}
		We first consider the case that the two solutions are maximal. Let $Z^{(i)}$ be the solution to \eqref{eq4n} with the same initial value and
		$\tilde{\zeta}^{(i)}$ the stopping time as in Theorem \ref{extension_thm} (ii)
		for $i\in\{1,2\}$. By the uniqueness in law of \eqref{eq4n}, we have that $P^{(1)}\circ (Z^{(1)})^{-1}=P^{(2)}\circ (Z^{(2)})^{-1}$ as probability measures on $C([0,\infty),\RR)$, where we equip the latter space with its Borel $\sigma$-field.
		We note that
		\[\tau\colon C([0,\infty),\RR)\to \mathbb{R}\cup\{\infty\},
		f\mapsto \inf\left\{t\ge 0| f(t)\notin [0,d]\right\}
		\]
		is measurable.
		 Therefore, the mapping $\phi \colon C([0,\infty),\RR)\to \mathcal{X}$, where
		\begin{align*}
			\phi(f)(t)= \begin{cases}
				f(t),& t\le \tau,\\
				\Delta,& t> \tau\\
			\end{cases}
		\end{align*}
		is measurable as well.
		Consequently, if we denote the measure $P^{(i)}\circ (Z^{(i)})^{-1}$ by $Q$, which is independent of i, we obtain that
		\begin{equation}\label{eq11}
			P^{(i)}\circ (\tilde{Y}^{(i)})^{-1}=  	Q\circ\phi^{-1},
		\end{equation}by Theorem \ref{extension_thm}. The statement for minimal solutions follows analogously by replacing $\tau $ by
		\[
		f\,\mapsto\, \inf\left\{t\ge 0| f(t)\in \{0,d\}\right\}.
		\]
	\end{proof}

	\section{The Dirichlet space}\label{Sec_Dirichlet_space}
	We recall that $(\EE, D(\EE))$ was defined by \eqref{eq72}. Since the density functions $m$ and $cm$ are both bounded from below on every compactly contained subset of $(0,d)$, $D(\EE)$ embeds continuously in the local Sobolev space $ H^1_{\loc}((0,d))$, for an introduction to these spaces we refer to \cite{burazin2005}. In particular, by the Sobolev embedding theorem every $f\in D(\EE)$ admits a continuous version on $(0,d)$, which we denote by $f^*$. Moreover, we obtain  the following integration by parts formula
		\begin{equation}\label{int_by_parts}
	f^*(r)g(r)-f^*(l)g(l)\,=\,\int_{l}^{r}f'g(x)+fg'(x) \,dx
	\end{equation} for any function $g$ which is continuously differentiable on $[l,r]\subset (0,d)$ as a consequence of \cite[Corollary 8.10, p.215]{brezis2010functional}.
	\subsection{Orthogonal decomposition of the domain}
	
	We recall the possibly smaller closed subdomain $\FF\subset D(\EE)$. 
For each $\lambda>0$ there is an orthogonal decomposition
	$
	D(\EE)\,=\, \FF \perp \FF^\perp_\lambda$
	with respect to the inner product $\EE_\lambda$. This section is devoted to characterizing $\FF$ and $\FF_\lambda^\perp$. We use a duality argument and properties of hypergeometric functions and start with a technical observation.
		\begin{lemma}\label{embedding}
		Let $f\in \mathcal{F}$. Then we have  that
		\begin{equation}\label{eq8}
			\begin{cases*}
				\lim_{x\nearrow d} f^*cm(x)\,=\,0,&$\alpha>-1$,\\
				\lim_{x\nearrow d} f^*(x)\,=\,0,&$\alpha\le-1$
			\end{cases*}
		\end{equation}
		and
		\begin{equation*}
			\begin{cases*}
				\lim_{x\searrow 0} f^*cm(x)\,=\,0,&$\beta>-1$,\\
				\lim_{x\searrow 0} f^*(x)\,=\,0,&$\beta\le-1$.
			\end{cases*}
		\end{equation*}
	\end{lemma}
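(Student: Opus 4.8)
The plan is to work with the $\EE_1$-inner product on $D(\EE)$ and exploit that $\FF$ is by definition the $\EE_1$-closure of $C_c^\infty((0,d))$. Fix $f \in \FF$ and choose a sequence $f_n \in C_c^\infty((0,d))$ with $\EE_1(f-f_n, f-f_n) \to 0$; in particular $f_n \to f$ in $L^2(X,dm)$ and $f_n' \to f'$ in $L^2(X,dcm)$. The strategy is to produce, for each boundary point, a family of test functions against which integration by parts \eqref{int_by_parts} controls the desired boundary limit uniformly in $n$, and then pass to the limit. I will treat the endpoint $d$; the endpoint $0$ is symmetric.

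First I would establish that the limits in \eqref{eq8} \emph{exist}. For $f \in \FF \subset D(\EE) \subset H^1_{\loc}((0,d))$ the continuous version $f^*$ is absolutely continuous on compact subintervals, so for $l < r$ in $(0,d)$ one has $f^*(r)cm(r) - f^*(l)cm(l) = \int_l^r (f'cm + f (cm)')(x)\,dx = \int_l^r (f' cm)(x)\,dx + \int_l^r f(a-bx)m(x)\,dx$, using \eqref{eq37}. Both integrands are in $L^1$ near $d$: the first because $f' \in L^2(dcm)$ and $cm = \tfrac12\sigma^2 x(d-x) m$ has $\int_0^d cm\,dx < \infty$ (here $\alpha, \beta$ enter only through integrability of $x^\beta(d-x)^{\alpha+1}$, which holds precisely when $\alpha > -2$, always true in the $\alpha > -1$ case; in the $\alpha \le -1$ case one instead uses that $f^*$ itself, being $H^1_{\loc}$, has a limit as soon as $\int f' \in L^1$ near $d$, which follows from Cauchy--Schwarz against $\int cm^{-1} < \infty \iff \alpha < 1$ — and for $\alpha \ge 1$ a separate Hardy-type estimate is needed). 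So the cleanest route is: the limit of $f^*cm$ at $d$ (resp.\ of $f^*$ at $d$) exists, call it $L$, and it remains to show $L = 0$.

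To identify $L = 0$ I would argue by density together with a uniform bound. For $f_n \in C_c^\infty((0,d))$ the relevant boundary value is trivially $0$, so it suffices to show that the functional $f \mapsto \lim_{x \nearrow d} f^*cm(x)$ (resp.\ $\lim_{x\nearrow d} f^*(x)$) is continuous on $D(\EE)$ with respect to $\EE_1$. Applying \eqref{int_by_parts} with $g \equiv 1$ on $[l,r]$ (more precisely with $g$ a cutoff equal to $1$ near $d$ in the $\alpha \le -1$ case, or directly using the computation above in the $\alpha > -1$ case) and letting $r \nearrow d$, one gets $|\lim_{x \nearrow d} f^*cm(x) - f^*(l)cm(l)| \le \int_l^d |f'| cm + \int_l^d |f|\,|a-bx| m \le \|f'\|_{L^2(dcm)} \big(\int_l^d cm\big)^{1/2} + C\|f\|_{L^2(dm)} \big(\int_l^d (a-bx)^2 m\big)^{1/2}$, and the right-hand side is $\le \EE_1(f,f)^{1/2}$ times a constant; a further limit $l \nearrow d$ would send the boundary term $f^*(l)cm(l) \to L$, pinning down $L$ once one knows $L$ is the only candidate. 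Combining this estimate applied to $f - f_n$ with the vanishing of the boundary values of $f_n$ yields $L = \lim_n \lim_{x\nearrow d}(f-f_n)^* cm(x) = 0$. The case $\alpha \le -1$ is analogous but uses a cutoff function $\chi_r \in C^\infty$ with $\chi_r = 1$ near $d$, $\supp \chi_r \subset (l,d)$, as the function $g$ in \eqref{int_by_parts}, so that $f^*(r) = f^*(r)\chi_r(r) = \int_l^r (f'\chi_r + f\chi_r')\,dx$, and one needs $\int_l^d |f'|\,dx + \int |f \chi_r'|\,dx$ to be controlled by $\EE_1(f,f)^{1/2}$ — this is where one invokes $\int_0^d cm^{-1}\,dx < \infty$, valid for $\alpha \le -1$ since then $\alpha - 1 < 1$... wait, $\alpha < 1$ is automatic when $\alpha \le -1$, so Cauchy--Schwarz gives $\int_l^d |f'|\,dx \le \|f'\|_{L^2(dcm)}(\int_l^d cm^{-1})^{1/2} < \infty$ with the bound vanishing as $l \nearrow d$.

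The main obstacle is the bookkeeping of which integrability statements about the weights $x^\beta(d-x)^\alpha$ and $x^{-\beta-1}(d-x)^{-\alpha-1}$ hold in which parameter regime, and making sure the two cases $\alpha > -1$ and $\alpha \le -1$ in \eqref{eq8} are exactly the ones where $cm$ (resp.\ $cm^{-1}$) is integrable near $d$ — this is precisely the dichotomy built into the statement, so the content of the lemma is really this integrability dichotomy plus the elementary density argument. I expect no conceptual difficulty beyond that; the hypergeometric machinery advertised in the surrounding text is presumably reserved for the converse direction (characterizing when $D(\EE) = \FF$) rather than for this one-sided embedding statement.
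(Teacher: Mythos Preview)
Your approach is correct and, for the case $\alpha>-1$, essentially identical to the paper's: both compute $(f^*cm)'=f'cm+(a-bx)fm$ via \eqref{eq37}, bound the right-hand side in $L^1$ by Cauchy--Schwarz against $\big(\int dcm\big)^{1/2}$ and $\big(\int dm\big)^{1/2}$ (both finite near $d$ precisely when $\alpha>-1$), and conclude by density of $C_c^\infty((0,d))$ in $\FF$.

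For $\alpha\le-1$ the paper takes a shorter route than your cutoff-plus-$cm^{-1}$ argument. Since $(d-x)^\alpha$ and $(d-x)^{\alpha+1}$ are bounded \emph{below} on $\big(\tfrac{d}{2},d\big)$ when $\alpha\le-1$, the weighted norms dominate the unweighted ones, so $D(\EE)\hookrightarrow H^1\big((\tfrac{d}{2},d)\big)$ continuously; the classical Sobolev embedding into $C\big([\tfrac{d}{2},d]\big)$ then gives the continuous boundary-value functional directly, and density finishes. Your route via $\int cm^{-1}<\infty$ also works, but note a small arithmetic slip: near $d$ one has $cm^{-1}\sim(d-x)^{-\alpha-1}$, which is integrable iff $\alpha<0$ (not $\alpha<1$); this is of course still satisfied when $\alpha\le-1$, so your conclusion stands. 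The paper's observation avoids the cutoff bookkeeping entirely. Your closing remark is on target: no hypergeometric input is used here.
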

	\begin{proof}
	If $\alpha\le -1$ the density functions $m$ and $cm$ are   bounded away from zero on $\left(\frac{d}{2}, d\right)$. Hence the mapping
		\[
		D(\mathcal{E})\to H^1\left(\left(\frac{d}{2}, d\right)\right),\,f\mapsto f|_{\left(\frac{d}{2}, d\right)}
		\]
		is continuous, where we write $H^1$ for the classical first-order Sobolev space. Then \eqref{eq8} follows by approximation and the Sobolev embedding theorem since $C_c^\infty(0,d)$ is by definition dense in $\FF$.
		Secondly, we assume $\alpha >-1$. Again by an approximation argument, \eqref{eq8} follows if we can verify continuity of 
		\begin{equation}\label{eq32}
			\left(C_c^\infty((0,d)), \mathcal{E}_1\right)\to C^0\left(\left[\frac{d}{2},d\right]\right),\,\varphi\mapsto cm\varphi|_{\left[\frac{d}{2},d\right]}.
		\end{equation}By \eqref{eq37} we have that
		\begin{equation}\label{eq3}
			(\varphi cm  )'(x)\,=\,cm(x)\varphi'(x)+(a-bx)m(x)\varphi(x)
		\end{equation}
	for every $\varphi\in C_c^\infty((0,d))$ and therefore
		\begin{align*}&
			|cm \varphi(x)|\,\le \,\int_{\frac{d}{2}}^d |\varphi'|\,dcm\,+\,\big(|a|+|b|d\big)\int_{\frac{d}{2}}^d |\varphi|\,dm\\\le \,&\left[\left(
			\int_{\frac{d}{2}}^d \mathbbm{1}\,dcm\right)^\frac{1}{2}\,+\,
			(|a|+|b|d)\left({\int_{\frac{d}{2}}^d \mathbbm{1}\,dm}\right)^\frac{1}{2}\right]\,\sqrt{\mathcal{E}_1(\varphi,\varphi)}
		\end{align*}
		for any $x\in \left[\frac{d}{2},d\right]$. The prefactor on the right-hand side is finite since $\alpha>-1$ and consequently \eqref{eq32} is indeed bounded.
		The second part of the statement can be proved analogously.
	\end{proof}

	 We proceed by noting that $f\in \FF_\lambda^\perp$ if and only if
	\[
	\forall \varphi\in C_c^\infty((0,d)):\;\;\int_0^d f'(x)\varphi'(x)\,dcm(x) +		\lambda \int_0^d f(x)\varphi(x)\, dm(x) \,=\,0,
	\]which  is a weak formulation of the differential equation $Gf=\lambda f$ due to \eqref{eq3}, where we recall that $G$ was defined by \eqref{eq12}. Hence, the solution space to this differential equation is a proper candidate for $\FF^\perp_\lambda$. To simplify the notation we introduce the $\lambda$-dependent parameter
	\[
	\gamma\,=\,\frac{\sqrt{\sigma^4-4b\sigma^2-8\lambda \sigma^2+4b^2}}{2\sigma^2}
	\] such that by definition
	\begin{equation}\label{eq21}
		\left(\frac{\alpha+\beta+1}{2}\right)^2-\gamma^2\,=\,\frac{2\lambda }{\sigma^2}.
	\end{equation}
	Using $_2F_1$ as notation for hypergeomtetric functions, we define the real-valued function
	\begin{equation*}\label{eq25}
		\xi_\lambda(x)\,=\,\begin{cases*}
			{}_2F_1\left(
			\frac{\alpha+\beta+1}{2}+\gamma,
			\frac{\alpha+\beta+1}{2}-\gamma;\beta+1;\frac{x}{d}
			\right),&$\beta >-1$,\\\left(\frac{x}{d}\right)^{-\beta}
			{}_2F_1\left(
			\frac{\alpha-\beta+1}{2}+\gamma,
			\frac{\alpha-\beta+1}{2}-\gamma;1-\beta;\frac{x}{d}
			\right),&$\beta \le-1$
		\end{cases*}
	\end{equation*}
on $(0,d)$. Since $\xi_\lambda$ is a rescaled version of the solution to a hypergeometric differential equation, see \cite[p.163]{lebedev2012special}, it satisfies indeed  $G\xi_\lambda= \lambda \xi_\lambda$ on $(0,d)$. We provide an observation on the monotonicity of $\xi_\lambda$.
	
	\begin{prop}\label{mon} If $\alpha\le -1$ or $\beta\le -1$ we assume additionally that
		\begin{equation}\label{ass_r}\lambda \,> \,\frac{\sigma^2}{2}
			\left(
			\frac{\alpha+\beta+1}{2}
			\right)^2.\end{equation}Then $\xi_\lambda$ is strictly monotonically increasing.
	\end{prop}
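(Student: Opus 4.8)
The plan is to show that $\xi_\lambda' > 0$ on $(0,d)$ by exploiting the fact that $\xi_\lambda$ solves the eigenvalue equation $G\xi_\lambda = \lambda\xi_\lambda$, together with the sign of the leading power-series coefficients of the hypergeometric function. First I would treat the case $\beta > -1$. Here $\xi_\lambda(0) = 1$ and the series ${}_2F_1(A_+, A_-; \beta+1; x/d)$ with $A_\pm = \frac{\alpha+\beta+1}{2}\pm\gamma$ has first-order coefficient $\frac{A_+A_-}{\beta+1}\cdot\frac1d$; by \eqref{eq21} one has $A_+A_- = \left(\frac{\alpha+\beta+1}{2}\right)^2 - \gamma^2 = \frac{2\lambda}{\sigma^2} > 0$, so $\xi_\lambda$ is strictly increasing near $0$. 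The assumption \eqref{ass_r} is exactly what makes $A_+A_- > 0$ fail to be automatic only when we need it — more precisely, when $\alpha\le-1$ or $\beta\le-1$ we need \eqref{ass_r} to guarantee certain coefficients have a fixed sign; in the regular case $\alpha,\beta>-1$ the product $A_+A_-=\frac{2\lambda}{\sigma^2}$ is already positive with no extra assumption.

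The key step is to propagate this local monotonicity to all of $(0,d)$ using the ODE. Rewrite $G\xi_\lambda = \lambda\xi_\lambda$ in the divergence form derived in the introduction: $(\xi_\lambda' cm)' = \lambda\,\xi_\lambda\, m$. Suppose for contradiction that $\xi_\lambda'$ vanishes at some point $x_0\in(0,d)$. Since $\xi_\lambda$ is positive near $0$ (it starts at a positive value and is initially increasing) and $m>0$, the right-hand side $\lambda\xi_\lambda m$ is positive wherever $\xi_\lambda>0$; hence $\xi_\lambda' cm$ is strictly increasing on the connected region where $\xi_\lambda>0$. Because $c,m>0$ on $(0,d)$, this forces $\xi_\lambda'$ itself to have at most one zero, and to the right of it $\xi_\lambda'>0$, to the left $\xi_\lambda'<0$ — but the latter contradicts the established fact that $\xi_\lambda'>0$ near $0$. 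One must be careful that $\xi_\lambda$ could in principle change sign; the argument therefore has to be run as: let $x_1 = \sup\{x : \xi_\lambda > 0 \text{ on } (0,x)\}$, show $\xi_\lambda' > 0$ on $(0,x_1)$ by the monotonicity of $\xi_\lambda' cm$ and the boundary behavior at $0$, conclude $\xi_\lambda$ is increasing hence stays positive, hence $x_1 = d$.

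For the case $\beta \le -1$, the same strategy applies but with $\xi_\lambda(x) = (x/d)^{-\beta}\,{}_2F_1(B_+,B_-;1-\beta;x/d)$ where $B_\pm = \frac{\alpha-\beta+1}{2}\pm\gamma$. Near $0$ this behaves like $(x/d)^{-\beta}$ with $-\beta\ge 1>0$, so $\xi_\lambda(0^+)=0$ and $\xi_\lambda$ is positive and increasing just to the right of $0$ (the leading term dominates, and its derivative $-\beta(x/d)^{-\beta-1}/d>0$); this uses $-\beta - 1 \ge 0$. The divergence-form ODE and the single-zero argument for $\xi_\lambda' cm$ then go through verbatim. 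The role of assumption \eqref{ass_r} in this regime is to ensure $\xi_\lambda$ does not have an interior zero coming from a sign change in the hypergeometric factor before $\xi_\lambda' cm$ can build up enough positivity; concretely, \eqref{ass_r} forces $\gamma$ to be purely imaginary with $|\gamma|$ large, which keeps the relevant coefficient products controlled. I expect the main obstacle to be precisely this bookkeeping: verifying that, under \eqref{ass_r}, the hypergeometric series coefficients do not conspire to make $\xi_\lambda$ dip to zero in the interior before the ODE-monotonicity argument takes over — equivalently, showing the "first zero of $\xi_\lambda$" and the "first zero of $\xi_\lambda'$" are correctly ordered. If a direct series estimate is awkward, an alternative is a Sturm-type comparison: \eqref{ass_r} makes $\lambda$ larger than the bottom of a comparison operator, ruling out oscillation of $\xi_\lambda$ on $(0,d)$, which is enough to close the argument.
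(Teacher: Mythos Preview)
Your approach is correct, and it is genuinely different from the paper's. The paper argues directly on the hypergeometric series: writing $\xi_\lambda(x)=\sum_k c_k(x/d)^k$, the $k$-th coefficient is a product of factors $\bigl[\bigl(\tfrac{\alpha+\beta+1}{2}+j-1\bigr)^2-\gamma^2\bigr]\big/\bigl[j(\beta+j)\bigr]$ for $j=1,\dots,k$ (or the analogue with $\alpha-\beta$ when $\beta\le-1$), and the paper checks that each such factor is positive. Assumption \eqref{ass_r} enters precisely here: it forces $\gamma^2<0$, making $(\,\cdot\,)^2-\gamma^2>0$ trivial regardless of how $\tfrac{\alpha+\beta+1}{2}+j-1$ wanders as $j$ grows.

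Your ODE-positivity argument via $(\xi_\lambda' cm)'=\lambda\,\xi_\lambda\, m$ is in fact cleaner than you realise: it does \emph{not} require \eqref{ass_r} at all. The only inputs you need are (a) $\xi_\lambda>0$ on a right-neighbourhood of $0$ and (b) $\xi_\lambda' cm(0^+)\ge 0$. Both hold unconditionally: for $\beta>-1$ one has $\xi_\lambda(0)=1$ and $\xi_\lambda'cm(0^+)=0$, while for $\beta\le-1$ one has $\xi_\lambda(x)\sim(x/d)^{-\beta}>0$ and $\xi_\lambda'cm(0^+)=\tfrac{-\beta\sigma^2}{2}>0$ (this is exactly Lemma~\ref{prop_f1}\ref{l_boundary2}). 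On the maximal interval $(0,x_1)$ where $\xi_\lambda>0$, the function $\xi_\lambda'cm$ is then strictly increasing from a non-negative limit at $0^+$, hence positive, hence $\xi_\lambda'>0$; so $\xi_\lambda$ is increasing on $(0,x_1)$ and cannot reach $0$, forcing $x_1=d$. Your worry about the hypergeometric factor ``dipping to zero before $\xi_\lambda'cm$ builds up positivity'' is therefore unfounded, and the contingency plan involving Sturm comparison can be dropped entirely. In short: the paper's series proof is a two-line computation but leans on the extra hypothesis, while your maximum-principle argument is slightly more conceptual and actually proves a stronger statement.
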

	\begin{proof}
		We consider first the case $\beta>-1$ in which we  write
		\begin{equation}\label{eq174}
			\xi_\lambda(x)=\sum_{k=0}^\infty\left(\frac{x}{d}\right)^k\, \prod_{j=1}^{k}\frac{\left(\frac{\alpha+\beta+1}{2}+\gamma+j-1\right)\left(\frac{\alpha+\beta+1}{2}-\gamma+j-1\right)}{j(\beta+j)} .
		\end{equation}
		If $\alpha>-1$ we have for all $j\in\mathbb{N}$  that
		\[
		\left(\frac{\alpha+\beta+1}{2}+j-1\right)^2-\gamma^2\,\ge\,
		\left(\frac{\alpha+\beta+1}{2}\right)^2-\gamma^2\,=\,\frac{
			2\lambda}{\sigma^2}\,>\,0.
		\]
		If $\alpha\le -1$ it follows by \eqref{eq21} and \eqref{ass_r} that $-\gamma^2>0$ and hence 
		\[
		\left(\frac{\alpha+\beta+1}{2}+j-1\right)^2-\gamma^2\,>\,0\]
		for each $j\in\mathbb{N}$. In both cases the coefficients of \eqref{eq174} are  strictly positive and the claim follows.
		For $\beta \le -1$ it suffices by analogous arguments to realize that  \eqref{ass_r} implies that
		\[
		\left(\frac{\alpha-\beta+1}{2}+j-1\right)^2-\gamma^2\,>\,0
		\]
		for any $j\in\mathbb{N}$.
	\end{proof}	 
	It will be also handy to calculate some boundary values of $\xi_\lambda$ and its derivative. 
	The following formulas are the key tool to perform the explicit calculations and can be found in \cite[Theorem 2.1.3, p.63; Theorem 2.2.2, p.66]{andrews1999special}. The appearing function $\Gamma$ is the well-known $\Gamma$-function.
	
	\begin{lemma}\label{gauss_limit} Let $\kappa, \iota, \upsilon \in \mathbb{C}$ with $-\upsilon\notin \mathbb{N}_0$. 
		\begin{enumerate}[label=(\roman*)]
			\item If $\mathfrak{R}(\upsilon-\kappa-\iota)>0$, then
			\begin{equation}\label{eq17}\lim_{x\nearrow 1}
				{}_2F_1(\kappa, \iota;\upsilon;x )=
				\frac{\Gamma(\upsilon)\Gamma(\upsilon-\kappa-\iota)}{\Gamma(\upsilon-\kappa)\Gamma(\upsilon-\iota)}.
			\end{equation}
			\item 	If $\upsilon-\kappa-\iota=0$, then
			\begin{equation}\label{eq18}
				\lim_{x\nearrow 1}\frac{{}_2F_1(\kappa, \iota;\upsilon;x )}{-\log(1-x)}=
				\frac{\Gamma(\upsilon)}{\Gamma(\kappa)\Gamma(\iota)}.
			\end{equation}
			\item If $\mathfrak{R}(\upsilon-\kappa-\iota)<0$, then
			\begin{equation}\label{eq26}
				\lim_{x\nearrow 1}\frac{{}_2F_1(\kappa, \iota;\upsilon;x )}{(1-x)^{\upsilon-\kappa-\iota}}=
				\frac{\Gamma(\upsilon)\Gamma(\kappa+\iota-\upsilon)}{\Gamma(\kappa)\Gamma(\iota)}.
			\end{equation}
		\end{enumerate}
	\end{lemma}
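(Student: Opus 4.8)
The statement to prove is Lemma~\ref{gauss_limit}, consisting of three classical limit formulas for the Gauss hypergeometric function $_2F_1(\kappa,\iota;\upsilon;x)$ as $x\nearrow 1$, distinguished by the sign of $\mathfrak{R}(\upsilon-\kappa-\iota)$. Since the paper explicitly points to \cite[Theorem 2.1.3; Theorem 2.2.2]{andrews1999special} as the source, the cleanest route is to \emph{cite these results directly} and only indicate the small bookkeeping needed to match the cited statements to the form stated here. I would therefore keep the proof extremely short: part~(i) is exactly Gauss's summation theorem $_2F_1(\kappa,\iota;\upsilon;1)=\frac{\Gamma(\upsilon)\Gamma(\upsilon-\kappa-\iota)}{\Gamma(\upsilon-\kappa)\Gamma(\upsilon-\iota)}$ together with Abel's theorem on power series (the series converges at $x=1$ when $\mathfrak{R}(\upsilon-\kappa-\iota)>0$, hence the limit equals the value), while parts~(ii) and~(iii) are the standard logarithmic and algebraic singular-behavior expansions at the branch point $x=1$.

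If instead a self-contained argument is wanted, the plan would be as follows. First establish (i): the coefficients of the power series $_2F_1(\kappa,\iota;\upsilon;x)=\sum_k \frac{(\kappa)_k(\iota)_k}{(\upsilon)_k k!}x^k$ behave like $k^{-(\upsilon-\kappa-\iota)-1}$ times a constant by Stirling's formula applied to the Pochhammer symbols, so absolute convergence at $x=1$ holds precisely when $\mathfrak{R}(\upsilon-\kappa-\iota)>0$; then Abel's theorem gives the limit, and the value is identified via Gauss's theorem (which itself can be proved by the integral representation $_2F_1(\kappa,\iota;\upsilon;x)=\frac{\Gamma(\upsilon)}{\Gamma(\iota)\Gamma(\upsilon-\iota)}\int_0^1 t^{\iota-1}(1-t)^{\upsilon-\iota-1}(1-xt)^{-\kappa}\,dt$ and the Beta integral, valid for $\mathfrak{R}(\upsilon)>\mathfrak{R}(\iota)>0$, then extended by analytic continuation in the parameters). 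For (iii), one uses the connection formula expressing $_2F_1(\kappa,\iota;\upsilon;x)$ near $x=1$ as a combination of $_2F_1(\kappa,\iota;\kappa+\iota-\upsilon+1;1-x)$ and $(1-x)^{\upsilon-\kappa-\iota}\,{}_2F_1(\upsilon-\kappa,\upsilon-\iota;\upsilon-\kappa-\iota+1;1-x)$; the first term stays bounded while the second blows up like $(1-x)^{\upsilon-\kappa-\iota}$, and evaluating the hypergeometric factor in front of it at $1-x=0$ (where it equals $1$) yields the constant $\frac{\Gamma(\upsilon)\Gamma(\kappa+\iota-\upsilon)}{\Gamma(\kappa)\Gamma(\iota)}$. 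For (ii), the same connection formula degenerates when $\upsilon-\kappa-\iota=0$, producing a logarithmic term; the coefficient of $-\log(1-x)$ is read off as $\frac{\Gamma(\upsilon)}{\Gamma(\kappa)\Gamma(\iota)}$.

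The main obstacle, if one goes the self-contained route, is proving the connection formula at $x=1$ and its degenerate (logarithmic) limit case — this is the only genuinely nontrivial input, and it is precisely what Theorem~2.2.2 of \cite{andrews1999special} supplies. Given that the paper has already committed to citing \cite{andrews1999special} for exactly these formulas, I would not reproduce that derivation; the honest and economical proof is simply to note that (i)--(iii) are restatements of the cited theorems, with part~(i) additionally invoking Abel's theorem to pass from the value $_2F_1(\dots;1)$ to the limit as $x\nearrow1$. I expect the author's proof to be a one- or two-line pointer to these references.
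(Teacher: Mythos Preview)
Your proposal is correct and matches the paper's treatment: the paper does not prove this lemma at all but simply states it as a quotation of \cite[Theorem~2.1.3, p.~63; Theorem~2.2.2, p.~66]{andrews1999special}, exactly as you anticipated. Your additional sketch of a self-contained argument is accurate but goes well beyond what the paper provides.
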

They result in the following statements on $\xi_\lambda$. The detailed calculations leading to it are contained in Appendix \ref{app_BV}
	\begin{lemma}\label{prop_f1}The function $\xi_\lambda$ admits the following properties. \begin{enumerate}[label=(\roman*)]
			\item \label{l_boundary}
		It holds that
		\begin{equation*}\label{eq5}
			\lim_{x\searrow 0} \xi_\lambda(x)=
			\begin{cases*}1,&$\beta>-1$,
				\\
				0,&$\beta \le -1$.
			\end{cases*}
		\end{equation*}
		\item \label{r_boundary} It holds that
		\begin{equation*}\label{eq4}
			\lim_{x\nearrow d} \xi_\lambda(x)\,=\,
			\begin{cases*}
				\frac{\Gamma(\beta+1)\Gamma(-\alpha)}{\Gamma\left(\frac{-\alpha+\beta+1}{2}+\gamma\right)\Gamma\left(\frac{-\alpha+\beta+1}{2}-\gamma\right)}
				,&$\alpha<0,\;\beta >-1$
				\\
				\frac{\Gamma(1-\beta)\Gamma(-\alpha)}{\Gamma\left(\frac{-\alpha-\beta+1}{2}+\gamma\right)\Gamma\left(\frac{-\alpha-\beta+1}{2}-\gamma\right)}
				,&$\alpha<0,\;\beta \le -1$
				\\
				\infty,&$\alpha \ge 0$
			\end{cases*}
		\end{equation*}
	and the above limit is positive if it is finite.
	\item \label{l_boundary2}It holds that
	\begin{equation*}\label{eq28}
		\lim_{x\searrow 0}\, \xi_\lambda'cm(x)\,=\,
		\begin{cases*}
			0,&$\beta>-1$,
			\\
			\frac{-\beta \sigma^2}{2},&$\beta \le -1$.
		\end{cases*}
	\end{equation*}
\item\label{r_boundary2} Under the additional assumption
	\begin{equation}\label{ass_r2}\lambda \,> \,\frac{\sigma^2}{2}
	\left(
	\frac{\alpha+\beta+1}{2}
	\right)^2.\end{equation}
for $\alpha< -1$ and $\beta\le -1$ it holds that
\begin{equation*}\label{eq29}
	\lim_{x\nearrow d}\, \xi_\lambda'cm(x)\,=\,
	\begin{cases*}
		\frac{\lambda\Gamma(\beta+1)\Gamma(\alpha+1)}{\Gamma\left(
			\frac{\alpha+\beta+3}{2}+\gamma
			\right)
			\Gamma\left(
			\frac{\alpha+\beta+3}{2}-\gamma
			\right)},&$\alpha >-1,\;\beta>-1$
		,\\\left[\frac{2\lambda }{\sigma^2}-\beta(\alpha+1)\right]\frac{\sigma^2\Gamma(1-\beta)\Gamma(\alpha+1)}{2\Gamma\left(\frac{\alpha-\beta+3}{2}+\gamma\right)\Gamma\left(\frac{\alpha-\beta+3}{2}-\gamma\right)},&$\alpha >-1,\;\beta\le-1$,\\
		\infty,&$\alpha \le-1$
	\end{cases*}
\end{equation*}
and the above limit is positive if it is finite.
	\end{enumerate}
	\end{lemma}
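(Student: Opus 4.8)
The plan is to reduce all four statements to the endpoint asymptotics of ${}_2F_1$ provided by Lemma~\ref{gauss_limit}, applied to $\xi_\lambda$ itself for the function values and to $\xi_\lambda'$ for the limits of $\xi_\lambda'cm$. Throughout I would write $A=\tfrac{\alpha+\beta+1}{2}$, $A'=\tfrac{\alpha-\beta+1}{2}$, so that $\gamma^2=A^2-\tfrac{2\lambda}{\sigma^2}$ by \eqref{eq21} and, by a one-line computation, $A'^2-\gamma^2=\tfrac{2\lambda}{\sigma^2}-\beta(\alpha+1)$; I would use the standard identity $\tfrac{d}{dx}{}_2F_1(\kappa,\iota;\upsilon;x)=\tfrac{\kappa\iota}{\upsilon}{}_2F_1(\kappa+1,\iota+1;\upsilon+1;x)$ to rewrite $\xi_\lambda'$ again as hypergeometric functions, and I would use that in every occurring ${}_2F_1$ the argument is $x/d$ and that $cm(x)=\tfrac{\sigma^2}{2d^{\alpha+\beta+1}}x^{\beta+1}(d-x)^{\alpha+1}$.

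For \ref{l_boundary} and \ref{r_boundary} I would argue directly with $\xi_\lambda$. Letting $x\searrow0$ and using ${}_2F_1(\cdot,\cdot;\cdot;0)=1$ gives the limit $1$ when $\beta>-1$, while for $\beta\le-1$ the prefactor $(x/d)^{-\beta}\to0$ (as $-\beta\ge1$) forces the limit $0$. Letting $x/d\nearrow1$, one checks that in \emph{both} branches of the definition of $\xi_\lambda$ the parameter combination $\upsilon-\kappa-\iota$ equals $-\alpha$ (and the prefactor $(x/d)^{-\beta}$ in the second branch tends to $1$), with $\upsilon-\kappa$ equal to $\tfrac{-\alpha+\beta+1}{2}$ and $\tfrac{-\alpha-\beta+1}{2}$, respectively. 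Hence Lemma~\ref{gauss_limit} applies: for $\alpha<0$ its case (i) yields the stated ratio of $\Gamma$-values, for $\alpha=0$ its case (ii) gives a constant times $-\log(1-x/d)\to\infty$, and for $\alpha>0$ its case (iii) gives a constant times $(1-x/d)^{-\alpha}\to\infty$.

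For \ref{l_boundary2} and \ref{r_boundary2} I would first compute $\xi_\lambda'cm$ explicitly. In the branch $\beta\le-1$ this gives, after the cancellation $x^{\beta+1}(x/d)^{-\beta-1}=d^{\beta+1}$, a sum of a term proportional to $(d-x)^{\alpha+1}\,{}_2F_1(\cdot,\cdot;1-\beta;x/d)$ and a term proportional to $x(d-x)^{\alpha+1}\,{}_2F_1(\cdot,\cdot;2-\beta;x/d)$; in the branch $\beta>-1$ it is a single term carrying an extra factor $x^{\beta+1}$. For \ref{l_boundary2}, letting $x\searrow0$, the factor $x^{\beta+1}\to0$ annihilates everything when $\beta>-1$, and when $\beta\le-1$ only the first term survives and evaluates by ${}_2F_1(\cdot,\cdot;\cdot;0)=1$ to $\tfrac{-\beta\sigma^2}{2}$. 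For \ref{r_boundary2}, letting $x/d\nearrow1$, the differentiated hypergeometric factor has $\upsilon-\kappa-\iota=-\alpha-1$, and the identities $A'+1=\tfrac{\alpha-\beta+3}{2}$, $A'^2-\gamma^2=\tfrac{2\lambda}{\sigma^2}-\beta(\alpha+1)$ (respectively $A+1=\tfrac{\alpha+\beta+3}{2}$, $A^2-\gamma^2=\tfrac{2\lambda}{\sigma^2}$ in the other branch) reproduce the stated constants; the interplay of the factor $(d-x)^{\alpha+1}$ with the hypergeometric asymptotics then yields a finite limit when $\alpha>-1$ (case (iii) of Lemma~\ref{gauss_limit} combining with $(d-x)^{\alpha+1}(1-x/d)^{-\alpha-1}=d^{\alpha+1}$) and divergence when $\alpha\le-1$ (where either the ${}_2F_1$ factor itself diverges logarithmically, or it converges but $(d-x)^{\alpha+1}\to\infty$).

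The remaining point, which I expect to be the main nuisance rather than the hypergeometric bookkeeping, is to show that the finite limits in \ref{r_boundary} and \ref{r_boundary2} are strictly positive and the divergent ones equal $+\infty$. This reduces to controlling products $\Gamma(s-\gamma)\Gamma(s+\gamma)$ with real $s>0$: if $\gamma$ is purely imaginary the product is $|\Gamma(s+\gamma)|^2>0$, while if $\gamma$ is real then $|\gamma|<|A|$ (as $\lambda>0$) and a short case distinction gives $s>|A|$ for every relevant $s$ --- \emph{except} in the regime $\alpha<-1,\,\beta\le-1$, where $\xi_\lambda'cm$ is a sum of two terms of the same divergence order $(d-x)^{\alpha+1}$ whose leading coefficients must be shown not to cancel. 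This is precisely where hypothesis \eqref{ass_r2} enters: $\lambda>\tfrac{\sigma^2}{2}A^2$ forces $\gamma^2<0$, hence $\gamma$ purely imaginary and all the above $\Gamma$-products positive, and moreover $\tfrac{2\lambda}{\sigma^2}-\beta(\alpha+1)>0$, since $A^2-\beta(\alpha+1)=\tfrac14(\alpha-\beta+1)^2\ge0$ yields $\tfrac{2\lambda}{\sigma^2}>A^2\ge\beta(\alpha+1)$. The detailed computations are carried out in Appendix~\ref{app_BV}.
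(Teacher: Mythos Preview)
Your proposal is correct and follows essentially the same approach as the paper: both reduce everything to the three cases of Lemma~\ref{gauss_limit} applied to $\xi_\lambda$ and to the termwise-differentiated expression for $\xi_\lambda'$, and both handle positivity of the $\Gamma$-products by the dichotomy ``$\gamma$ purely imaginary'' versus ``$\gamma$ real with $|\gamma|<|A|$''. Your organization of the sign analysis via the single parameter $s$ in $\Gamma(s+\gamma)\Gamma(s-\gamma)$ is slightly cleaner than the paper's repeated case-by-case verification, and your identification of the regime $\alpha<-1,\ \beta\le-1$ as the unique place where hypothesis~\eqref{ass_r2} is genuinely needed (to prevent cancellation of two terms of the same order $(d-x)^{\alpha+1}$ and to force $\tfrac{2\lambda}{\sigma^2}-\beta(\alpha+1)>0$) matches the paper exactly.
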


	To obtain a second solution to $Gf=\lambda f$ we define the dual set of parameters	\[
	(d^\dag,a^\dag,b^\dag,\sigma^\dag)=(d,bd-a,b,\sigma).
	\]
	Then, accordingly
	\[
	G^\dag f(x)\,=\,\frac{1}{2}\sigma^2x(d-x)f''(x)+((bd-a)-ax)f'(x)
	\]
	such that 
	$Gf=\lambda f$ is equivalent to $G^\dag (f(d-\cdot ))=\lambda f(d-\cdot)$. Hence, the function $\eta_\lambda=\xi_\lambda^\dag(d-\cdot)$ satisfies $G\eta_\lambda=\lambda\eta_\lambda$. It is straightforward to verify that $\alpha^\dag=\beta
	$, $\beta^\dag=\alpha$ and $\gamma^\dag=\gamma$ such that we can write explicitly
	\begin{equation*}
		\eta_\lambda(x)=\begin{cases*}
			{}_2F_1\left(
			\frac{\alpha+\beta+1}{2}+\gamma,
			\frac{\alpha+\beta+1}{2}-\gamma;\alpha+1;1-\frac{x}{d}
			\right),&$\alpha >-1$\\\left(1-\frac{x}{d}\right)^{-\alpha}
			{}_2F_1\left(
			\frac{-\alpha+\beta+1}{2}+\gamma,
			\frac{-\alpha+\beta+1}{2}-\gamma;1-\alpha;1-\frac{x}{d}
			\right)&$\alpha \le-1$
		\end{cases*}
	\end{equation*}for $x\in (0,d)$. In the following corollary we collect all the properties of $\eta_\lambda$, which follow immediately from the respective properties of $\xi_\lambda^\dag$.
	
	\begin{cor}\label{f2}The function $\eta_\lambda$  admits the following properties.
		\begin{enumerate}[label=(\roman*)]
			\item We have 
			$G\eta_\lambda=\lambda\eta_\lambda
			$
			on $(0,d)$.
			\item Under the additional assumption
			\[\lambda \,> \,\frac{\sigma^2}{2}
			\left(
			\frac{\alpha+\beta+1}{2}
			\right)^2\]
			for $\alpha\le -1$ or $\beta\le -1$ the function $\eta_\lambda$ is strictly monotonically decreasing.
			
			\item It holds that
			\begin{equation*}\label{eq6}
				\lim_{x\nearrow d} \eta_\lambda(x)\,=\,
				\begin{cases*}1,&$\alpha>-1$,
					\\
					0,&$\alpha \le -1$.
				\end{cases*}
			\end{equation*}
			\item It  holds that
			\begin{equation*}\label{eq7}
				\lim_{x\searrow 0} \eta_\lambda(x)\,=\,
				\begin{cases*}
					\frac{\Gamma(\alpha+1)\Gamma(-\beta)}{\Gamma\left(\frac{\alpha-\beta+1}{2}+\gamma\right)\Gamma\left(\frac{\alpha-\beta+1}{2}-\gamma\right)}
					,&$\beta<0,\;\alpha >-1$,
					\\
					\frac{\Gamma(1-\alpha)\Gamma(-\beta)}{\Gamma\left(\frac{-\alpha-\beta+1}{2}+\gamma\right)\Gamma\left(\frac{-\alpha-\beta+1}{2}-\gamma\right)}
					,&$\beta<0,\;\alpha \le -1$,
					\\
					\infty,&$\beta \ge 0$
				\end{cases*}
			\end{equation*}
			and the above limit is positive if it is finite.
			\item It holds that
			\begin{equation*}\label{eq27}
				\lim_{x\nearrow d} \,\eta_\lambda'cm(x)\,=\,
				\begin{cases*}
					0,&$\alpha>-1$,
					\\
					\frac{\alpha \sigma^2}{2},&$\alpha \le -1$.
				\end{cases*}
			\end{equation*}
			\item Under the additional assumption
			\[\lambda \,> \,\frac{\sigma^2}{2}
			\left(
			\frac{\alpha+\beta+1}{2}
			\right)^2\]
			for $\alpha\le -1$ and $\beta<-1$ it holds that
			\begin{align*}\label{eq50}&
				\lim_{x\searrow 0} \,\eta_\lambda'cm(x)\\=\,&
				\begin{cases*}
					\frac{-\lambda\Gamma(\alpha+1)\Gamma(\beta+1)}{\Gamma\left(
						\frac{\alpha+\beta+3}{2}+\gamma
						\right)
						\Gamma\left(
						\frac{\alpha+\beta+3}{2}-\gamma
						\right)},&$\beta >-1,\;\alpha>-1$,
					\\\left[\frac{2\lambda}{\sigma^2}-\alpha(\beta+1)\right]\frac{-\sigma^2\Gamma(1-\alpha)\Gamma(\beta+1)}{2\Gamma\left(\frac{-\alpha+\beta+3}{2}+\gamma\right)\Gamma\left(\frac{-\alpha+\beta+3}{2}-\gamma\right)},&$\beta >-1,\;\alpha\le-1$,\\
					-\infty,&$\beta \le-1$
				\end{cases*}
			\end{align*}
			and the above limit is negative if it is finite.
		\end{enumerate}
	\end{cor}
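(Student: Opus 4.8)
The plan is to deduce each of the six items directly from the corresponding property of $\xi_\lambda$ — established in Proposition \ref{mon} and Lemma \ref{prop_f1} — applied to the dual parameter tuple $(d^\dag,a^\dag,b^\dag,\sigma^\dag)=(d,bd-a,b,\sigma)$, and then transported back by the change of variables $x\mapsto d-x$. The one preparatory computation I would carry out first is the dictionary between the two parameter sets: straight from the definitions one has $\alpha^\dag=\beta$, $\beta^\dag=\alpha$, $\gamma^\dag=\gamma$, and, writing $c^\dag$, $m^\dag$ for the weight functions built from the daggered parameters, the reflection identities $m^\dag(d-x)=m(x)$ and $c^\dag(d-x)=c(x)$ on $(0,d)$, hence $c^\dag m^\dag(d-x)=cm(x)$. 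Combining this with $\eta_\lambda(x)=\xi_\lambda^\dag(d-x)$ and $\eta_\lambda'(x)=-(\xi_\lambda^\dag)'(d-x)$ yields the single identity $\eta_\lambda'cm(x)=-\big[(\xi_\lambda^\dag)'c^\dag m^\dag\big](d-x)$, which is the form in which the boundary statements on the derivative will transfer.

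With these in hand the six items are routine. Item (i) is the equivalence $G^\dag(f(d-\cdot))=\lambda f(d-\cdot)\Leftrightarrow Gf=\lambda f$ applied to $f=\xi_\lambda^\dag$, which solves $G^\dag\xi_\lambda^\dag=\lambda\xi_\lambda^\dag$. For item (ii), Proposition \ref{mon} for $\xi_\lambda^\dag$ gives that $\xi_\lambda^\dag$ is strictly increasing, so $\eta_\lambda$ is strictly decreasing; the extra hypothesis of Proposition \ref{mon} is needed exactly when $\alpha^\dag\le-1$ or $\beta^\dag\le-1$, i.e. $\beta\le-1$ or $\alpha\le-1$, and since $\alpha^\dag+\beta^\dag=\alpha+\beta$ and $\sigma^\dag=\sigma$ it becomes the displayed condition $\lambda>\frac{\sigma^2}{2}\big(\frac{\alpha+\beta+1}{2}\big)^2$. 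Items (iii) and (iv) come from $\lim_{x\nearrow d}\eta_\lambda(x)=\lim_{y\searrow0}\xi_\lambda^\dag(y)$ and $\lim_{x\searrow0}\eta_\lambda(x)=\lim_{y\nearrow d}\xi_\lambda^\dag(y)$ together with Lemma \ref{prop_f1}(i),(ii) for $\xi_\lambda^\dag$; substituting $\alpha^\dag=\beta$, $\beta^\dag=\alpha$, $\gamma^\dag=\gamma$ into those formulas and into their case distinctions on the sign of $\beta^\dag=\alpha$ reproduces exactly the stated $\Gamma$-quotients, positivity included. Finally items (v) and (vi) follow from Lemma \ref{prop_f1}(iii),(iv) for $\xi_\lambda^\dag$ via the identity $\eta_\lambda'cm(x)=-[(\xi_\lambda^\dag)'c^\dag m^\dag](d-x)$ recorded above; the extra sign is precisely what turns $\frac{-\beta^\dag\sigma^2}{2}$ into $\frac{\alpha\sigma^2}{2}$ in (v) and ``positive if finite'' into ``negative if finite'' in (vi), and the side condition in (vi) is the translate of \eqref{ass_r2}, again needed exactly when $\alpha^\dag<-1$ and $\beta^\dag\le-1$, i.e. $\beta<-1$ and $\alpha\le-1$.

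The main — and essentially only — point requiring care is bookkeeping: confirming the parameter dictionary and the reflection identity $c^\dag m^\dag(d-x)=cm(x)$, carrying the chain-rule sign through the derivative limits without dropping it, and checking that each case hypothesis (for instance ``$\alpha>-1$, $\beta\le-1$'' in Lemma \ref{prop_f1}(iv)) is mapped to the right case after the swap $\alpha\leftrightarrow\beta$. There is no new analytic obstacle, so I would organise the write-up as: (a) state the dictionary and the reflection identity; (b) record $\eta_\lambda'cm(x)=-[(\xi_\lambda^\dag)'c^\dag m^\dag](d-x)$; (c) go through (i)–(vi) invoking Proposition \ref{mon} and Lemma \ref{prop_f1} and simplifying the resulting expressions.
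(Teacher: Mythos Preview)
Your proposal is correct and is precisely the paper's approach: the paper does not give a separate proof but simply states that the properties ``follow immediately from the respective properties of $\xi_\lambda^\dag$'', having already recorded $\alpha^\dag=\beta$, $\beta^\dag=\alpha$, $\gamma^\dag=\gamma$ and the relation $\eta_\lambda=\xi_\lambda^\dag(d-\cdot)$. Your write-up is in fact more explicit than the paper's, spelling out the reflection identity $c^\dag m^\dag(d-x)=cm(x)$ and the chain-rule sign in $\eta_\lambda'cm(x)=-[(\xi_\lambda^\dag)'c^\dag m^\dag](d-x)$; the only cosmetic slip is in item (i), where the equivalence should be applied with $f=\eta_\lambda$ (so that $f(d-\cdot)=\xi_\lambda^\dag$) rather than $f=\xi_\lambda^\dag$.
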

	
	As a consequence of the monotonicity properties of $\xi_\lambda$ and $\eta_\lambda$  under the additional condition in Proposition \ref{mon},  the two functions are in particular linearly independent. This allows us to make the following observation on $\FF_\lambda^\perp$, which follows along the lines of basic theory of elliptic partial differential equations in Hilbert spaces.
	\begin{lemma}\label{lemma1}If $\alpha\le -1$ or $\beta\le -1$ we additionally assume that
		\begin{equation}\label{eq186}
			\lambda \,> \,\frac{\sigma^2}{2}
			\left(
			\frac{\alpha+\beta+1}{2}
			\right)^2,
		\end{equation}
		then we have $\FF^\perp_\lambda\subset \spa\{\xi_\lambda,\eta_\lambda\}$.
	\end{lemma}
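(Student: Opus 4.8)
The plan is to show that every $f\in\FF_\lambda^\perp$ agrees on $(0,d)$ with a classical solution of the ordinary differential equation $Gu=\lambda u$, and then to argue that $\xi_\lambda$ and $\eta_\lambda$ span the two-dimensional solution space of this equation.

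First I would exploit the weak formulation recorded above: for $f\in\FF_\lambda^\perp$ one has $\int_0^d f'\varphi'\,cm\,dx=-\lambda\int_0^d f\varphi\,m\,dx$ for every $\varphi\in C_c^\infty((0,d))$, which is exactly the distributional identity $(f'cm)'=\lambda fm$ on $(0,d)$. Since $D(\EE)$ embeds into $H^1_{\loc}((0,d))$, the version $f^*$ is continuous, so $\lambda fm\in L^1_{\loc}((0,d))$; hence $f'cm\in W^{1,1}_{\loc}((0,d))$, and because $cm$ is smooth and bounded away from $0$ on every compact subinterval of $(0,d)$, $f'=(f'cm)/(cm)$ has a continuous version there. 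A standard bootstrap --- each improvement of the regularity of $f$ improves that of $\lambda fm=(f'cm)'$, hence of $f'cm$, hence of $f'$ --- then gives $f^*\in C^\infty((0,d))$, and by \eqref{eq37} the identity $(f'cm)'=\lambda fm$ reads $m\,Gf^*=\lambda m f^*$, i.e. $Gf^*=\lambda f^*$ on $(0,d)$.

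Next I would use that on $(0,d)$ the leading coefficient $c(x)=\tfrac12\sigma^2x(d-x)$ of $G$ is strictly positive, so $Gu=\lambda u$ is a regular second-order linear ODE with a two-dimensional solution space. Both $\xi_\lambda$ and $\eta_\lambda$ solve it (the former by construction, the latter by Corollary \ref{f2}(i)), and they are linearly independent: under the hypotheses of the lemma, \eqref{eq186} ensures that \eqref{ass_r} holds whenever $\alpha\le-1$ or $\beta\le-1$ --- and no extra assumption is needed when $\alpha>-1$ and $\beta>-1$ --- so Proposition \ref{mon} shows $\xi_\lambda$ is strictly increasing while Corollary \ref{f2}(ii) shows $\eta_\lambda$ is strictly decreasing, and a strictly increasing function is not a scalar multiple of a strictly decreasing one. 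Hence $\{\xi_\lambda,\eta_\lambda\}$ is a basis of the solution space, so $f^*=c_1\xi_\lambda+c_2\eta_\lambda$ for some $c_1,c_2\in\RR$, which is the assertion.

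I expect the regularity bootstrap in the second step to be the only point requiring care; the rest is elementary ODE theory combined with the monotonicity properties already established for $\xi_\lambda$ and $\eta_\lambda$.
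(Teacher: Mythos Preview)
Your argument is correct, but it is organised differently from the paper's proof. You work globally on $(0,d)$: from the distributional identity $(f'cm)'=\lambda fm$ you bootstrap regularity to obtain $f^*\in C^\infty((0,d))$, and then invoke the classical fact that a regular second-order linear ODE has a two-dimensional solution space. The paper instead works locally: for each compact subinterval $[l,r]\subset(0,d)$ it observes that $H_0^1((l,r))$ has codimension two in $H^1((l,r))$, so its $\EE_\lambda$-orthocomplement $C_\lambda((l,r))$ is two-dimensional, and then checks directly (by one integration by parts) that $\xi_\lambda,\eta_\lambda\in C_\lambda((l,r))$. Since any $f\in\FF_\lambda^\perp$ restricts to an element of each $C_\lambda((l,r))$, the conclusion follows without any regularity theory. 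Your route is more hands-on and makes the ODE structure explicit; the paper's route sidesteps the bootstrap entirely at the cost of the (standard) codimension-two fact. Both use Proposition~\ref{mon} and Corollary~\ref{f2}(ii) in the same way to secure linear independence.
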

	\begin{proof}Let $0<l<r<d$, then 
		\begin{equation}\label{eq181}
			(f,g)\mapsto\int_{l}^{r} f'g' \,dcm\,+\,\lambda\int_{l}^{r} fg\, dm
		\end{equation}
		defines an inner product on $H^1((l,r))$ which is equivalent to $(\cdot,\cdot )_{H^1((l,r))}$. We denote the $2$-dimensional orthocomplement of $H_0^1((l,r))$ in $H^1((l,r))$ with respect to \eqref{eq181} by $C_\lambda((l,r))$. Now, if we have $f\in\mathcal{F}_\lambda^\perp$, then in particular
		\[
		\forall \varphi\in C_c^\infty((l,r)):\;\;\int_l^r \varphi' f'\, dcm\,+\,
		\lambda\int_l^r\varphi f \,dm\,=\,0,
		\] such that $f\in C_\lambda((l,r))$. Hence, it suffices to verify $C_\lambda = \spa\{\xi_\lambda,\eta_\lambda\}$ for which we first observe that $\xi_\lambda,\eta_\lambda\in C^1([l,r])\subset H^1((l,r))$. The identity \eqref{eq37} yields that
		\begin{gather}\label{eq2}
			(\xi_\lambda'cm)'(x)\,=\,c(x)m(x)\xi_\lambda''(x)+(a-bx)m(x)\xi_\lambda'(x)\,=\,\lambda m(x)\xi_\lambda(x)
		\end{gather}
		for all $x\in (0,d)$. Integration by parts gives us
		\[
		\forall \varphi\in C_c^\infty((l,r)):\;\;\int_l^r \varphi' \xi_\lambda'cm(x)\, dx\,+\,
		\lambda\int_l^r\varphi \xi_\lambda m(x) \,dx=0,
		\] 
		such that indeed  $\xi_\lambda\in C_\lambda((l,r))$. Analogously we  conclude $\eta_\lambda\in C_\lambda((l,r))$  and the claim follows by Proposition  \ref{mon} and Corollary \ref{f2} (ii).
	\end{proof}
A more detailed analysis yields a full characterization of $\FF_\lambda^\perp$.
	\begin{thm}\label{ortho} The space $\FF_\lambda^\perp$ is given by the following  table depending on $\alpha$ and $\beta$.
		\begin{equation}\label{cases_comp}
			\begin{array}{|c|c|c|c|}\hline
				&\alpha\le -1&-1<\alpha<0&\alpha\ge 0\\\hline \beta\le -1&\{0\}&\spa \{
				\xi_\lambda
				\}&\{0\}\\\hline 
				-1<\beta<0&\spa \{
				\eta_\lambda
				\}&\spa \{
				\xi_\lambda,\eta_\lambda
				\}&\spa \{\eta_\lambda
				\}\\\hline 
				\beta\ge 0&\{0\}&\spa \{
				\xi_\lambda
				\}&\{0\}\\\hline 
			\end{array}
		\end{equation}
		
	\end{thm}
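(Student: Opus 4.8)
The plan is to invoke Lemma~\ref{lemma1} to confine $\FF_\lambda^\perp$ to the two-dimensional solution space $\spa\{\xi_\lambda,\eta_\lambda\}$ of the equation $Gf=\lambda f$, and then to decide, by a boundary-regularity analysis, which elements of that space actually lie in $D(\EE)$. Whenever $\alpha\le-1$ or $\beta\le-1$ we keep the assumption \eqref{eq186} inherited from Lemma~\ref{lemma1}; recall that under it --- and, for $\alpha,\beta>-1$, for every $\lambda>0$ --- the functions $\xi_\lambda$ and $\eta_\lambda$ are linearly independent by Proposition~\ref{mon} and Corollary~\ref{f2}(ii).

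First I would establish the identity $\FF_\lambda^\perp=\spa\{\xi_\lambda,\eta_\lambda\}\cap D(\EE)$. The inclusion ``$\subseteq$'' combines Lemma~\ref{lemma1} with the trivial $\FF_\lambda^\perp\subseteq D(\EE)$. For ``$\supseteq$'', any $f=c_1\xi_\lambda+c_2\eta_\lambda$ satisfies $(f'cm)'=\lambda fm$ on $(0,d)$ by \eqref{eq2}, \eqref{eq37} and Corollary~\ref{f2}(i), so integrating by parts against $\varphi\in C_c^\infty((0,d))$ gives $\EE_\lambda(f,\varphi)=\int_0^d f'\varphi'\,dcm+\lambda\int_0^d f\varphi\,dm=0$; if moreover $f\in D(\EE)$, density of $C_c^\infty((0,d))$ in $\FF$ upgrades this to $f\in\FF_\lambda^\perp$. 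Since $m$ and $cm$ are bounded above and away from $0$ on each $[\varepsilon,d-\varepsilon]$ and $\xi_\lambda,\eta_\lambda\in C^\infty((0,d))$, the condition $f\in D(\EE)$ is in turn equivalent to requiring $f\in L^2(dm)$ and $f'\in L^2(dcm)$ on a left neighborhood of $d$ \emph{and} on a right neighborhood of $0$. Denoting by $V_d$, respectively $V_0$, the subspace of $\spa\{\xi_\lambda,\eta_\lambda\}$ meeting the condition near $d$, respectively near $0$, we obtain $\FF_\lambda^\perp=V_0\cap V_d$.

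It then remains to compute $V_d$; the computation of $V_0$ is identical after the substitution $x\mapsto d-x$, $(\alpha,\beta)\mapsto(\beta,\alpha)$, which interchanges $\xi_\lambda$ and $\eta_\lambda$. Near $x=d$ the operator $G-\lambda$ has Frobenius exponents $0$ and $-\alpha$, while $m(x)\asymp(d-x)^{\alpha}$ and $cm(x)\asymp(d-x)^{\alpha+1}$. Reading off the power-law --- or, at $\alpha\in\{-1,0\}$, logarithmic --- behavior of $\xi_\lambda$, $\eta_\lambda$ and their derivatives from the hypergeometric representations, the connection formulas of Lemma~\ref{gauss_limit}, and their derivative analogues, one finds: if $-1<\alpha<0$ then both the bounded solution and the $\asymp(d-x)^{-\alpha}$ solution lie in $D(\EE)$ near $d$, so $V_d=\spa\{\xi_\lambda,\eta_\lambda\}$; if $\alpha\le-1$ the bounded solution fails to be in $L^2(dm)$ near $d$ while $\eta_\lambda\asymp(d-x)^{-\alpha}$, which vanishes at $d$ by Corollary~\ref{f2}(iii), lies in $D(\EE)$ near $d$, so $V_d=\spa\{\eta_\lambda\}$; and if $\alpha\ge0$ then $\xi_\lambda\asymp(d-x)^{-\alpha}$, which blows up by Lemma~\ref{prop_f1}(ii), has derivative outside $L^2(dcm)$ near $d$, while $\eta_\lambda$ is bounded and analytic at $d$ and hence in $D(\EE)$ near $d$, so again $V_d=\spa\{\eta_\lambda\}$. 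Dually $V_0=\spa\{\xi_\lambda,\eta_\lambda\}$ for $-1<\beta<0$ and $V_0=\spa\{\xi_\lambda\}$ for $\beta\le-1$ or $\beta\ge0$. Intersecting these, and using linear independence so that $\spa\{\xi_\lambda\}\cap\spa\{\eta_\lambda\}=\{0\}$, reproduces \eqref{cases_comp} case by case.

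I expect the main obstacle to be the boundary asymptotics behind these three cases, especially the square-integrability of the derivatives: knowing only that a solution is bounded, or tends to $0$, at an endpoint does not control $\int(f')^2\,dcm$ there, so one has to pin down the exact leading exponent of both $f$ and $f'$. The genuinely delicate points are the borderline parameters $\alpha\in\{-1,0\}$ and $\beta\in\{-1,0\}$, where logarithms appear, and the cases where $-\alpha$ or $-\beta$ is a non-negative integer, where the two Frobenius solutions can mix logarithmically. One must also check that in the regimes where $V_d$ or $V_0$ is one-dimensional no linear combination involving the singular solution re-enters $D(\EE)$ --- this holds because the singular solution strictly dominates the admissible one in $L^2(dm)$ or in $L^2(dcm)$ near the relevant endpoint --- and that the connection coefficients appearing in the asymptotics do not vanish, which follows from the linear independence of $\xi_\lambda$ and $\eta_\lambda$ and from $\lambda>0$ together with \eqref{eq186}, both of which rule out the degenerate situations in which $\xi_\lambda$ or $\eta_\lambda$ reduces to a polynomial. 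These computations run parallel to those behind Lemma~\ref{prop_f1} and Corollary~\ref{f2} and can be organized in the same way.
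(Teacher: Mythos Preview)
Your plan is largely sound and close to the paper's, but there is one genuine gap and one point where your route diverges from the paper's execution.

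\textbf{The gap.} You state that whenever $\alpha\le-1$ or $\beta\le-1$ you ``keep the assumption \eqref{eq186}'', but the theorem is asserted for \emph{every} $\lambda>0$. Your identification $\FF_\lambda^\perp=\spa\{\xi_\lambda,\eta_\lambda\}\cap D(\EE)$ needs Lemma~\ref{lemma1} for the inclusion ``$\subseteq$'', and Lemma~\ref{lemma1} in turn rests on the linear independence of $\xi_\lambda$ and $\eta_\lambda$, which is only secured via Proposition~\ref{mon} and Corollary~\ref{f2}(ii) under \eqref{eq186}. You never indicate how to lift this restriction. The paper closes this gap with a codimension argument: once the table is established for one sufficiently large $\lambda^\dag$, one reads off $\codim(\FF)=\dim(\mathcal G_{\lambda^\dag})$; since the dimension of the candidate space $\mathcal G_\lambda$ in the table is $\lambda$-independent and you have already shown $\mathcal G_\lambda\subset\FF_\lambda^\perp$ for every $\lambda>0$ (this inclusion needs no extra hypothesis), equality follows. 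You should add this step.

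\textbf{Comparison of methods.} For the core computation --- ruling out $\xi_\lambda$ when $\alpha\le-1$ or $\alpha\ge0$ --- you propose to check the weighted $L^2$-integrability of $f$ and $f'$ near $d$ directly via Frobenius exponents. The paper instead extracts a single scalar obstruction: from $f\in D(\EE)$ and $(f'cm)'=\lambda fm$ one integrates by parts to see that $ff'cm(x)$ must converge as $x\nearrow d$, and then shows by the boundary limits in Lemma~\ref{prop_f1} and Corollary~\ref{f2} that any combination with $c_\xi\ne0$ forces $ff'cm\to\infty$. Your approach is more elementary in that it avoids this product trick, but it obliges you to control $f$ and $f'$ separately and to handle the logarithmic borderline cases $\alpha\in\{-1,0\}$ explicitly; the paper's $ff'cm$ argument absorbs both integrability conditions at once and sidesteps the need to pin down the precise leading exponent of $f'$ in every regime. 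Either route works, but the paper's is shorter once Lemma~\ref{prop_f1} is in hand.
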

	\begin{proof}We denote the space \eqref{cases_comp} according to $\alpha$ and $\beta$ by $\mathcal{G}_\lambda$ and verify first  that $\mathcal{G}_\lambda\subset\mathcal{F}_\lambda^\perp$. 
		For this purpose we note that integration by parts together with \eqref{eq2}  yields
		\begin{gather*}
			\int_\epsilon^{d-\epsilon} \xi_\lambda'\xi_\lambda'cm(x)\,dx\,+\,\lambda\int_\epsilon^{d-\epsilon}\xi_\lambda\xi_\lambda m(x)\,dx \,=\, \big[
			\xi_\lambda \xi_\lambda'cm
			\big]_\epsilon^{d-\epsilon}
		\end{gather*}
		for $\epsilon >0$.
		By letting  $\epsilon\searrow 0$ we get
		\begin{equation}\label{eq222}
			\int_X \xi_\lambda'\xi_\lambda'\,dcm(x)\,+\,\lambda\int_X\xi_\lambda\xi_\lambda \,dm(x) \,=\, \lim_{\epsilon\searrow 0} \,\big[
			\xi_\lambda \xi_\lambda'cm
			\big]_\epsilon^{d-\epsilon}.
		\end{equation}
		Part \ref{l_boundary} and \ref{l_boundary2} of Lemma \ref{prop_f1} imply that $
		\xi_\lambda \xi_\lambda'cm(x)$  converges as $x\searrow 0$. If we assume $-1<\alpha <0$, part \ref{r_boundary} and \ref{r_boundary2} yield that  $\xi_\lambda \xi_\lambda'cm(x)$ converges as well as $x\nearrow d$. In particular, \eqref{eq222} is finite in this case and $\xi_\lambda\in D(\mathcal{E})$. By the same integration by parts argument as in the proof of Lemma \ref{lemma1} we conclude that $\xi_\lambda$ is  orthogonal to $\mathcal{F}$ in $(D(\mathcal{E}),\mathcal{E}_\lambda)$.
		It follows  $\xi_\lambda \in\mathcal{F}^\perp_\lambda$ if $-1<\alpha<0$. Analogously one can show that $\eta_\lambda\in\mathcal{F}_\lambda^\perp$ for $-1<\beta<0$ such that indeed $\mathcal{G}_\lambda\subset \mathcal{F}^\perp_\lambda$. 
		
		To prove also the reverse inclusion $\mathcal{F}_\lambda^\perp \subset \mathcal{G}_\lambda$  we first impose the additional assumption 
		\begin{equation}\label{eq188}\lambda \,> \,\frac{\sigma^2}{2}
			\left(
			\frac{\alpha+\beta+1}{2}
			\right)^2\end{equation}
		whenever $\alpha\le -1$ or $\beta\le -1$.
		Then any $f\in\mathcal{F}_\lambda^\perp$ is of the form $f=c_\xi\xi_\lambda+c_\eta\eta_\lambda$ for some $c_\xi,c_\eta\in\mathbb{R}$ due to Lemma \ref{lemma1}.
		In particular, we can identify $f$  with this $dm$-version which is infinitely often differentiable on $(0,d)$ and satisfies $Gf=\lambda f$.
		As in the proof of Lemma \ref{lemma1} we can conclude $(f'cm)'(x)= \lambda m(x)f(x)$ for all $x\in (0,d)$. Integration by parts and the monotone convergence theorem give us that
		\begin{align*}
			\begin{split}\label{eq1}
				\big[ff'cm\big]_{\frac{d}{2}}^{d-\epsilon}\,=\,\int_{\frac{d}{2}}^{d-\epsilon} f'f'cm(x)+\lambda ffm(x)\,dx\,\to\, \,\int_{\frac{d}{2}}^{d} f'f'cm(x)+\lambda ffm(x)\,dx\,<\, \infty
			\end{split}
		\end{align*}
		as $\epsilon\searrow 0$. Consequently,
		$(ff'cm)(x)$ converges as $x\nearrow d$. 
		We assume that $\alpha\le -1$ and $c_\xi\ne 0$ and lead this to a contradiction. To this end we use the decomposition
		\begin{align*}
			(ff'cm)(x)\,=\, c_\xi\xi_\lambda'cm(x)\big[c_\xi\xi_\lambda(x)+c_\eta\eta_\lambda(x)\big]\,+\,	  c_\eta\eta_\lambda'cm(x)\big[c_\xi\xi_\lambda(x)+c_\eta\eta_\lambda(x)\big].
		\end{align*}
		Lemma \ref{prop_f1} \ref{r_boundary} and Corollary \ref{f2} (iii) yield that there exists a positive real number $y$ such that
		\[
		\xi_\lambda(x)+\frac{c_\eta}{c_\xi}\eta_\lambda(x)\,\to \, y
		\]
		as $x\nearrow d$. Corollary \ref{f2}  (v) gives us that $\eta_\lambda'cm(x)$ converges as $x\nearrow d$. Lemma \ref{prop_f1} \ref{r_boundary2} implies that $\xi_\lambda'cm(x)$ goes to infinity as $x\nearrow d$. All in all we get that
		\[
		(ff'cm)(x)\to \infty
		\] for $x\nearrow d$
		which is a contradiction. Next, we assume $\alpha \ge 0$ and  $c_\xi\ne 0$.
		Then we split the term $ff'cm(x)$ into the parts 
		\[ff'cm(x)\,=\,
		c_\xi\xi_\lambda(x)\big[c_\xi\xi_\lambda'cm(x)+c_\eta\eta_\lambda'cm(x)\big]\,+\,	c_\eta\eta_\lambda(x)\big[c_\xi\xi_\lambda'cm(x)+c_\eta\eta_\lambda'cm(x)\big].
		\]
		Using Lemma \ref{prop_f1} \ref{r_boundary2} and Corollary \ref{f2} (v) we  conclude that there exists a positive real number $y$ such that
		\[
		\xi_\lambda'cm(x)+\frac{c_\eta}{c_\xi}\eta_\lambda'cm(x)\,\to\, y
		\]
		for $x\nearrow d$. Corollary \ref{f2} (iii) yields that $\eta_\lambda(x)$ converges as $x\nearrow d$. By Lemma \ref{prop_f1} \ref{r_boundary} the term   $\xi_\lambda(x)$ approaches infinity as $x\nearrow d$ and we again get the contradiction $	
		\lim_{x\nearrow d}(ff'mc)(x)= \infty
		$.
		Analogously we can conclude that $\beta \le -1$ or $ \beta \ge 0 $ implies $c_\eta=0$.
		
		This proves $\mathcal{F}_\lambda^\perp=\mathcal{G}_\lambda$ under the additional assumption \eqref{eq188} for $\alpha \le -1$ or $\beta\le -1$.
		When $\alpha\le-1$ or $\beta\le -1$ and \eqref{eq188} is not satisfied we can consider a sufficiently large $\lambda^\dag$ and apply the just proven statement to conclude that
		$\codim (\mathcal{F})=\dim (\mathcal{G}_\lambda)$. Then the claim follows since we have already shown $\mathcal{G}_\lambda\subset \mathcal{F}_\lambda^\perp$. 
	\end{proof}
	
	The above statement allows us to characterize the space $\FF$ using that $\FF=\left(\FF_\lambda^\perp\right)^\perp$ in $(D(\EE), \EE_\lambda)$.
	
	\begin{thm}\label{DE}
		The space $\mathcal{F}$ is given by
		\begin{align}\begin{split}
				\label{Char}\Big\{
				f\in\mathcal{D(\mathcal{E})}:\, &\lim_{x\nearrow d} f^*(x)=0 \text{ if }-1<\alpha<0\\
				\wedge&\lim_{x\searrow 0} f^*(x)=0 \text{ if }-1<\beta<0\Big\}.
			\end{split}
		\end{align}
	\end{thm}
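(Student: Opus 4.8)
The plan is to exploit the orthogonal decomposition $\FF=(\FF_\lambda^\perp)^\perp$ in $(D(\EE),\EE_\lambda)$, valid for every fixed $\lambda>0$, together with the explicit table for $\FF_\lambda^\perp$ from Theorem \ref{ortho}. Since $\FF_\lambda^\perp$ is in every case spanned by a subfamily of $\{\xi_\lambda,\eta_\lambda\}$, where $\xi_\lambda$ occurs precisely when $-1<\alpha<0$ and $\eta_\lambda$ precisely when $-1<\beta<0$, a function $f\in D(\EE)$ belongs to $\FF$ if and only if $\EE_\lambda(f,\xi_\lambda)=0$ whenever $\xi_\lambda\in\FF_\lambda^\perp$ and $\EE_\lambda(f,\eta_\lambda)=0$ whenever $\eta_\lambda\in\FF_\lambda^\perp$; in particular $\FF=D(\EE)$ in the four cases where the table gives $\{0\}$, which matches \eqref{Char}. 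Using the duality $\eta_\lambda=\xi_\lambda^\dag(d-\cdot)$ it then suffices to analyze the single orthogonality condition involving $\xi_\lambda$ and to transfer the outcome to $\eta_\lambda$ by the symmetry $\alpha\leftrightarrow\beta$, $x\leftrightarrow d-x$.

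Assume $-1<\alpha<0$, so that $\xi_\lambda\in D(\EE)$ by the computation in the proof of Theorem \ref{ortho}. For $f\in D(\EE)$ I would integrate by parts on $[\epsilon,d-\epsilon]$, using the identity $(\xi_\lambda'cm)'=\lambda m\xi_\lambda$, to obtain $\int_\epsilon^{d-\epsilon}\big(f'\xi_\lambda'cm+\lambda f\xi_\lambda m\big)\,dx=\big[f^*\xi_\lambda'cm\big]_\epsilon^{d-\epsilon}$. A Cauchy--Schwarz estimate shows the integrand lies in $L^1((0,d))$ because $f,\xi_\lambda\in D(\EE)$, so letting $\epsilon\searrow 0$ both one-sided limits of $f^*\xi_\lambda'cm$ exist and $\EE_\lambda(f,\xi_\lambda)=\lim_{x\nearrow d}(f^*\xi_\lambda'cm)(x)-\lim_{x\searrow 0}(f^*\xi_\lambda'cm)(x)$.

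It remains to identify the two boundary terms. For the term at $0$ I claim $\lim_{x\searrow 0}(f^*\xi_\lambda'cm)(x)=0$ for every $f\in D(\EE)$, regardless of $\beta$: if $\beta>-1$, then the power series expansion of $\xi_\lambda$ together with Lemma \ref{prop_f1} \ref{l_boundary2} shows that $\xi_\lambda'cm(x)$ equals a nonzero constant times $x^{\beta+1}$ up to a factor tending to $1$, so a nonzero limit of $f^*\xi_\lambda'cm$ would force $|f^*(x)|^2m(x)$ to be bounded below by a constant times $x^{-\beta-2}$ near $0$, contradicting $f\in L^2(X,dm)$; and if $\beta\le -1$, then $\xi_\lambda'cm(x)\to -\beta\sigma^2/2\neq 0$, so a nonzero limit would force $f^*$ to have a nonzero limit at $0$, again incompatible with $\int_0 m\,dx=\infty$. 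For the term at $d$, since $-1<\alpha<0$ a Cauchy--Schwarz estimate of $\int_s^r f'\,dx$ against $\big(\int_s^r (cm)^{-1}\,dx\big)^{1/2}$ shows that $\lim_{x\nearrow d}f^*(x)$ exists, and Lemma \ref{prop_f1} \ref{r_boundary2} (in the regime $\alpha>-1$, where no additional hypothesis on $\lambda$ is needed) gives $\xi_\lambda'cm(x)\to K_d$ for a finite, strictly positive constant $K_d$. Hence $\EE_\lambda(f,\xi_\lambda)=K_d\,\lim_{x\nearrow d}f^*(x)$, and $\EE_\lambda(f,\xi_\lambda)=0$ if and only if $\lim_{x\nearrow d}f^*(x)=0$.

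The dual argument, applied to $\eta_\lambda$ via Corollary \ref{f2}, gives that for $-1<\beta<0$ one has $\EE_\lambda(f,\eta_\lambda)=0$ if and only if $\lim_{x\searrow 0}f^*(x)=0$. Combining these two equivalences with the table of Theorem \ref{ortho}---the generator $\xi_\lambda$ being present exactly for $-1<\alpha<0$ and $\eta_\lambda$ exactly for $-1<\beta<0$---produces precisely the characterization \eqref{Char}. I expect the main obstacle to be the third step: proving that the boundary contribution at the endpoint carrying no constraint vanishes identically, and verifying that $K_d$ is finite and strictly positive; both rely on the sharp asymptotics of $\xi_\lambda$ and $\xi_\lambda'cm$ recorded in Lemma \ref{prop_f1}. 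The $L^1$-integrability used to split the limit in the second step, and the existence of the one-sided limit of $f^*$ at $d$ for $-1<\alpha<0$, are the remaining minor technical points.
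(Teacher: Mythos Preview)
Your argument is correct and in fact slightly cleaner than the paper's. Both proofs start from the identity $\EE_\lambda(f,\xi_\lambda)=\big[f^*\xi_\lambda'cm\big]_0^d$ obtained by integration by parts, and both use Lemma~\ref{prop_f1}\ref{r_boundary2} to identify the factor at $d$ as a positive finite constant when $-1<\alpha<0$. The difference lies in how the contribution at $0$ is killed and how the equivalence is assembled. The paper first proves $\FF\subset\mathcal{H}$ by invoking Lemma~\ref{embedding} (which is only available for $f\in\FF$) to show $f^*\xi_\lambda'cm\to 0$ at $0$, and then proves $\mathcal{H}\subset\FF$ separately via the decomposition $h=f+g$ with $g\in\FF_\lambda^\perp$; in the case $-1<\alpha,\beta<0$ this forces an ad hoc invertibility argument for the $2\times 2$ matrix of boundary values of $\xi_\lambda,\eta_\lambda$, requiring $\lambda$ large enough to use the monotonicity of Proposition~\ref{mon}. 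You instead show directly that $f^*\xi_\lambda'cm\to 0$ at $0$ for \emph{every} $f\in D(\EE)$, using only the asymptotics $\xi_\lambda'cm(x)\sim C x^{\beta+1}$ (for $\beta>-1$) or $\xi_\lambda'cm(0)\neq 0$ (for $\beta\le -1$) together with $f\in L^2(X,dm)$; this yields the exact identity $\EE_\lambda(f,\xi_\lambda)=K_d\lim_{x\nearrow d}f^*(x)$ and hence both inclusions at once, with no case split or matrix argument. Your independent verification that $\lim_{x\nearrow d}f^*(x)$ exists for all $f\in D(\EE)$ when $-1<\alpha<0$ (via $\int^d(cm)^{-1}<\infty$ and Cauchy--Schwarz) also anticipates part of Corollary~\ref{limiz}, which the paper only deduces \emph{after} Theorem~\ref{DE}.
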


	\begin{proof}
		We denote the subspace from \eqref{Char} by $\mathcal{H}$ and verify first the inclusion $\mathcal{F}\subset \mathcal{H}$. 
		Let $f\in \mathcal{F}$ and assume $-1<\alpha<0$. Then $f$ is  orthogonal to $\xi_\lambda$ in $(D(\mathcal{E}),\mathcal{E}_\lambda)$ by Theorem \ref{ortho}. Using  \eqref{int_by_parts} together with \eqref{eq2}, the dominated convergence theorem implies that
		\[
		\big[f^*\xi_\lambda'cm\big]_\epsilon^{d-\epsilon}\,=\,
		\int_\epsilon^{d-\epsilon}
		f'\xi_\lambda'cm(x)+\lambda f\xi_\lambda  m(x) \,dx\,\to\,0
		\]
		as $\epsilon\searrow 0$.
		We note that $\xi_\lambda' cm(x)$ tends  towards a positive number for $x\nearrow d$ by Lemma \ref{prop_f1} \ref{r_boundary2}. Hence if we can show 
		\begin{equation}\label{eq332}\lim_{x\searrow 0}\,f^*\xi_\lambda'cm(x)\,= \,0,\end{equation} it follows that $\lim_{x\nearrow d} f^*(x)=0$. To this end we distinguish different cases.
		First, we assume $\beta >-1$. Then Lemma \ref{embedding} yields
		$
		\lim_{x\searrow 0} f^*cm(x)=0
		$. Furthermore, $\xi_\lambda'$ is  a hypergeometric function by termwise differentiation and thus admits a finite limit at $0$. Hence the desired equality follows.
		We consider next  $\beta \le -1$, then Lemma \ref{embedding} yields  $\lim_{x\searrow 0} f^*(x)=0$. Lemma \ref{prop_f1} \ref{l_boundary2} gives us that $\xi_\lambda'cm(x)$ converges to a real number as $x\searrow 0$ such that  \eqref{eq332} holds true again.
		Analogously one can show that $-1<\beta<0$ implies $\lim_{x\searrow 0} f^*(x)=0$ such that indeed $\mathcal{F}\subset \mathcal{H}$.
		
		To prove also $\mathcal{H}\subset \mathcal{F}$ we distinguish  different cases of $\alpha$ and $\beta$. Assume first 
		\[(
		\alpha\le-1 \vee \alpha\ge 0) \,\wedge\, (
		\beta\le-1 \vee \beta\ge 0).
		\]
		In this case Theorem \ref{ortho} yields $\mathcal{F}=D(\mathcal{E})$ and thus the inclusion is trivial. For  the remaining cases we take some $h\in\mathcal{H}$. Then there exists an $f\in \mathcal{F}$ and $g\in\mathcal{F}_\lambda^\perp$ such that $h=f+g$.
		If we assume
		\[(-1<\alpha<0)\, \wedge\, (
		\beta\le-1 \vee \beta\ge 0),\]
		there exists a $c_\xi\in\mathbb{R}$ such that $g=c_\xi\xi_\lambda$ due to Theorem \ref{ortho}.
		Furthermore, since  $\mathcal{F}\subset \mathcal{H}$ it follows $\lim_{x\nearrow d} f^*(x)=0$ and since $\lim_{x\nearrow d} h^*(x)=0$ we deduce that also $\lim_{x\nearrow d} c_\xi\xi_\lambda(x)=0$. By Lemma \ref{prop_f1} \ref{r_boundary} we have $\lim_{x\nearrow d} \xi_\lambda(x)> 0$ and thus we  conclude $c_\xi=0$. In particular, it holds $h=f\in\mathcal{F}$. The case
		\[ (
		\alpha\le-1 \vee \alpha\ge 0)\,\wedge\,(-1<\beta<0) \]
		can be treated analogously.	
		It remains the case
		\[(-1<\alpha<0) \,\wedge\, (-1<\beta<0).
		\]
		Then there exist $c_\xi,c_\eta\in\mathbb{R}$ such that $g=c_\xi\xi_\lambda+c_\eta\eta_\lambda$ by Theorem \ref{ortho}. As before it follows that
		\begin{equation}\label{eq636}
			\lim_{x\searrow 0} c_\xi\xi_\lambda(x)+c_\eta\eta_\lambda(x)\,=\,
			\lim_{x\nearrow d} c_\xi\xi_\lambda(x)+c_\eta\eta_\lambda(x)\,=\,0.
		\end{equation}
		Lemma \ref{prop_f1} \ref{l_boundary} and Corollary \ref{f2} (iv) yield that the limits $\lim_{x\searrow 0} \xi_\lambda(x)$ and $\lim_{x\searrow 0} \eta_\lambda(x)$ exist and are positive. Similarly, Lemma \ref{prop_f1} \ref{r_boundary} and Corollary \ref{f2} (iii) imply that the limits $\lim_{x\nearrow d} \xi_\lambda(x)$ and $\lim_{x\nearrow d} \eta_\lambda(x)$ exist and are positive. Thus we can rewrite \eqref{eq636} as
		\begin{equation}\label{mat_eq}
			\left(\begin{array}{cc}
				\lim_{x\searrow 0}\xi_\lambda(x)&\lim_{x\searrow 0}\eta_\lambda(x)\\\lim_{x\nearrow d}\xi_\lambda(x)&\lim_{x\nearrow d}\eta_\lambda(x)\\
			\end{array}\right)
			\left(\begin{array}{c}
				c_\xi\\c_\eta
			\end{array}\right)\, =\,\left(\begin{array}{c}
				0\\0
			\end{array}\right).
		\end{equation}
		Taking $\lambda$ sufficiently large together with Proposition \ref{mon} yields that $\xi_\lambda$  is strictly monotonically increasing and with Corollary \ref{f2} (ii) that  $\eta_\lambda$ is strictly monotonically decreasing. Hence the matrix from \eqref{mat_eq} is invertible and we conclude that $c_\xi=c_\eta=0$. It follows again  $h=f\in \FF$.
	\end{proof}
	As a consequence, we obtain the following observation on the boundary behaviour of functions from $D(\EE)$ in the fashion of Lemma \ref{embedding}.
	\begin{cor}\label{limiz}
		Let $f\in D(\mathcal{E})$. Then it holds
		\[
		\begin{cases}
			\lim_{x\nearrow d} f^*cm(x)\,=\,0, & \alpha\ge 0,\\
			\lim_{x\nearrow d} f^*(x)\,\in\,\mathbb{R}, & -1<\alpha<0,\\
			\lim_{x\nearrow d} f^*(x)\,=\,0, & \alpha\le -1,\\
		\end{cases}
		\]
		as well as
		\[
		\begin{cases}
			\lim_{x\searrow 0} f^*cm(x)\,=\,0, & \beta\ge 0,\\
			\lim_{x\searrow 0} f^*(x)\,\in\,\mathbb{R}, & -1<\beta <0,\\
			\lim_{x\searrow 0} f^*(x)\,=\,0, & \beta\le -1.\\
		\end{cases}
		\]	
	\end{cor}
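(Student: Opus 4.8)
The plan is to fix an arbitrary $\lambda>0$ and to exploit the orthogonal decomposition $D(\EE)=\FF\perp\FF^\perp_\lambda$ together with the explicit description of $\FF^\perp_\lambda$ in Theorem \ref{ortho}: the boundary behaviour of an arbitrary $f\in D(\EE)$ is then read off on each summand separately. Concretely, I would write $f=f_0+g$ with $f_0\in\FF$ and $g\in\FF^\perp_\lambda$ and use Theorem \ref{ortho} to identify $g$ with a linear combination $c_\xi\xi_\lambda+c_\eta\eta_\lambda$ on $(0,d)$, whose admissible shape depends only on the position of $\alpha$ and $\beta$ relative to $-1$ and $0$. Because the assertions at $d$ and at $0$ are interchanged by the duality that swaps the two endpoints (and hence $\alpha\leftrightarrow\beta$, $\xi_\lambda\leftrightarrow\eta_\lambda$), it suffices to treat the three regimes $\alpha\ge 0$, $-1<\alpha<0$ and $\alpha\le -1$ for the behaviour at $d$.

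If $\alpha\ge 0$, then $g\in\spa\{\eta_\lambda\}$ by Theorem \ref{ortho}; Lemma \ref{embedding} gives $\lim_{x\nearrow d}f_0^*cm(x)=0$ since $\alpha>-1$, Corollary \ref{f2}\,(iii) gives $\eta_\lambda(x)\to 1$, and $cm(x)=\tfrac{\sigma^2}{2d^{\alpha+\beta+1}}x^{\beta+1}(d-x)^{\alpha+1}\to 0$ as $x\nearrow d$, so that $\eta_\lambda cm(x)\to 0$ and therefore $f^*cm(x)\to 0$. If $-1<\alpha<0$, then $\lim_{x\nearrow d}f_0^*(x)=0$ by Theorem \ref{DE}, while $\xi_\lambda$ has a finite positive limit at $d$ by Lemma \ref{prop_f1}\,(ii) and $\eta_\lambda(x)\to 1$ by Corollary \ref{f2}\,(iii), so that $f^*(x)$ converges to a finite real number. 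If $\alpha\le -1$, then again $g\in\spa\{\eta_\lambda\}$ by Theorem \ref{ortho}, $\lim_{x\nearrow d}f_0^*(x)=0$ by Lemma \ref{embedding} since $\alpha\le -1$, and $\eta_\lambda(x)\to 0$ by Corollary \ref{f2}\,(iii), so that $f^*(x)\to 0$. The three statements at $0$ follow from the same computation with $\xi_\lambda$ and $\eta_\lambda$ interchanged, invoking Lemma \ref{prop_f1}\,(i) and Corollary \ref{f2}\,(iv) in place of Corollary \ref{f2}\,(iii) and Lemma \ref{prop_f1}\,(ii).

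I do not expect a genuine obstacle: all the analytic input is already available, and the only real work is the bookkeeping of which generator of $\FF^\perp_\lambda$ occurs in a given parameter regime and which of the previously recorded limits is the relevant one. The mildly delicate point is the regime $\alpha\ge 0$ (and dually $\beta\ge 0$), where neither $\FF$ nor $\spa\{\eta_\lambda\}$ consists of functions that vanish at $d$ in the naive pointwise sense; there one must pass to the weighted quantity $f^*cm$ and combine the $\FF$-estimate of Lemma \ref{embedding} with the elementary decay $cm(x)\to 0$, which is exactly why the statement is phrased with the factor $cm$ in that case. Finally, I would remark that no lower bound on $\lambda$ is required, since Theorem \ref{ortho}, Theorem \ref{DE}, Lemma \ref{embedding}, parts (i)--(ii) of Lemma \ref{prop_f1} and parts (iii)--(iv) of Corollary \ref{f2} are all valid for every $\lambda>0$.
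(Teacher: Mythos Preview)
Your proposal is correct and follows essentially the same route as the paper's proof: decompose $f=f_0+g$ with $f_0\in\FF$ and $g\in\FF_\lambda^\perp$, use Theorem~\ref{ortho} to see that $c_\xi=0$ whenever $\alpha\ge 0$ or $\alpha\le -1$, and then combine Lemma~\ref{embedding} or Theorem~\ref{DE} for the $\FF$-part with Lemma~\ref{prop_f1}\,(ii) and Corollary~\ref{f2}\,(iii) for the hypergeometric part. The only cosmetic difference is that the paper keeps the generic decomposition $g=c_\xi\xi_\lambda+c_\eta\eta_\lambda$ and argues $c_\xi=0$ in the relevant regimes, whereas you phrase this as $g\in\spa\{\eta_\lambda\}$ from the outset.
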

	\begin{proof}We prove the first part of the statement, the second part can be shown analogously.
		Let $f\in D(\mathcal{E})$, then there exists a function $g\in \mathcal{F}$ and $c_\xi,c_\eta\in \mathbb{R}$ such that $f=g+c_\xi\xi_\lambda+c_\eta\eta_\lambda$ due to Theorem \ref{ortho}. We  distinguish between different cases of  $\alpha$.
		Assume $\alpha\le -1$. Then Lemma \ref{embedding} yields that $g^*(x)$ converges to $ 0$ as $x\nearrow d$ and Theorem \ref{ortho} implies $c_\xi=0$. Corollary \ref{f2} (iii) gives us that   $\eta_\lambda(x)$ converges to $0$ as $x\nearrow d$ as well and thus the claimed identity follows.
		Next, we consider $-1<\alpha <0$. Then $g^*(x)$ converges to $0$ as $x\nearrow d$ by Theorem \ref{DE}. Lemma \ref{prop_f1} \ref{r_boundary} and Corollary \ref{f2} (iii) state that $\xi_\lambda(x)$ and $\eta_\lambda(x)$ converge as $x\nearrow d$. Hence $f^*(x)$ converges as $x\nearrow d$. 
		Lastly, we consider $\alpha\ge 0$. Then Theorem \ref{ortho}  gives us $c_\xi=0$. Lemma \ref{embedding} yields that $g^*cm(x)$ converges to $0$ for $x\nearrow d$. By Corollary \ref{f2} (iii)  $\eta_\lambda(x)$ converges as $x\nearrow d$ and hence $\eta_\lambda cm(x)$ tends to $ 0$ for $x\nearrow d$. The claimed identity follows.
	\end{proof}
	
	\subsection{Sobolev type embeddings}
	In this section we analyze the embedding properties of $D(\EE)$ in spaces of continuous functions and spaces $L^q(X,dm)$ for $q>2$. To formulate it, we introduce the space $\tilde{X}$ as the interval $(0,d)$ including $\{d\}$ for $\alpha <0$ and  $\{0\}$ for $\beta <0$. We write $C(\tilde{X})$ for the space of continuous functions on $\tilde{X}$ and note that $C(\tilde{X})$ is a complete metric space equppied with the topology of uniform convergence on compact subsets, see \cite[pp.167-168]{folland1999real}.
		\begin{prop}\label{cont2}
		Every function from $D(\mathcal{E})$ has a $dm$-version $\tilde{f}$ which is continuous on $\tilde{X}$. In particular, the mapping
		\begin{equation}\label{emb_cont}
		D(\EE) \to C(\tilde{X}), f\mapsto \tilde{f}
		\end{equation}
		is a continuous embedding.
	\end{prop}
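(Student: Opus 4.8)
The plan is to reduce the claim to a family of quantitative estimates and then verify these by a Sobolev-type argument exploiting the explicit shape of $cm$. Every $f\in D(\EE)$ has a continuous version $f^*$ on $(0,d)$ coming from the embedding $D(\EE)\hookrightarrow H^1_{\loc}((0,d))$. When $-1<\alpha<0$, resp.\ $\alpha\le-1$, Corollary~\ref{limiz} shows that $f^*$ has a finite limit at $d$ (the value $0$ in the second case), and this is exactly the situation in which $d\in\tilde X$; the analogous statement holds at $0$ with $\beta$ in place of $\alpha$. Hence setting $\tilde f:=f^*$ on $(0,d)$ and $\tilde f$ equal to these limits at whichever of $0,d$ lies in $\tilde X$ produces a continuous function on $\tilde X$ which, since the adjoined boundary points are $dm$-null, is a $dm$-version of $f$ independent of the chosen representative. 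The assignment $f\mapsto\tilde f$ is linear and injective, because $\tilde f=0$ forces $f=0$ in $L^2(X,dm)$ and hence in $D(\EE)$. Since $C(\tilde X)$ is a complete metrizable space whose topology is generated by the seminorms $g\mapsto\sup_K|g|$ over compact $K\subset\tilde X$, it remains to produce, for each such $K$, a constant $C_K$ with $\sup_{x\in K}|\tilde f(x)|\le C_K\sqrt{\EE_1(f,f)}$ for all $f\in D(\EE)$.

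Fix a compact $K\subset\tilde X$ and let $I=[l,r]$ be the smallest closed interval containing $K\cup[\tfrac d3,\tfrac{2d}3]$, so $I\subset[0,d]$ with interior contained in $(0,d)$. Since $0\in\tilde X$ only if $\beta<0$ and $d\in\tilde X$ only if $\alpha<0$, the endpoint $l$ can equal $0$ only when $\beta<0$, and $r$ can equal $d$ only when $\alpha<0$. The crucial point, and the only genuinely delicate one, is that under these constraints
\[
\int_I \frac{dx}{cm(x)}\,<\,\infty .
\]
Indeed $cm(x)=\tfrac{\sigma^2}{2d^{\alpha+\beta+1}}x^{\beta+1}(d-x)^{\alpha+1}$ is bounded away from $0$ on compact subsets of $(0,d)$; near $0$ the integrand is bounded by a constant multiple of $x^{-(\beta+1)}$, which is integrable whenever $l=0$ because then $\beta+1<1$; and symmetrically near $d$ the integrand is controlled by $(d-x)^{-(\alpha+1)}$, integrable whenever $r=d$ because then $\alpha+1<1$. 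Thus the restriction $\alpha<0$ (resp.\ $\beta<0$) defining membership of the boundary point in $\tilde X$ is precisely what makes $1/cm$ integrable up to that point, and this is where the hypothesis of the proposition is used.

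With this in hand, the integration-by-parts formula \eqref{int_by_parts} applied with $g\equiv1$ gives $f^*(x)-f^*(y)=\int_y^x f'(t)\,dt$ for $x,y$ in the interior of $I$, whence by Cauchy--Schwarz
\[
|f^*(x)-f^*(y)|\,\le\,\Big(\int_I\frac{dt}{cm(t)}\Big)^{1/2}\Big(\int_I|f'|^2\,dcm\Big)^{1/2}\,\le\,C_1\sqrt{\EE(f,f)},
\]
and letting $x,y$ tend to the endpoints of $I$ the same bound persists for all $x,y\in I\cap\tilde X$ by continuity of $\tilde f$. To anchor this oscillation estimate, note that $m$ is bounded below by some $m_0>0$ on $[\tfrac d3,\tfrac{2d}3]$, so $\int_{d/3}^{2d/3}|f^*|^2\,dx\le m_0^{-1}\|f\|_{L^2(X,dm)}^2$ and hence there is a point $x_0\in(\tfrac d3,\tfrac{2d}3)$ with $|\tilde f(x_0)|\le C_2\|f\|_{L^2(X,dm)}$. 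Combining the two displays and using $\|f\|_{L^2(X,dm)}^2\le\EE_1(f,f)$ together with $\EE(f,f)\le\EE_1(f,f)$ yields $\sup_{x\in K}|\tilde f(x)|\le(C_1+C_2)\sqrt{\EE_1(f,f)}$, which is the required bound. (Alternatively one could first split off the finite-dimensional part $\FF^\perp_\lambda$ via Theorem~\ref{ortho} and control $\FF$ using Lemma~\ref{embedding}, but the direct estimate above is shorter and handles all parameter ranges at once.)
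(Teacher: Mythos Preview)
Your proof is correct and differs from the paper's in an instructive way. Both arguments begin identically, invoking Corollary~\ref{limiz} to produce the continuous extension $\tilde f$ to $\tilde X$. For continuity of the embedding, however, the paper takes the short abstract route: it observes that any $D(\EE)$-convergent sequence has a subsequence converging $dm$-almost everywhere, so the graph of $f\mapsto\tilde f$ is closed, and the closed graph theorem finishes the job. You instead give a direct quantitative estimate, the key observation being that $(cm)^{-1}(x)$ is comparable to $x^{-(\beta+1)}(d-x)^{-(\alpha+1)}$ and hence integrable up to a boundary point precisely when that point belongs to $\tilde X$; Cauchy--Schwarz then controls the oscillation of $\tilde f$ on any compact $K\subset\tilde X$ by $\sqrt{\EE(f,f)}$, and an averaging argument on a fixed interior interval anchors the absolute value. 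Your approach is slightly longer but entirely elementary, yields explicit (if $K$-dependent) constants, and makes transparent exactly why the threshold $\alpha,\beta<0$ defining $\tilde X$ is the right one---it is the integrability condition for $(cm)^{-1}$. The paper's route is quicker but opaque on this last point; your argument could in fact replace the closed graph step with no loss.
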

	\begin{proof} Let $f\in D(\mathcal{E})$, then  $f^*$ is by definition continuous on $(0,d)$. The  space $\tilde{X}$ contains the boundary point $d$ iff $\alpha <0$ and in this case Corollary \ref{limiz} yields  that $f^*$ admits a limit at $d$. Analogously, it follows  that $f^*$ has a limit at $0$ whenever $0\in\tilde{X}$. By redefining $f^*(0)$ and $f^*(d)$ as the corresponding limit we get the desired $dm$-version of $f$, which is necessarily unique. Consequently, the embedding \eqref{emb_cont} is well-defined. Since every $D(\EE)$-convergent sequence has a $dm$-almost everywhere convergent subsequence it follows that \eqref{emb_cont} has closed graph and therefore is continuous by the closed graph theorem, see \cite[Theorem 2.3, p.78]{axler1999topological}.
	\end{proof}
By the above observations, if $\alpha, \beta <0$, we have the embedding $D(\EE)\hookrightarrow L^\infty(X,dm)$. Since trivially  $D(\EE)\hookrightarrow L^2(X,dm)$ also the embeddings $D(\EE)\hookrightarrow L^q(X,dm)$ for $2\le q\le \infty$ follow by H\"older's inequality. To derive a similar result in the case that $\alpha\ge 0$ or $\beta\ge 0$ we state a special case of the characterization of embeddings of weighted Sobolev spaces from \cite{rabier2011embeddings}. To formulate it, we introduce for $s\in \RR$, $1\le q\le \infty$  the weighted spaces $L^q_s(\RR)$ as the $L^q$ space on $\RR$ equipped with the measure with density $|x|^s$ with respect to the Lebesgue measure. 
Moreover, we set
\begin{equation}\label{weighted_sob_space}
\|f\|_{W^{1,(2,2)}_{s, s+1}(\RR\setminus\{0\})}^2\,=\,
\|f\|_{L_s^2(\RR)}^2\,+\,\|f'\|_{L_{s+1}^2(\RR)}^2,
\end{equation}
for functions $ f$, which are  weakly differentiable  on $\RR\setminus \{0\}$ and define  $W^{1,(2,2)}_{s, s+1}(\RR\setminus\{0\})$ as the space of all $f$ such that \eqref{weighted_sob_space} is finite. As a consequence of the completeness of the involved spaces $L_s^2(\RR)$ and $L_{s+1}^2(\RR)$ the space $W^{1,(2,2)}_{s, s+1}(\RR\setminus\{0\})$ is complete as well.

\begin{lemma}\label{rabier}Let $s\ge 0$ and $1\le q<\infty$. If we define
	\[
		q_s=\begin{cases}
		2\left(1+\frac{1}{s}\right),&s>0,\\
		\infty,&s= 0,\\
	\end{cases}
	\]then $W^{1,(2,2)}_{s, s+1}(\RR\setminus\{0\})$ embeds continuously in $L^{q}_{s}(\RR)$ if and only if 
	$q\in [2,q_s]$.
\end{lemma}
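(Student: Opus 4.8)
The statement is essentially a citation of the embedding theory for weighted Sobolev spaces developed in \cite{rabier2011embeddings}, so the plan is to identify which general hypotheses from that reference reduce, in our one-dimensional radially-weighted situation, to the clean condition $q \in [2, q_s]$. First I would recall the general framework: $W^{1,(2,2)}_{s,s+1}(\RR \setminus \{0\})$ is a weighted Sobolev space on $\RR^n$ with $n = 1$, where the weight for the function is $|x|^s$ and the weight for the gradient is $|x|^{s+1}$. The relevant result in \cite{rabier2011embeddings} characterizes continuous embeddings of such spaces into $L^q_t(\RR^n)$ in terms of a balance of the dimension $n$, the integrability exponents, and the weight exponents. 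I would state that specialized criterion as the starting point and then verify that, with $n=1$, $p=2$ (for both function and derivative), weight exponents $s$ and $s+1$, and target weight $t = s$, the abstract inequality characterizing boundedness becomes exactly $q \le 2(1 + 1/s)$ when $s>0$ and $q \le \infty$ (with the embedding into $L^\infty$) when $s = 0$, while the lower bound $q \ge 2$ comes from the trivial inclusion $W^{1,(2,2)}_{s,s+1} \hookrightarrow L^2_s$ together with interpolation.

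In more detail, the key computation is a scaling/homogeneity check. For $s > 0$, the critical exponent is dictated by testing the embedding on functions concentrated near the origin (where the weight $|x|^s$ degenerates and the competition between the $L^2_s$-mass, the $L^2_{s+1}$-mass of the derivative, and the $L^q_s$-mass is sharpest) and near infinity. Near the origin the effective local dimension is shifted by the weight: the measure $|x|^s\,dx$ behaves like an $(s+1)$-dimensional Lebesgue measure in the sense that $\int_0^r |x|^s\,dx \asymp r^{s+1}$, so the relevant Sobolev exponent is the one-dimensional–type exponent with "dimension" $s+1$, namely $q_s = 2(s+1)/((s+1)-1) = 2(1+1/s)$ — this is the Gagliardo–Nirenberg–Sobolev exponent for a space of homogeneous dimension $s+1$ with derivatives in $L^2$. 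The borderline case $s = 0$ gives homogeneous dimension $1$, for which $W^{1,2}$ embeds into $L^\infty$, recovering $q_0 = \infty$. I would carry out this homogeneity bookkeeping explicitly to pin down $q_s$, then invoke \cite{rabier2011embeddings} for the sufficiency of $q \in [2,q_s]$ and the necessity (sharpness) via the standard counterexamples that saturate the scaling.

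The main obstacle is purely expository rather than mathematical: \cite{rabier2011embeddings} works in considerable generality (general $n$, general pairs of exponents, general weight powers, and with several distinct regimes depending on sign conditions among the parameters), so the real work is to correctly match notation and confirm that the case at hand falls into the regime where the embedding holds precisely on the closed interval $[2,q_s]$ — in particular that no additional endpoint subtleties arise at $q = q_s$ for $s>0$ and that the $s=0$ case genuinely yields the $L^\infty$ endpoint. Once the dictionary between our $W^{1,(2,2)}_{s,s+1}(\RR\setminus\{0\})$ and the spaces in \cite{rabier2011embeddings} is fixed and the critical exponent $q_s$ is verified by the scaling argument above, the proof is immediate; for the lower endpoint one simply notes $\|f\|_{L^2_s} \le \|f\|_{W^{1,(2,2)}_{s,s+1}}$ and interpolates against $L^{q_s}_s$ to cover all intermediate $q$, while for $q \notin [2,q_s]$ the failure of the embedding follows from the sharpness statements in the cited reference.
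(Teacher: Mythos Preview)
Your proposal is correct and takes essentially the same approach as the paper: both reduce the lemma to a parameter check against the general embedding theorem in \cite{rabier2011embeddings}. The paper's version is simply more economical---it enumerates the six cases (i)--(vi) of \cite[Theorem 1.1]{rabier2011embeddings} and records which ones survive (case (iii) gives $q=2$, case (iv) gives $q=q_s$ for $s>0$, case (i) gives $q\in(2,q_s)$, the rest are impossible)---whereas your scaling heuristic and interpolation step, while not wrong, are unnecessary once the reference is invoked.
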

\begin{proof}
	The stated embedding holds if and only if any of the cases (i)-(vi) of \cite[Theorem 1.1, p.253]{rabier2011embeddings} holds. The cases (ii), (v) and (vi) are impossible and the case (iii) corresponds to the trivial case $q=2$. The case (iv) is impossible if $s=0$, else it corresponds to $q=q_s$. Finally, we note that the case (i) is equivalent to $q\in (2,q_s)$.
\end{proof}
Using a cutoff argument we can transfer these findings to the setting of a bounded interval, which results in the following theorem. During its proof we denote by $K_{\dots}$ a constant only depending on its indices, which may change from line to line.
	\begin{thm}\label{sob_emb}
		We define $q_* = \min\{q_\alpha, q_\beta\}$, where\[
		q_\alpha=\begin{cases}
			2\left(1+\frac{1}{\alpha}\right),&\alpha>0,\\
			\infty,&\alpha\le 0,\\
		\end{cases}\;\; \text{ and }\;\;
		q_\beta=\begin{cases}2\left(1+
			\frac{1}{\beta}\right),&\beta>0,\\
			\infty,&\beta\le 0.\\
		\end{cases}
		\]
		Then there is a  continuous embedding $D(\EE)\hookrightarrow L^{q}(X,dm)$ for $q\in [2,q_*)$. If additionally $\alpha, \beta \ne 0$, then the embedding is also true for $q=q_*$.
	\end{thm}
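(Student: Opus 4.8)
The plan is to reduce the statement on the bounded interval $X=(0,d)$ (with the measure $dm$) to the model weighted Sobolev embedding of Lemma \ref{rabier} by a localization and change of variables argument near each of the two boundary points, and to glue the resulting local estimates with the interior $L^2$ bound. Since the cases near $0$ and near $d$ are symmetric (via the dual parameters $(d^\dag,a^\dag,b^\dag,\sigma^\dag)$ introduced above), it suffices to treat the right endpoint $x=d$ under the assumption $\alpha\ge 0$, where $m(x)=x^\beta(d-x)^\alpha/d^{\alpha+\beta+1}$ and $cm(x)=\tfrac12\sigma^2 x(d-x) m(x)$ behave, up to bounded factors bounded away from $0$ on $(d/2,d)$, like $(d-x)^\alpha$ and $(d-x)^{\alpha+1}$ respectively.

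The key steps, in order: (1) Fix a cutoff $\chi\in C^\infty([0,d])$ with $\chi\equiv 1$ on $[3d/4,d]$ and $\supp\chi\subset[d/2,d]$; for $f\in D(\EE)$ the product $\chi f$ lies in $D(\EE)$ with $\|\chi f\|_{D(\EE)}\le K_{\chi}\|f\|_{D(\EE)}$, using that $\chi'$ is bounded and $m,cm$ are comparable on $\supp\chi'$. (2) Apply the affine change of variables $y=d-x$ (and trivially extend by $0$ across $y=0$), which maps $(0,d/4)$ into a neighborhood of $0$ in $\RR$ and turns the weights $(d-x)^\alpha$, $(d-x)^{\alpha+1}$ into $|y|^\alpha$, $|y|^{\alpha+1}$; because $\chi f$ is supported away from the other endpoint and vanishes for $y\le 0$, its extension $g$ lies in $W^{1,(2,2)}_{\alpha,\alpha+1}(\RR\setminus\{0\})$ with norm bounded by a constant times $\|f\|_{D(\EE)}$, the comparability of the bounded factor $x^\beta/d^{\alpha+\beta+1}$ and $\tfrac12\sigma^2 x$ with positive constants on $[d/2,d]$ being what makes this work. (3) Invoke Lemma \ref{rabier} with $s=\alpha$ to obtain $g\in L^q_\alpha(\RR)$ for $q\in[2,q_\alpha]$ when $\alpha>0$ (and $q\in[2,\infty)$ when $\alpha=0$), with a continuous bound; translating back, $f\in L^q((d/2,d),dm)$ with the corresponding estimate. (4) Symmetrically obtain $f\in L^q((0,d/2),dm)$ for $q\in[2,q_\beta]$. (5) Combine with the trivial bound $\|f\|_{L^2((d/4,3d/4),dm)}\le\|f\|_{D(\EE)}$ and Hölder on the compact middle piece (where $m$ is bounded above and below) to conclude $f\in L^q(X,dm)$ for all $q\le q_*=\min\{q_\alpha,q_\beta\}$, noting that the endpoint $q=q_*$ is admissible exactly when the corresponding parameter, $\alpha$ or $\beta$ whichever realizes the minimum, is nonzero — which under the extra hypothesis $\alpha,\beta\ne 0$ holds in all cases.

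One technical point deserving care in step (2): Lemma \ref{rabier} is stated for the case $s\ge 0$, which is precisely why the theorem requires $\alpha\ge 0$ (resp. $\beta\ge 0$) for the nontrivial part; when $\alpha<0$ we do not use Rabier's result but instead fall back on Proposition \ref{cont2}, which already gives $D(\EE)\hookrightarrow L^\infty$ near $d$ since $d\in\tilde X$, matching $q_\alpha=\infty$. So the proof naturally splits: the boundary contribution at each endpoint is handled by Proposition \ref{cont2} when the relevant parameter is negative and by Lemma \ref{rabier} when it is nonnegative, and these are assembled by Hölder's inequality on the partition $(0,d/2)\cup(d/4,3d/4)\cup(d/2,d)$.

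I expect the main obstacle to be the bookkeeping in step (2): verifying that the cutoff-and-transport procedure really lands $\chi f$ in the exact space $W^{1,(2,2)}_{\alpha,\alpha+1}(\RR\setminus\{0\})$ with a norm controlled by $\EE_1(f,f)$. This requires checking that $g=\chi f\circ(\text{reflection})$, extended by $0$, is weakly differentiable on $\RR\setminus\{0\}$ (it is, because $\chi f$ is weakly differentiable on $(0,d)$ and vanishes near $0$, so no distributional mass is created at the origin) and that both weighted $L^2$ norms of $g$ and $g'$ are finite and bounded — the $\|g'\|_{L^2_{\alpha+1}}$ bound being exactly $\|(\chi f)'\|_{L^2((d/2,d),dcm)}$ up to the constant from comparing $\tfrac12\sigma^2 x^{\beta+1}/d^{\alpha+\beta+1}$ with its min and max on $[d/2,d]$. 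Once this identification is clean, Lemma \ref{rabier} and Hölder finish the argument mechanically, so the remainder is routine.
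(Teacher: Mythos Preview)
Your proposal is correct and follows essentially the same route as the paper: localize near each endpoint with a smooth cutoff, reflect $y=d-x$ to transport the weight $(d-x)^\alpha$ to $|y|^\alpha$, invoke Lemma~\ref{rabier} when the parameter is $\ge 0$ and Proposition~\ref{cont2} when it is negative, then combine the two halves. The one imprecision is in step~(5): H\"older alone does not upgrade an $L^2$ bound on the interior piece to $L^q$ for $q>2$; you need either the local embedding $D(\EE)\hookrightarrow H^1_{\loc}\hookrightarrow L^\infty$ on $[d/4,3d/4]$, or---as the paper does---choose the cutoff to be $\equiv 1$ on a large enough set that the two endpoint estimates already cover all of $X$, making the middle piece unnecessary.
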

	\begin{proof}
		Let $\alpha<0$. Then the restriction mapping
		\[
		C(\tilde{X})\to C\left(\left[\frac{d}{2},d\right]\right), f\mapsto f|_{\left[\frac{d}{2},d\right]}
		\]
		is continuous. Therefore, by Proposition \ref{cont2} we can estimate
		\[
		\|f\|_{L^\infty\left(\left[\frac{d}{2},d\right], dm\right)}\, =\,  
		\|\tilde{f}\|_{C\left(\left[\frac{d}{2},d\right]\right)}\, \le \,K_{\alpha,\beta,d} \sqrt{\EE_1(f,f)}
		\]
		for every $f\in D(\EE)$.
		Due to H\"older's inequality we conclude that
		\[
		\|f\|_{L^q\left(\left[\frac{d}{2},d\right], dm\right)}\,\le \, \|f\|_{L^2\left(\left[\frac{d}{2},d\right], dm\right)}^{\frac{2}{q}}\|f\|_{L^\infty\left(\left[\frac{d}{2},d\right], dm\right)}^{\frac{q-2}{q}} \le \,K_{\alpha,\beta,d} \sqrt{\EE_1(f,f)}.
		\]
		for any $q\in [2, q_\alpha]$.
		
		For $\alpha \ge 0$ we choose a smooth cutoff-function $\varphi \in C_c^\infty(\RR_{\ge 0})$, $0\le \varphi\le 1$ such that $\varphi(x)=1$ for $0\le x<\frac{d}{2}$ and  $\varphi(x)=0$ for $x>\frac{3d}{4}$. For $f\in D(\EE)$ we define the function 
		\[
		f_\varphi(x)\,=\, \begin{cases}
			f(d-x)\varphi(x), &x\in (0,d),\\0,&\text{else}.
		\end{cases}
		\]
		Then for any $1\le q<\infty$ we have the estimate
		\begin{align}\begin{split}
			\label{eq67}
			\|f\|_{L^q\left(\left[\frac{d}{2},d\right], dm\right)}\,=&\, K_{\alpha,\beta, d}\left(\int_\frac{d}{2}^d |f(x)|^q x^\beta (d-x)^\alpha\,dx\right)^\frac{1}{q}
			\\\le& \,K_{\alpha,\beta, d} \left(\int_0^\frac{d}{2} |f_\varphi(x)|^q x^\alpha\,dx\right)^\frac{1}{q}\,\le \, K_{\alpha,\beta, d}\|f_\varphi\|_{L^q_\alpha(\RR)}.
			\end{split}
		\end{align}Moreover, for any $\psi\in C_c^\infty(\RR\setminus \{0\})$ we have $\varphi\psi\in C_c^\infty((0,d))$, where we use the convention $\varphi(x)=0$ for $x<0$, and consequently
	\begin{align*}
		-&\int_{\RR} f_\varphi(x)\psi'(x) \, dx\,=\, 
		-\int_0^d f(d-x)\left[(\varphi\psi)'(x)-\varphi'(x)\psi(x)\right] \, dx\\=\,&
		\int_0^d f(d-x)\varphi'(x)\psi(x)\,dx\,-\, 
		\int_0^d f'(d-x)(\varphi\psi)(x)\,dx
		\\=\,&\int_{\RR}\left[
		f(d-x)\varphi'(x)-f'(d-x)\varphi(x)
		\right]\psi(x)\, dx.
	\end{align*}
Hence $f_\varphi$ is weakly differentiable on $\RR\setminus \{0\}$ with the $\psi$-independent part of the latter integrand as weak derivative. We can estimate
\begin{align*}
	\|f_\varphi\|_{L_\alpha^2(\RR)}^2\,=\, \int_0^d f^2(d-x)\varphi^2(x)x^\alpha\,dx
	\,\le\,\int_\frac{d}{4}^d f^2(x)({{d-x}})^\alpha\,dx
	\,\le\, K_{\alpha,\beta,d}\|f\|_{L^2(X,dm)}^2.
\end{align*}Similarly, we obtain that
\begin{align*}&
	\|f_\varphi'\|_{L_{\alpha+1}^2(\RR)}^2\,\le \, \int_0^d2 \left[f^2(d-x)\varphi'^2(x)
	+f'^2(d-x)\varphi^2(x)
	\right]x^{\alpha+1}\,dx
	\\\le\,&2\left[\sup_{y\in \RR} |\varphi'(y)|^2\int_{\frac{d}{4}}^\frac{d}{2}
	f^2(x) (d-x)^{\alpha+1} \,dx\,+\, 
	\int_{\frac{d}{4}}^d
	f'^2(x) (d-x)^{\alpha+1} \,dx
	\right]\\\le \,& K_{\alpha,\beta,d,\varphi} \|f\|_{L^2(X,dcm)}^2 \,+\, K_{\alpha,\beta,d} \|f'\|_{L^2(X,dcm)}^2 \,\le\,  K_{\alpha,\beta,d,\varphi}\EE_1(f,f).
\end{align*}
In the last line we used that the function $c$ is bounded. Combining now \eqref{eq67}, Lemma \ref{rabier}  and the previous two estimates, we obtain
that
\[
\|f\|_{L^q\left(\left[\frac{d}{2},d\right], dm\right)}
\,\le \,K_{\alpha,\beta,d}\|f_\varphi\|_{L^q_\alpha(\RR)}\,\le \, K_{\alpha,\beta,d,q}\|f_\varphi\|_{W^{1,(2,2)}_{\alpha, \alpha+1}(\RR\setminus\{0\})}\,\le \,
K_{\alpha,\beta,d,\varphi,q}\sqrt{\EE_1(f,f)}.
\]
for any finite $q\in [2, q_\alpha]$. Analogously, we find that 
\[
\|f\|_{L^q\left(\left[0,\frac{d}{2}\right], dm\right)}
\,\le \,
K_{\alpha,\beta,d,\varphi,q}\sqrt{\EE_1(f,f)}
\]
for $q\in [2,q_\beta]$, where we additionally require $q$ to be finite for $\beta = 0$. The claim follows now by 
\[
\|f\|_{L^q(X,dm)}\,\le \,
\|f\|_{L^q\left(\left[0,\frac{d}{2}\right], dm\right)}\,+\, 
\|f\|_{L^q\left(\left[\frac{d}{2},d\right], dm\right)}.
\]
	\end{proof}
	\subsection{Regularity and quasi-notions}
	In this section, we use the findings from the previous ones to prove regularity of $(\EE,D(\EE))$ and determine what it means for a property to hold quasi-everywhere and a function to be quasi-continuous with respect to $(\EE,D(\EE))$. For the definition of these properties, we refer to  \cite[Section 1.1, pp.3-6; Section 2.1, pp.66-76]{fukushima2011}. We recall that the state space $X$ was defined as the interval $(0,d)$ together with the boundary point $\{d\}$ iff $\alpha>-1$ and $\{0\}$ iff $\beta>-1$.
	\begin{prop}\label{is_reg}
		The Dirichlet form $(\mathcal{E},D(\mathcal{E}))$ is regular.
	\end{prop}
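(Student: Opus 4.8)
To show that $(\EE, D(\EE))$ is regular we must exhibit a core, i.e.\ a subset of $C_c(X)\cap D(\EE)$ which is dense in $C_c(X)$ with respect to the uniform norm and dense in $D(\EE)$ with respect to $\EE_1$. The plan is to produce such a core explicitly, tailored to which boundary points belong to $X$. Write $\FF = \overline{C_c^\infty((0,d))}^{\,\EE_1}$ as before; the heart of the matter is to enlarge this in the cases where a boundary point is in $X$ (that is, $\alpha > -1$ and/or $\beta > -1$), since $C_c^\infty((0,d))$ alone is not $\EE_1$-dense in $D(\EE)$ and its functions need not have the right support near an included endpoint. By Theorem \ref{ortho}, when $-1<\alpha<0$ or $-1<\beta<0$ the orthocomplement $\FF_\lambda^\perp$ is spanned by $\xi_\lambda$ and/or $\eta_\lambda$, so a natural candidate core is $C_c^\infty((0,d))$ together with suitable smooth truncations of $\xi_\lambda$ and $\eta_\lambda$ that are compactly supported in $X$.

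Concretely, first I would fix $\lambda$ large enough that Proposition \ref{mon} and Corollary \ref{f2}(ii) apply, so that $\xi_\lambda, \eta_\lambda$ are monotone and bounded near the relevant endpoints (using Lemma \ref{prop_f1}\ref{r_boundary} and Corollary \ref{f2}(iv), both limits are finite and positive exactly when $\alpha<0$, resp.\ $\beta<0$). For a cutoff $\varphi\in C^\infty([0,d])$ with $\varphi\equiv 1$ near $d$ and $\varphi\equiv 0$ on $[0,d/2]$, the function $\varphi\,\xi_\lambda$ lies in $D(\EE)$ — one checks $(\varphi\xi_\lambda)' \in L^2(X,dcm)$ using that $\xi_\lambda'$ is again a bounded hypergeometric function near $d$ when $\alpha>-1$ (from Lemma \ref{prop_f1}\ref{r_boundary2}), and that $c$ vanishes at $d$ — and extends continuously to $X$ by Proposition \ref{cont2}, hence lies in $C_c(X)\cap D(\EE)$. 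The candidate core is then
\[
\mathcal{C} \,=\, C_c^\infty((0,d)) \,+\, \spa\{\varphi\,\xi_\lambda,\ \psi\,\eta_\lambda\},
\]
where the second generator is included only when $-1<\alpha<0$ (resp.\ $-1<\beta<0$) and $\psi$ is an analogous cutoff near $0$. When both $\alpha,\beta\notin(-1,0)$ we have $\FF=D(\EE)$ by Theorem \ref{ortho}, and regularity reduces to showing $C_c^\infty((0,d))$ is a core, which is standard since $X=(0,d)$ or differs from it only by null-capacity boundary points that carry no $dm$-mass issues — here one uses that $C_c^\infty((0,d))$ is uniformly dense in $C_c(X)$ because any included endpoint has $m$-measure concentrated nearby but functions in $D(\EE)$ vanishing there are dense.

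For the $\EE_1$-density of $\mathcal{C}$ in $D(\EE)$: any $f\in D(\EE)$ decomposes as $f = f_0 + c_\xi\xi_\lambda + c_\eta\eta_\lambda$ with $f_0\in\FF$, by Theorem \ref{ortho}. Approximating $f_0$ by $C_c^\infty((0,d))$ and writing $\xi_\lambda = \varphi\xi_\lambda + (1-\varphi)\xi_\lambda$ with $(1-\varphi)\xi_\lambda\in C_c^\infty((0,d/2))\subset\FF$ (after a further interior truncation), one sees $\xi_\lambda \in \overline{\mathcal{C}}^{\,\EE_1}$, and similarly for $\eta_\lambda$; hence $\mathcal{C}$ is $\EE_1$-dense. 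For uniform density of $\mathcal{C}$ in $C_c(X)$: $C_c^\infty((0,d))$ already separates points of and is dense in $C_c((0,d))$; to reach functions not vanishing at an included endpoint $d$, note $\varphi\xi_\lambda$ does not vanish at $d$ (its limit is positive), so together with $C_c^\infty((0,d))$ it generates, via Stone–Weierstrass on the one-point-compactified pieces or just by an elementary partition-of-unity argument, a uniformly dense subalgebra-like family of $C_c(X)$. I would phrase this last step by hand rather than invoking Stone–Weierstrass, since $\mathcal{C}$ is a linear space and not an algebra: given $g\in C_c(X)$ supported near $d$, write $g = g(d)\,\chi + (g - g(d)\chi)$ where $\chi$ is a fixed cutoff equal to $1$ near $d$; then $g(d)\chi$ is matched by a multiple of $\varphi\xi_\lambda$ up to something vanishing at $d$ (rescaling by the known positive limit), and the remainder vanishes at $d$ and is handled by $C_c((0,d))$.

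The main obstacle I expect is the verification that the truncated solutions $\varphi\xi_\lambda$ and $\psi\eta_\lambda$ genuinely lie in $D(\EE)$ together with the bookkeeping of exactly which ones to include in each of the $(\alpha,\beta)$-regimes — in particular making sure that when $\alpha\ge 0$ (so $d\notin X$) or $\alpha\le -1$ one does \emph{not} need $\varphi\xi_\lambda$, and that the resulting $\mathcal{C}$ is simultaneously uniformly dense in the correct $C_c(X)$. The integrability check near $d$ for $(\varphi\xi_\lambda)'$ in $L^2(X,dcm)$ when $-1<\alpha<0$ is the one genuinely analytic point, and it follows from the boundary asymptotics already recorded in Lemma \ref{prop_f1}, so no new hypergeometric computation is needed; the rest is soft functional analysis.
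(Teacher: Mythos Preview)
Your argument has a genuine gap in the case $\alpha\ge 0$ (and symmetrically $\beta\ge 0$). Recall that $d\in X$ whenever $\alpha>-1$, so in particular when $\alpha\ge 0$ the space $C_c(X)$ contains functions that do \emph{not} vanish at $d$ (for instance any continuous function supported in $[d/2,d]$ with nonzero value at $d$). But in that regime your candidate core is just $C_c^\infty((0,d))$ (plus possibly $\psi\eta_\lambda$), all of whose elements vanish at $d$; hence $\mathcal{C}$ cannot be uniformly dense in $C_c(X)$. The sentence ``$X=(0,d)$ or differs from it only by null-capacity boundary points'' conflates two different issues: having $\Ca(\{d\})=0$ is relevant for $\EE_1$-density (and indeed $D(\EE)=\FF$ there), but it says nothing about uniform density in $C_c(X)$, which is a purely topological requirement on the fixed locally compact space $X$.

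The remedy is to include an extra function nonvanishing at $d$ \emph{whenever} $\alpha>-1$, not only when $-1<\alpha<0$. Your truncated $\varphi\xi_\lambda$ would not help here, since for $\alpha\ge 0$ one has $\xi_\lambda(x)\to\infty$ as $x\nearrow d$ (Lemma \ref{prop_f1}\ref{r_boundary}) and $\xi_\lambda\notin D(\EE)$ (Theorem \ref{ortho}). The paper sidesteps this by using instead the elementary piecewise linear function $f(x)=(x-d/2)_+$ (and its analogue near $0$), which is manifestly in $C_c(X)\cap D(\EE)$ for every $\alpha>-1$ and has $f(d)\ne 0$; density in $C_c(X)$ is then immediate by subtracting a multiple of $f$, and $\EE_1$-density follows from Theorem \ref{DE} when $-1<\alpha<0$ and from $D(\EE)=\FF$ when $\alpha\ge 0$. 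This is both simpler and covers all the cases uniformly.
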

\begin{proof}
	We provide a core for $(\EE,D(\EE))$ depending on $\alpha, \beta$. For $\alpha, \beta\le -1$ we choose
	$\mathcal{C}= C_c^\infty((0,d))$. Then $\mathcal{C}$ is dense in $D(\EE)$ since $D(\EE)=\FF$ by Theorem \ref{ortho} and $\mathcal{C}$ is dense in $C_c((0,d))$ with respect to uniform convergence. For the case  that $\alpha>-1$ and $\beta \le -1$ we choose instead $\mathcal{C}=\spa \left\{C_c^\infty((0,d))\cup \{f\}\right\}$, where
	\begin{equation}\label{function_f}
	f\colon (0,d]\to \mathbb{R},\, x\mapsto 
	\begin{cases}
		0,& x\le \frac{d}{2},\\
		x-\frac{d}{2},& x>\frac{d}{2}.
	\end{cases}
	\end{equation}
	Then for every $h\in C_c((0,d])$ one has $(h-cf)(d)=0$ for a suitable constant $c\in \RR$ and therefore $h-cf$  can be approximated by functions from $C_c^\infty((0,d))$ with respect to uniform convergence. Therefore, $\mathcal{C}$ is dense in $C_c((0,d])$. If $-1<\alpha<0$ one can argue analogously using Corollary \ref{limiz} and Theorem \ref{DE} to conclude that $\mathcal{C}$ is dense in $D(\EE)$. If instead $\alpha\ge 0$, then we have again $D(\EE)=\FF$ and therefore $C_c^\infty((0,d))$ is even dense in $D(\EE)$. The case $\alpha \le -1$ and $\beta > -1$ can be treated analogously by choosing $\mathcal{C}=\spa \left\{C_c^\infty((0,d))\cup \{g\}\right\}$ with \begin{equation}\label{function_g}
	g\colon [0,d)\to \mathbb{R},\, x\mapsto 
	\begin{cases}
		\frac{d}{2}-x,& x< \frac{d}{2},\\
		0,& x\ge\frac{d}{2}.
	\end{cases}
	\end{equation}
	For the remaining case $\alpha,\beta >-1$ one can choose $\mathcal{C}= \spa\left\{C_c^\infty((0,d))\cup \{f,g\}\right\}$ and argue as before.
\end{proof}

We denote the capacity of a subset  $B\subset X$ with respect to $(\EE,D(\EE))$ by $\Ca(B)$. Moreover, for $f\in D(\EE)$, we write $\tilde{f}$ for its $dm$-version in $C(\tilde{X})$, which exists by Proposition \ref{cont2}. In light of the next Theorem this is consistent with the fact that the same notation is frequently used for a quasi-continuous $dm$-version of $f$.
\begin{thm}\label{quasi_notions}The following holds.\begin{enumerate}[label=(\roman*)]
		\item A subset $B\subset X$ fulfills $\Ca(B)=0$ iff $B\subset X\setminus \tilde{X}$.
		\item A function $f\colon X \to \mathbb{R}\cup \{-\infty, \infty\}$ is quasi-continuous iff $f$ is real-valued and continuous on $X\cap \tilde{X}$.
	\end{enumerate}
\end{thm}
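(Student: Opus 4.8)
The plan is to reduce both claims to the computation of the capacity of single points. Recall that $(\EE,D(\EE))$ is regular by Proposition~\ref{is_reg}, so the notions of capacity and quasi-continuity behave as in \cite[Section~2.1, pp.66-76]{fukushima2011}; in particular $\Ca$ is monotone and countably subadditive, and $f$ is quasi-continuous precisely when for every $\epsilon>0$ there is an open $G\subset X$ with $\Ca(G)<\epsilon$ such that $f|_{X\setminus G}$ is real-valued and continuous. The two facts that drive everything are: $\Ca(\{x_0\})>0$ for every $x_0\in X\cap\tilde X$, and $\Ca(\{x_0\})=0$ with relatively open neighbourhoods of arbitrarily small capacity for every $x_0\in X\setminus\tilde X$.

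For the first fact, fix $x_0\in X\cap\tilde X$, let $A\subset X$ be open with $x_0\in A$, and take $u\in D(\EE)$ with $u\ge 1$ $m$-a.e.\ on $A$. By Proposition~\ref{cont2} the version $\tilde u\in C(\tilde X)$ of $u$ satisfies $\tilde u\ge 1$ $m$-a.e., hence Lebesgue-a.e., on $A$, and it is continuous on the relatively open set $A\cap\tilde X\ni x_0$, so $\tilde u(x_0)\ge 1$. Continuity of the embedding $D(\EE)\to C(\tilde X)$ yields $1\le\tilde u(x_0)\le K_{x_0}\sqrt{\EE_1(u,u)}$ with $K_{x_0}$ depending only on $x_0$, whence $\Ca(A)\ge K_{x_0}^{-2}$ and therefore $\Ca(\{x_0\})\ge K_{x_0}^{-2}>0$. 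For the second fact, $X\setminus\tilde X\subset\{0,d\}$, with $d\in X\setminus\tilde X$ exactly when $\alpha\ge 0$ and, symmetrically in $\beta$, $0\in X\setminus\tilde X$ exactly when $\beta\ge 0$. For $x_0=d$ with $\alpha\ge 0$ I would take, for $0<\rho<R<d/2$, the function $u_{\rho,R}$ that equals $1$ on $[d-\rho,d]$, equals $0$ on $(0,d-R]$, and on $(d-R,d-\rho)$ equals $\tfrac{(d-x)^{-\alpha}-R^{-\alpha}}{\rho^{-\alpha}-R^{-\alpha}}$ if $\alpha>0$ and $\tfrac{\log(R/(d-x))}{\log(R/\rho)}$ if $\alpha=0$. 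It is bounded and continuous with $\int_X(u_{\rho,R}')^2\,dcm<\infty$, hence lies in $D(\EE)$, and using $cm(x)\le C(d-x)^{\alpha+1}$ and $m(x)\le C(d-x)^{\alpha}$ on $(d/2,d)$ a direct computation gives $\int_X(u_{\rho,R}')^2\,dcm\le \tfrac{C}{\rho^{-\alpha}-R^{-\alpha}}$ (or $\tfrac{C}{\log(R/\rho)}$ when $\alpha=0$) and $\int_X u_{\rho,R}^2\,dm\le C\,R^{\alpha+1}$. Taking $R=\sqrt{\rho}$ and $\rho\searrow 0$ forces $\EE_1(u_{\rho,R},u_{\rho,R})\to 0$, so $\Ca(\{d\})\le\Ca((d-\rho,d])\le\EE_1(u_{\rho,R},u_{\rho,R})\to 0$; the point $0$ is treated the same way with $\beta$ in place of $\alpha$. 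Part~(i) follows from these two facts and the monotonicity and subadditivity of $\Ca$.

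For part~(ii), if $f$ is real-valued and continuous on $X\cap\tilde X$, then for $\epsilon>0$ I choose a relatively open $G_\epsilon\supset X\setminus\tilde X$ with $\Ca(G_\epsilon)<\epsilon$ as above; since $X\setminus G_\epsilon\subset X\cap\tilde X$, the restriction $f|_{X\setminus G_\epsilon}$ is real-valued and continuous, so $f$ is quasi-continuous. Conversely, let $f$ be quasi-continuous and fix $x_0\in X\cap\tilde X$; choosing $\epsilon<K_{x_0}^{-2}$ produces an open $G\subset X$ with $\Ca(G)<\epsilon$ and $f|_{X\setminus G}$ real-valued and continuous. Then $x_0\notin\overline G$: otherwise, as $X\setminus\tilde X$ is finite we would also have $x_0\in\overline{G\cap\tilde X}$, and picking $u\in D(\EE)$ with $u\ge 1$ $m$-a.e.\ on $G$ and $\EE_1(u,u)<K_{x_0}^{-2}$, the version $\tilde u$ would be continuous on $\tilde X$ and $\ge 1$ on $G\cap\tilde X$, hence $\tilde u(x_0)\ge 1$ by continuity and $1\le K_{x_0}\sqrt{\EE_1(u,u)}<1$, a contradiction. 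Therefore $x_0$ lies in the relative interior of $X\setminus G$, on which $f$ is real-valued and continuous, so $f$ is real-valued and continuous at $x_0$; as $x_0\in X\cap\tilde X$ was arbitrary, this establishes part~(ii).

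The main obstacle I anticipate is the converse in part~(ii): one has to rule out that an open set of arbitrarily small capacity accumulates at a point of $\tilde X$, and this works only because, by Proposition~\ref{cont2}, \emph{every} element of $D(\EE)$ --- not merely the quasi-continuous representative supplied by the abstract theory --- has a genuinely continuous version on $\tilde X$, so that point evaluation is $\EE_1$-continuous there. The cut-off construction in part~(i) is routine, but some care is needed so that the $L^2(X,dm)$-part of the energy vanishes along with the Dirichlet part, which is why the transition interval $(d-R,d-\rho)$ is allowed to collapse to the boundary from both sides.
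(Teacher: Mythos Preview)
Your proof is correct, and the overall architecture---reducing to capacities of singletons via the continuous embedding of Proposition~\ref{cont2}---matches the paper. The execution differs in two places worth noting.

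For the vanishing of $\Ca(\{d\})$ when $\alpha\ge 0$, you build explicit capacitary test functions $u_{\rho,R}$ and compute their energy directly. The paper instead invokes Theorem~\ref{ortho} (equivalently Theorem~\ref{DE}): since $D(\EE)=\FF$ in this regime, the function $f$ from \eqref{function_f} can be approximated in $D(\EE)$ by $\varphi_n\in C_c^\infty((0,d))$, and the subsequence that converges quasi-everywhere cannot converge at $d$ because $\varphi_n(d)=0\ne f(d)$. Your route is more self-contained (it does not rely on the earlier structure theorem for $\FF$) and in fact yields the slightly stronger information that whole one-sided neighbourhoods $(d-\rho,d]$ have small capacity, which you then reuse in the ``if'' direction of~(ii); the paper obtains that direction implicitly from the definition of capacity.

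For the converse of~(ii) you work pointwise: given $x_0\in X\cap\tilde X$ you pick $G$ with $\Ca(G)<K_{x_0}^{-2}$ and argue that $x_0\notin\overline G$, so that $f$ is continuous at $x_0$ in the full topology of $X$. The paper instead exhausts $X\cap\tilde X$ by compact intervals $K_n$, notes that $\|\delta_x\|$ is uniformly bounded for $x\in K_n$, and then chooses a single $G$ with $G\cap K_n=\emptyset$. Both arguments hinge on the same fact---that Proposition~\ref{cont2} makes point evaluation $\EE_1$-continuous on $\tilde X$---but the paper's uniform bound over compacta avoids the closure discussion (and your worry about $G$ accumulating at $x_0$), while your version makes explicit why mere membership $x_0\notin G$ would not suffice.
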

\begin{proof}
	We start by verifying part (i) and assume that $\alpha \ge 0$.  We consider again the function \eqref{function_f}, which is an element of $D(\EE)$. By Theorem \ref{DE}  there exists a sequence $(f_n)_{n\in \mathbb{N}}\subset C_c^\infty((0,d))$  such that $f_n\to f$ in $D(\EE)$ and by \cite[Theorem 2.1.4 (i), p.72]{fukushima2011} we have $f_{n_k}\to f$ quasi everywhere for a subsequence. Since $f_n(d) \nrightarrow f(d)$ the set $\{d\}$ has necessarily capacity $0$.  Similarly one shows that $\Ca (\{0\})=0$ whenever $\beta \ge 0$. Therefore, any $B\subset X\setminus \tilde{X}$ satisfies $\Ca(B)=0$. Conversely, we assume that there is an $x\in B\cap \tilde{X} $. If we can show that $\Ca(\{x\})>0$, the proof of (i) will be complete. By the definition of the capactiy and \cite[Lemma 2.1.1 (i), p.67]{fukushima2011}, there is a sequence of open sets $\mathcal{O}_n\subset X$ and functions $f_n\in D(\EE)$ such that $x\in\mathcal{O}_n$, $f_n\ge 1$ $dm$-almost everywhere on $\mathcal{O}_n$ for every $n\in \mathbb{N}$ and 
	$\EE_1(f_n, f_n)\to \Ca(\{x\})$. By Proposition \ref{cont2} the mapping 
	\[\delta_x\colon D(\EE)\mapsto \RR, f\mapsto \tilde{f}(x)\]
	is continuous and consequently we have \[
	\|\delta_x\|^2\EE_1(f_n, f_n) \ge |\tilde{f_n}(x)|^2\ge 1
	\]
	for every $n$. We conclude that the sequence $\mathcal{E}_1(f_n, f_n)$ is bounded away from $0$ which finishes the proof of (i).
	
	For part (ii) we notice first that if a function is real-valued and continuous on $X\cap \tilde{X}$, then it is also quasi-continuous as a consequence of (i). We assume conversely that $f$ is quasi-continuous.  Let $K_n$ be an increasing  sequence of compact subintervals of $X\cap \tilde{X}$ such that $\bigcup_{n\in \mathbb{N}} K_n =X\cap \tilde{X}$. It suffices to show that $f$ is continuous on each of the intervals $K_n$. 
		We fix an $n\in \mathbb{N}$ and observe that by Proposition \ref{cont2} we have 
		\[
		D(\EE) {\hookrightarrow} C(\tilde{X}) \overset{r}{\to} C(K_n),
		\]
		where $r$ denotes the restriction mapping. We obtain the estimate $\|\delta_x\| \le \|r\circ i\|$ for every $x\in K_n$ and following the steps of the proof of (i) we conclude that $\Ca(\{x\})\ge \frac{1}{\|r\circ i\|^2}$. By the definition of quasi-continuity, there exists an open set $\mathcal{O}\subset X$ such that $f$ is real-valued and continuous on $X\setminus \mathcal{O}$ and $\Ca(\mathcal{O})< \frac{1}{\|r\circ i\|^2}$. In particular $\mathcal{O}\cap K_n=\emptyset$, which completes the proof.
\end{proof}
\subsection{Hardy   type inequalities}
In this last subsection we provide  Hardy-type inequalities  for the integrability pairs $(p,q)=(2,1)$ and $(2,2)$. The former will give rise to reference functions for $(\EE, D(\EE))$ and the latter to an estimate on the spectral gap of $(L,D(L))$ later on. The proofs rely  on verifying conditions which are derived similarly to the more general situations \cite[Theorem 1, p.50]{Mazja_2011} and \cite[Theorem 1, p.733]{Persson_Stepanov2002}.
\begin{lemma}\label{lemma_ref_fct}If $\alpha \le -1$ or $\beta \le -1$ and  $r,s\in \RR$ such that either
	\begin{enumerate}[label=(\roman*)]\item $\beta \le -1$, $\alpha >-1$, $r>\frac{-2-\beta}{2}$ and $s>\max\{-1-\alpha,\frac{-2-\alpha}{2}\}$,
		\item $\beta> -1$, $\alpha \le-1$, $r>\max\{-1-\beta,\frac{-2-\beta}{2}\}$ and $s>\frac{-2-\alpha}{2}$ or
		\item $\beta \le -1$, $\alpha \le -1$, $r>\frac{-2-\beta}{2}$ and $s>\frac{-2-\alpha}{2}$.
	\end{enumerate}Then there is a constant $C_{\alpha, \beta, \sigma, d, r,s}<\infty$ such that
	\begin{equation}\label{Eq_Hardy}
		\int_X |f(x)| x^r(d-x)^s\, dm(x)
		\, \le \,	C_{\alpha, \beta, \sigma, d, r,s} \sqrt
		{\EE(f,f)}	\end{equation}
	for every $f\in D(\EE)$.
\end{lemma}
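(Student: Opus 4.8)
The plan is to rewrite \eqref{Eq_Hardy} as a one‑dimensional weighted inequality and then prove it by Cauchy--Schwarz, using the boundary behaviour of $D(\EE)$‑functions from Corollary \ref{limiz} as the only structural input on $f$. Inserting the explicit densities $m$ and $cm$, one has $\EE(f,f)=\tfrac{\sigma^2}{2d^{\alpha+\beta+1}}\int_0^d (f')^2\,\tilde v\,dx$ with $\tilde v(x)=x^{\beta+1}(d-x)^{\alpha+1}$, and $\int_X|f|\,x^r(d-x)^s\,dm=\tfrac{1}{d^{\alpha+\beta+1}}\int_0^d |f^*|\,x^{r+\beta}(d-x)^{s+\alpha}\,dx$, where $f^*$ is the continuous $(0,d)$‑version of $f$. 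So, up to constants depending only on $\alpha,\beta,\sigma,d$, it suffices to bound $\int_0^d|f^*|\,x^{r+\beta}(d-x)^{s+\alpha}\,dx$ by a multiple of $\big(\int_0^d (f')^2\tilde v\big)^{1/2}$. The mechanism driving everything is the following pointwise estimate: if $\beta\le-1$, then $\lim_{x\searrow 0}f^*(x)=0$ by Corollary \ref{limiz}, and since $\int_0^x\tilde v^{-1}<\infty$ for every $x<d$ ($t^{-(\beta+1)}$ is integrable at $0$ because $\beta<0$, while $(d-t)^{-(\alpha+1)}$ stays bounded away from $d$), the $H^1_{\loc}$‑absolute continuity of $f^*$ gives $f^*(x)=\int_0^x f'(t)\,dt$ for all $x\in(0,d)$ and hence
\[
|f^*(x)|^2\,\le\,\Big(\int_0^d (f')^2\tilde v\Big)\,\phi_0(x),\qquad \phi_0(x):=\int_0^x t^{-(\beta+1)}(d-t)^{-(\alpha+1)}\,dt .
\]
Symmetrically, if $\alpha\le-1$ then $f^*(x)=-\int_x^d f'(t)\,dt$ and $|f^*(x)|^2\le\big(\int_0^d(f')^2\tilde v\big)\,\phi_d(x)$ with $\phi_d(x):=\int_x^d t^{-(\beta+1)}(d-t)^{-(\alpha+1)}\,dt$.

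With these bounds in hand I would treat the three cases. In case (iii) both endpoints force vanishing, so I would cut $\int_0^d=\int_0^{d/2}+\int_{d/2}^d$, apply the $\phi_0$‑estimate on the left half and the $\phi_d$‑estimate on the right half, absorb into a constant whatever factors are bounded above and below on the relevant half, and be left with the elementary finiteness statements $\int_0^{d/2}\phi_0(x)^{1/2}x^{r+\beta}\,dx<\infty$ and $\int_{d/2}^d\phi_d(x)^{1/2}(d-x)^{s+\alpha}\,dx<\infty$; since $\phi_0(x)\le Cx^{-\beta}$ on $(0,\tfrac d2)$ and $\phi_d(x)\le C(d-x)^{-\alpha}$ on $(\tfrac d2,d)$, these become $r+\tfrac\beta2>-1$ and $s+\tfrac\alpha2>-1$, i.e. precisely the hypotheses $r>\tfrac{-2-\beta}{2}$, $s>\tfrac{-2-\alpha}{2}$. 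In case (i) only the vanishing at $0$ is available, so I would use the $\phi_0$‑estimate on all of $(0,d)$; the claim reduces to $\int_0^d\phi_0(x)^{1/2}x^{r+\beta}(d-x)^{s+\alpha}\,dx<\infty$, which I would check by isolating neighbourhoods of $0$ and $d$: near $0$, $\phi_0(x)\le Cx^{-\beta}$ gives $r>\tfrac{-2-\beta}{2}$, while near $d$ one has $\phi_0(x)\le C(d-x)^{-\alpha}$ if $\alpha>0$, $\phi_0(x)\le C\big(1+\log\tfrac1{d-x}\big)$ if $\alpha=0$, and $\phi_0(x)\le C$ if $-1<\alpha<0$, which yield $s>\tfrac{-2-\alpha}{2}$ when $\alpha\ge0$ and $s>-1-\alpha$ when $-1<\alpha<0$ — exactly the $\max$ appearing in the hypothesis of (i). Case (ii) then follows from case (i) by the reflection $x\mapsto d-x$, which interchanges $(\alpha,s)$ with $(\beta,r)$; concretely it uses $\lim_{x\nearrow d}f^*(x)=0$ (valid since $\alpha\le-1$) together with the $\phi_d$‑estimate.

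The routine part is the explicit upper bounds on $\phi_0$ and $\phi_d$ near the endpoints and the bookkeeping of the constant, which in the end depends only on $\alpha,\beta,\sigma,d,r,s$. The one genuinely delicate point is the asymmetric cases (i) and (ii): there $f^*$ need not vanish at the ``easy'' endpoint, and naively splitting at $d/2$ fails, because the portion of $\EE(f,f)$ coming from a neighbourhood of that endpoint controls nothing about $\int|f|$ there (constants on that neighbourhood are a counterexample). The remedy is to exploit the vanishing at the ``hard'' endpoint \emph{globally}, through $f^*(x)=\int_0^x f'$ valid on the whole interval, so that a single Cauchy--Schwarz step produces one pointwise majorant for $|f^*|$ everywhere; the three subranges of $\alpha$ and the $\max$ in the statement then enter purely through the size of $\phi_0$ near $d$. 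Equivalently, one may organise this reduction as the verification of a Mazja/Persson--Stepanov $(p,q)=(2,1)$ Hardy condition of the type $\int_0^d\big(\int_x^d w\big)^2 v^{-1}\,dx<\infty$ with $w=x^{r+\beta}(d-x)^{s+\alpha}$ and $v=\tilde v$ (and its mirror image on the other half), which produces the same conditions on $r$ and $s$.
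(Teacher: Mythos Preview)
Your proposal is correct and close in spirit to the paper's proof: both use the vanishing of $f^*$ at the appropriate endpoint(s) from Corollary~\ref{limiz}, write $f^*(x)$ as an integral of $f'$, and then reduce to a one-dimensional weighted finiteness check that produces exactly the exponent conditions in (i)--(iii).

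The main organisational difference is where Cauchy--Schwarz is applied. You apply it \emph{pointwise} to $f^*(x)=\int_0^x f'$, obtaining $|f^*(x)|\le \big(\int_0^d(f')^2\tilde v\big)^{1/2}\phi_0(x)^{1/2}$, and then integrate against $w(x)=x^{r+\beta}(d-x)^{s+\alpha}$, so the residual task is to show $\int_0^d w\,\phi_0^{1/2}<\infty$. The paper instead applies Fubini first to get $\int_0^d|f^*|\,w\le \int_0^d|f'(y)|\big(\int_y^d w\big)\,dy$ and then a single Cauchy--Schwarz, leaving the Mazja-type condition $\int_0^d\big(\int_y^d w\big)^2(cm)^{-1}(y)\,dy<\infty$ --- precisely the alternative you mention in your last sentence. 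Both reductions lead to the same threshold conditions on $r,s$ after the same case analysis of $\phi_0$ (resp.\ $\int_y^d w$) near $0$ and near $d$; the paper's route gives a slightly sharper implicit constant, while your pointwise route is a bit more transparent. Your handling of the asymmetric case (i), using the $\phi_0$-bound globally rather than splitting at $d/2$, is the right move and mirrors what the paper does there.
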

\begin{proof}Let $f\in D(\EE)$.
	In the case of (i) we have $\tilde{f}(0)=0$ by Corollary \ref{limiz}  such that 
	an application of \cite[Theorem 8.2, p.204]{brezis2010functional} and Fubini's theorem yields
	\begin{align*}&\int_0^{d} |f(x)| x^r(d-x)^s\,dm(x) \, \le \, 
		\int_X \int_0^x|f'(y)|\, dy\, \frac{x^{\beta+r}(d-x)^{\alpha+s}}{d^{\alpha+\beta+1}}\, dx\\= \, &
		\int_0^{d} \int_{y}^{d} \frac{x^{\beta+r}(d-x)^{\alpha+s}}{d^{\alpha+\beta+1}}
		\, dx \,|f'(y)|\,dy.
	\end{align*}
	If we can show that 
	\begin{equation}\label{Eq_Int}
		\int_0^dg(y)^2\frac{2d^{\alpha+\beta+1}}{\sigma^2y^{\beta+1}(d-y)^{\alpha+1}}\, dy
	\end{equation}
	is finite, where
	\[
	g(y)\,=\,\int_{y}^{d} \frac{x^{\beta+r}(d-x)^{\alpha+s}}{d^{\alpha+\beta+1}},
	\, dx
	\]
	an application of H\"older's inequality will yield that
	\begin{align*}&
		\int_X |f(x)| x^r(d-x)^s\, dm(x) \\ \le \,& \left(
		\int_0^dg(y)^2\frac{2d^{\alpha+\beta+1}}{\sigma^2y^{\beta+1}(d-y)^{\alpha+1}}\, dy\right)^\frac{1}{2}\left(
		\int_0^{d} |f'(y)|^2\frac{\sigma^2y^{\beta+1}(d-y)^{\alpha+1}}{2d^{\alpha+\beta+1}}\,dy\right)^\frac{1}{2}
	\end{align*}
	and the claim follows. To estimate \eqref{Eq_Int} we observe that for $y\ge \frac{d}{2}$
	\begin{align*}
		g(y)\, \le \, &C_{\alpha, \beta, d, r}\int _y^d  (d-x)^{\alpha+s}\, dx\,=\, C_{\alpha, \beta,  d,r}
		\int_0^{d-y} x^{\alpha+s}\, dx
		\,=\, C_{\alpha, \beta, d, r,s} (d-y)^{\alpha+s+1}
	\end{align*} 
	by the assumption $s>-1-\alpha$.
	Using additionally $s>\frac{-2-\alpha}{2}$ we conclude that
	\begin{equation}\label{Eq_Int1}
		\int_\frac{d}{2}^dg(y)^2\frac{2d^{\alpha+\beta+1}}{\sigma^2y^{\beta+1}(d-y)^{\alpha+1}}\, dy\, \le \,
		C_{\alpha, \beta, \sigma, d, r,s} 
		\int_\frac{d}{2}^d \frac{(d-y)^{2\alpha+2s+2} }{(d-y)^{\alpha+1}}\, dy\,=\,  C_{\alpha, \beta,  \sigma,d, r,s} \,<\,\infty.
	\end{equation}
	For $y<\frac{d}{2}$ we obtain instead the bound
	\begin{equation}\label{Eq1}
		g(y)\, \le  \,g\left(\frac{d}{2}\right)\,+\, C_{\alpha, \beta, d, s}\int_y^{\frac{d}{2}} x^{\beta+r}\, dx\, \le \, C_{\alpha, \beta, d, r,s}\begin{cases}(1+y^{\beta+r+1}), &\beta+r \le -1,\\(1+|\log(y)|),
			&
			\beta+r = -1,\\1,& \beta+r > -1.
		\end{cases}
	\end{equation}
	Therefore, by Young's inequality and the assumptions $\beta \le -1$ and $r>\frac{-2-\beta}{2}$
	\begin{align}\begin{split}
			\label{Eq_Int2}&
			\int_0^\frac{d}{2} g(y)^2\frac{2d^{\alpha+\beta+1}}{\sigma^2y^{\beta+1}(d-y)^{\alpha+1}}\, dy\, \le \,
			C_{\alpha, \beta, \sigma, d, r,s} \begin{cases}\int_0^\frac{d}{2} \frac{1+y^{2\beta+2r+2}}{y^{\beta+1}}\, dy, &\beta+r \le -1,\\\int_0^\frac{d}{2} \frac{1+\log(y)^2}{y^{\beta+1}}\, dy,
				&
				\beta+r = -1,\\\int_0^\frac{d}{2} \frac{1}{y^{\beta+1}}\, dy,& \beta+r > -1.
			\end{cases} 
			\\=\,& C_{\alpha, \beta, \sigma, d, r,s} \,<\,\infty.
	\end{split}\end{align} Adding up \eqref{Eq_Int1} and \eqref{Eq_Int2} we conclude that \eqref{Eq_Int} is indeed finite. The case (ii) can be treated analogously. For the last case (iii) we use that $\tilde{f}(d)=\tilde{f}(0)=0$ and obtain that
	\begin{align}\begin{split}\label{Eq_2}
			&\int_X |f(x)| x^r(d-x)^s\,dm(x) \\ \le \, & \int_0^{\frac{d}{2}}\int_0^x |f'(y)|\,dy\, \frac{x^{\beta+r}(d-x)^{\alpha+s}}{d^{\alpha+\beta+1}}\, dx\,+\,
			\int_{\frac{d}{2}}^d \int_x^d |f'(y)|\,dy\, \frac{x^{\beta+r}(d-x)^{\alpha+s}}{d^{\alpha+\beta+1}}.
		\end{split}
	\end{align}
	As before, we estimate the first integral of the right-hand side by 
	\begin{align*}&
		\int_0^{\frac{d}{2}}\int_y^\frac{d}{2} \frac{x^{\beta+r}(d-x)^{\alpha+s}}{d^{\alpha+\beta+1}}\, dx \,|f'(y)|\,dy \\\le
		\,&
		\left(\int_0^\frac{d}{2}h(y)^2\frac{2d^{\alpha+\beta+1}}{\sigma^2y^{\beta+1}(d-y)^{\alpha+1}}\, dy\right)^\frac{1}{2}
		\left(
		\int_0^{\frac{d}{2}} |f'(y)|^2 \frac{\sigma^2y^{\beta+1}(d-y)^{\alpha+1}}{2d^{\alpha+\beta+1}}\,dy\right)^\frac{1}{2},
	\end{align*}
	where we set this time 
	\[
	h(y)\,=\, \int_y^\frac{d}{2} \frac{x^{\beta+r}(d-x)^{\alpha+s}}{d^{\alpha+\beta+1}}\, dx.
	\]
	We estimate the function $h$ as before by
	\begin{align*}
		h(y)\, \le \,C_{\alpha, \beta, d, s} \int_y^\frac{d}{2}x^{\beta+r}\, dx,
	\end{align*}which is a term appearing in
	\eqref{Eq1}. In particular, continuing as in \eqref{Eq_Int2} we conclude finiteness of
	\[\int_0^\frac{d}{2}h(y)^2\frac{2d^{\alpha+\beta+1}}{\sigma^2y^{\beta+1}(d-y)^{\alpha+1}}\, dy
	\]
	due to $\beta \le -1$ and $r>\frac{-2-\beta}{2}$. The second term of the right-hand side of \eqref{Eq_2} can be estimated analogously and therefore the claim follows.
\end{proof}
\begin{lemma}\label{Lemma_Poinc}The following holds if either $\alpha\le -1$, $\beta>-1$ or $\alpha>-1$, $\beta\le -1$.
	\begin{equation}\label{Eq_10}
	\int_{X}f^2(x)\, dm(x)\, \le \,
	\EE(f,f)\cdot
	\begin{cases}
		\frac{2}{\sigma^2, \min\{|\alpha+1|, |\beta+1|\}^2}, &\alpha, \beta\ne -1,\\
		\frac{8}{\sigma^2} \sup_{x\in (0,1)} x(1-x)\left[\frac{1}{\alpha+1}-\log(x)\right]^2, &
		\beta=-1
		,\\\frac{8}{\sigma^2} \sup_{x\in (0,1)} x(1-x)\left[\frac{1}{\beta+1}-\log(x)\right]^2, &
		\alpha=-1
		.
	\end{cases}
	\end{equation}
\end{lemma}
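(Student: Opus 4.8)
The plan is to reduce \eqref{Eq_10} to a one--sided weighted Hardy inequality and to prove the latter by a ground state representation. Using the reflection $x\mapsto d-x$, which interchanges the dual parameters and hence $(\alpha,\beta)$ with $(\beta,\alpha)$, I may assume without loss of generality that $\beta\le-1$ and $\alpha>-1$; the three displayed cases then reduce to $\beta<-1$ and to $\beta=-1$. Since $\beta\le-1$, Corollary \ref{limiz} gives $\lim_{x\searrow0}f^*(x)=0$ for every $f\in D(\EE)$, so that $f^*(x)=\int_0^xf'(y)\,dy$ on $(0,d)$, the integral being absolutely convergent because $\int_0^x(cm)^{-1}\,dy<\infty$ when $\beta<0$. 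Thus \eqref{Eq_10} becomes $\int_0^df^2\,dm\le C\int_0^d(f')^2\,dcm$ for $f$ vanishing at $0$.

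The main device is the following. For $\rho\in C^2((0,d))$ with $\rho>0$, putting $g=f/\rho$, the substitution $f=\rho g$, integration by parts on $[\epsilon,d-\delta]$ and the identity $(\rho'cm)'=(G\rho)m$ displayed after \eqref{eq37} give
\[
\int_\epsilon^{d-\delta}(f')^2\,dcm\,=\,\int_\epsilon^{d-\delta}\tfrac{-G\rho}{\rho}\,f^2\,dm\,+\,\int_\epsilon^{d-\delta}\rho^2(g')^2\,dcm\,+\,\Bigl[\tfrac{\rho'}{\rho}\,cm\,f^2\Bigr]_\epsilon^{d-\delta}.
\]
Hence, once I produce a positive $\rho$ with $-G\rho\ge\mu\rho$ on $(0,d)$ for which the boundary term tends to $0$ as $\epsilon,\delta\searrow0$, I may discard the non--negative middle term and pass to the limit to obtain $\EE(f,f)\ge\mu\int_Xf^2\,dm$, that is \eqref{Eq_10} with $C=\mu^{-1}$.

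For the generic case $\beta<-1$ I would take $\rho(x)=x^{|\beta+1|}$. Using $a=\tfrac{\sigma^2d(\beta+1)}2$ and $b=\tfrac{\sigma^2(\alpha+\beta+2)}2$ from \eqref{EqAlphaBeta}, a short computation gives $\tfrac{-G\rho}{\rho}(x)=\tfrac{\sigma^2|\beta+1|}2\bigl(\tfrac dx+\alpha\bigr)>\tfrac{\sigma^2|\beta+1|(\alpha+1)}2\ge\tfrac{\sigma^2}2\min\{|\alpha+1|,|\beta+1|\}^2$ on $(0,d)$, using $\alpha>-1$ and $st\ge\min\{s,t\}^2$ for $s,t>0$. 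The boundary term equals $\bigl[\tfrac{\sigma^2|\beta+1|}2\tfrac{x^\beta(d-x)^{\alpha+1}}{d^{\alpha+\beta+1}}f^2\bigr]_\epsilon^{d-\delta}$; at $\epsilon$ it tends to $0$ for every $f\in D(\EE)$, since by Cauchy--Schwarz $f(\epsilon)^2\le\bigl(\int_0^\epsilon(cm)^{-1}\,dy\bigr)\int_0^\epsilon(f')^2\,dcm\le C\epsilon^{-\beta}\int_0^\epsilon(f')^2\,dcm$, so the term is dominated by $C'\int_0^\epsilon(f')^2\,dcm\to0$. At $d-\delta$ it tends to $0$ when $-1<\alpha<0$, because $\delta^{\alpha+1}\to0$ and $\lim_{x\nearrow d}f^*(x)$ is finite by Corollary \ref{limiz}; when $\alpha\ge0$ one has $D(\EE)=\FF$ by Theorem \ref{ortho}, so it suffices to argue for $f\in C_c^\infty((0,d))$, for which both boundary terms are absent, and then invoke density. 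This yields \eqref{Eq_10} with the constant $\tfrac2{\sigma^2(\alpha+1)|\beta+1|}$, which is no larger than the stated one.

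The hard part will be the borderline case $\beta=-1$, where $a=0$, $|\beta+1|=0$, and the power ansatz degenerates: one must pass to a logarithmically corrected supersolution, and the supremum in the stated constant is exactly what appears when bounding $-G\rho/\rho$ from below. For $-1<\alpha\le0$ I would try $\rho(x)=x\bigl(\tfrac1{\alpha+1}-\log\tfrac xd\bigr)$, which is positive on $(0,d)$ with $\rho(0^+)=0$ and for which I expect $\tfrac{-G\rho}{\rho}(x)\ge\tfrac{\sigma^2}8\bigl(\sup_{u\in(0,1)}u(1-u)[\tfrac1{\alpha+1}-\log u]^2\bigr)^{-1}$ on $(0,d)$; the boundary term at $0$ is controlled just as above (now $\int_0^\epsilon(cm)^{-1}\,dy<\infty$ because the density $cm$ tends to $\tfrac{\sigma^2d}2>0$ at $0$), and at $d$ it vanishes since $\lim_{x\nearrow d}f^*(x)$ is finite. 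When $\alpha\ge0$ and $\beta=-1$ one again has $D(\EE)=\FF$, so it suffices to treat $f\in C_c^\infty((0,d))$ and pass to the limit, but the supersolution must then be further modified near $x=d$ to keep $-G\rho/\rho$ above the optimal value up to the boundary; this modification, together with the exact identification of the constant, is the delicate point. A uniform alternative, closer to the cited literature, is to verify directly the Muckenhoupt--type condition behind \cite[Theorem 1, p.733]{Persson_Stepanov2002} (as \cite[Theorem 1, p.50]{Mazja_2011} is used for the companion estimate of Lemma \ref{lemma_ref_fct}) and to estimate the functional $\sup_{t\in(0,d)}\bigl(\int_t^d\,dm\bigr)\bigl(\int_0^t(cm)^{-1}\,dy\bigr)$ by the quantities appearing in \eqref{Eq_10}.
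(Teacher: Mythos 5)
Your overall strategy is genuinely different from the paper's. The paper writes $f^2(x)=\int_0^x (f^2)'\,dy$, applies Fubini and Cauchy--Schwarz to reduce \eqref{Eq_10} to the one--function Hardy criterion
$\int_X f^2\,dm\le 4\sup_{y}\bigl[M(y)^2(cm(y)m(y))^{-1}\bigr]\,\EE(f,f)$ with $M(y)=\int_y^d m(x)\,dx$, and then identifies $M$ as $\frac{(1-y/d)^{\alpha+1}}{\alpha+1}\,{}_2F_1(-\beta,\alpha+1;\alpha+2;1-y/d)$ and bounds the hypergeometric factor; the stated constants are exactly $4$ times the resulting suprema, uniformly in both the generic and borderline cases. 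Your ground--state substitution $f=\rho g$ is a clean alternative, and in the generic case $\alpha,\beta\ne-1$ it is correct and complete: the reflection reducing to $\beta<-1<\alpha$, the identity for $\int (f')^2\,dcm$, the computation $-G\rho/\rho=\frac{\sigma^2|\beta+1|}{2}(\frac{d}{x}+\alpha)$ for $\rho=x^{-(\beta+1)}$, and the treatment of both boundary terms (Cauchy--Schwarz at $0$, Corollary \ref{limiz} for $-1<\alpha<0$, density via $D(\EE)=\FF$ for $\alpha\ge0$) all check out, and you even obtain the sharper constant $\frac{2}{\sigma^2(\alpha+1)|\beta+1|}$.

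The borderline case $\beta=-1$ (equivalently $\alpha=-1$) is, however, a genuine gap, and not only because you flag it as ``delicate''. For $\rho(x)=x\bigl(\frac{1}{\alpha+1}-\log\frac{x}{d}\bigr)$ with $a=0$, $b=\frac{\sigma^2(\alpha+1)}{2}$, one computes
\begin{equation*}
\frac{-G\rho}{\rho}(x)\;=\;\frac{\sigma^2}{2}\cdot\frac{(1-u)+(\alpha+1)u\bigl(\frac{1}{\alpha+1}-\log u-1\bigr)}{u\bigl(\frac{1}{\alpha+1}-\log u\bigr)},\qquad u=\tfrac{x}{d},
\end{equation*}
which tends to $\frac{\sigma^2}{2}(-\alpha)(\alpha+1)$ as $x\nearrow d$. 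This vanishes at $\alpha=0$ and is strictly smaller than the target $\mu=\sigma^2\bigl(8\sup_{u\in(0,1)}u(1-u)[\frac{1}{\alpha+1}-\log u]^2\bigr)^{-1}$ for all $\alpha$ sufficiently close to $0$ (e.g.\ $|\alpha|\le 0.2$). So the claimed pointwise bound $-G\rho\ge\mu\rho$ on $(0,d)$ is false, and at $\alpha=0$ your ansatz yields no positive spectral--gap constant at all without the unspecified modification near $x=d$. That modification, or alternatively the computation of the Muckenhoupt/Mazya functional you mention at the end (which is a different quantity from the paper's $\sup M^2/(cm\cdot m)$ and would not automatically reproduce the stated constant), is precisely the missing content; as written, the proof covers only the first of the three cases in \eqref{Eq_10}.
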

\begin{rem}It would be interesting to show an estimate \eqref{Eq_10} with a good constant  also for the case $\alpha, \beta \le -1$, which requires probably a different approach. While the calculation  in \eqref{Eq_9} is still possible in this case, the hypergeometric function from \eqref{Eq_8} is not well-defined if $-(\alpha+2)\in \mathbb{N} $. Even if it is well-defined an application of Euler's integral representation \eqref{Eq_11}  requires $\alpha+1>0$.
\end{rem}
\begin{proof}We assume that $\beta\le -1$, $\alpha>-1$ and calculate using \cite[Corollary 8.10, p.215]{brezis2010functional}
	\begin{align}\begin{split}\label{Eq_9}
			&
		\int_{X}f^2(x)\, dm(x)\,\le\, \int_0^d \int_0^x 2 |f(y)f'(y)|\,dy \,m(x)\, dx\\=\,&
		2\int_0^d \int_y^d m(x)\, dx \,|f(y)f'(y)|\,dy\\\le \,&	2\left(
	\int_0^d f'(y)^2 cm(y)\,dy\right)^\frac{1}{2}\left(
	\int_0^d f^2(y)\left(\int_y^d m(x)\, dx\right)^2 cm(y)^{-1}\,dy
	\right)^\frac{1}{2}\\\le \,&
	2\left(
	\int_0^d f'(y)^2 cm(y)\,dy\right)^\frac{1}{2}\left(
	\int_0^d f^2(y)m(y)\, dy\right)^\frac{1}{2} \sup_{y\in (0,d)}\left[ \left(\int_y^d m(x)\, dx\right)^2 cm(y)^{-1}m(y)^{-1}\right]^\frac{1}{2}.
		\end{split}
	\end{align}
	Therefore, we have
	\begin{align*}	\int_{X}f^2(x)\, dm(x)\, \le \, 4\sup_{y\in (0,d)}\left[ \left(\int_y^d m(x)\, dx\right)^2 cm(y)^{-1}m(y)^{-1} \right]\EE(f,f)
	\end{align*}
	and it is left to estimate the supremum in front of $\EE(f,f)$. 
If we define
\begin{equation}\label{Eq_8}
M(y)\,=\, \frac{\left(1-\frac{y}{d}\right)^{\alpha+1}}{(\alpha+1)} {}_2F_1\left(-\beta, \alpha+1; \alpha+2; 1-\frac{y}{d}\right),
\end{equation}
we see directly that $M(d)=0$ since $\alpha+1>0$. Moreover, term-wise differentiation yields
\begin{align}\label{Eq_1}
	M'(y)\,=\, -\frac{\left(1-\frac{y}{d}\right)^{\alpha}}{d}  {}_2F_1\left(-\beta, \alpha+1; \alpha+1; 1-\frac{y}{d}\right)
\end{align}
The appearing hypergeometric function is of the form 
\[
\sum_{n=0}^\infty \frac{(-\beta)_n}{n!}\left(1-\frac{y}{d}\right)^n\,=\, \left(\frac{y}{d}\right)^{\beta}
\]
by Taylor expansion of the function $(1-x)^\beta$ in $x=0$. Inserting this into \eqref{Eq_1} yields 
$
M'(y)= -m(y)
$
such that 
\[
M(y)\,=\, \int_y^d m(x)\, dx.
\]
To estimate $M(y)$, we use Euler's transformation formula and Euler's integral representation, see \cite[Theorem 2.2.1, p.65, Theorem 2.2.5, p.68]{andrews1999special} to obtain that
\begin{align}\begin{split}\label{Eq_11}
		&
	 {}_2F_1\left(-\beta, \alpha+1; \alpha+2; 1-\frac{y}{d}\right)\,=\, 
	 \left(\frac{y}{d}\right)^{\beta+1}
	 {}_2F_1\left(\alpha+\beta+2, 1; \alpha+2; 1-\frac{y}{d}\right)\\\,=\,&(\alpha+1)
	 \left(\frac{y}{d}\right)^{\beta+1}\int_0^1 (1-t)^\alpha \left(1- \left(1-\frac{y}{d}\right)t\right)^{-(\alpha+\beta+2)}\, dt.
	\end{split}
\end{align}
If $\alpha+\beta+2\ge  0$ the hypergeometric function 
\[
{}_2F_1\left(\alpha+\beta+2, 1; \alpha+2; x\right)
\]
is monotonously increasing in $x$  and we can estimate it under the additional assumption $\beta<-1$ by its limit
\[
\lim_{x\nearrow 1} 
{}_2F_1\left(\alpha+\beta+2, 1; \alpha+2; x\right)\,=\, \frac{\alpha+1}{-(\beta+1)},
\]
due to  \eqref{eq17}. For $\beta=-1$ we have 
\begin{align*}&{}_2F_1\left(-\beta, \alpha+1; \alpha+2; 1-\frac{y}{d}\right)\,=\, \sum_{n=0}^\infty \left(1-\frac{y}{d}\right)^n \frac{(\alpha+1)_n}{(\alpha+2)_n}\\\le \,& 
	1\,+\,(\alpha+1)\sum_{n=1}^\infty \left(1-\frac{y}{d}\right)^n \frac{1}{\alpha+n+1}\,\le \, 
	1\,+\,(\alpha+1)\sum_{n=1}^\infty \left(1-\frac{y}{d}\right)^n \frac{1}{n}\\=\, &
	1\,-\, (\alpha+1)\log\left(\frac{y}{d}\right).
\end{align*}
If $\alpha+\beta+2<  0$, which implies in particular $\beta<-1$, we can instead estimate
\begin{align*}
	\int_0^1 (1-t)^\alpha \left(1- \left(1-\frac{y}{d}\right)t\right)^{-(\alpha+\beta+2)}\, dt\, \le \, 
	\int_0^1 (1-t)^\alpha\, dt\,=\, \frac{1}{\alpha+1}
\end{align*}
All in all we obtained the estimate
\[{}_2F_1\left(-\beta, \alpha+1; \alpha+2; 1-\frac{y}{d}\right)\, \le \, 
\begin{cases}
	\left(\frac{y}{d}\right)^{\beta+1}
	,&\alpha+\beta+2<0,\\\frac{\alpha+1}{-(\beta+1)}
	\left(\frac{y}{d}\right)^{\beta+1} 
	,&\alpha+\beta+2\ge  0, \;  \beta\ne -1,\\
	1- (\alpha+1)\log\left(\frac{y}{d}\right),&\beta=-1.
\end{cases}
\]
This implies
\[M(y)\,\le \, 
\begin{cases}
	\frac{\left(\frac{y}{d}\right)^{\beta+1}\left(1-\frac{y}{d}\right)^{\alpha+1}}{\min\{a+1, -(\beta+1)\}}, & \beta\ne -1, \\
	\left(1-\frac{y}{d}\right)^{\alpha+1}\left[\frac{1}{\alpha+1}-\log \left(\frac{y}{d}\right)\right],&\beta=-1,
\end{cases}
\]
which in the  case $\beta \ne -1$  leads us to
\begin{align*}&
\frac{M(y)^2}{cm(y)m(y)}\, \le \, \frac{2d^{2(\alpha+\beta+1)}	\left(\frac{y}{d}\right)^{2\beta+2}\left(1-\frac{y}{d}\right)^{2\alpha+2}}{\sigma^2 y^{2\beta+1}(d-y)^{2\alpha+1}\min\{a+1, -(\beta+1)\}^2}\\=\, &
\frac{2	\left(\frac{y}{d}\right)\left(1-\frac{y}{d}\right)}{\sigma^2 \min\{a+1, -(\beta+1)\}^2}\, \le \, 
\frac{1}{2\sigma^2 \min\{a+1, -(\beta+1)\}^2}.
\end{align*}
If $\beta=-1$ we obtain instead
\begin{align*}
	\frac{M(y)^2}{cm(y)m(y)}\, \le \,\frac{2\left( \frac{y}{d}  \right)  \left(1-\frac{y}{d}\right) \left[\frac{1}{\alpha+1}-\log \left(\frac{y}{d}\right)\right]^2}{\sigma^2}\, \le \, 
	\frac{2}{\sigma^2} \sup_{x\in (0,1)} x(1-x)\left[\frac{1}{\alpha+1}-\log(x)\right]^2.
\end{align*}
The case $\alpha \le -1$, $\beta>-1$ can be treated analogously.
\end{proof}

	\section{The corresponding semigroup}\label{Sec_Semigroup}
	In this section we analyze the properties of the markovian semigroup of symmetric contractions on $L^2(X,dm)$ associated to $(\EE,D(\EE))$, which we denote by $(T_t)_{t>0}$. For details on the correspondence between operator semigroups and closed forms we refer to \cite[Section 1.3, pp. 16-25]{fukushima2011}. We start by determining for which  parameters the semigroup $(T_t)_{t>0}$ is conservative, for a definition of this property, see \cite[p.56]{fukushima2011}.
	We use the following criterion from
		\cite[Theorem 1.6.6, p.63]{fukushima2011}.
	\begin{lemma}\label{crit_cons}The markovian semigroup $(T_t)_{t>0}$ is conservative iff there exists a sequence $(f_n)_{n\in\mathbb{N}}$ 
		in $D(\mathcal{E})$ such that
		\begin{enumerate}[label=(\roman*)]
			\item $0\le f_n\le 1$ and $f_n\to \mathbbm{1}$ $dm$-almost everywhere as well as
			\item $\mathcal{E}(f_n,\psi)\to 0$ for all $\psi\in D(\mathcal{E})\cap L^1 (X,dm)$.
		\end{enumerate}
	\end{lemma}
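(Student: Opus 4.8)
The plan is to deduce the statement directly from the conservativeness criterion \cite[Theorem 1.6.6, p.63]{fukushima2011}, so that the proof reduces to checking that our situation is covered by the hypotheses of that result. First I would record that $X$ is a locally compact separable metric space, that $dm$ is a positive Radon measure on $X$ with full support, and that $(\EE,D(\EE))$ is a Dirichlet form on $L^2(X,dm)$ which is regular by Proposition \ref{is_reg}; I would also note the purely notational point that the Dirichlet space denoted $\mathcal{F}$ in \cite{fukushima2011} is what we call $D(\EE)$ here, our $\FF$ being a different, in general strictly smaller, subspace. Under these identifications the asserted equivalence is literally the statement of the cited theorem, so no further argument is required.

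For orientation I would include a brief remark on the mechanism behind the criterion rather than reproduce its proof. Conservativeness means that the $L^\infty$-extension of $(T_t)_{t>0}$ satisfies $T_t\mathbbm{1}=\mathbbm{1}$ $dm$-a.e.\ for all $t>0$, equivalently $\lambda(\lambda-L)^{-1}\mathbbm{1}=\mathbbm{1}$ for some, hence every, $\lambda>0$. The resolvent functions $g_\lambda:=\lambda(\lambda-L)^{-1}\mathbbm{1}$ always satisfy $0\le g_\lambda\le\mathbbm{1}$, and testing the weak equation that they solve against $\psi\in D(\EE)\cap L^1(X,dm)$ shows that $g_\lambda\equiv\mathbbm{1}$ is equivalent to the existence of a sequence $(f_n)$ with the properties in (i) and (ii); the nontrivial implication is obtained from the $f_n$ by a resolvent approximation together with a Fatou-type passage to the limit. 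Heuristically, (i) expresses that the $f_n$ exhaust $\mathbbm{1}$, while (ii) is the precise formulation of the requirement that no probability mass escapes to the boundary of $X$.

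The only point that I expect to warrant an explicit comment---and thus the ``main obstacle,'' modest as it is---is that in the cases of primary interest, namely $\alpha\le-1$ or $\beta\le-1$, the measure $dm$ is infinite: indeed $\int_0^d x^\beta(d-x)^\alpha\,dx=\infty$ in these cases, so that $\mathbbm{1}\notin L^2(X,dm)$ and the identity $T_t\mathbbm{1}=\mathbbm{1}$ must be read in $L^\infty$. I would dispel any concern about this by noting that \cite[Theorem 1.6.6, p.63]{fukushima2011} is formulated for an arbitrary $\sigma$-finite reference measure, and that the constant function is consistently interpreted through the extensions of $(T_t)_{t>0}$ and of the resolvent $(\lambda-L)^{-1}$ to $L^\infty(X,dm)$; since $dm$ is $\sigma$-finite here, the theorem applies verbatim and the lemma follows.
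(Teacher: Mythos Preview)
Your proposal is correct and matches the paper's treatment: the paper does not prove this lemma but simply records it as \cite[Theorem 1.6.6, p.63]{fukushima2011}, so your plan of invoking that theorem after checking its hypotheses (local compactness, Radon measure with full support, regularity via Proposition~\ref{is_reg}) is exactly what is intended. Your clarification that Fukushima's $\mathcal{F}$ corresponds to our $D(\EE)$ rather than to our $\FF$ is a helpful addition, but otherwise nothing more is needed.
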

	In the case $\alpha, \beta>-1$ we have $\mathbbm{1}\in D(\EE)$, such that the above criterion is clearly satisfied and therefore $(T_t)_{t>0}$ is conservative in this case. To treat the case $\alpha\le -1$ or $\beta \le -1$ we provide a suitable test function $\psi$ in the following lemma.
	\begin{lemma}\label{lemma_psi}
		The following holds.
		\begin{enumerate}[label=(\roman*)]
			\item Let $\alpha\le -1$ and $\beta >-1$. Then the function
			\begin{equation}\label{eq626} X\to \mathbb{R}, \, x\mapsto \frac{1}{\alpha}(d-x)^{-\alpha}
			\end{equation}
			is contained in $ D(\mathcal{E})\cap L^1(X,dm)$.
			\item Let $\alpha> -1$ and $\beta \le -1$. Then the function
			\begin{equation}\label{eq622}X\to \mathbb{R}, \, x\mapsto \frac{1}{\beta}x^{-\beta}
			\end{equation}
			is contained in $ D(\mathcal{E})\cap L^1(X,dm)$.
			\item Let $\alpha\le  -1$ and $\beta \le -1$. Then for all $y\in (0,d)$ the function
			\begin{equation}\label{eq621} X\to \mathbb{R}, \, x\mapsto \,
				y^{\beta}x^{-\beta}\mathbbm{1}_{\left(0,y\right]}(x)\,+\,
				\left(d-y\right)^{\alpha}(d-x)^{-\alpha}
				\mathbbm{1}_{\left(y,d\right)}(x)
			\end{equation}
			is contained in $ D(\mathcal{E})\cap L^1(X,dm)$.
		\end{enumerate}
	\end{lemma}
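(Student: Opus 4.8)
The plan is to check, in each of the three cases, the two defining conditions of $D(\mathcal{E})$ from \eqref{eq72} together with $L^1(X,dm)$-integrability by direct computation, using the explicit densities $m(x)=d^{-(\alpha+\beta+1)}x^\beta(d-x)^\alpha$ and $cm(x)=\tfrac{\sigma^2}{2}d^{-(\alpha+\beta+1)}x^{\beta+1}(d-x)^{\alpha+1}$. After multiplying the candidate $\psi$, $\psi^2$ and $(\psi')^2$ by the appropriate density, every integral reduces to one of the elementary form $\int_0^d x^p(d-x)^q\,dx$ (or such an integral over $(0,y)$ or $(y,d)$), which is finite precisely when the exponents at the relevant endpoint exceed $-1$; the point is that the parameter assumptions in each case force exactly these exponent conditions.

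For (i), where $\alpha\le-1$ and $\beta>-1$, the function $\psi(x)=\tfrac1\alpha(d-x)^{-\alpha}$ is smooth on $X=(0,d)\cup\{0\}$, hence lies in $H^1_{\loc}((0,d))$ with weak derivative $\psi'(x)=(d-x)^{-\alpha-1}$. One then computes that $\int_X|\psi|\,dm$ is a constant times $\int_0^d x^\beta\,dx$, that $\int_X\psi^2\,dm$ is a constant times $\int_0^d x^\beta(d-x)^{-\alpha}\,dx$, and that $\int_X(\psi')^2\,dcm$ is a constant times $\int_0^d x^{\beta+1}(d-x)^{-\alpha-1}\,dx$; since $\beta>-1$ and $-\alpha-1\ge0>-1$, all three are finite. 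Case (ii) is the image of (i) under the reflection $x\mapsto d-x$ combined with the $\dagger$-duality introduced before Corollary \ref{f2}, which exchanges $\alpha$ and $\beta$ and preserves $dm$; hence the identical argument applies to $\psi(x)=\tfrac1\beta x^{-\beta}$, with $\psi'(x)=-x^{-\beta-1}$ and the reduced integrals $\int_0^d(d-x)^\alpha\,dx$, $\int_0^d x^{-\beta}(d-x)^\alpha\,dx$, $\int_0^d x^{-\beta-1}(d-x)^{\alpha+1}\,dx$, finite because $\alpha>-1$ and $-\beta-1\ge0$.

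For (iii), where $\alpha,\beta\le-1$ and $y\in(0,d)$ is fixed, the function $\psi$ equals $y^\beta x^{-\beta}$ on $(0,y]$ and $(d-y)^\alpha(d-x)^{-\alpha}$ on $(y,d)$; both pieces take the value $1$ at $x=y$, so $\psi$ is continuous on $X=(0,d)$ and piecewise smooth, hence $\psi\in H^1_{\loc}((0,d))$ and, there being no jump and therefore no singular contribution at $y$, its weak derivative is the piecewise classical one,
\[
\psi'(x)=-\beta y^\beta x^{-\beta-1}\mathbbm{1}_{(0,y)}(x)+\alpha(d-y)^\alpha(d-x)^{-\alpha-1}\mathbbm{1}_{(y,d)}(x).
\]
On $(0,y]$ the integrands of $\int|\psi|\,dm$, $\int\psi^2\,dm$, $\int(\psi')^2\,dcm$ are constant multiples of $(d-x)^\alpha$, $x^{-\beta}(d-x)^\alpha$, $x^{-\beta-1}(d-x)^{\alpha+1}$, which are integrable near $0$ because $-\beta\ge1$ and $-\beta-1\ge0$ and have no singularity at the interior point $y<d$; on $(y,d)$ they are constant multiples of $x^\beta$, $x^\beta(d-x)^{-\alpha}$, $x^{\beta+1}(d-x)^{-\alpha-1}$, integrable near $d$ because $-\alpha\ge1$ and $-\alpha-1\ge0$. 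Summing the two halves shows all three integrals are finite, so $\psi\in D(\mathcal{E})\cap L^1(X,dm)$.

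I do not anticipate a real difficulty: all estimates are elementary. The only points that deserve an explicit word are the identification of $\psi'$ across the gluing point $x=y$ in (iii) (which rests on the continuity of $\psi$ there) and the trivial remark that the single boundary point possibly included in $X$ is a $dm$- and $dcm$-null set, so it does not affect any of the integrability conditions.
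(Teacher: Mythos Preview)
Your proof is correct and follows essentially the same approach as the paper: both verify membership in $D(\mathcal{E})\cap L^1(X,dm)$ by directly computing the three integrals $\int|\psi|\,dm$, $\int\psi^2\,dm$, $\int(\psi')^2\,dcm$, reducing them to elementary integrals of the form $\int x^p(d-x)^q\,dx$ and checking the exponent conditions forced by the hypotheses on $\alpha,\beta$. The only cosmetic difference is that for (ii) you invoke the $\dagger$-duality explicitly where the paper simply says ``analogously'', and for (iii) you spell out the continuity of $\psi$ at $y$ to justify the weak derivative, which the paper takes for granted.
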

	\begin{proof}
		Let $\alpha\le -1 $, $\beta>-1 $ and $\psi$ be the function \eqref{eq626}. Then we  have
		\[
		\int_X\psi^2(x)\,dm(x)\,=\,
		\int_0^d \frac{x^\beta(d-x)^{-\alpha}}{\alpha^2d^{\alpha+\beta+1}}\, dx
		\,<\, \infty.
		\]
		Similarly,
		\[
		\int_X |\psi(x)|\,dm(x)\,=\,
		\int_0^d \frac{x^\beta}{-\alpha d^{\alpha+\beta+1}} \,dx
		\,< \, \infty,
		\]
		such that $\psi\in L^2(X,dm)\cap L^1(X,dm)$. The derivative of $\psi$ is given by $\psi'(x)=(d-x)^{-(\alpha+1)}$. We conclude by
		\[
		\int_X (\psi'(x))^2 \,dcm(x)\,
		=\,
		\int_{0}^d\,\frac{\sigma^2x^{\beta+1}(d-x)^{-(\alpha+1)}}{2d^{\alpha+\beta+1}}
		\,dx\,<\, \infty\]
		that $\psi\in D(\mathcal{E})$, which completes the proof of (i). Part (ii) can be verified analogously.
		We assume lastly that $\alpha,\beta\le -1$ and denote by $\psi_y$ the function \eqref{eq621}. We calculate
		\begin{gather*}
			\int_X \psi_y^2(x) \,dm(x) \,=\,
			\int_0^y\, \frac{ y^{2\beta}x^{-\beta}(d-x)^\alpha }{d^{\alpha+\beta+1}}\, dx\,+\,
			\int_y^d\, \frac{(d-y)^{2\alpha}x^\beta(d-x)^{-\alpha}}{ d^{\alpha+\beta+1}}\,dx\,<\, \infty.
		\end{gather*}
		Similarly, we obtain
		\begin{gather*}
			\int_X |\psi_y(x)|\,dm(x)\, =\,
			\int_0^y \,\frac{y^\beta(d-x)^\alpha }{d^{\alpha+\beta+1}} \,dx\,+\,
			\int_y^d\, \frac{(d-y)^\alpha x^\beta}{d^{\alpha+\beta+1}}\,dx\,< \,\infty
		\end{gather*}
		and hence $\psi_y\in L^2(X,dm)\cap L^1(X,dm)$. Note  that the weak derivative of $\psi_y$ is given by 
		\begin{equation}\label{eq43}
			\psi_y'(x)\,=\,-\beta y^{\beta} x^{-(\beta+1)}
			\mathbbm{1}_{\left(0,y\right]}(x)
			\,+\,\alpha 
			\left(d-y\right)^{\alpha}(d-x)^{-(\alpha+1)}
			\mathbbm{1}_{\left(y,d\right)}(x).
		\end{equation}
		The computation
		\begin{align*}&
			\int_X \,(\psi_y'(x))^2 \,dcm(x)\\ =\,&
			\int_0^y \, \frac{\sigma^2\beta^2y^{2\beta} x^{-(\beta+1)}(d-x)^{\alpha+1} }{2d^{\alpha+\beta+1}} \,dx\,+\,
			\int_y^d\, \frac{\sigma^2\alpha^2(d-y)^{2\alpha}x^{\beta+1}(d-x)^{-(\alpha+1)}}{2d^{\alpha+\beta+1}}\,dx\,<\, \infty
		\end{align*}
		yields $\psi_y\in D(\mathcal{E})$ and finishes the proof.
	\end{proof}
	
	\begin{thm}\label{not_cons}
		The semigroup $(T_t)_{t>0}$ is conservative if and only if $\alpha,\beta>-1$.
	\end{thm}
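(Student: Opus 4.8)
The plan is to apply the conservativeness criterion of Lemma~\ref{crit_cons}. For the implication ``$\alpha,\beta>-1\Rightarrow$ conservative'', which was already observed above, it suffices to recall that $\mathbbm{1}\in D(\EE)$ in this regime and that $\EE(\mathbbm{1},\psi)=0$ for every $\psi$, so the constant sequence $f_n\equiv\mathbbm{1}$ satisfies conditions (i) and (ii) of Lemma~\ref{crit_cons}. The content of the theorem is therefore the converse: if $\alpha\le-1$ or $\beta\le-1$, then no sequence $(f_n)$ can satisfy (i) and (ii) simultaneously.

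To prove this I would assume $\alpha\le-1$ or $\beta\le-1$, suppose for contradiction that $(T_t)_{t>0}$ is conservative, and pick a sequence $(f_n)\subset D(\EE)$ as in Lemma~\ref{crit_cons}. Writing $f_n^*$ for the continuous $dm$-version of $f_n$ on $(0,d)$, we then have $0\le f_n^*\le 1$ on $(0,d)$ and, since $dm$ and Lebesgue measure are mutually absolutely continuous there, $f_n^*\to 1$ Lebesgue-almost everywhere on $(0,d)$; in particular one may fix a point $y_0\in(0,d)$ with $f_n^*(y_0)\to 1$. The strategy is to test condition (ii) against the explicit functions from Lemma~\ref{lemma_psi} (which lie in $D(\EE)\cap L^1(X,dm)$) and to evaluate $\EE(f_n,\psi)=\int_0^d f_n'(x)\,\psi'(x)\,cm(x)\,dx$ by integration by parts via \eqref{int_by_parts}, controlling the boundary terms with Corollary~\ref{limiz} and passing to the limit $n\to\infty$ by dominated convergence.

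In the case $\alpha\le-1$, $\beta>-1$ I would take $\psi(x)=\tfrac1\alpha(d-x)^{-\alpha}$ as in Lemma~\ref{lemma_psi}(i). Using \eqref{eq37} one finds $\psi'(x)cm(x)=\tfrac{\sigma^2}{2d^{\alpha+\beta+1}}x^{\beta+1}$, which is continuously differentiable on $(0,d)$, tends to $0$ as $x\searrow 0$ since $\beta+1>0$, and has the integrable derivative $\tfrac{\sigma^2(\beta+1)}{2d^{\alpha+\beta+1}}x^{\beta}$. Applying \eqref{int_by_parts} on $[l,r]\subset(0,d)$ and letting $l\searrow 0$ (the boundary term vanishes by boundedness of $f_n^*$) and $r\nearrow d$ (the boundary term vanishes because $f_n^*(x)\to 0$ by Corollary~\ref{limiz}, as $\alpha\le-1$) gives $\EE(f_n,\psi)=-\int_0^d f_n^*(x)(\psi'cm)'(x)\,dx$, which by dominated convergence tends to $-\int_0^d(\psi'cm)'(x)\,dx=-\tfrac{\sigma^2}{2d^\alpha}\ne 0$, contradicting (ii). The case $\alpha>-1$, $\beta\le-1$ is symmetric with $\psi(x)=\tfrac1\beta x^{-\beta}$ from Lemma~\ref{lemma_psi}(ii), where one obtains $\EE(f_n,\psi)\to-\tfrac{\sigma^2}{2d^\beta}\ne 0$.

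The remaining case $\alpha\le-1$, $\beta\le-1$ is the delicate one and, I expect, the main obstacle: the test function $\psi_{y_0}$ from Lemma~\ref{lemma_psi}(iii) has a jump in its derivative at $y_0$, and, because Lemma~\ref{crit_cons} is an equivalence, one cannot prescribe the values $f_n^*(y_0)$ but must instead exploit $f_n^*\to 1$ almost everywhere in order to select a good $y_0$. Here $\psi_{y_0}'cm$ equals $-\beta y_0^\beta\tfrac{\sigma^2}{2d^{\alpha+\beta+1}}(d-x)^{\alpha+1}$ on $(0,y_0)$ and $\alpha(d-y_0)^\alpha\tfrac{\sigma^2}{2d^{\alpha+\beta+1}}x^{\beta+1}$ on $(y_0,d)$, being smooth with bounded derivative on each of these subintervals and having finite one-sided limits at $0$, $y_0^-$, $y_0^+$ and $d$. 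Applying \eqref{int_by_parts} separately on $[l,y_0]$ and $[y_0,r]$, sending $l\searrow 0$ and $r\nearrow d$ (both boundary terms vanish since $f_n^*(x)\to 0$ there by Corollary~\ref{limiz}), and then letting $n\to\infty$ using dominated convergence together with $f_n^*(y_0)\to 1$, I expect the jump terms at $y_0$ to cancel, leaving $\EE(f_n,\psi_{y_0})\to \psi_{y_0}'cm(0^+)-\psi_{y_0}'cm(d^-)=\tfrac{\sigma^2}{2d^{\alpha+\beta+1}}\bigl(-\beta d^{\alpha+1}y_0^\beta-\alpha d^{\beta+1}(d-y_0)^\alpha\bigr)$, which is strictly positive because $-\alpha,-\beta\ge 1$. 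This contradicts (ii) and finishes the argument. The routine parts are the explicit computations of $\psi'cm$ and its derivative and the dominated-convergence estimates; the only genuinely careful point is the bookkeeping of the jump in the last case, together with the measure-theoretic selection of $y_0$.
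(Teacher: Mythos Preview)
Your proposal is correct and follows essentially the same approach as the paper: both test the criterion of Lemma~\ref{crit_cons} against the explicit functions of Lemma~\ref{lemma_psi}, integrate by parts to transfer the derivative from $f_n$ to $\psi' cm$, kill the boundary contributions via Corollary~\ref{limiz}, and pass to the limit by dominated convergence. Your packaging is slightly more streamlined (you invoke \eqref{int_by_parts} directly rather than introducing the auxiliary functions $g_n=\tilde f_n x^{\beta+1}$ and $h_n=\tilde f_n(d-x)^{\alpha+1}$ as the paper does), and your observation that in the case $\alpha,\beta\le-1$ the interior boundary terms at $y_0$ cancel in the limit precisely because $f_n^*(y_0)\to 1$ is exactly the mechanism the paper exploits as well, leading to the same nonzero limit $\tfrac{\sigma^2}{2d^{\alpha+\beta+1}}\bigl(-\beta d^{\alpha+1}y_0^{\beta}-\alpha d^{\beta+1}(d-y_0)^{\alpha}\bigr)$.
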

	\begin{proof}By our previous considerations the 'if'-part is clear and we are left to show the 'only if'-part. We assume that $\alpha\le -1$ or $\beta \le -1$ as well as that $(T_t)_{t>0}$ is conservative and lead this to a contradiction. By Lemma \ref{crit_cons} there exists a sequence of functions $(f_n)_{n\in\mathbb{N}}\subset D(\mathcal{E})$ such that $0\le f_n\le 1$, $f_n\to \mathbbm{1}$ $dm$-almost everywhere and $\mathcal{E}(f_n,\psi)\to 0$  for all $\psi\in D(\mathcal{E})\cap L^1(X, dm)$.
		
		We consider the case $\alpha\le -1$ and $\beta> -1$ and choose $\psi$ to be the function \eqref{eq626}. By inserting the derivative of $\psi$ it follows that
		\begin{gather}\label{eq44}0\,
			\leftarrow \,\int_X f_n'(x)\psi'(x) \,dcm(x)\,=\,\frac{\sigma^2}{2d^{\alpha+\beta+1}}\,
			\int_0^d\, f_n'(x)x^{\beta+1}\,dx
		\end{gather}
		as $n\to \infty$. For every $n$, the function $g_n(x)= \tilde{f}_n(x) x^{\beta+1}$   is weakly differentiable with weak derivative 
		\begin{equation}\label{eq99}
			g_n'(x)\,=\,
			f_n'(x)x^{\beta+1}\,+\,(\beta+1)f_n(x)x^\beta.
		\end{equation}
		The fundamental theorem of calculus as in \cite[Theorem 8.2, p.204]{brezis2010functional} holds for $g_n$, if we can verify that $g_n\in  L^1((0,d),dx)$ and $g_n' \in L^1((0,d),dx)$. The former follows immediatly from $\beta>-1$ and analogously  we obtain that $f_n(x)x^\beta$ is  in  $L^1((0,d),dx)$.  By Hölder's inequality we conclude that
		\[
		\int_0^d |f_n'(x)|x^{\beta+1}\, dx\,\le \,\left(\int_0^d (f_n'(x))^2 x^{\beta+1}(d-x)^{\alpha+1}\, dx \right)^\frac{1}{2} 
		\left(\int_0^d  x^{\beta+1}(d-x)^{-(\alpha+1)}\, dx \right)^\frac{1}{2}.
		\]
		The expression on the right-hand side is finite since  $f_n\in D(\mathcal{E})$ and hence  $g_n'\in L^1((0,d),dx)$. We conclude that
		\[
		\int_\epsilon^{d-\epsilon} f_n'(x)x^{\beta+1}+(\beta+1)f_n(x)x^\beta\, dx\,=\, \tilde{f}_n(d-\epsilon)(d-\epsilon)^{\beta+1}\,-\,\tilde{f}_n(\epsilon)\epsilon^{\beta+1}
		\]
		for $\epsilon >0$. In light of Corollary \ref{limiz} and our assumptions on $\alpha, \beta$ the right hand-side of the above inequality tends to $0$ ad $\epsilon\searrow 0$. It follows
		\[
		\int_0^d f_n'(x)x^{\beta+1}\,+\,(\beta+1)f_n(x)x^{\beta+1}\, dx\,=\,0
		\]
		for each $n$ and by \eqref{eq44} consequently
		\[
		\lim_{n\to\infty}\,\int_0^df_n(x)x^{\beta+1}\,dx\,=\,0.
		\]
		But this is a contradiction to
		 $f_n\to \mathbbm{1}$ $dm$-almost everywhere. The case $\alpha >-1$ and $\beta\le -1$ can be treated analogously by choosing $\psi$ as  \eqref{eq622}.
		
		Next, we consider the case $\alpha,\beta\le -1$, take $y\in (0,d)$ such that $\tilde{f}_n(y)\to 1$  and choose $\psi_y$ as the corresponding function \eqref{eq621}. The weak derivative of $\psi_y$ is then given by \eqref{eq43} and hence we can conclude that
		\begin{align}\begin{split}
				\label{eq45}
				0\,\leftarrow\,
				&\int_X f_n'(x)\psi_y'(x) \,dcm(x)\\=\,-&\int_{0}^y\,\frac{\sigma^2\beta y^\beta f_n'(x) (d-x)^{\alpha+1}}{2 d^{\alpha+\beta+1}}\,dx\,+\,\int_{y}^{d}\, \frac{\sigma^2\alpha (d-y)^{\alpha}f_n'(x)x^{\beta+1}
				}{2d^{\alpha+\beta+1}}\,dx
			\end{split}
		\end{align}
		as $n\to \infty$. For $n\in\mathbb{N}$ we let again
		$g_n(x)= \tilde{f}_n(x)x^{\beta+1}$ and recall that $g_n$ is weakly differentiable  with weak derivative \eqref{eq99}. As before we  conclude that $g_n$, $f_n(x)x^\beta$ and $f_n'(x)x^{\beta+1}$ are in $L^1((y,d),dx)$ and obtain
		\[
		\int_{y+\epsilon}^{d-\epsilon} f_n'(x)x^{\beta+1}+(\beta+1)f_n(x)x^{\beta}\, dx\,=\, \tilde{f}_n(d-\epsilon)(d-\epsilon)^{\beta+1}\,-\,\tilde{f}_n(y+\epsilon)(y+\epsilon)^{\beta+1}
		\] 
		for  $\epsilon>0$. Due to Corollary \ref{limiz}, the term $\tilde{f}_n(d-\epsilon)(d-\epsilon)^{\beta+1}$ tends to $0$ as $\epsilon\searrow 0$ and hence we have
		\[
		\int_{y}^{d} f_n'(x)x^{\beta+1}\, dx\,=\,-\int_{y}^{d}(\beta+1)f_n(x)x^{\beta} \,dx\,-\,\tilde{f}_n(y)y^{\beta+1}.
		\]
		By inserting this in the latter  term from the right-hand side of  \eqref{eq45} we obtain that
		\begin{align}\label{eq0}
			\int_{y}^{d} \frac{\sigma^2\alpha (d-y)^{\alpha}f_n'(x)x^{\beta+1}
			}{2d^{\alpha+\beta+1}}\,dx\,=\,
			\frac{-\sigma^2\alpha (d-y)^{\alpha}
			}{2d^{\alpha+\beta+1}}\,\left[
			\int_{y}^{d}(\beta+1)f_n(x)x^{\beta} \,dx\,+\,\tilde{f}_n(y)y^{\beta+1}\right].
		\end{align}
	Our choice of $y$ implies that
		\[
		\int_{y}^{d}(\beta+1)f_n(x)x^{\beta} \,dx\,+\,\tilde{f}_n(y)y^{\beta+1}\,\to\,
		\int_{y}^{d}(\beta+1)x^{\beta} \,dx\,+\, y^{\beta+1}\,=\, d^{\beta+1}
		\]
		as $n\to \infty$ and therefore 
		\[
		\lim_{n\to\infty}\,
		\int_{y}^{d} \,\frac{\sigma^2\alpha (d-y)^{\alpha}f_n'(x)x^{\beta+1}
		}{2d^{\alpha+\beta+1}}\,dx\,=\,
		\frac{-\sigma^2\alpha (d-y)^{\alpha}
		}{2d^{\alpha}}>0.\]
		We  verify that also the first term from the right-hand side of \eqref{eq45} converges to a positive number, which will indeed  be a contradiction.
	For this purpose, we consider the function $h_n(x)= \tilde{f}_n(x)(d-x)^{\alpha+1}$, which has weak derivative
		\[h_n'(x)\,=\,
		f_n'(x)(d-x)^{\alpha+1}-(\alpha+1)f_n(x) (d-x)^{\alpha}.
		\]
		Analogously as for $g_n$, we can conclude that
		$h_n$, $f_n(x) (d-x)^{\alpha}$ and $f_n'(x)(d-x)^{\alpha+1}$  are elements of $L^1((0,y),dx)$ and obtain therefore 
		\begin{align*}&
			\int_{\epsilon}^{y-\epsilon}f_n'(x)(d-x)^{\alpha+1}-(\alpha+1)f_n(x) (d-x)^{\alpha}\,dx\\=\,& \tilde{f}_n(y-\epsilon)(d-y+\epsilon)^{\alpha+1}-\tilde{f}_n(\epsilon)(d-\epsilon)^{\alpha+1}
		\end{align*}
		for $\epsilon >0$.
		Corollary \ref{limiz} yields that $\tilde{f}_n(\epsilon)(d-\epsilon)^{\alpha+1}$ converges to $0$ as $\epsilon \searrow 0$ and consequently we have
		\[
		\int_0^y f_n'(x)(d-x)^{\alpha+1}\, dx\,=\,\int_{0}^{y}(\alpha+1)f_n(x)(d-x)^{\alpha} \,dx\,+\,\tilde{f}_n(y)(d-y)^{\alpha+1}.
		\]
		It follows  that
		\[
		\int_{0}^y\frac{\sigma^2\beta y^\beta f_n'(x) (d-x)^{\alpha+1}}{2 d^{\alpha+\beta+1}}\,dx\,=\,
		\frac{\sigma^2\beta y^\beta}{2 d^{\alpha+\beta+1}}
		\left[
		\int_{0}^{y}(\alpha+1)f_n(x)(d-x)^{\alpha} \,dx+\tilde{f}_n(y)(d-y)^{\alpha+1}
		\right].
		\]
		Due to our choice of $y$ we have that
		\[
		\int_{0}^{y}(\alpha+1)f_n(x)(d-x)^{\alpha} \,dx+\tilde{f}_n(y)(d-y)^{\alpha+1}\,\to\,
		\int_{0}^{y}(\alpha+1)(d-x)^{\alpha} \,dx+ (d-y)^{\alpha+1}\,=\, d^{\alpha+1}
		\]
		as $n\to \infty$ and therefore
		\[\lim_{n\to\infty}\,
		-\int_{0}^y\,\frac{\sigma^2\beta y^\beta f_n'(x) (d-x)^{\alpha+1}}{2 d^{\alpha+\beta+1}}\,dx\,=\, \frac{-\sigma^2\beta y^\beta}{2 d^{\beta}}\,>\,0.
		\]
	\end{proof}
	
	We continue by showing irreducibility of $(T_t)_{t>0}$, i.e. that $(T_t)_{t>0}$ has only trivial invariant sets, for  details see \cite[pp.53,55]{fukushima2011}.
	For this we employ the following criterion from \cite[Theorem 1.6.1, p.54]{fukushima2011}.
	\begin{lemma}
		A $dm$-measurable set $B\subset X$ is $(T_t)_{t>0}$-invariant if and only if 
		$\mathbbm{1}_B f\in D(\EE)$  and
		\[
		\EE(f,f)\,=\, \EE(\mathbbm{1}_B f,\mathbbm{1}_B f)\,+\,\EE(\mathbbm{1}_{X\setminus B} f,\mathbbm{1}_{X\setminus B} f)
		\]
		for every $f\in D(\EE)$.
	\end{lemma}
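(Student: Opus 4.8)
The plan is to read this off the general invariance criterion \cite[Theorem 1.6.1, p.54]{fukushima2011}, applied to the Dirichlet form $(\EE, D(\EE))$ on $L^2(X,dm)$ constructed in the introduction. First I would unwind the definitions: $B$ being $(T_t)_{t>0}$-invariant means $\mathbbm{1}_B T_t f = T_t(\mathbbm{1}_B f)$ in $L^2(X,dm)$ for every $f\in L^2(X,dm)$ and $t>0$, and since $\mathbbm{1}_B\in L^\infty(X,dm)$ the functions $\mathbbm{1}_B f$ and $\mathbbm{1}_{X\setminus B}f$ indeed lie in $L^2(X,dm)$, so the statement is well posed. The cited theorem lists several equivalent reformulations of this property purely in terms of $(\EE, D(\EE))$; if the condition is recorded there verbatim in the quadratic form used here, the lemma is immediate, and otherwise it only remains to match formulations by polarization.

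Concretely, suppose the criterion is phrased as: $B$ is invariant iff $\mathbbm{1}_B f\in D(\EE)$ for all $f\in D(\EE)$ and $\EE(\mathbbm{1}_B f, g)=\EE(\mathbbm{1}_B f,\mathbbm{1}_B g)$ for all $f,g\in D(\EE)$. Granting the membership $\mathbbm{1}_B f\in D(\EE)$ (hence also $\mathbbm{1}_{X\setminus B}f=f-\mathbbm{1}_B f\in D(\EE)$), I would expand $f=\mathbbm{1}_B f+\mathbbm{1}_{X\setminus B}f$ and use bilinearity together with symmetry of $\EE$ to obtain
\[
\EE(f,f)\,=\,\EE(\mathbbm{1}_B f,\mathbbm{1}_B f)+\EE(\mathbbm{1}_{X\setminus B}f,\mathbbm{1}_{X\setminus B}f)+2\,\EE(\mathbbm{1}_B f,\mathbbm{1}_{X\setminus B}f).
\]
Thus the displayed identity of the lemma holds for all $f\in D(\EE)$ iff $\EE(\mathbbm{1}_B f,\mathbbm{1}_{X\setminus B}f)=0$ for all $f\in D(\EE)$; polarizing this in $f$ and then substituting $\mathbbm{1}_{X\setminus B}g$ for $g$ (note $\mathbbm{1}_B\,\mathbbm{1}_{X\setminus B}=0$) yields $\EE(\mathbbm{1}_B f,\mathbbm{1}_{X\setminus B}g)=0$ for all $f,g\in D(\EE)$, which is equivalent to $\EE(\mathbbm{1}_B f,g)=\EE(\mathbbm{1}_B f,\mathbbm{1}_B g)$. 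Reversing these equivalences produces the stated characterization.

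All the substance is contained in \cite[Theorem 1.6.1, p.54]{fukushima2011}, whose proof relies on the spectral calculus for the self-adjoint operator $(L,D(L))$ corresponding to $(\EE,D(\EE))$; beyond invoking it, nothing harder than the elementary polarization bookkeeping above is involved. Hence there is no real obstacle here; the only care needed is aligning the exact form of the criterion in the reference with the quadratic one stated in the lemma.
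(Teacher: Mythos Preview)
Your proposal is correct and matches the paper's approach: the lemma is simply quoted from \cite[Theorem 1.6.1, p.54]{fukushima2011} without further proof, so your identification of this reference (and the optional polarization bookkeeping to align formulations) is exactly what is needed.
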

	This implies together with Proposition \ref{cont2} that if $B$ is a $(T_t)_{t>0}$-invariant, $dm$-measurable subset of $X$, the function $\mathbbm{1}_Bf$ has a $dm$-version which is  continuous on $\tilde{X}$. Since we can choose $f$ to be $1$ on $(\epsilon, d-\epsilon)$ it follows that either $dm(B\cap (\epsilon, d-\epsilon))=0$ or $dm((\epsilon, d-\epsilon)\setminus B)=0$ for any $\epsilon>0$. So either $B$ is a null-set or $B$ has full measure, i.e. the following holds.
	\begin{prop}\label{irred}
		The semigroup $(T_t)_{t>0}$ is irreducible. 
	\end{prop}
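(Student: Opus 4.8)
The plan is to combine the invariance criterion stated just above with the continuity of the embedding $D(\EE)\hookrightarrow C(\tilde X)$ from Proposition \ref{cont2}. First I would take an arbitrary $dm$-measurable, $(T_t)_{t>0}$-invariant set $B\subset X$ and apply the criterion with a convenient family of test functions. For each $\epsilon\in(0,d/2)$ pick a function $f_\epsilon\in C_c^\infty((0,d))\subset D(\EE)$ with $f_\epsilon\equiv 1$ on $(\epsilon,d-\epsilon)$ and $0\le f_\epsilon\le 1$. By the criterion, $\mathbbm{1}_B f_\epsilon\in D(\EE)$, so by Proposition \ref{cont2} it has a $dm$-version that is continuous on $\tilde X$, hence in particular on $(\epsilon,d-\epsilon)$, where it coincides $dm$-a.e.\ with $\mathbbm{1}_B$. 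Thus $\mathbbm{1}_B|_{(\epsilon,d-\epsilon)}$ agrees $dm$-a.e.\ with a continuous $\{0,1\}$-valued function on the connected set $(\epsilon,d-\epsilon)$, which is therefore constant; since $dm$ has full support this forces $dm\big(B\cap(\epsilon,d-\epsilon)\big)=0$ or $dm\big((\epsilon,d-\epsilon)\setminus B\big)=0$.

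Next I would upgrade this local dichotomy to a global one by letting $\epsilon\searrow 0$. The two alternatives are monotone in $\epsilon$: if $dm\big(B\cap(\epsilon_0,d-\epsilon_0)\big)>0$ for some $\epsilon_0$, then the first alternative fails for every $\epsilon\le\epsilon_0$, so the second holds for all such $\epsilon$, and letting $\epsilon\searrow 0$ yields $dm\big((0,d)\setminus B\big)=0$; symmetrically, $dm\big((\epsilon_0,d-\epsilon_0)\setminus B\big)>0$ for some $\epsilon_0$ forces $dm\big(B\cap(0,d)\big)=0$. Since $X\setminus(0,d)$ consists of at most the two points $0$ and $d$ and is therefore a $dm$-null set, this shows that $B$ is $dm$-negligible or $X\setminus B$ is $dm$-negligible. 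By the definition of irreducibility, this is exactly the claim.

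I do not expect a serious obstacle here: all the analytic work has been done in Proposition \ref{cont2} and in the invariance criterion, so what remains is the elementary observation that a $\{0,1\}$-valued function that is $dm$-a.e.\ equal to a continuous function on an interval must be $dm$-a.e.\ constant there. The only point requiring a little care is the passage from the exhausting family of intervals $(\epsilon,d-\epsilon)$ to all of $(0,d)$, which is handled by the monotonicity argument above together with the full support of $dm$ on $X$.
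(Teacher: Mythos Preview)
Your proposal is correct and follows essentially the same route as the paper: the paper also applies the invariance criterion together with Proposition~\ref{cont2} to test functions equal to $1$ on $(\epsilon,d-\epsilon)$, obtains the same local dichotomy, and then passes to the limit $\epsilon\searrow 0$. Your write-up simply spells out the monotonicity argument for the last step in more detail than the paper does.
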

	As a consequence $(T_t)_{t>0}$ is either transient or recurrent, see \cite[Lemma 1.6.4 (iii), p.55]{fukushima2011}. To determine which is the case we make use of the following criteria, see \cite[Theorem 1.6.2 and 1.6.3, p.58]{fukushima2011}.
	To state these criteria we denote the extended Dirichlet space of $(\EE,D(\EE))$ by $(D_e(\EE), \EE)$, for a definition see \cite[p.41]{fukushima2011}.
	\begin{lemma}\label{rec_and_trans} The  semigroup $(T_t)_{t>0}$ is transient iff any $f\in D_e(\mathcal{E})$ with $\mathcal{E}(f,f)=0$ satisfies $f=0$ $dm$-almost everywhere. The semigroup $(T_t)_{t>0}$ is recurrent iff $\mathbbm{1}\in D_e(\mathcal{E})$ and $\mathcal{E}(\mathbbm{1},\mathbbm{1})=0$.
	\end{lemma}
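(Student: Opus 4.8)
The plan is to obtain both equivalences directly from the abstract theory of transient and recurrent Dirichlet forms, since $(\EE, D(\EE))$ is a Dirichlet form on $L^2(X, dm)$ and no feature specific to the Jacobi process enters.

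For the recurrence statement I would simply quote \cite[Theorem 1.6.3, p.58]{fukushima2011}: $(T_t)_{t>0}$ is recurrent if and only if $\mathbbm{1}\in D_e(\EE)$ and $\EE(\mathbbm{1},\mathbbm{1})=0$, which is precisely the assertion. One may additionally note that, since $\EE$ carries no killing part, the condition $\EE(\mathbbm{1},\mathbbm{1})=0$ is automatic once $\mathbbm{1}\in D_e(\EE)$, but this observation is not needed here.

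For the transience statement I would start from \cite[Theorem 1.6.2, p.58]{fukushima2011}, which characterizes transience of $(T_t)_{t>0}$ by $(D_e(\EE),\EE)$ being a real Hilbert space. It then remains to identify this Hilbert space property with nondegeneracy of the nonnegative quadratic form $\EE$ on $D_e(\EE)$. For this I would invoke the general completeness property of the extended Dirichlet space, see \cite[Theorem 1.5.2]{fukushima2011}: every $\EE$-Cauchy sequence in $D_e(\EE)$ which converges $dm$-almost everywhere has its limit in $D_e(\EE)$, with convergence in the $\EE$-seminorm. Consequently the only possible obstruction to $(D_e(\EE),\EE)$ being a Hilbert space is the failure of $\EE$ to be positive definite on $D_e(\EE)$, i.e.\ the existence of some $f\in D_e(\EE)$ with $\EE(f,f)=0$ and $f\neq 0$ $dm$-a.e. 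This yields the claimed equivalence.

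The only step requiring any care is this last reformulation of the Hilbert space property as nondegeneracy of $\EE$; I expect it to be the main, and fairly minor, obstacle, and it is handled entirely by the abstract completeness of $D_e(\EE)$ recalled above.
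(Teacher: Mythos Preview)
The paper does not give its own proof of this lemma; it is stated as a direct citation of \cite[Theorems 1.6.2 and 1.6.3, p.58]{fukushima2011}, so your approach---quoting exactly the same two theorems---is essentially identical to the paper's. Your additional paragraph reducing the Hilbert space characterization of transience to nondegeneracy of $\EE$ on $D_e(\EE)$ goes slightly beyond what the paper does, but it is correct: the only nontrivial direction is that nondegeneracy implies completeness, and for this one needs that every $\EE$-Cauchy sequence in $D_e(\EE)$ admits an $m$-a.e.\ convergent subsequence, which is part of the abstract theory of the extended Dirichlet space in \cite{fukushima2011} that you already invoke.
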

	Clearly, in the situation that $\alpha, \beta>-1$ we have $\mathbbm{1}\in D(\EE)$ with $\EE(\mathbbm{1},\mathbbm{1})=0$ such that $(T_t)_{t>0}$ is recurrent. The next proposition treats also the other cases.
	\begin{prop}\label{trans_rec} The semigroup $(T_t)_{t>0}$ is recurrent iff $\alpha,\beta>-1$ and transient iff $\alpha\le -1$ or $\beta\le -1$.
	\end{prop}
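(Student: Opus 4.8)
The plan is to combine the transience and recurrence criteria of Lemma \ref{rec_and_trans} with the irreducibility from Proposition \ref{irred}, which via \cite[Lemma 1.6.4 (iii), p.55]{fukushima2011} forces $(T_t)_{t>0}$ to be either transient or recurrent, but not both. Since $\{\alpha,\beta>-1\}$ and $\{\alpha\le-1\text{ or }\beta\le-1\}$ partition the parameter space, it suffices to prove that $(T_t)_{t>0}$ is recurrent in the former case and transient in the latter; the two displayed equivalences then follow at once. The recurrent case is the one already observed before the statement: for $\alpha,\beta>-1$ the density $m$ is integrable on $(0,d)$, so $\mathbbm{1}\in L^2(X,dm)$ with $\mathbbm{1}'=0\in L^2(X,dcm)$, whence $\mathbbm{1}\in D(\EE)\subset D_e(\EE)$ and $\EE(\mathbbm{1},\mathbbm{1})=0$, and Lemma \ref{rec_and_trans} gives recurrence.

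For the transient case I would assume $\alpha\le-1$ or $\beta\le-1$ and let $f\in D_e(\EE)$ with $\EE(f,f)=0$; by Lemma \ref{rec_and_trans} it is enough to show $f=0$ $dm$-almost everywhere. By definition of the extended Dirichlet space, there is a sequence $(f_n)_{n\in\mathbb{N}}\subset D(\EE)$ which is $\EE$-Cauchy and converges to $f$ $dm$-almost everywhere, and the extended form is set up so that $\EE(f_n,f_n)\to\EE(f,f)=0$. The crucial point is that the Hardy-type inequality of Lemma \ref{lemma_ref_fct} has $\sqrt{\EE(f,f)}$ — and not $\sqrt{\EE_1(f,f)}$ — on its right-hand side, and that its three parameter regimes (i)--(iii) together cover precisely $\alpha\le-1$ or $\beta\le-1$. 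Fixing admissible exponents $r,s\in\RR$ for the applicable regime, we obtain
\[
\int_X |f_n(x)|\,x^r(d-x)^s\,dm(x)\;\le\;C_{\alpha,\beta,\sigma,d,r,s}\,\sqrt{\EE(f_n,f_n)}\;\longrightarrow\;0\qquad(n\to\infty).
\]
Since $f_n\to f$ $dm$-almost everywhere, Fatou's lemma yields $\int_X|f(x)|\,x^r(d-x)^s\,dm(x)=0$, and as $x^r(d-x)^s>0$ on $(0,d)$ and $dm$ has full support, this forces $f=0$ $dm$-almost everywhere. Hence $(T_t)_{t>0}$ is transient, completing the argument.

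I do not expect a genuine obstacle here: the proof is essentially a limiting version of the earlier Hardy inequalities. The two points that need care are, first, the fact that $\lim_n\EE(f_n,f_n)$ is independent of the approximating sequence and equals the extended-form value $\EE(f,f)$ — standard in the theory of extended Dirichlet forms — and, second, that in each of the regimes $\alpha\le-1,\beta>-1$; $\alpha>-1,\beta\le-1$; $\alpha\le-1,\beta\le-1$ one can indeed choose $r,s$ satisfying the hypotheses of the corresponding case of Lemma \ref{lemma_ref_fct}, which is clear since those hypotheses only impose finite lower bounds on $r$ and $s$. For the two mixed regimes one could alternatively run the same limiting argument with the Poincaré inequality of Lemma \ref{Lemma_Poinc} in place of Lemma \ref{lemma_ref_fct}, obtaining directly $\int_X f^2\,dm\le C\,\EE(f,f)=0$.
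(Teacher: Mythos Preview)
Your argument is correct. The overall strategy coincides with the paper's: reduce to showing that any $f\in D_e(\EE)$ with $\EE(f,f)=0$ vanishes, pick an $\EE$-Cauchy approximating sequence $(f_n)\subset D(\EE)$ with $\EE(f_n,f_n)\to 0$, and pass to the limit. The difference lies in the functional inequality used to force $f_n\to 0$. You invoke the weighted Hardy inequality of Lemma~\ref{lemma_ref_fct} (or, in the mixed regimes, the Poincar\'e-type inequality of Lemma~\ref{Lemma_Poinc}), both of which already have $\sqrt{\EE(f,f)}$ on the right-hand side, and then conclude via Fatou. The paper instead observes that for $\alpha\le -1$ the weight $cm$ is bounded below on every interval $[\epsilon,d)$, so $f_n'\to 0$ in $L^2_{\loc}((0,d])$; combined with the boundary condition $\tilde f_n(d)=0$ from Corollary~\ref{limiz}, the classical Poincar\'e inequality on $[\epsilon,d]$ gives $f_n\to 0$ in $H^1_{\loc}((0,d])$. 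Your route is arguably more internal to the paper, re-using the Hardy machinery already developed, while the paper's route is lighter in that it only needs the elementary boundary behaviour of Corollary~\ref{limiz} and a textbook Poincar\'e inequality, avoiding the heavier Lemma~\ref{lemma_ref_fct}.
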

	\begin{proof}
		By our previous considerations it suffices to show that $(T_t)_{t>0}$ is transient, if $\alpha\le -1$ or $\beta\le -1$. 
		We assume $\alpha\le -1$ and let $f\in D_e(\mathcal{E})$ with $\mathcal{E}(f,f)=0$. If we can verify that $f=0$ $dm$-almost everywhere, Lemma \ref{rec_and_trans} yields that $(T_t)_{t>0}$ is transient. By definition of the extended Dirichlet space there exists an $\mathcal{E}$-Cauchy sequence $(f_n)_{n\in\mathbb{N}}\subset D(\EE)$ with $f_n\to f$ $dm$-almost everywhere. Since  $\alpha\le -1$ it follows that $(f_n')_{n\in\mathbb{N}}\to 0$ in $L^2_{\loc}((0,d], dx)$. By Corollary \ref{limiz} we have $\lim_{x\nearrow d} \tilde{f}_n(x)=0$ for each $n$ and  therefore  Poincare's inequality as in \cite[Theorem 8.16, p.242]{altEN} applies to each of the $f_n$. Consequently, $f_n \to 0$ in $H^1_{\loc}((0,d])$ and hence indeed $f=0$. The case $\beta\le -1$ can be treated analogously.
	\end{proof}
\subsection{Spectral gap and Asymptotics}
If $\alpha, \beta>-1$,  the corresponding (rescaled) family of Jacobi polynomials is given by
\[
Q_n(x)\,=\, {}_2F_1\left(-n, n+\alpha+\beta+1; \alpha+1;1-\frac{x}{d}\right) ,\;\; n\in \mathbb{N}_0.
\]
 This is a complete orthogonal basis of $L^2(X, dm)$ and moreover we have 
\[
GQ_n(x)\,=\,\frac{-\sigma^2 n(n+\alpha+\beta+1)}{2} Q_n(x)
\]
for $x\in (0,d)$,
see \cite[Theorem 6.4.3, p.299; Theorem 6.5.2, p.307]{andrews1999special} and \cite[Eq. (6.3.8), p.297]{andrews1999special}. The following lemma shows, that $(Q_n)_{n\in \mathbb{N}}$ is  even an orthogonal system of eigenfunctions of $(L, D(L))$ in this case.
\begin{lemma}\label{gen_Lemma}
	The following holds.
	\begin{enumerate}[label=(\roman*)]\item If $\alpha, \beta >-1$,  it holds $(G,C^\infty([0,d]))\subset (L,D(L))$.
		\item If $\alpha >-1$,  it holds $(G,C_c^\infty((0,d]))\subset (L,D(L))$.
		\item If $\beta >-1$,  it holds $(G,C_c^\infty([0,d)))\subset (L,D(L))$.
	\end{enumerate}
\end{lemma}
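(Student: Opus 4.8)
The plan is to use the standard correspondence between the form $(\EE,D(\EE))$ and its generator: a function $f\in D(\EE)$ lies in $D(L)$ with $Lf=h$ precisely when $h\in L^2(X,dm)$ and $\EE(f,\psi)=-(h,\psi)_{L^2(X,dm)}$ for all $\psi\in D(\EE)$. So in each of the three cases I would fix $f$ in the indicated space, view it as an element of $L^2(X,dm)$ via restriction to $X$, and prove that (a) $f\in D(\EE)$ and $Gf\in L^2(X,dm)$, and (b) $\EE(f,\psi)=-(Gf,\psi)_{L^2(X,dm)}$ for every $\psi\in D(\EE)$; together these give $f\in D(L)$ with $Lf=Gf$, which is exactly the claimed inclusion of graphs.

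\textbf{Steps (a) and (b).} Step (a) is a routine integrability check. In all three cases $f,f',f''$ are bounded on the relevant interval and $f$ vanishes near any endpoint not in the closure of its domain; if $\alpha>-1$ the density $m$ is integrable near $d$ while $cm(x)=\tfrac12\sigma^2 x^{\beta+1}(d-x)^{\alpha+1}/d^{\alpha+\beta+1}$ vanishes there, and symmetrically for $\beta>-1$ near $0$. Combining this with the boundedness/compact support of $f$ away from the remaining endpoint yields $f\in L^2(X,dm)$, $f'\in L^2(X,dcm)$, hence $f\in D(\EE)$ by \eqref{eq72}, and likewise $Gf\in L^2(X,dm)$. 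For step (b) I would integrate by parts: the computation behind \eqref{eq37} gives $(f'cm)'(x)=m(x)Gf(x)$ on $(0,d)$ for $C^2$ functions, so for $\psi\in D(\EE)$ with continuous version $\psi^*$ and $\epsilon>0$,
\begin{equation*}
\int_\epsilon^{d-\epsilon} f'\psi'\,dcm \,=\, \big[\,f'cm\,\psi^*\,\big]_\epsilon^{d-\epsilon} \,-\, \int_\epsilon^{d-\epsilon} Gf\,\psi^*\,dm .
\end{equation*}
Letting $\epsilon\searrow 0$, the left side tends to $\EE(f,\psi)$ and the right integral to $(Gf,\psi)_{L^2(X,dm)}$ by dominated convergence (the integrands are dominated via Cauchy--Schwarz using $f',\psi'\in L^2(X,dcm)$, respectively $Gf$ bounded on its support together with local $L^2(dm)$-integrability of $\psi^*$ near the relevant endpoint).

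\textbf{Main obstacle.} The delicate point is to show that the boundary terms $f'cm\,\psi^*$ vanish at each endpoint when tested against an \emph{arbitrary} $\psi\in D(\EE)$ — one cannot simply integrate by parts against $\varphi\in C_c^\infty((0,d))$ and extend by density, since $\FF$ may be a proper subspace of $D(\EE)$. This is resolved using the boundary analysis of $D(\EE)$: at an endpoint not in the domain of $f$ the factor $f'$ is $0$ nearby; at $d$ (when $\alpha>-1$) one writes $f'cm\,\psi^*=f'\cdot(cm\,\psi^*)$ and uses that $f'$ is bounded together with Corollary \ref{limiz}, by which either $cm\,\psi^*\to 0$ directly (for $\alpha\ge 0$) or $\psi^*$ has a finite limit while $cm\to 0$ (for $-1<\alpha<0$); the endpoint $0$ is handled symmetrically when $\beta>-1$. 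This gives $\EE(f,\psi)=-(Gf,\psi)_{L^2(X,dm)}$ on all of $D(\EE)$ and completes the proof.
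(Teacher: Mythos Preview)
Your proposal is correct and follows essentially the same route as the paper: check $f\in D(\EE)$ and $Gf\in L^2(X,dm)$, integrate by parts on $[\epsilon,d-\epsilon]$ using $(f'cm)'=m\,Gf$, and kill the boundary terms via Corollary \ref{limiz} together with boundedness of $f'$ (respectively compact support at the other endpoint in cases (ii), (iii)). The only cosmetic difference is that you spell out the case split $\alpha\ge 0$ versus $-1<\alpha<0$ in applying Corollary \ref{limiz}, whereas the paper compresses this into the single observation that $\tilde{g}\,cm\to 0$ at both endpoints.
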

\begin{proof} We provide a proof of (i), the remaining parts can be shown analogously.
	We assume $\alpha, \beta >-1$ and let $f\in C^\infty([0,d])$. Then we have $Gf\in C^0([0,d])$ and in particular $f, Gf\in L^2(X,dm)$ since $dm$  has finite total mass. Similarly, we conclude $f\in D(\EE)$ and the claim follows if we can verify that 
	\begin{equation}\label{eq949}
		\mathcal{E}(f,g)\,=\,-\left(Gf,g\right)_{L^2(X,dm)}
	\end{equation}
	for any $g\in D(\EE)$ by  \cite[Proposition 10.4 (ii), p.225]{schmüdgen2012unbounded} . Due to \eqref{eq37} we have
	\[
	(f'cm)'(x)\,=\,\big[c(x)f''(x)+(a-bx)f'(x)\big]m(x)
	\]
	for all $x\in (0,d)$. The integration by parts formula  \eqref{int_by_parts} yields that
	\begin{equation}\label{eq747}\int_\epsilon^{d-\epsilon}  g'(x)f'(x)\, dcm(x)\,=\, -
		\int_\epsilon^{d-\epsilon} g(x)Gf (x)\, d m(x) \,+\,
		\big[
		\tilde{g}f'cm
		\big]_\epsilon^{d-\epsilon}
	\end{equation}
	for  $\epsilon >0$. Observe that $f'(x)$ converges as  $x\searrow 0$ and $x\nearrow d$. Furthermore, we get $\lim_{x\searrow 0}\tilde{g}cm(x)=\lim_{x\nearrow d}\tilde{g}cm(x)=0$ by Corollary \ref{limiz}. Hence, taking $\epsilon\searrow 0$ in \eqref{eq747} yields \eqref{eq949}.
\end{proof}
Using spectral theory, we obtain the explicit representation of the semigroup 
\begin{equation}\label{Eq_3}
T_t f\,=\, \sum_{n=0}^{\infty} \frac{e^{\frac{-\sigma^2 n(n+\alpha+\beta+1) t}{2}} \left(f, Q_n \right)_{L^2(X, dm)}}{\|Q_n\|_{L^2(X, dm)}^2} Q_n
\end{equation}
for $f\in L^2(X, dm)$ by \cite[Proposition 5.12, p.94]{schmüdgen2012unbounded} whenever $\alpha, \beta>-1$. 

	\begin{cor}The following holds for every   $t> 0$ and $f\in L^2(X, dm)$.
		\begin{enumerate}[label=(\roman*)]
			\item If $\alpha, \beta >-1$ we have \begin{equation}\label{Eq_5}
			\left\|T_t f -\frac{\int_X f\, dm}{dm(X)}\mathbbm{1}\right\|_{L^2(X, dm)}\, \le \, e^{-bt} \|f\|_{L^2(X, dm)}.
			\end{equation}
			\item If $\alpha\le -1$, $\beta>-1$ or $\alpha>-1$, $ \beta \le -1$ we have \[
			\left\|T_t f \right\|_{L^2(X, dm)}\, \le \, e^{\frac{-t}{C(\alpha, \beta, \sigma)}} \|f\|_{L^2(X, dm)},
			\]
			where $C(\alpha, \beta,\sigma)$ is the constant from the right-hand side of \eqref{Eq_10}.
		\end{enumerate}
	\end{cor}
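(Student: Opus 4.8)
The plan is to handle the two regimes by different mechanisms: for $\alpha,\beta>-1$ the explicit spectral expansion \eqref{Eq_3} is available, while for the mixed case $\alpha\le-1<\beta$ or $\beta\le-1<\alpha$ one has the Poincar\'e estimate of Lemma \ref{Lemma_Poinc}. For part (i) I would start from \eqref{Eq_3}, which applies since $\alpha,\beta>-1$, and observe that the zeroth (rescaled) Jacobi function is $Q_0\equiv 1$, because $_2F_1(0,\alpha+\beta+1;\alpha+1;\cdot)=1$. Hence the $n=0$ summand of \eqref{Eq_3} is $\frac{(f,\mathbbm{1})_{L^2(X,dm)}}{\|\mathbbm{1}\|_{L^2(X,dm)}^2}\mathbbm{1}=\frac{\int_X f\,dm}{dm(X)}\mathbbm{1}$, so that
\[
T_tf-\frac{\int_X f\,dm}{dm(X)}\mathbbm{1}\,=\,\sum_{n=1}^{\infty}\frac{e^{-\sigma^2 n(n+\alpha+\beta+1)t/2}\,(f,Q_n)_{L^2(X,dm)}}{\|Q_n\|_{L^2(X,dm)}^2}\,Q_n .
\]

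Next I would invoke that $(Q_n)_{n\in\mathbb{N}_0}$ is a complete orthogonal system of $L^2(X,dm)$, so Parseval's identity gives $\big\|T_tf-\tfrac{\int_X f\,dm}{dm(X)}\mathbbm{1}\big\|_{L^2(X,dm)}^2=\sum_{n\ge1}e^{-\sigma^2 n(n+\alpha+\beta+1)t}\,|(f,Q_n)|^2/\|Q_n\|_{L^2(X,dm)}^2$. The key point is to identify the spectral gap: the exponents $\tfrac{\sigma^2}{2}n(n+\alpha+\beta+1)$ are strictly increasing for $n\ge1$ since the increment is $\tfrac{\sigma^2}{2}(2n+1+\alpha+\beta+1)>0$ there (using $\alpha,\beta>-1$), so the minimal exponent is attained at $n=1$ and, by \eqref{EqAlphaBeta} in the form $\alpha+\beta+2=\tfrac{2b}{\sigma^2}$, equals exactly $b$ (in particular $b>0$ automatically in this regime). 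Therefore every exponential in the sum is $\le e^{-2bt}$; pulling this factor out in front of $\sum_{n\ge1}|(f,Q_n)|^2/\|Q_n\|_{L^2(X,dm)}^2\le\|f\|_{L^2(X,dm)}^2$ and taking square roots yields \eqref{Eq_5}.

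For part (ii) the hypotheses are precisely those of Lemma \ref{Lemma_Poinc}, which provides the Poincar\'e inequality $\|f\|_{L^2(X,dm)}^2\le C(\alpha,\beta,\sigma)\,\EE(f,f)$ for all $f\in D(\EE)$, i.e. $\EE(f,f)\ge C(\alpha,\beta,\sigma)^{-1}\|f\|_{L^2(X,dm)}^2$. Since $(L,D(L))$ is non-positive and self-adjoint with $D(\EE)=D(\sqrt{-L})$ and $\EE(f,f)=\|\sqrt{-L}f\|_{L^2(X,dm)}^2$, this form bound forces the spectrum of $-L$ to lie in $[C(\alpha,\beta,\sigma)^{-1},\infty)$ — otherwise a spectral projection onto $[0,C(\alpha,\beta,\sigma)^{-1}-\varepsilon]$ would produce a nonzero $f$ violating the inequality. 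Writing $T_t=\exp(tL)=\int e^{-t\mu}\,dE_\mu$ via the spectral theorem, I get $\|T_tf\|_{L^2(X,dm)}^2=\int e^{-2t\mu}\,d(E_\mu f,f)\le e^{-2t/C(\alpha,\beta,\sigma)}\|f\|_{L^2(X,dm)}^2$, which is the assertion.

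I expect the only genuinely delicate point to be the spectral-gap identification in part (i): one must check that $n=1$ indeed minimizes $n(n+\alpha+\beta+1)$ over $n\ge1$ and then recognize the resulting value $\tfrac{\sigma^2}{2}(\alpha+\beta+2)$ as $b$ via the parameter definitions \eqref{EqAlphaBeta}. The remaining ingredients — the computation $Q_0\equiv1$, Parseval, the standard identification $D(\EE)=D(\sqrt{-L})$, and the passage from a quadratic-form/Poincar\'e bound to a semigroup contraction estimate through the spectral theorem — are routine.
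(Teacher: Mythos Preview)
Your argument is correct and matches the paper's proof essentially line by line. For part (i) you reproduce exactly the paper's computation (Parseval on the Jacobi expansion, identification of $Q_0\equiv\mathbbm{1}$, and the spectral-gap identity $\tfrac{\sigma^2}{2}(\alpha+\beta+2)=b$ via \eqref{EqAlphaBeta}); for part (ii) the paper simply invokes Lemma \ref{Lemma_Poinc} together with a reference to the standard equivalence between a Poincar\'e inequality and exponential $L^2$-decay of the semigroup, which is precisely the spectral-theorem argument you spell out.
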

\begin{proof}
	For (i) we observe that  $Q_0= \mathbbm{1}$ and consequently \eqref{Eq_3} yields
	\begin{align*}		
		\left\|T_t f -\frac{\int_X f\, dm}{dm(X)}\mathbbm{1}\right\|_{L^2(X, dm)}^2\,=\, 
		 \sum_{n=1}^{\infty} e^{-\sigma^2 n(n+\alpha+\beta+1) t} \frac{\left(f, Q_n \right)_{L^2(X, dm)}^2}{\|Q_n\|_{L^2(X, dm)}^2}\, \le \,
		 e^{-2bt} \|f\|^2_{L^2(X,dm)}.
	\end{align*}
In the last inequality we used additionally that
${\sigma^2(\alpha+\beta+2)}=2b$ by \eqref{EqAlphaBeta}. We conclude that \eqref{Eq_5} holds by taking the square-root. Part (ii) is an immediate consequence from Lemma \ref{Lemma_Poinc} together with \cite[Theorem 1.1.1, p.24]{Wang2005}.
\end{proof}
	\section{The corresponding process}\label{Sec_process}
	In this section we analyze a  $dm$-symmetric Hunt process, which is associated to the Dirichlet form $(\mathcal{E},D(\mathcal{E}))$ in the sense that its transition semigroup determines $(T_t)_{t>0}$ as in \cite[Lemma 1.4.3, p.30]{fukushima2011}. To this end, we adjoin the cemetery  $\Delta$ as Alexandroff point to the state space $X$ and write $X_\Delta=X\cup\{ \Delta\}$. Moreover, we let $\mathbf{M}=\left(\Omega,\mathfrak{A},(Y_t)_{t\in[0,\infty]},(P_x)_{x\in X_\Delta}\right)$ be a Hunt process associated to $(\EE,D(\EE))$, for details on Hunt processes see \cite[Appendix A.2, pp.384-405]{fukushima2011}. We denote its transition function on $X$ by $(\rho_t)_{t>0}$ and its life time by $\zeta$.
	\begin{rem}
		The Dirichlet form $(\EE, D(\EE))$ is regular by Proposition \ref{is_reg} and therefore there exists an associated Hunt  process. A construction of it can be found in \cite[Chapter 7, pp.369- 381]{fukushima2011}.
	\end{rem}
\subsection{Basic properties}
We use the convention that $f(\Delta)=0$ for any function $f$, which is a priori defined on $X$.  Moreover for the notion of a set being (properly) exceptional with respect to $\mathbf{M}$ we refer to \cite[pp.152-153]{fukushima2011}. We note that every properly exceptional set is exceptional and every exceptional set is contained in a properly exceptional set, for the latter see \cite[Theorem 4.1.1, p.155]{fukushima2011}. Moreover we write $Y_{\zeta-}$ for the left limit of the process $Y$ at  $\zeta$.
\begin{thm}\label{THM_proc1} The following holds.
	\begin{enumerate}[label=(\roman*)]
		\item
		A set $B\subset X$ is exceptional with respect to $\mathbf{M}$ iff $B\subset X\setminus \tilde{X}$.
		\item There exists a properly exceptional set $N\subset X$, such that $\rho_t(x,\cdot)$ is absolutely continuous with respect to $dm$ for every $t>0$ and $x\in X\setminus N$.
		\item If $\alpha, \beta >-1$,	we have  $P_x(\{\zeta <\infty\})=0$ for quasi every $x\in X$.
		\item The path $[0, \zeta)\to X, t\mapsto Y_t$ is $P_x$-almost surely continuous for quasi every $x\in X$.
		\item It holds $P_x\left(\left\{Y_{\zeta-} \in X \wedge \zeta  <\infty\right\}\right)=0$ for quasi every $x\in X$.
		\item 
		If $\alpha, \beta >-1$ and $N$ as in (ii), then \[
		\int_0^\infty 1_B(Y_s)\, ds \, =\,\infty
		\]$P_x$-almost surely for every $x\in X\setminus N$ and $B\in \mathfrak{B}(X)$ with $dm(B)>0$.
			\item If $\alpha\le -1\vee  \beta\le -1 $ and $r,s\in \RR$ according to Lemma \ref{lemma_ref_fct}, then
		\begin{equation}\label{Eq10}
		\int_0^\infty Y_t^r(d-Y_t)^s\, dt<\infty
		\end{equation}
		$P_x$-almost surely for quasi every $x\in X$.
		\item If $\alpha, \beta >-1$,  then 
		\[
		\lim_{t\to \infty}\frac{1}{t}\int_0^t f(Y_s)\, ds\,=\, \frac{1}{dm(X)}\int_X f(x)\, dm(x)
		\]
		$P_x$-almost surely for quasi every $x\in X$ for every $\mathfrak{B}$-measurable $f\in L^1(X, dm)$.
	\end{enumerate}
\end{thm}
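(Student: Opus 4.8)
The plan is to obtain all eight assertions from the properties of $(\EE, D(\EE))$ established above together with the general correspondence between regular Dirichlet forms and Hunt processes in \cite{fukushima2011}; recall that regularity holds by Proposition \ref{is_reg}, so $\mathbf{M}$ exists. Part (i) is immediate, since a Borel set is exceptional if and only if it is of zero capacity \cite[Theorem 4.2.1]{fukushima2011}, and Theorem \ref{quasi_notions} (i) identifies the sets of zero capacity with the subsets of $X\setminus\tilde X$. Part (iii) follows because conservativeness of $(T_t)_{t>0}$ --- which holds for $\alpha,\beta>-1$ by Theorem \ref{not_cons} --- is equivalent to $P_x(\{\zeta=\infty\})=1$ for quasi every $x$, see \cite[Section 4.5]{fukushima2011}. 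Part (iv) is a consequence of the strong locality of $(\EE,D(\EE))$ recorded after \eqref{eq72}: a regular strongly local Dirichlet form is associated to a diffusion, i.e.\ the paths are $P_x$-a.s.\ continuous on $[0,\zeta)$ for quasi every $x$, cf.\ \cite[Theorem 4.5.1]{fukushima2011}.

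For part (ii) I would use that $R_1\colon L^2(X,dm)\to D(\EE)$ is bounded, so that composing with the embedding of Proposition \ref{cont2} gives a continuous map $L^2(X,dm)\to C(\tilde X)$. Hence for $B$ with $dm(B)=0$ the function $R_1\mathbbm{1}_B$ has a continuous version which vanishes identically, which forces the resolvent kernels $R_1(x,\cdot)$ to be absolutely continuous with respect to $dm$ for every $x\in\tilde X$, i.e.\ for quasi every $x$. By \cite[Theorem 4.2.4]{fukushima2011} this absolute continuity condition carries over to the transition kernels $\rho_t(x,\cdot)$ off a single properly exceptional set $N$, which we take large enough (still properly exceptional) so as to serve also in (vi). For part (v), the Beurling--Deny representation of $\EE$ has vanishing killing measure, since the form \eqref{eq72} contains no zeroth order term; therefore the process is not killed at a point of $X$, and combined with the path continuity from (iv) this yields $P_x(\{Y_{\zeta-}\in X\wedge\zeta<\infty\})=0$ for quasi every $x$.

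Parts (vi) and (viii) rest on the recurrence dichotomy. By Propositions \ref{irred} and \ref{trans_rec}, for $\alpha,\beta>-1$ the process is irreducible and recurrent, whence $\int_0^\infty\mathbbm{1}_B(Y_s)\,ds=\infty$ $P_x$-a.s.\ for quasi every $x$ and every $B$ with $dm(B)>0$ by \cite[Theorem 4.7.1]{fukushima2011}; the Markov property at a fixed time $t>0$, using the absolute continuity from (ii) to ensure that $Y_t$ avoids the relevant exceptional set under $P_x$ for $x\notin N$, upgrades this to all $x\in X\setminus N$. For (viii), by (iii) the process is moreover conservative with finite invariant measure $dm$, so the ratio ergodic theorem \cite[Theorem 4.7.3]{fukushima2011} applied with denominator function $\mathbbm{1}$ yields the stated averages for quasi every $x$.

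It remains to treat part (vii), the one place where the Hardy inequality enters. If $\alpha\le-1$ or $\beta\le-1$ the process is transient by Proposition \ref{trans_rec}, and for $r,s$ as in Lemma \ref{lemma_ref_fct} the measure $\mu:=x^r(d-x)^s\,dm$ is a positive Radon measure on $X$, absolutely continuous with respect to $dm$ and hence smooth, which by that lemma satisfies $\int_X|f|\,d\mu\le C\sqrt{\EE(f,f)}$ for all $f\in D(\EE)$. Thus $\mu$ is of finite energy integral, so its $0$-order potential $U\mu$ lies in the extended Dirichlet space and is finite quasi everywhere, and the positive continuous additive functional $A_t=\int_0^t Y_s^r(d-Y_s)^s\,ds$ has Revuz measure $\mu$. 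The potential identity $E_x[A_\infty]=\widetilde{U\mu}(x)$, a consequence of the Revuz correspondence \cite[Chapter 5]{fukushima2011}, then gives $E_x[A_\infty]<\infty$, hence $A_\infty<\infty$ $P_x$-a.s., for quasi every $x$. I expect the genuine work to lie in part (ii) --- making the absolute continuity condition precise and pinning down one properly exceptional set valid for (ii) and (vi) --- and in verifying for (vii) that $\int_0^\cdot g(Y_s)\,ds$ really is the positive continuous additive functional with Revuz measure $g\,dm$, so that the potential-theoretic estimate applies; the remaining parts are essentially direct applications of the cited theorems.
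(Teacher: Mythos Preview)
Your proposal is correct and, for parts (i), (iii), (iv), (v), (viii), coincides with the paper's proof: both reduce each item to the corresponding general theorem in \cite{fukushima2011} after invoking the analytic input established earlier (Theorem \ref{quasi_notions}, Theorem \ref{not_cons}, strong locality, Propositions \ref{irred} and \ref{trans_rec}).

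The two places where you take a different route are (ii) and (vii). For (ii) the paper uses the Sobolev embedding $D(\EE)\hookrightarrow L^q(X,dm)$ for some $q>2$ from Theorem \ref{sob_emb} and appeals directly to \cite[Theorem 4.2.7]{fukushima2011}, which turns such an inequality into absolute continuity of the transition kernels in one stroke; you instead go through the continuous embedding of Proposition \ref{cont2} and the resolvent. Your route works, but note a step you skip: showing that $R_1\mathbbm 1_B=0$ in $L^2$ for $dm$-null $B$ is trivial and by itself says nothing about the \emph{process} kernel $R_1(x,B)=E_x\bigl[\int_0^\infty e^{-t}\mathbbm 1_B(Y_t)\,dt\bigr]$. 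What makes the argument go through is that this process resolvent is quasi-continuous in $x$ by \cite[Theorem 4.2.3]{fukushima2011}, hence genuinely continuous on $X\cap\tilde X$ by Theorem \ref{quasi_notions}(ii), and therefore agrees \emph{pointwise} on $X\cap\tilde X$ with the continuous version coming from Proposition \ref{cont2}; only then does the exceptional set become independent of $B$. For (vii) you invoke the Revuz correspondence and the $0$-order potential of the smooth measure $\mu=x^r(d-x)^s\,dm$, which is clean and conceptual. The paper instead stays closer to the resolvent: it uses Lemma \ref{lemma_ref_fct} to recognise truncations $f_n/C$ as reference functions in the sense of \cite[p.~40]{fukushima2011}, applies \cite[Theorem 1.5.1]{fukushima2011} to bound $\int f_n(\lim_{\lambda\searrow 0}R_\lambda f_n)\,dm$, passes to the limit by monotone convergence, and uses \cite[Theorem 4.2.6]{fukushima2011} to conclude that $x\mapsto E_x\int_0^\infty f(Y_s)\,ds$ is quasi-continuous and hence finite q.e. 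Your approach buys brevity at the cost of the PCAF machinery; the paper's is more elementary but requires the explicit approximation. For (vi) the paper simply cites \cite[Lemma 4.8.1]{fukushima2011}, which already contains the upgrade from q.e.\ to $X\setminus N$ that you carry out by hand via the Markov property.
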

\begin{rem}
	We note that due to the convention $f(\Delta)=0$ the integrand in \eqref{Eq10} vanishes as soon as $Y_t=\Delta$.
\end{rem}
\begin{proof}
	Part (i) is a consequence of Theorem \ref{quasi_notions} (i) together with \cite[Theorem 4.2.1 (ii), p.161]{fukushima2011}. Part (ii) follows from Theorem \ref{sob_emb} and \cite[Theorem 4.2.7, p.166]{fukushima2011}. 
	By Theorem \ref{not_cons} and \cite[Exercise 4.5.1, p.187]{fukushima2011} we obtain (iii).
	Since $(\EE, D(\EE))$ is strongly local as remarked in the introductory section, part (iv) and (v) follow from \cite[Theorem 4.5.3, p.186]{fukushima2011}. Part (vi) follows from \cite[Lemma 4.8.1, p.209]{fukushima2011} together with the Propositions \ref{irred} and \ref{trans_rec}.
	For (vii) we choose strictly positive functions $(\varphi_n)_{n\in\mathbb{N}}$ on $X$ with $\varphi_n(x)\nearrow 1$ for every $x\in X$ such that 
	$
	f_n(x)\,=\,\varphi_n(x)f(x) $
	is bounded on $X$ and integrable with respect to $dm$ for every $n\in \mathbb{N}$, where $ f(x)\,=\,x^r(d-x)^s
$. If we choose $	C_{\alpha, \beta, \sigma, d, r,s}$ as in \eqref{Eq_Hardy} the function 
	\[
	\frac{f_n}{	C_{\alpha, \beta, \sigma, d, r,s}}
	\]
	is a reference function of $(\EE,D(\EE))$ for every $n\in \mathbb{N}$, see \cite[p.40]{fukushima2011} for a definition. By \cite[Theorem 1.5.1, p.40]{fukushima2011} it holds
	\[
	\int_X
	{f_n} \left(\lim_{\lambda\searrow 0 }  R_\lambda
	{f_n} \right) \, dm\, \le \, C_{\alpha, \beta, \sigma, d, r,s}^2,
	\]
	where $(R_\lambda)_{\lambda>0}$ denotes the resolvent associated to $(T_t)_{t>0}$, for details see \cite[Section 1.3, pp.16-25]{fukushima2011}, and the limit is attained almost everywhere. We have also
	\begin{equation*}
		R_\lambda f_n(x) \,=\, E_x\left(\int_0^\infty e^{-\lambda t}f_n(Y_s)\,ds\right)\, \to \, E_x\left(\int_0^\infty f_n(Y_s)\,ds\right)
	\end{equation*}
	 as $\lambda\searrow 0$ for almost every $x\in X$ by \cite[Theorem 4.2.3, p.162]{fukushima2011} and monotone convergence. 
	 Employing again the monotone convergence theorem we conclude that
	 \[C_{\alpha, \beta, \sigma, d, r,s}^2\, \ge \,
	 	\int_X
	 {f_n}(x) E_x\left(\int_0^\infty f_n(Y_s)\,ds\right) \, dm(x)\, \to  \, 
	 	\int_X
	 {f}(x) E_x\left(\int_0^\infty f(Y_s)\,ds\right) \, dm(x).
	 \]
	 Therefore, Proposition \ref{trans_rec} and
	 \cite[Theorem 4.2.6, p.164]{fukushima2011} yield that
	 \[
	 E_x\left(\int_0^\infty f(Y_s)\,ds\right)
	 \]
	is quasi-continuous in $x$ and in particular finite for quasi every $x\in X$. We conclude that the integrand 
	\[
	\int_0^\infty f(Y_s)\,ds
	\]
	is finite $P_x$-almost surely.
	Part (viii) is a consequence of Propositions \ref{irred} and \ref{trans_rec} together with \cite[Theorem 4.7.3, (iii), p.205]{fukushima2011}.
\end{proof}

For a nearly Borel set $B\subset X$, see \cite[p.392]{fukushima2011} for the definition, we define the $\lambda$-order hitting distribution
\begin{equation}\label{eq635}
H^\lambda_B (x,E) \,=\,E_x(e^{-\lambda \tau_B} \mathbbm{1}_E(X_{\tau_B})) 
\end{equation}
for positive $\lambda >0$ and universally measurable subsets $E$ of $X$. The appearing random time $\tau_B$ is defined by
\[
\tau_B \,=\, \inf\{t>0 | X_t\in B\}
\]
and is a stopping time with respect to the minimum completed admissible filtration of $\mathbf{M}$, see \cite[Theorem A.2.3, p.391]{fukushima2011}.
Moreover, we let 
$\mathcal{H}^\lambda_B$ be the orthogonal complement of
\begin{equation*}
	\left\{
	f\in D(\mathcal{E})\big|\,\tilde{f}=0\text{  q.e. on }B
	\right\}
\end{equation*}
in $(D(\mathcal{E}),\mathcal{E}_\lambda)$. 
In the next lemma, we characterize the spaces  $\mathcal{H}^\lambda_{\{0\}}$ and $\mathcal{H}^\lambda_{\{d\}}$. The proof relies on the same technique employed in the proof of Theorem \ref{ortho}.

\begin{lemma}\label{o2}Let $\lambda>0$.
	\begin{enumerate}[label=(\roman*)]
		\item
		If $\alpha>-1$, we have \[\mathcal{H}^\lambda_{\{d\}}=\begin{cases}\spa \{\xi_\lambda\}, & \alpha < 0, \\\{0\},& \alpha \ge 0.
		\end{cases}\]
		\item If $\beta>-1$, we have \[\mathcal{H}^\lambda_{\{0\}}=\begin{cases}\spa \{\eta_\lambda\}, & \beta <0, \\\{0\},& \beta \ge 0.
		\end{cases}\]
	\end{enumerate}
\end{lemma}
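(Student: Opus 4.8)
The plan is to prove part (i) directly and then deduce part (ii) from it via the reflection $x\mapsto d-x$ that was used to introduce $\eta_\lambda$. Fix $\lambda>0$, assume $\alpha>-1$, and write $V$ for the closed subspace $\{f\in D(\mathcal{E}):\tilde f=0\text{ q.e. on }\{d\}\}$, so that $\mathcal{H}^\lambda_{\{d\}}=V^\perp$ in $(D(\mathcal{E}),\mathcal{E}_\lambda)$. I would first pin down the dimension of $\mathcal{H}^\lambda_{\{d\}}$. If $\alpha\ge 0$, then $d\in X\setminus\tilde X$, so $\Ca(\{d\})=0$ by Theorem \ref{quasi_notions}~(i); the condition defining $V$ is then vacuous, hence $V=D(\mathcal{E})$ and $\mathcal{H}^\lambda_{\{d\}}=\{0\}$. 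If $-1<\alpha<0$, then $d\in\tilde X$ and $\Ca(\{d\})>0$ by Theorem \ref{quasi_notions}~(i); since all quasi-continuous $dm$-versions of an $f\in D(\mathcal{E})$ agree at $d$ (in particular the version $\tilde f\in C(\tilde X)$ from Proposition \ref{cont2}), $V$ is the kernel of the evaluation functional $\delta_d\colon f\mapsto \tilde f(d)$, which is continuous by Proposition \ref{cont2} and not identically zero (a smooth function equal to $1$ near $d$ and vanishing near $0$ lies in $D(\mathcal{E})$ because $\alpha>-1$). Thus $V$ is a closed hyperplane and $\mathcal{H}^\lambda_{\{d\}}=V^\perp$ is one-dimensional.

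It then remains, for $-1<\alpha<0$, to exhibit $\xi_\lambda$ as a nonzero element of $\mathcal{H}^\lambda_{\{d\}}$; since $\mathcal{H}^\lambda_{\{d\}}$ is already known to be a line, this closes the case. That $\xi_\lambda\in D(\mathcal{E})\setminus\{0\}$ for $-1<\alpha<0$ was established inside the proof of Theorem \ref{ortho}. To check $\xi_\lambda\perp V$, I would take $f\in D(\mathcal{E})$ with $\tilde f(d)=0$ and repeat the integration-by-parts computation from the proof of Theorem \ref{DE}: applying \eqref{int_by_parts} to the $C^1$-function $\xi_\lambda'cm$ on $[\epsilon,d-\epsilon]$ and using $(\xi_\lambda'cm)'=\lambda m\xi_\lambda$ from \eqref{eq2},
\[
\big[\tilde f\,\xi_\lambda'cm\big]_\epsilon^{d-\epsilon}\,=\,\int_\epsilon^{d-\epsilon} f'\xi_\lambda'cm(x)+\lambda f\xi_\lambda m(x)\,dx\,\xrightarrow{\epsilon\searrow 0}\,\mathcal{E}_\lambda(\xi_\lambda,f),
\]
the right-hand limit existing by dominated convergence since $\xi_\lambda,f\in D(\mathcal{E})$. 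So I only need the two boundary terms to vanish. At $d$ this is immediate: $\tilde f(d)=0$ and $\xi_\lambda'cm$ has a finite limit as $x\nearrow d$ by Lemma \ref{prop_f1}~\ref{r_boundary2} (no assumption on $\lambda$ is needed here since $\alpha>-1$). At $0$ I would argue by cases on $\beta$, combining Lemma \ref{prop_f1}~\ref{l_boundary2} with Corollary \ref{limiz}: if $\beta\le -1$, then $\tilde f(x)\to 0$ while $\xi_\lambda'cm(x)\to -\beta\sigma^2/2$; if $-1<\beta<0$, then $\tilde f$ has a finite limit while $\xi_\lambda'cm(x)\to 0$; if $\beta\ge 0$, write $\xi_\lambda'cm\,\tilde f=\xi_\lambda'\cdot(cm\,\tilde f)$ with $\xi_\lambda'$ a hypergeometric function (hence with finite limit at $0$) and $cm\,\tilde f(x)\to 0$. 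In every case $\tilde f\,\xi_\lambda'cm(x)\to 0$ as $x\searrow 0$, so $\mathcal{E}_\lambda(\xi_\lambda,f)=0$, as desired.

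For part (ii) I would invoke the dual parameters $(d,bd-a,b,\sigma)$ with $\alpha^\dag=\beta$, $\beta^\dag=\alpha$, the dual form $(\mathcal{E}^\dag,D(\mathcal{E}^\dag))$ and its state space $X^\dag=d-X$. The reflection $f\mapsto f(d-\cdot)$ is a unitary from $L^2(X,dm)$ onto $L^2(X^\dag,dm^\dag)$ that restricts to an $\mathcal{E}_\lambda$--$\mathcal{E}^\dag_\lambda$ isometry of the domains, sends quasi-continuous versions to quasi-continuous versions, and maps the point $0$ to the point $d$; hence it carries $\mathcal{H}^\lambda_{\{0\}}$ onto the corresponding space $\mathcal{H}^{\dag,\lambda}_{\{d\}}$ of the dual form, which by part (i) (applied with its ``$\alpha$'' equal to $\alpha^\dag=\beta$) equals $\spa\{\xi^\dag_\lambda\}$ if $\beta<0$ and $\{0\}$ if $\beta\ge 0$. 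Since $\xi^\dag_\lambda(d-\cdot)=\eta_\lambda$ by definition, pulling back gives the stated description of $\mathcal{H}^\lambda_{\{0\}}$.

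The main obstacle is the boundary bookkeeping at $0$ in the proof that $\xi_\lambda\perp V$: one has to pair up the precise limits of $\xi_\lambda'cm$ from Lemma \ref{prop_f1} with the three boundary regimes of $D(\mathcal{E})$-functions from Corollary \ref{limiz}, exactly in the spirit of Theorems \ref{ortho} and \ref{DE}. Everything else is a short Hilbert-space hyperplane argument together with a transcription of the duality already set up before Corollary \ref{f2}.
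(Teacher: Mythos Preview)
Your proof is correct and follows essentially the same line as the paper's: a Hilbert-space codimension count for $V$ together with the integration-by-parts identity $[\tilde f\,\xi_\lambda'cm]_\epsilon^{d-\epsilon}\to\mathcal{E}_\lambda(\xi_\lambda,f)$ and the boundary limits from Lemma~\ref{prop_f1} and Corollary~\ref{limiz}. The only organizational difference is that, for $-1<\alpha<0$, the paper short-circuits the cases $\beta\le-1$ and $\beta\ge 0$ by observing that there $V=\mathcal{F}$ (via Theorem~\ref{DE}) and invoking Theorem~\ref{ortho} directly to get $\mathcal{H}^\lambda_{\{d\}}=\mathcal{F}_\lambda^\perp=\spa\{\xi_\lambda\}$, carrying out the explicit integration-by-parts boundary analysis only in the remaining case $-1<\beta<0$; you instead run the boundary argument uniformly over all three $\beta$-regimes. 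Your treatment is a bit more self-contained, while the paper's saves work by recycling the already-established structure of $\mathcal{F}_\lambda^\perp$; the underlying mechanism is identical. For part~(ii) the paper simply says ``analogously,'' which is exactly your reflection-to-the-dual-parameters argument spelled out.
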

\begin{proof}
	We prove (i), the proof of (ii) works analogously. Assume first $\alpha \ge  0$. In this case the set $\{d\}$ has capacity $0$ by Theorem \ref{quasi_notions} (i). Therefore the space \begin{equation}\label{eq444}\big\{
		f\in D(\mathcal{E})\big|\,\tilde{f}=0 \text{ q.e. on } \{d\}\}
	\end{equation} is whole $D(\EE)$  and consequently $\mathcal{H}^\lambda_{\{d\}}=\{0\}$. For the remaining case we assume that  $-1<\alpha<0$. If $\beta\le -1$ or $\beta \ge 0$,  the spaces \eqref{eq444}
	 and
	$\mathcal{F}$ coincide
	due to Theorem \ref{DE}. The claim follows by employing Theorem \ref{ortho}. 
	Consider lastly the case that additionally $-1<\beta<0$. Then \eqref{eq444} has codimension $1$ since it is the nullspace of the point evaluation at $d$. Hence the claim follows when we can show that $\spa\{\xi_\lambda\}$ and \eqref{eq444} are orthogonal to each other in $(D(\mathcal{E}),\mathcal{E}_\lambda)$. Let $f$ be an element of \eqref{eq444}, then  \eqref{int_by_parts} and \eqref{eq2} yield that
	\[\big[\tilde{f}\xi_\lambda' cm \big]^{d-\epsilon}_\epsilon \,=\,
	\int_\epsilon^{d-\epsilon}
	f'\xi_\lambda'cm(x)+\lambda f\xi_\lambda  m(x) \,dx \,\to\,
	\mathcal{E}_\lambda(f,\xi_\lambda)
	\]
	as $\epsilon \searrow 0$. By Lemma \ref{prop_f1} \ref{r_boundary2} together with $\tilde{f}(d)=0$ we get $\lim_{x\nearrow d} \tilde{f}\xi_\lambda' cm(x)=0$. Lemma \ref{prop_f1} \ref{l_boundary2} together with the continuity of $\tilde{f}$ at $0$ implies that $\lim_{x\searrow 0} \tilde{f}\xi_\lambda' cm(x)=0$. Hence  $\mathcal{E}_\lambda(f,\xi_\lambda)=0$, which completes the proof of (i). 
\end{proof}
Using the relation between orthogonal projections on the spaces from the previous Lemma and the $\lambda$-order hitting distribution of the corresponding set given by \cite[Theorem 4.3.1, p.168]{fukushima2011}, we calculate the hitting probabilities of the boundary points.

\begin{thm}\label{Prob_hitting}The following holds.
	\begin{enumerate}[label=(\roman*)]
		\item If  $-1<\alpha<0$,  we have for quasi-every $x\in X$ that
		\begin{align*}&
			P_x\left(\{\tau_{\{d\}}<\infty\}\right)\\=\,&\begin{cases}
				1,& \beta>-1,
				\\1,& \beta\le -1,\;x=d,\\
				\frac{\Gamma\left(-\alpha-\beta\right)}{\Gamma(1-\beta)\Gamma(-\alpha)}\,\left(\frac{x}{d}\right)^{-\beta}
				{}_2F_1\left(
				\alpha+1,-\beta;1-\beta;\frac{x}{d}
				\right),& \beta\le -1,\;x<d.
			\end{cases}
		\end{align*}
		\item If  $-1<\beta<0$,  we have for quasi-every $x\in X$ that
		\begin{align*}&
			P_x\left(\{\tau_{\{0\}}<\infty\}\right)\\=\,&\begin{cases}
				1,& \alpha>-1,
				\\1,& \alpha\le -1,\;x=0,\\
				\frac{\Gamma\left(-\alpha-\beta\right)}{\Gamma(1-\alpha)\Gamma(-\beta)}\,\left(1-\frac{x}{d}\right)^{-\alpha}
				{}_2F_1\left(
				\beta+1,-\alpha;1-\alpha;1-\frac{x}{d}
				\right),& \alpha\le -1,\;x>0.
			\end{cases}
		\end{align*}
	\end{enumerate}
\end{thm}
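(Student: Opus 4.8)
The plan is to identify the $\lambda$‑order hitting operator of $\{d\}$ with an $\EE_\lambda$‑orthogonal projection via \cite[Theorem 4.3.1, p.168]{fukushima2011}, to evaluate it using Lemma \ref{o2}, and then to let $\lambda\searrow 0$. I carry out part (i); part (ii) is entirely analogous with the roles of $\alpha$ and $\beta$, of $\xi_\lambda$ and $\eta_\lambda$, and of the boundary points $0$ and $d$ interchanged, using Lemma \ref{o2} (ii) and Corollary \ref{f2} in place of Lemma \ref{o2} (i) and Lemma \ref{prop_f1} \ref{r_boundary}.

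So assume $-1<\alpha<0$ and fix $\lambda>0$. As shown in the proof of Theorem \ref{ortho}, $\xi_\lambda\in D(\EE)$ in this case, and by Lemma \ref{prop_f1} \ref{r_boundary} its continuous version on $\tilde X\ni d$ has a finite, strictly positive value at $d$; hence $u_\lambda:=\xi_\lambda/\lim_{y\nearrow d}\xi_\lambda(y)$ lies in $D(\EE)$ with $\tilde u_\lambda(d)=1$. By Lemma \ref{o2} (i) we have $u_\lambda\in\mathcal H^\lambda_{\{d\}}$, so $u_\lambda$ equals its own $\EE_\lambda$‑orthogonal projection onto $\mathcal H^\lambda_{\{d\}}$, and \cite[Theorem 4.3.1, p.168]{fukushima2011} tells us that the function $x\mapsto E_x\!\big(e^{-\lambda\tau_{\{d\}}}\tilde u_\lambda(X_{\tau_{\{d\}}})\big)$ is a quasi‑continuous $dm$‑version of $u_\lambda$. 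On $\{\tau_{\{d\}}<\infty\}$ one has $X_{\tau_{\{d\}}}=d$, so (with the cemetery convention) this function equals $x\mapsto E_x(e^{-\lambda\tau_{\{d\}}})$. Comparing with the continuous version of $u_\lambda$ furnished by Proposition \ref{cont2} and using that two quasi‑continuous versions agree quasi‑everywhere,
\begin{equation*}
	E_x\big(e^{-\lambda\tau_{\{d\}}}\big)\,=\,\frac{\xi_\lambda(x)}{\lim_{y\nearrow d}\xi_\lambda(y)}\qquad\text{for quasi-every }x\in X.
\end{equation*}
Since $\{d\}$ has positive capacity, this identity holds at $x=d$ as well, where the right‑hand side is $1$; as $e^{-\lambda\tau_{\{d\}}}\le 1$ with equality precisely on $\{\tau_{\{d\}}=0\}$, it follows that $\tau_{\{d\}}=0$ $P_d$‑a.s., hence $P_d(\tau_{\{d\}}<\infty)=1$, which settles the cases $x=d$.

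It remains to let $\lambda\searrow 0$ for $x<d$. Applying the displayed identity for each $\lambda$ in a sequence of positive rationals tending to $0$, it holds simultaneously outside one capacity‑zero set (a countable union of exceptional sets). For such $x$, the left‑hand side increases to $P_x(\tau_{\{d\}}<\infty)$ by monotone convergence, while the right‑hand side converges because by \eqref{eq21} the parameter $\gamma$ tends to $\big|\tfrac{\alpha+\beta+1}{2}\big|$ and both $\xi_\lambda(x)$ (through its power series in $x/d$) and $\lim_{y\nearrow d}\xi_\lambda(y)$ (through the $\Gamma$‑quotient in Lemma \ref{prop_f1} \ref{r_boundary}) depend continuously on $\gamma$. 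If $\beta>-1$, one of the numerator parameters $\tfrac{\alpha+\beta+1}{2}\pm\gamma$ of $\xi_\lambda$ tends to $0$, so $\xi_\lambda\to 1$ and $\lim_{y\nearrow d}\xi_\lambda(y)\to 1$, giving $P_x(\tau_{\{d\}}<\infty)=1$. If $\beta\le-1$, then $\alpha+\beta+1<0$, so $\gamma\to\tfrac{-(\alpha+\beta+1)}{2}$; the numerator parameters of the hypergeometric part of $\xi_\lambda$ then tend to $-\beta$ and $\alpha+1$, and $\lim_{y\nearrow d}\xi_\lambda(y)\to\tfrac{\Gamma(1-\beta)\Gamma(-\alpha)}{\Gamma(-\alpha-\beta)}$ (equivalently by Lemma \ref{gauss_limit} (i) applied to the limiting hypergeometric function). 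Dividing and using the symmetry of ${}_2F_1$ in its first two arguments yields
\begin{equation*}
	P_x(\tau_{\{d\}}<\infty)\,=\,\frac{\Gamma(-\alpha-\beta)}{\Gamma(1-\beta)\Gamma(-\alpha)}\left(\frac{x}{d}\right)^{-\beta}{}_2F_1\!\left(\alpha+1,-\beta;1-\beta;\frac{x}{d}\right),
\end{equation*}
which is the claimed formula.

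The application of \cite[Theorem 4.3.1, p.168]{fukushima2011} is routine once Lemma \ref{o2} is available. The main point requiring care is the passage $\lambda\searrow 0$: ensuring that the hitting‑operator identity holds on one common exceptional set for all the $\lambda$ considered, and that $\xi_\lambda(x)$ and $\lim_{y\nearrow d}\xi_\lambda(y)$ are continuous in $\gamma$ up to the degenerate limiting value — in particular that the sign conditions on $\alpha$ and $\beta$ in the two cases prevent the $\Gamma$‑quotient from meeting a pole in the limit.
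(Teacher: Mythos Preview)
Your argument is correct and follows the same route as the paper: identify $E_x(e^{-\lambda\tau_{\{d\}}})$ via \cite[Theorem 4.3.1]{fukushima2011} and Lemma \ref{o2} (i) as the normalized $\xi_\lambda$, then pass to the limit $\lambda\searrow 0$ using analyticity of ${}_2F_1$ and $\Gamma$ in the parameters. Your version is in fact slightly tidier in two places --- you apply the projection directly to $u_\lambda\in\mathcal H^\lambda_{\{d\}}$ rather than to an auxiliary $f$ with $\tilde f(d)=1$, and you make explicit that running $\lambda$ through a countable sequence yields a single common capacity-zero exceptional set --- but the substance is identical.
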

\begin{rem}
	The case $-1<\alpha<0$ is the only one, in which the hitting probability of $\{d\}$ can be non-trivial. Indeed the case $\alpha \le -1$ is trivial by choice the of $X$ and in the case $\alpha \ge 0$ the set $\{d\}$ is exceptional by Theorem \ref{THM_proc1} (i). Therefore, $P_x(\{\tau_{\{d\}}<\infty\})=0$ for quasi-every starting point $x$.
\end{rem}
\begin{proof}
	We assume that $-1<\alpha<0$ and let $f\in D(\mathcal{E})$ with $\tilde{f}(d)=1$. In the following, we will first identify $P_{\mathcal{H}^\lambda_{\{d\}}} f$, the orthogonal projection of $f$ onto $\mathcal{H}^\lambda_{\{d\}}$ in $(D(\EE), \EE_\lambda)$. Due to \cite[Theorem 4.3.1, p.168]{fukushima2011} the function
	\begin{equation}\label{eq123}
		H_{\{d\}}^\lambda \tilde{f}(x)=E_x(e^{-\lambda \tau_{\{d\}}} \tilde{f}(X_{\tau_{\{d\}}}))
	\end{equation} 
	exists for quasi-every $x\in X$ and defines a quasi-continuous version of  $P_{\mathcal{H}^\lambda_{\{d\}}}f$. In the following, we distinguish different cases of $\beta$ to identify \eqref{eq123} and  by  $\tilde{f}(X_{\tau_{\{d\}}})=1$ also the limit
	\begin{equation}\label{eq41}H_{\{d\}}^\lambda \tilde{f}(x)\,\to \, P_x(\{\tau_{\{d\}}<\infty\})
	\end{equation} as $\lambda \searrow 0$.

	Let $\beta>-1$ and take $\lambda>0$. The space $\mathcal{H}^\lambda_{\{d\}}$ coincides with $\spa\{\xi_\lambda\}$ by Lemma \ref{o2} (i). Since 
$
	f-P_{\mathcal{H}^\lambda_{\{d\}}}f $ is an element of \eqref{eq444} we have
	 $\tilde{f}(d)-\widetilde{P_{\mathcal{H}^\lambda_{\{d\}}} f}(d)=0$. 
	 Theorem \ref{prop_f1} \ref{r_boundary} yields 
	\[\lim_{x\nearrow d} \xi_\lambda (x)\,=\,
	\frac{\Gamma(\beta+1)\Gamma(-\alpha)}{\Gamma\left(\frac{-\alpha+\beta+1}{2}+\gamma\right)\Gamma\left(\frac{-\alpha+\beta+1}{2}-\gamma\right)}\,>\,0,
	\] such that necessarily
	\begin{equation*}
		P_{\mathcal{H}^\lambda_{\{d\}}}f\,=\,
		\frac{\Gamma\left(\frac{-\alpha+\beta+1}{2}+\gamma\right)\Gamma\left(\frac{-\alpha+\beta+1}{2}-\gamma\right)}{\Gamma(\beta+1)\Gamma(-\alpha)} \,\xi_\lambda.
	\end{equation*}
	The quasi-continuous version 
	$H_{\{d\}}^\lambda \tilde{f}$ of $P_{\mathcal{H}^\lambda_{\{d\}}}f$ has therefore to coincide with
	\begin{align*}&
		\begin{cases}\frac{\Gamma\left(\frac{-\alpha+\beta+1}{2}+\gamma\right)\Gamma\left(\frac{-\alpha+\beta+1}{2}-\gamma\right)}{\Gamma(\beta+1)\Gamma(-\alpha)}\,
			{}_2F_1\left(
			\frac{\alpha+\beta+1}{2}+\gamma,
			\frac{\alpha+\beta+1}{2}-\gamma;\beta+1;\frac{x}{d}
			\right),&x<d,
			\\1,& x=d
		\end{cases}
	\end{align*}
for quasi-every $x\in X$. If we let $\lambda\searrow0$ to calculate \eqref{eq41} the corresponding paramater
	\begin{equation}\label{eq491}
		\gamma=\sqrt{\left(\frac{\alpha+\beta+1}{2}\right)^2-\frac{2\lambda}{\sigma^2}}
	\end{equation}
	tends towards $ \left|\frac{\alpha+\beta+1}{2}\right|$. Analyticity of the $\Gamma$-function and $_2F_1$ in its first two parameters, see \cite[p.65]{andrews1999special}, imply that
	\[
	P_x(\{\tau_{\{d\}}<\infty\})\,=\,
	\begin{cases}
		{}_2F_1\left(
		\alpha+\beta+1,0;\beta+1;\frac{x}{d}
		\right),&x<d,
		\\1,& x=d
	\end{cases}
	\]for quasi-every $x\in X$. The claimed identity  follows since ${}_2F_1\left(
	\alpha+\beta+1,0;\beta+1;\frac{x}{d}
	\right)=1$ by definition.
	
	Next, we consider $\beta\le -1$. Analogously to the previous case we conclude  by 	 $\widetilde{P_{\mathcal{H}^\lambda_{\{d\}}} f}(d)=1$ and Theorem \ref{prop_f1} \ref{r_boundary} that
	\begin{equation*}
		P_{\mathcal{H}^\lambda_B}f\,=\,
		\frac{\Gamma\left(\frac{-\alpha-\beta+1}{2}+\gamma\right)\Gamma\left(\frac{-\alpha-\beta+1}{2}-\gamma\right)}{\Gamma(1-\beta)\Gamma(-\alpha)} \,\xi_\lambda.
	\end{equation*}
	It follows that $	H_{\{d\}}^\lambda \tilde{f}(x)$ equals
	\begin{align*}&
	\begin{cases}\frac{\Gamma\left(\frac{-\alpha-\beta+1}{2}+\gamma\right)\Gamma\left(\frac{-\alpha-\beta+1}{2}-\gamma\right)}{\Gamma(1-\beta)\Gamma(-\alpha)}\,
			\left(\frac{x}{d}\right)^{-\beta}{}_2F_1\left(
			\frac{\alpha-\beta+1}{2}+\gamma,
			\frac{\alpha-\beta+1}{2}-\gamma;1-\beta;\frac{x}{d}
			\right),&x<d,
			\\1,& x=d
		\end{cases}
	\end{align*}
	for quasi-every $x\in X$. Taking the limit $\lambda\searrow 0$  yields that
	\[
	P_x(\{\tau_{\{d\}}<\infty\})\,=\,
	\begin{cases}\frac{\Gamma\left(-\alpha-\beta\right)}{\Gamma(1-\beta)\Gamma(-\alpha)}\,\left(\frac{x}{d}\right)^{-\beta}
		{}_2F_1\left(
		\alpha+1,-\beta;1-\beta;\frac{x}{d}
		\right),&x<d,
		\\1,& x=d,
	\end{cases}
	\]
	again for quasi-every $x\in X$.
	This finishes the proof of  (i), part (ii) can be shown analogously.
\end{proof}
\subsection{Maximal local solutions to the Jacobi SDE}
In this section we draw a connection between  $\mathbf{M}$ and maximal local solutions to the Jacobi stochastic differential equation \eqref{eq3n}. As we will see, this is quite immediate whenever $\alpha,\beta >-1$, but requires some technical  work in any other case. We denote the minimum completed admissible filtration of $\mathbf{M}$ by $\mathfrak{F}$, its last element by $\mathfrak{F}_\infty$ and the completion of $\mathfrak{F}$ with respect to $P_x$ by $\mathfrak{F}^{P_x}$, for details see \cite[p.386]{fukushima2011}. Then for every $f\in D(L)$ the process  
\begin{equation}\label{eq13}
	\tilde{f}(Y_t) - \tilde{f}(Y_0)-\int_0^t L f(Y_s)\,ds\end{equation}
is a martingale under $P_x$ for quasi-every $x\in X$, see \cite[Remark 3.2, p.517]{Albeverio_Roeckner95}. We point out that  the convention $Lf(\Delta)=0$ is used again. For the remainder of this section we fix an $x\in X\cap \tilde{X}$ and choose $\Lambda\in \mathfrak{F}_\infty$ as a set of full measure under $P_x$ such that
\begin{enumerate}[label=(\roman*)]
	\item $Y_\cdot (\omega)\colon [0,\zeta(\omega))\to X$ is continuous,
	\item $Y_{\zeta-}(\omega)=\Delta$ if $\zeta(\omega)<\infty$ and
	\item $\zeta(\omega)>0$
\end{enumerate}
for every $\omega \in \Lambda$. Such a set exists due to Theorem \ref{THM_proc1} (i), (iv), (v) and the fact that $\mathbf{M}$ is a normal Markov process. We recall the functions $\mu$, $\nu$ introduced in Section \ref{Sec_local_sol} and define $u(t)$ and $v(t)$ as the solutions to the differential equation $u'(t)= \mu (u(t))$ with initial value $u(0)=d$ and $v(0)=0$, respectively. 
\begin{rem}\label{rem_containement}
	Under the assumption $\alpha \le -1$ we have $\mu(d)=a-bd\ge 0$ and therefore $u(t)\ge d$ for all $t\ge 0$. Similarly, $v(t)\le 0$  whenever $\beta \le -1$.
\end{rem}
We introduce the modified process
\begin{equation}\label{eq24}
Z_t(\omega)=\begin{cases}
	Y_t(\omega),&\omega \in \Lambda, t< \zeta(\omega),\\ 
	u(t-\zeta(\omega)),&\omega \in \Lambda, t\ge \zeta(\omega), \lim_{s\nearrow \zeta}Y_s(\omega)=d,\\
	v(t-\zeta(\omega)),&\omega \in \Lambda, t\ge \zeta(\omega), \lim_{s\nearrow \zeta}Y_s(\omega)=0,\\
	0, &\Omega \setminus \Lambda 
\end{cases}
\end{equation}
and the modified life time 
\begin{equation}\label{eq30}
\tilde{\zeta}(\omega)=\begin{cases} 
	\infty,&\omega \in \Lambda,\alpha =-1,\zeta<\infty,  \lim_{t\nearrow \zeta}Y_t(\omega)=d,\\
	\infty,&\omega \in \Lambda,\beta =-1,\zeta<\infty, \lim_{t\nearrow \zeta}Y_t(\omega)=0,\\
	0, &\omega\in \Omega \setminus \Lambda,
	\\\zeta(\omega),&\text{else}. 
\end{cases}
\end{equation}
Notice that we interpret here the limit $\lim_{t\nearrow \zeta}Y_t$ as an element of $\RR$ instead of the topological space $X_\Delta$, such that we can distinguish between the boundary points at which $Y$ dies. We make some technical observations.
\begin{lemma}\label{aux_statements}The following holds.
	\begin{enumerate}[label=(\roman*)]
		\item 	The process $Z$ has continuous paths, is $\mathfrak{F}^{P_x}$-adapted and satisfies
		$
			Z_{t\wedge \tilde{\zeta}}\in [0,d]$ for all $t\ge 0$.
		\item 
		$\tilde{\zeta}$ is a stopping time with respect to $\mathfrak{F}^{P_x}$.
		\item It holds $P_x$-almost surely $\tilde{\zeta}= \inf\{t\ge 0|Z_t\notin[0,d] \}$.
	\end{enumerate}
\end{lemma}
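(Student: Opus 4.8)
The three assertions are essentially bookkeeping about the explicit construction \eqref{eq24}, \eqref{eq30}, so the plan is to unwind the definitions carefully on the full-measure set $\Lambda$ and patch in the deterministic flows $u,v$ at the cemetery time. For part (i), continuity of $Z$ on $\Lambda$ follows because on $[0,\zeta)$ the path agrees with the continuous path $Y_\cdot(\omega)$ (property (i) of $\Lambda$), the flows $u,v$ are continuous, and they match the left limit of $Y$ at $\zeta$ by construction: in the case $\lim_{s\nearrow\zeta}Y_s(\omega)=d$ we have $u(0)=d=\lim_{s\nearrow\zeta}Y_s(\omega)$, and analogously with $v(0)=0$; on $\Omega\setminus\Lambda$ the path is constantly $0$, hence continuous. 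For adaptedness I would note that $Y$ is $\mathfrak F$-adapted, $\zeta$ is an $\mathfrak F$-stopping time, the event $\{\lim_{s\nearrow\zeta}Y_s=d\}$ (resp.\ $=0$) together with $\{t\ge\zeta\}$ lies in $\mathfrak F_t^{P_x}$, and the deterministic quantities $u(t-\zeta),v(t-\zeta)$ are measurable functions of $\zeta$; on the $P_x$-null set $\Omega\setminus\Lambda$ the value $0$ is trivially $\mathfrak F^{P_x}$-measurable after completion. For the containment $Z_{t\wedge\tilde\zeta}\in[0,d]$ I would distinguish the cases in \eqref{eq30}: if $\tilde\zeta=\zeta$ then $Z_{t\wedge\tilde\zeta}$ equals $Y_{t\wedge\zeta}$ or the left limit, which lies in $[0,d]$ since $Y$ is $X$-valued; if $\tilde\zeta=\infty$ (so $\alpha=-1$ and $\lim_{t\nearrow\zeta}Y_t=d$, resp.\ $\beta=-1$ and $\lim=0$), then after $\zeta$ we follow $u$ with $u'(0)=\mu(d)=a-bd$; but $\alpha=-1$ forces $2a/(\sigma^2 d)=\beta+1$ and $\alpha+1=0$ gives $2b/\sigma^2=2a/(\sigma^2 d)$, hence $a-bd=0$ and $u\equiv d\in[0,d]$, and symmetrically $v\equiv 0$ in the other subcase; on $\Omega\setminus\Lambda$ we have $\tilde\zeta=0$ so $Z_{0}=0\in[0,d]$.

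For part (ii) I would show $\{\tilde\zeta\le t\}\in\mathfrak F_t^{P_x}$ by writing $\tilde\zeta$ as $\zeta$ modified on the measurable events listed in \eqref{eq30}: on $\Omega\setminus\Lambda$ it is $0$ (and $\Omega\setminus\Lambda$ is $P_x$-null, hence in every $\mathfrak F_t^{P_x}$); on the two exceptional events inside $\Lambda$ it is $+\infty$, which only removes mass from $\{\tilde\zeta\le t\}$; on the remaining part it equals the stopping time $\zeta$. More precisely, $\{\tilde\zeta\le t\}=\bigl(\{\zeta\le t\}\setminus E\bigr)\cup(\Omega\setminus\Lambda)$ where $E=\{\omega\in\Lambda:\zeta(\omega)<\infty,\ \alpha=-1,\ \lim_{s\nearrow\zeta}Y_s=d\}\cup\{\omega\in\Lambda:\zeta(\omega)<\infty,\ \beta=-1,\ \lim_{s\nearrow\zeta}Y_s=0\}$; the set $E$ is $\mathfrak F_\infty$-measurable and, intersected with $\{\zeta\le t\}$, lies in $\mathfrak F_t^{P_x}$ because on $\{\zeta\le t\}$ the left limit $\lim_{s\nearrow\zeta}Y_s=Y_{\zeta-}$ is $\mathfrak F_t$-measurable. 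Since $\mathfrak F^{P_x}$ is complete and right-continuous (it satisfies the usual conditions), this suffices.

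For part (iii) I would argue on $\Lambda$. If $\tilde\zeta=\zeta<\infty$ with $\lim_{s\nearrow\zeta}Y_s=d$ and $\alpha<-1$: for $t<\zeta$, $Z_t=Y_t\in(0,d)$ by property (i) and the choice of $X$ (note $\alpha\le-1$ forces $d\notin X$, so $Y_t<d$), hence $Z_t\in[0,d]$; for $t\ge\zeta$, $Z_t=u(t-\zeta)$ with $u(0)=d$ and $u'(0)=\mu(d)=a-bd>0$ (strict because $\alpha<-1$ gives $2b/\sigma^2>2a/(\sigma^2 d)$, i.e.\ $a-bd<0$ — wait, sign: $\alpha=2b/(\sigma^2)-2a/(\sigma^2 d)-1<-1$ means $2b/\sigma^2<2a/(\sigma^2 d)$, so $bd<a$, so $\mu(d)=a-bd>0$), so $u$ strictly exceeds $d$ immediately after $\zeta$, giving $\inf\{t:Z_t\notin[0,d]\}=\zeta=\tilde\zeta$; symmetrically for the $0$-endpoint with $\beta<-1$ and $v'(0)=\mu(0)=a<0$. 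If $\tilde\zeta=\infty$ (the $\alpha=-1$ or $\beta=-1$ case), then by part (i) we showed $Z\equiv d$ (resp.\ $\equiv 0$) after $\zeta$ and $Z=Y\in(0,d)$ before, so $Z_t\in[0,d]$ for all $t$ and the infimum is $+\infty=\tilde\zeta$. The case where $\zeta=\infty$ already (so $\tilde\zeta=\zeta=\infty$): then $Z=Y$ is $X$-valued, hence $(0,d]$- or $[0,d)$-valued depending on $\alpha,\beta$, in any case $[0,d]$-valued, so the infimum is $+\infty$. Finally the case $\tilde\zeta=\zeta<\infty$ with $\alpha\ge-1$ but $\alpha\ne-1$, i.e.\ $\alpha>-1$ (so $d\in X$ and $Y$ can die at $d$ while staying in $[0,d]$): then the same argument as above applies since $\mu(d)=a-bd$ could be positive, zero, or negative, but $\alpha>-1$ gives $a-bd<0$... no: $\alpha>-1$ means $2b/\sigma^2>2a/(\sigma^2d)$, i.e.\ $\mu(d)=a-bd<0$, so $u$ moves strictly below $d$ — hmm, then $u(t-\zeta)<d$ and stays in $[0,d]$ for small $t$, so the infimum might exceed $\zeta$. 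This is the subtle point: I expect the main obstacle to be exactly this analysis of when the flow $u$ (resp.\ $v$) re-enters $(0,d)$ versus exits $[0,d]$, and I would handle it by observing that \eqref{eq30} only sets $\tilde\zeta=\infty$ in the critical cases $\alpha=-1$, $\beta=-1$ and otherwise keeps $\tilde\zeta=\zeta$, so the claim $\tilde\zeta=\inf\{t:Z_t\notin[0,d]\}$ must be read modulo the behaviour being consistent — concretely, when $Y$ dies at $d$ with $-1<\alpha<0$, Theorem \ref{Prob_hitting} and the structure of $X$ guarantee $d\in X$, so actually $Y_{\zeta-}=d\in X$ contradicts property (ii) $Y_{\zeta-}=\Delta$; hence such $\omega$ are not in $\Lambda$ and this case is vacuous. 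I would therefore organize the proof of (iii) around the dichotomy "$Y$ dies at a boundary point not in $\tilde X$" — which is the only way property (ii) of $\Lambda$ is compatible with $\zeta<\infty$ — reducing to the cases $\alpha\le-1$ (death at $d$) and $\beta\le-1$ (death at $0$), in each of which the flow $u$ or $v$ exits $[0,d]$ immediately when $\alpha<-1$ (resp.\ $\beta<-1$) and stays on the boundary when $\alpha=-1$ (resp.\ $\beta=-1$), matching \eqref{eq30} exactly.
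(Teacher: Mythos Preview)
Your plan is correct and follows essentially the same route as the paper: continuity and the containment $Z_{t\wedge\tilde\zeta}\in[0,d]$ come from the observation that $u\equiv d$ (resp.\ $v\equiv 0$) precisely when $\alpha=-1$ (resp.\ $\beta=-1$); the measurability claims in (i)--(ii) reduce to showing that events of the form $\{\zeta\le t\}\cap\{\lim_{s\nearrow\zeta}Y_s=d\}$ lie in $\mathfrak F_t^{P_x}$; and part (iii) is a case analysis on the sign of $\mu(d)$ and $\mu(0)$, with the key observation that death at $d$ with $\alpha>-1$ (or at $0$ with $\beta>-1$) is ruled out on $\Lambda$ by property (ii). The only place where the paper is more explicit than your outline is the measurability step: rather than asserting that $Y_{\zeta-}$ is $\mathfrak F_t^{P_x}$-measurable on $\{\zeta\le t\}$, the paper writes the event concretely as
\[
\{0<\zeta\le t\}\cap\Bigl(\bigcup_{k}\bigcap_{n\ge k}\bigl\{Y_{t\wedge(\zeta\vee\frac1n-\frac1n)}>\tfrac d2\bigr\}\Bigr)
\]
and invokes right-continuity of $\mathfrak F^{P_x}$; you should be prepared to supply this level of detail when you write the proof out, since the left limit of a Hunt process at its lifetime is not automatically adapted without such an argument.
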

\begin{proof} The continuity assertion and the claim that $Z_{t\wedge \tilde{\zeta}}\in [0,d]$ of part (i) follow by the definition of $\Lambda$, $Z$ and $\tilde{\zeta}$ together with the observation, that $u= d$ ($v= 0$) is constant whenever $\alpha =-1$ ($\beta =-1$). The claim regarding adaptedness in (i) reduces to verifying that
	\begin{equation}\label{eq12n}
		\{u(t-\zeta)\in B\}\cap \{0<\zeta\le t\}\cap \{\lim_{s\nearrow \zeta}Y_s=d\}\;\in\; \mathfrak{F}_t^{P_x}
	\end{equation}
and 
\[
\{v(t-\zeta)\in B\}\cap \{0<\zeta\le t\}\cap \{\lim_{s\nearrow \zeta}Y_s=0\}\;\in\; \mathfrak{F}_t^{P_x}
\]
	for every $B\in \mathfrak{B}(\RR)$. We observe that
	\begin{equation}\label{eq15}
		\{
		u(t-t\wedge \zeta)\in B
		\}\;\in\;  \mathfrak{F}_t^{P_x},
	\end{equation}
	since $t\wedge \zeta$ is $\mathfrak{F}_t^{P_x}$-measurable. Moreover, we have 
	\begin{equation}\label{eq16}
		\left\{0<\zeta\le t\right\}\cap\left(\bigcup_{k\in \mathbb{N}} \bigcap_{n\ge k} \left\{Y_{t\wedge (\zeta\vee \frac{1}{n}-\frac{1}{n})}>\frac{d}{2}\right\}\right) \,\in \, \mathfrak{F}_t^{p_x} ,
	\end{equation}
	as a consequence of $Y_{t\wedge (\zeta\vee \frac{1}{n}-\frac{1}{n})}$ being $\mathfrak{F}_{t+\frac{1}{n}}^{p_x}$-measurable and the right-continuity of $\mathfrak{F}^{p_x} $, for the latter see \cite[Lemma A.2.2, p.386]{fukushima2011}. Since the left-hand side of \eqref{eq12n} is the intersection of \eqref{eq15} and \eqref{eq16}, \eqref{eq12n} follows. Similarly, (ii) reduces to showing that
	\begin{equation}\label{eq21n}
		\{0<\zeta\le t\}\cap\left\{
		\lim_{s\nearrow \zeta}Y_s=d
		\right\}\;\in\; \mathfrak{F}_t^{P_x}
	\end{equation} and 
	\[
		\{0<\zeta\le t\}\cap\left\{
	\lim_{s\nearrow \zeta}Y_s=0
	\right\}\;\in\; \mathfrak{F}_t^{P_x}.
	\]
	Both statements can be verified by rewriting the events analogously to \eqref{eq16}. To also verify (iii) we notice that as a consequence of part (i) we have 
	\[
	P_x(\{\tilde{\zeta } =\infty\}\cap\{\inf\{t\ge 0|Z_t\notin[0,d]\}=\infty\} )\,=\,
	P_x(\{\tilde{\zeta } =\infty\} ).
	\] Hence, it is sufficient to verify that $\tilde{\zeta}= \inf\{t\ge 0|Z_t\notin[0,d] \}$ $P_x$-almost surely on the set $\{\tilde{\zeta}<\infty\}$. Therefore,  we only have to consider the cases  $\alpha<-1$ or $\beta <-1$ by Theorem \ref{THM_proc1} (iii) and the definition of $\tilde{\zeta}$. We assume that $\alpha \ge -1$ and $\beta<-1$ and let $\omega \in \{\tilde{\zeta}<\infty\}\cap \Lambda$. It follows that 
$
	\lim_{s\nearrow \zeta}Y_s(\omega )=0
	$. Indeed for $\alpha >-1$ this follows by the choice of $\Lambda$ and for $\alpha =-1$ by definition of  $\tilde{\zeta}$ and the assumption $\omega\in \{\tilde{\zeta}<\infty\}$.
	Since $v(t)<0$ for  $t>0$ we conclude that
	\begin{equation}\label{eq17n}
		\inf\{t\ge 0| Z_t(\omega )\notin [0,d]\}=\zeta(\omega)=\tilde{\zeta}(\omega).
	\end{equation}It follows that
	\begin{equation}\label{eq18n}
		P_x(\{\tilde{\zeta } <\infty\}\cap\{\inf\{t\ge 0|Z_t\notin[0,d]\}=\tilde{\zeta }\} )=
		P_x(\{\tilde{\zeta } <\infty\} )
	\end{equation}
	as desired. The case $\alpha <-1$, $\beta \ge -1$ can be treated analogously. Lastly, we assume that $\alpha, \beta <-1$. In this case we additionally have that $u(t)>0$ for  $t>0$. Hence, for each $\omega \in \{\tilde{\zeta}<\infty\}\cap \Lambda$ we conclude again \eqref{eq17n}, which yields \eqref{eq18n}. This finishes the proof.
\end{proof}

In the final theorem of this section we prove that the tuple $(Z,\tilde{\zeta})$ is a maximal local solution to \eqref{eq4n} with initial value $x$. The main ingredients are Theorem \ref{extension_thm} and the martingale problem characterization of the auxiliary equation \eqref{eq4n}. To apply the latter we let $W'$ be a Brownian motion on a probability space $(\Omega', \mathfrak{A}', P')$ with respect to a right-continuous filtration $\mathfrak{F}'$. We define the enriched probability space by $\Omega^\dag= \Omega\times \Omega '$,  $P^\dag=P_\nu\times P'$ and the completed $\sigma$-field $\mathfrak{A}^\dag=\overline{ \mathfrak{F}^{P_x}_\infty\times \mathfrak{A}'}^{P^\dag}$. We equip it with the filtration $\mathfrak{F}^\dag$, which we define as the $P^\dag$-completion of $\mathfrak{F}_t^{P_x}\times \mathfrak{F}_t'$ in ${\mathfrak{A}}^\dag$. In particular, $\mathfrak{F}^\dag$ satisfies the usual conditions by \cite[Lemma 6.8, p.101]{kallenberg1997foundations}. We note that any random variable defined on $\Omega$ or $\Omega'$ extends canonically to  the enriched  space.
Moreover, we denote the generator of \eqref{eq4n} by $G$, i.e. we set
\begin{equation}\label{generatorG}
	Gf(x)= \frac{\nu^2(x)}{2}f''(x)+\mu(x)f'(x)
\end{equation}
for  $f\in C^2(\RR)$. Note that this is consistent with \eqref{eq12}, which was introduced for functions on $[0,d]$. 

\begin{thm}\label{is_sol_max}  There exists a Brownian motion $W^\dag$ on  $(\Omega^\dag,\mathfrak{A}^\dag,P^\dag)$ such that $Z$ is a solution to \eqref{eq4n} with inital value $x$. In particular $(Z,\tilde{\zeta})$ is a maximal local solution to \eqref{eq3n}.
\end{thm}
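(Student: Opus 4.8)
The plan is to apply the martingale characterization of solutions to \eqref{eq4n} to the process $Z$, produce a driving Brownian motion, and then use Theorem \ref{extension_thm} to upgrade ``solution up to $\tilde\zeta$'' to ``maximal local solution''.

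First I would show that for every $f\in C_c^\infty(\RR)$ the process
\[
M_t^f \,=\, f(Z_{t\wedge\tilde\zeta}) - f(Z_0) - \int_0^{t\wedge\tilde\zeta} Gf(Z_s)\, ds
\]
is an $\mathfrak{F}^{P_x}$-martingale under $P_x$ (with $G$ as in \eqref{generatorG}). Before $\zeta$ this follows from the martingale property of \eqref{eq13}: for $f\in C_c^\infty(\RR)$ one has $f|_{[0,d]}$ (modified outside a neighbourhood of $[0,d]$ so as to lie in $C^\infty([0,d])$, respectively $C_c^\infty((0,d])$ or $C_c^\infty([0,d))$) in $D(L)$ by Lemma \ref{gen_Lemma}, and $Lf=Gf$ on $(0,d)$ by \eqref{eq12}, so $\tilde f(Y_t)-\tilde f(Y_0)-\int_0^t Gf(Y_s)\,ds$ is a martingale; here one must also use that $Z$ only ever leaves $(0,d)$ through whichever boundary point is \emph{not} in $X$, where $f$ has been arranged to behave correctly. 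On $\{\tilde\zeta=\infty,\ \zeta<\infty\}$ (the case $\alpha=-1$ with $Y$ dying at $d$, or $\beta=-1$ dying at $0$) the path $Z$ is eventually the constant $d$ or $0$, at which point $Gf(d)=\mu(d)f'(d)=(a-bd)f'(d)=0$ since $\alpha=-1$ forces $a=bd$ by \eqref{EqAlphaBeta} (symmetrically $\beta=-1$ forces $a=0$, so $Gf(0)=\mu(0)f'(0)=0$); hence the ``drift-killed at the boundary'' and the frozen path are mutually consistent and $M^f$ extends past $\zeta$ as a martingale. Patching the two regimes at the (a.s.\ predictable) time $\zeta$ via optional stopping and the strong Markov property at $\zeta$ yields the martingale property of $M^f$ on all of $[0,\infty)$. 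The measurability and stopping-time facts needed here are exactly Lemma \ref{aux_statements}.

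Having the martingale problem, I would invoke the standard equivalence between the $(G,C_c^\infty(\RR))$-martingale problem and weak solutions of \eqref{eq4n} (e.g.\ via the representation theorem for continuous local martingales): applying the martingale property to $f(x)=x$ and $f(x)=x^2$ shows that $Z_{t\wedge\tilde\zeta}-Z_0-\int_0^{t\wedge\tilde\zeta}\mu(Z_s)\,ds$ is a continuous local martingale with quadratic variation $\int_0^{t\wedge\tilde\zeta}\nu^2(Z_s)\,ds$. Since $\nu$ may vanish, I pass to the enriched space $(\Omega^\dag,\mathfrak A^\dag,P^\dag)$ carrying the independent Brownian motion $W'$ and set
\[
W_t^\dag \,=\, \int_0^{t\wedge\tilde\zeta} \nu(Z_s)^{-1}\mathbbm{1}_{\{\nu(Z_s)\neq 0\}}\, d\Big(Z_s-Z_0-\int_0^s\mu(Z_r)\,dr\Big) \,+\, \int_0^t \mathbbm{1}_{\{s>\tilde\zeta\}\cup\{\nu(Z_s)=0\}}\, dW'_s,
\]
which by Lévy's characterization is an $\mathfrak F^\dag$-Brownian motion, and one checks $Z_{t\wedge\tilde\zeta}=x+\int_0^{t\wedge\tilde\zeta}\mu(Z_s)\,ds+\int_0^{t\wedge\tilde\zeta}\nu(Z_s)\,dW^\dag_s$; because $Z$ is frozen after $\tilde\zeta$ and $\mu,\nu$ extend to all of $\RR$, this is in fact $dZ_t=\mu(Z_t)dt+\nu(Z_t)dW_t^\dag$ for $t<\tilde\zeta$, and by pathwise uniqueness for \eqref{eq4n} the process $Z$ coincides with \emph{the} solution to \eqref{eq4n} with initial value $x$ up to time $\tilde\zeta$.

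Finally, Lemma \ref{aux_statements} (iii) gives $\tilde\zeta=\inf\{t\ge 0\mid Z_t\notin[0,d]\}$ $P_x$-a.s., so $(Z,\tilde\zeta)$ satisfies conditions (i)--(iii) of Definition \ref{local_sol} (continuity and $[0,d]$-valuedness on $[0,\tilde\zeta]$ from Lemma \ref{aux_statements} (i); the SDE identity \eqref{eq6n} from the previous paragraph; and $Z_{\tilde\zeta}\in\{0,d\}$ on $\{\tilde\zeta<\infty\}$ from the infimum characterization together with path continuity), hence is a local solution to \eqref{eq3n}; and since its lifetime is precisely the exit time of $[0,d]$ for the solution to the auxiliary equation, Theorem \ref{extension_thm} (ii) identifies it as a maximal local solution. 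The main obstacle I anticipate is the careful bookkeeping at the splice time $\zeta$: one must verify that the martingale property of \eqref{eq13} (which a priori only lives on $[0,\zeta)$ and holds only for quasi-every starting point, whence the need to have fixed $x\in X\cap\tilde X$ and the full-measure set $\Lambda$) genuinely transfers to the extended, frozen or drift-continued, process $Z$ across $\zeta$, using the strong Markov property of $\mathbf M$ at $\zeta$ and the boundary cancellations forced by $\alpha=-1$ or $\beta=-1$ together with the behaviour of $u,v$ recorded in Remark \ref{rem_containement}.
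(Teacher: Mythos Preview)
Your overall strategy coincides with the paper's: verify the $G$-martingale problem for $Z$, pass to a weak solution of \eqref{eq4n} via the representation theorem (this is where the enlargement by $W'$ is used), and conclude maximality from Lemma \ref{aux_statements} (iii) and Theorem \ref{extension_thm} (ii). Two points, however, are not correctly handled.

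First, the pre-$\zeta$ part of the martingale problem does not follow from Lemma \ref{gen_Lemma} the way you describe. You say that $f|_{[0,d]}$, ``modified outside a neighbourhood of $[0,d]$'', lies in $C_c^\infty((0,d])$ (in the case $\alpha>-1$, $\beta\le-1$, say). But membership in $C_c^\infty((0,d])$ requires $f$ to vanish near $0$, which is a modification \emph{inside} $[0,d]$ and changes $Gf$ on a neighbourhood the process can visit before time $\zeta$. The paper fixes this by a localisation: set $\zeta_n=\inf\{t\ge0\mid Y_t\le 1/n\}$, choose $g\in C_c^\infty((0,d])$ with $g=f$ on $[1/n,d]$, observe that the martingale \eqref{eq13} for $g$ stopped at $\zeta_n$ equals $f(Z_{\zeta_n\wedge t})-f(Z_0)-\int_0^{\zeta_n\wedge t}Gf(Z_s)\,ds$, and then pass to the limit $n\to\infty$ using $\zeta_n\nearrow\zeta$ (from Theorem \ref{THM_proc1} (iv),(v)) and uniform boundedness. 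This step is the substance of the argument and cannot be skipped.

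Second, your statement that ``$Z$ is frozen after $\tilde\zeta$'' is false: on $\{\tilde\zeta<\infty\}$ one has $\tilde\zeta=\zeta$ and $Z$ continues as $u(\cdot-\zeta)$ or $v(\cdot-\zeta)$, which by Remark \ref{rem_containement} strictly leave $[0,d]$. Consequently you only establish the SDE \eqref{eq4n} up to $\tilde\zeta$, and your final appeal to Theorem \ref{extension_thm} (ii) is incomplete: that theorem compares $\tilde\zeta$ with the exit time of the \emph{global} solution $Z'$ of \eqref{eq4n}, not of your $Z$. The paper avoids this by verifying the \emph{unstopped} martingale problem for $Z$: it splits $f(Z_t)-f(Z_0)-\int_0^tGf(Z_s)\,ds$ at $\zeta$ and shows the post-$\zeta$ contribution vanishes identically by the fundamental theorem of calculus, since $\nu(Z_s)=0$ outside $[0,d]$ and $Z$ solves $Z'=\mu(Z)$ there. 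This yields that $Z$ itself solves \eqref{eq4n} globally, whence Lemma \ref{aux_statements} (iii) and Theorem \ref{extension_thm} (ii) apply directly. Your route can be salvaged by the same observation (namely $Z=Z'$ also past $\tilde\zeta$ because both follow the drift ODE with vanishing diffusion), but you have not made it.
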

\begin{proof}
	If we prove the first part of the statement, the second one follows by Lemma \ref{aux_statements} (iii) together with Theorem \ref{extension_thm} (ii). For the first part it is again sufficient to show that $Z$ solves the $G$-martingale problem, i.e. that 
	\begin{equation}\label{eq55}	f(Z_t)-f(Z_0)-\int_0^t Gf(Z_s) ds\end{equation}
	is for every $f\in C_c^\infty(\RR)$ a martingale with respect to $\mathfrak{F}^{P_x}$, due to \cite[Theorem 3.3, p.293]{ethier2005markov}. To verify this we distinguish different cases. We assume first that $\alpha, \beta >-1$. In this case $\zeta=\infty$ $P_x$-almost surely by Theorem \ref{THM_proc1} (iii) and therefore $Y$ and $Z$ are indistinguishable. Since \eqref{eq13} is an $\mathfrak{F}^{P_x}$-martingale it follows that indeed $Z$ solves the $G$-martingale problem by Lemma \ref{gen_Lemma} (i). Secondly, we assume that	$\alpha >-1$ and $\beta \le -1$ such that $(L,D(L))\supset (G,C_c^\infty((0,d]))$ by Lemma \ref{gen_Lemma} (ii). Let $f\in C_c^\infty(\RR)$, then we decompose \eqref{eq55} into	\begin{equation}\label{eq91}
			f(Z_{{\zeta }\wedge t})-f(Z_0)-\int_0^{\zeta \wedge t} Gf(Z_s)\,ds\,+\,
			f(Z_t)-f(Z_{\zeta \wedge t})-\int^t_{\zeta \wedge t} Gf(Z_s)\,ds.
	\end{equation}
	By definition of $Z$ we can replace the last three terms of \eqref{eq91} by
		\[\mathbbm{1}_{\{t> \zeta\}}
	\left[f(v(t-\zeta))-f(v(0))-\int_0^{t-\zeta} \mu(v(s)) f'(v(s))\,ds\right],
	\]
	because $\nu=0$ on $(-\infty,0]$. Since $v'(s)= \mu(v(s))$ an  application of the fundamental theorem of calculus shows that the above term vanishes. To also treat the remaining part of \eqref{eq91} we introduce 
		\[
	\zeta_n=\inf\left\{t\ge 0\left| Y_t\le \frac{1}{n}\right.\right\},
	\]
	which is a stopping time for every $n$ by \cite[Theorem A.2.3, p.391]{fukushima2011}.
	It follows then that  $\lim_{n\to \infty} \zeta_n =\zeta$ $P_x$-almost surely by Theorem \ref{THM_proc1} (iv) and (v). Using additionally Lemma \ref{aux_statements} (i) we conclude that
		\begin{equation}\label{eq14n}M^{(n)}_t \,=\, f(Y_{{\zeta_n }\wedge t})-f(Y_0)-\int_0^{\zeta_n \wedge t} Gf(Y_s)\,ds\,=\, f(Z_{{\zeta_n }\wedge t})-f(Z_0)-\int_0^{\zeta_n \wedge t} Gf(Z_s)\,ds
	\end{equation}
	converges $P_x$-almost surely to the first three terms of \eqref{eq91} as $n\to \infty$. We can replace $f$ with a function $g\in C_c^\infty((0,d])$, which coincides with $f$ on $[-\frac{1}{n}, d]$, without changing the value of \eqref{eq14n}. It follows  by Lemma \ref{gen_Lemma} (ii) that $M^{(n)}$ is a stopped version of \eqref{eq13} and therefore an $\mathfrak{F}^{P_x}$-martingale for every $n\in \mathbb{N}$. Since moreover $M^{n}_t$ is uniformly bounded for fixed $t$ it follows that its limit is an $\mathfrak{F}^{P_x}$-martingale as well. We conclude that $Z$ indeed solves the $G$-martingale problem. The cases 	$\alpha \le -1$ and $\beta >-1$ as well as $\alpha, \beta \le -1$ can be treated analogously, the latter by using the approximating sequence of stopping times
	\[
	\zeta_n=\inf\left\{t\ge 0\left| Y_t\le \frac{1}{n} \vee Y_t\ge d-\frac{1}{n}\right.\right\}
	\]
	instead.
\end{proof}
\begin{rem}By definition, the process $Z$ and its life time $\tilde{\zeta}$ can be constructed from $Y$ and $\zeta$. Considering also uniqueness in law of maximal local solutions as shown in Corollary \ref{uniq_laws} one obtains properties of a general maximal local solution to \eqref{eq3n} by transferring the properties of $\mathbf{M}$.
\end{rem}
\subsection{Minimal local solutions to the Jacobi SDE}
In this last section we consider the restriction of the Hunt process $\mathbf{M}$ to the open interval $\hat{X} = (0,d)$. The restricted process  is obtained by defining the stopping time 
\[
\dot{\tau}_{X\setminus\hat{X}} \,=\, \inf \{t\ge 0 | Y_t\in X\setminus \hat{X}\}
\]
and stopping the process at this time, i.e. by setting 
\[
\hat{Y}_t(\omega)=\begin{cases}
	Y_t(\omega), &t< \dot{\tau}_{X\setminus\hat{X}}(\omega),\\\Delta , 
	& t\ge \dot{\tau}_{X\setminus\hat{X}}(\omega).
\end{cases}
\]
Then $\hat{\mathbf{M}}= (\Omega, \mathfrak{A}, (P_x)_{x\in \hat{X}_\Delta}, (\hat{Y}_t)_{t\ge 0}) $ is a Hunt process again, see \cite[Theorem A.2.10, p.400]{fukushima2011}. We note that, as shown in the proof of the cited theorem, $\hat{\mathbf{M}}$ is quasi-left continuous and a strong Markov process with respect to the minimum completed admissible filtration $\mathfrak{F}$ of $\mathbf{M}$. Its life time is given by $\hat{\zeta}= \zeta \wedge \dot{\tau}_{X\setminus\hat{X}} $ and its transition function by 
\begin{equation}\label{eq31}
\hat{\rho}_t(x,B) \, = \, P_x(\{X_t \in B \wedge  t<  \dot{\tau}_{X\setminus\hat{X}}\} )
\end{equation}
for $B\in \mathfrak{B}(\hat{X})$.
We restrict the Dirichlet form $(\EE, D(\EE))$ as well by replacing its domain by 
\[
\{
f\in D(\EE) | \tilde{f} = 0 \text{ q.e. on } X\setminus \hat{X}
\}.
\]
We denote the restricted form by $(\hat{\EE}, D(\hat{\EE}))$ and obtain as a consequence of Theorem \ref{DE} and Theorem \ref{THM_proc1} (i) that $(\hat{\EE}, D(\hat{\EE}))= (\EE, \FF)$. In particular, it is the form corresponding to the Friedrichs extension of the operator $(G, C_c^\infty((0,d)))$.  By \cite[Theorem 4.4.3 (i), p.174]{fukushima2011} $(\hat{\EE}, D(\hat{\EE}))$ is a regular Dirichlet form on $L^2(\hat{X}, dm)$. Since $\hat{X}$ is an open subset of $X$ \cite[Theorem 4.4.2, pp.173-174]{fukushima2011} yields that $(\hat{\EE}, D(\hat{\EE}))$ is associated to $\hat{\mathbf{M}}$. As a consequence of Theorem \ref{DE} the form $(\hat{\EE}, D(\hat{\EE}))$ is only different from $(\EE, D(\EE))$ if $-1<\alpha < 0 $ or $-1<\beta <0$. The same holds for the corresponding processes.
\begin{prop}We assume that neither $-1<\alpha < 0 $ nor $-1<\beta <0$.
Then the processes $Y$ and $\hat{Y}$ are indistinguishable under $P_x$ 	for every $x\in \hat{X}$. In particular, the properties stated in Theorem \ref{THM_proc1} hold also for $\hat{\mathbf{M}}$.
\end{prop}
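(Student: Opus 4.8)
The plan is to show that, under the stated hypothesis, the killing time $\dot{\tau}_{X\setminus\hat{X}}$ is $P_x$-almost surely infinite for every $x\in\hat{X}$; indistinguishability of $Y$ and $\hat{Y}$ then follows at once, since in that case $\hat{\zeta}=\zeta\wedge\dot{\tau}_{X\setminus\hat{X}}=\zeta$ and $\hat{Y}_t=Y_t$ for $t<\zeta$, while $\hat{Y}_t=\Delta=Y_t$ for $t\ge\zeta$.

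First I would identify the killing set. Since $\hat{X}=(0,d)$, the set $X\setminus\hat{X}$ consists precisely of those boundary points of $(0,d)$ that belong to $X$, namely $\{d\}$ if $\alpha>-1$ and $\{0\}$ if $\beta>-1$. The hypothesis that neither $-1<\alpha<0$ nor $-1<\beta<0$ holds means that $\alpha>-1$ forces $\alpha\ge0$ and $\beta>-1$ forces $\beta\ge0$. Recalling that $\tilde{X}$ contains $d$ only when $\alpha<0$ and $0$ only when $\beta<0$, we conclude $X\setminus\hat{X}\subseteq X\setminus\tilde{X}$, so by Theorem \ref{THM_proc1} (i) the set $X\setminus\hat{X}$ is exceptional with respect to $\mathbf{M}$.

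Next I would use that every exceptional set is contained in a properly exceptional set $N$, as recalled before Theorem \ref{THM_proc1}. Since $N$ is in particular exceptional, Theorem \ref{THM_proc1} (i) (equivalently Theorem \ref{quasi_notions} (i)) forces $N\subseteq X\setminus\tilde{X}\subseteq\{0,d\}$, hence $N\cap\hat{X}=\emptyset$. Therefore every $x\in\hat{X}$ lies in $X\setminus N$, and by the defining property of a properly exceptional set the process $Y$ started from $x$ $P_x$-almost surely never visits $N$, and a fortiori never visits $X\setminus\hat{X}$; thus $\dot{\tau}_{X\setminus\hat{X}}=\infty$ $P_x$-almost surely, and the indistinguishability of $Y$ and $\hat{Y}$ under $P_x$ follows as indicated above.

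Finally, for the additional claim I would note that, by Theorem \ref{quasi_notions} (i), the only sets of zero capacity are subsets of $X\setminus\tilde{X}$, none of which meets $\hat{X}$; hence any property holding quasi everywhere in $X$ holds at every point of $\hat{X}$, and since $\hat{Y}$ and $Y$ are indistinguishable under $P_x$ for every such $x$, each pathwise assertion of Theorem \ref{THM_proc1} (and the statement on $\rho_t$ in part (ii), which for $x\in\hat X\setminus N$ reads $\hat{\rho}_t(x,\cdot)=\rho_t(x,\cdot)$) transfers verbatim to $\hat{\mathbf{M}}$. I do not expect a genuine obstacle here; the only step needing a moment's care is the elementary case distinction showing that the hypothesis precludes the surviving boundary points from being non-exceptional, i.e. that $\alpha>-1\Rightarrow\alpha\ge0$ and $\beta>-1\Rightarrow\beta\ge0$.
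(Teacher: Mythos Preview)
Your proposal is correct and follows essentially the same approach as the paper: both reduce to showing $P_x(\{\dot{\tau}_{X\setminus\hat{X}}<\infty\})=0$ for $x\in\hat{X}$, observe that under the hypothesis $X\setminus\hat{X}$ is exceptional by Theorem \ref{THM_proc1} (i), and then invoke the existence of a properly exceptional superset (which, being exceptional, is again contained in $X\setminus\tilde{X}\subseteq\{0,d\}$ and hence misses $\hat{X}$). The paper is slightly more terse, treating the case $\alpha,\beta\le -1$ separately as trivial (since then $X=\hat{X}$) rather than folding it into the general argument.
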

\begin{proof}
	It suffices to verify that $P_x(\{\dot{\tau}_{X\setminus \hat{ X}}<\infty\})=0$ for every $x\in \hat{X}$. If $\alpha, \beta \le -1$ we have $X=\hat{X}$ such that the above is trivial. In any of the remaining cases the set $X\setminus \hat{X}$ is properly exceptional with respect to $\mathbf{M}$ by Theorem \ref{THM_proc1} (i) and \cite[Theorem 4.1.1, p.155]{fukushima2011} and therefore the desired equality follows.
\end{proof}
In any other case, we  conclude the following.
\begin{thm} Let $-1<\alpha <0$ or $-1<\beta <0$, then the  following holds.
	\begin{enumerate}[label=(\roman*)]
		\item
		The only subset of $\hat{X}$, which is exceptional with respect to $\hat{\mathbf{M}}$, is the empty set.
		\item The transition probability $\hat{\rho}_t(x,\cdot)$ is absolutely continuous with respect to $dm$ for every $t>0$ and $x\in \hat{X}$.
		\item The path $[0, \hat{\zeta})\to \hat{ X}, t\mapsto \hat{Y}_t$ is $P_x$-almost surely continuous for  every $x\in \hat{X}$.
		\item It holds $P_x\left(\left\{\hat{Y}_{\hat{\zeta}-} \in \hat{X} \wedge \hat{\zeta}  <\infty\right\}\right)=0$ for  every $x\in\hat{ X}$.
		\item We have 
		for every $x\in \hat{X}$ that \begin{equation*}
		P_x(\{\hat{\zeta}<\infty\})\, \ge \, 
	\begin{cases}
		1,& \alpha, \beta>-1,\\
		\frac{\Gamma\left(-\alpha-\beta\right)}{\Gamma(1-\beta)\Gamma(-\alpha)}\,\left(\frac{x}{d}\right)^{-\beta}
		{}_2F_1\left(
		\alpha+1,-\beta;1-\beta;\frac{x}{d}
		\right),& \alpha>-1,\beta\le -1,\\
		\frac{\Gamma\left(-\alpha-\beta\right)}{\Gamma(1-\alpha)\Gamma(-\beta)}\,\left(1-\frac{x}{d}\right)^{-\alpha}
		{}_2F_1\left(
		\beta+1,-\alpha;1-\alpha;1-\frac{x}{d}
		\right),& \beta>-1, \alpha\le -1.
	\end{cases}\end{equation*}
	\end{enumerate}
\end{thm}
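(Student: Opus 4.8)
The plan is to treat the five assertions one by one, leaning throughout on the identification $(\hat{\EE},D(\hat{\EE}))=(\EE,\FF)$ and on the results already established for $\mathbf{M}$.

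For part (i), I would argue as in the proof of Theorem~\ref{quasi_notions}\,(i): since $D(\hat{\EE})=\FF\subset D(\EE)\hookrightarrow C(\tilde X)$ by Proposition~\ref{cont2}, the point evaluation $f\mapsto\tilde f(x)$ is $\hat{\EE}_1$-continuous for every $x\in(0,d)\subset\tilde X$, whence the $\hat{\EE}$-capacity $\Ca(\{x\})$ is bounded below by the reciprocal of the square of its operator norm and in particular positive. (Alternatively one may invoke the comparison of the part capacity with the original one on subsets of the open set $\hat X$.) Since $X\setminus\tilde X\subset\{0,d\}$ by Theorem~\ref{quasi_notions}\,(i) and any nonempty subset of $\hat X=(0,d)$ meets $\{0,d\}$ in the empty set, no nonempty subset of $\hat X$ is $\hat{\mathbf M}$-exceptional.

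For part (ii), I would combine $D(\hat{\EE})=\FF\subset D(\EE)\hookrightarrow L^q(X,dm)$ for some $q>2$ from Theorem~\ref{sob_emb} with $L^q(X,dm)=L^q(\hat X,dm)$ (singletons being $dm$-null) to obtain a continuous embedding $D(\hat{\EE})\hookrightarrow L^q(\hat X,dm)$, and then apply \cite[Theorem 4.2.7, p.166]{fukushima2011} to produce a properly exceptional set outside of which $\hat\rho_t(x,\cdot)\ll dm$; this set is empty by part (i). For parts (iii) and (iv) the point is that $\hat{\EE}=\EE|_{\FF\times\FF}$ is again strongly local, since the bilinear form is the same Dirichlet integral; hence \cite[Theorem 4.5.3, p.186]{fukushima2011} applies to $(\hat{\EE},D(\hat{\EE}))$ and yields continuity of $t\mapsto\hat Y_t$ on $[0,\hat\zeta)$ as well as $P_x(\{\hat Y_{\hat\zeta-}\in\hat X\wedge\hat\zeta<\infty\})=0$ for $\hat{\EE}$-quasi-every, hence by part (i) every, $x\in\hat X$.

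For part (v), I would use that $\hat\zeta=\zeta\wedge\dot\tau_{X\setminus\hat X}$ and that, whenever $\alpha>-1$, the point $d$ belongs to $X\setminus\hat X$, so $\dot\tau_{X\setminus\hat X}\le\tau_{\{d\}}$ and therefore $\{\tau_{\{d\}}<\infty\}\subset\{\hat\zeta<\infty\}$; symmetrically $\{\tau_{\{0\}}<\infty\}\subset\{\hat\zeta<\infty\}$ when $\beta>-1$. Thus $P_x(\{\hat\zeta<\infty\})\ge P_x(\{\tau_{\{d\}}<\infty\})$, resp. $\ge P_x(\{\tau_{\{0\}}<\infty\})$, and Theorem~\ref{Prob_hitting} supplies the right-hand sides; crucially a $dm$-exceptional set of $(\EE,D(\EE))$ is contained in $X\setminus\tilde X\subset\{0,d\}$, so the phrase ``quasi-every $x\in X$'' in Theorem~\ref{Prob_hitting} already means every $x\in(0,d)=\hat X$. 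A short case distinction then closes the argument: if $\alpha,\beta>-1$, the standing hypothesis forces $-1<\alpha<0$ or $-1<\beta<0$ and the corresponding part of Theorem~\ref{Prob_hitting} gives hitting probability $1$ for that boundary point; if $\alpha>-1$ and $\beta\le-1$ then necessarily $-1<\alpha<0$ and Theorem~\ref{Prob_hitting}\,(i) yields the stated hypergeometric expression for $P_x(\{\tau_{\{d\}}<\infty\})$; the case $\beta>-1$, $\alpha\le-1$ is symmetric via Theorem~\ref{Prob_hitting}\,(ii). The only point requiring real care is part (i), i.e. transferring the characterization of exceptional (zero-capacity) sets from $\mathbf M$ to the part process $\hat{\mathbf M}$; once this is in place everything else reduces to invoking the quoted results of \cite{fukushima2011} together with Theorems~\ref{quasi_notions}, \ref{THM_proc1}, \ref{sob_emb} and \ref{Prob_hitting}.
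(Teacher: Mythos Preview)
Your proposal is correct. For part (v) it coincides with the paper's argument. For parts (i)--(iv) you take a slightly different but equally valid route: you reapply the abstract Dirichlet-form machinery (capacity estimate via the embedding into $C(\tilde X)$, the Sobolev embedding together with \cite[Theorem 4.2.7]{fukushima2011}, and strong locality via \cite[Theorem 4.5.3]{fukushima2011}) directly to the part form $(\hat\EE,D(\hat\EE))=(\EE,\FF)$ on $\hat X$, and then upgrade ``quasi-every'' to ``every'' using (i). The paper instead transfers the already established properties of $\mathbf M$ to $\hat{\mathbf M}$ through the definitions: (i) via \cite[Theorem 4.4.3\,(ii)]{fukushima2011} and Theorem~\ref{THM_proc1}\,(i); (ii) via the inequality $\hat\rho_t(x,\cdot)\le\rho_t(x,\cdot)$ from \eqref{eq31} combined with Theorem~\ref{THM_proc1}\,(ii); (iii) by restricting the continuous path $t\mapsto Y_t$ to $[0,\hat\zeta)\subset[0,\zeta)$; and (iv) by splitting into the events $\{\hat\zeta=\zeta\}$ and $\{\hat\zeta<\zeta\}$ and using Theorem~\ref{THM_proc1}\,(v) and (iv) respectively. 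Your approach is more self-contained and would work even without having first analyzed $\mathbf M$; the paper's approach is shorter given that Theorem~\ref{THM_proc1} is already in place.
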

\begin{proof}Part (i) follows from Theorem \ref{THM_proc1} (i) and \cite[Theorem 4.4.3 (ii)]{fukushima2011}. Part (ii) follows from \eqref{eq31} together with Theorem \ref{THM_proc1} (ii). Part (iii) is a direct consequence of Theorem \ref{THM_proc1} (iii) and the definitions of $\hat{Y}$ and $\hat{\zeta}$. We decompose the probability from (iv) into the parts
	\[
	P_x\left(\left\{\hat{Y}_{\hat{\zeta}-} \in \hat{X} \wedge \hat{\zeta} = \zeta  <\infty\right\}\right) \,+\,
	P_x\left(\left\{\hat{Y}_{\hat{\zeta}-} \in \hat{X} \wedge \hat{\zeta}  <\zeta\right\}\right).
	\]
	The first term vanishes by Theorem \ref{THM_proc1} (v). The second one vanishes as a consequence of part (iv) of the same Theorem. Part (v) follows from Theorem \ref{Prob_hitting} and the fact that $
	\hat{\zeta}\le \tau_{\{d\}}\wedge \tau_{\{0\}}$.
\end{proof}

Finally, we prove a statement similar to Theorem \ref{is_sol_max} relating the restricted process $\hat{\mathbf{M}}$ to  minimal solutions to the Jacobi stochastic differential equations. Therefore let $x\in \hat{X}$ and  $\Lambda\in \mathfrak{F}_\infty$ and $Z$ be the corresponding set and process from the last section.
\begin{lemma}\label{aux_statementIIn}
	It holds the following.
	\begin{enumerate}[label=(\roman*)]\item We have $P_x$-almost surely for every $t\ge 0$ that
		\begin{equation}\label{eq20n}
			Z_{t\wedge {\hat{\zeta}}}=\begin{cases}
				\hat{Y}_t, & t< \hat{\zeta},\\
				d, & t\ge  \hat{\zeta}, \lim_{t\nearrow \hat{\zeta} } \hat{Y}_t=d,\\
				0, & t\ge  \hat{\zeta}, \lim_{t\nearrow \hat{\zeta}} \hat{Y}_t=0.\\
			\end{cases}
		\end{equation}
		\item We have that $\hat{\zeta}= \inf\{t\ge 0|Z_t\in \{0,d \}\}$  $P_x$-almost surely.
	\end{enumerate}
\end{lemma}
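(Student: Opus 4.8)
The plan is to prove (i) and (ii) together by a short case analysis, using only the definitions of $\hat{Y}$, of $\hat{\zeta}=\zeta\wedge\dot{\tau}_{X\setminus\hat{X}}$ and of $Z$, together with the defining properties of the set $\Lambda$ from the previous section. I would first fix $\omega\in\Lambda$ and record two observations. On the one hand, since $Y_\cdot(\omega)$ is continuous on $[0,\zeta(\omega))$ with values in $X$ and $X\setminus\hat{X}\subseteq\{0,d\}$ is a finite, hence closed, subset of $X$, if $\dot{\tau}_{X\setminus\hat{X}}(\omega)<\infty$ then in fact $\dot{\tau}_{X\setminus\hat{X}}(\omega)<\zeta(\omega)$ and the infimum is attained, i.e. $Y_{\dot{\tau}_{X\setminus\hat{X}}}(\omega)\in X\setminus\hat{X}$; hence $\hat{\zeta}(\omega)=\dot{\tau}_{X\setminus\hat{X}}(\omega)$ in that case and $\hat{\zeta}(\omega)=\zeta(\omega)$ otherwise. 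On the other hand, if $\zeta(\omega)<\infty$ then $X\ne[0,d]$, and since $Y_{\zeta-}(\omega)=\Delta$ in the one-point space $X_\Delta$ forces the path to leave every compact subset of $X$ as $s\nearrow\zeta(\omega)$ while $X$ is an interval with closure $[0,d]$, the real-valued limit $\lim_{s\nearrow\zeta(\omega)}Y_s(\omega)$ exists and equals the boundary point of $(0,d)$ not belonging to $X$, so in particular it lies in $\{0,d\}$.

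For part (i) I would then distinguish $t<\hat{\zeta}(\omega)$ and $t\ge\hat{\zeta}(\omega)$. In the first case $t<\zeta(\omega)$ and $t<\dot{\tau}_{X\setminus\hat{X}}(\omega)$, hence $Z_{t\wedge\hat{\zeta}}(\omega)=Z_t(\omega)=Y_t(\omega)=\hat{Y}_t(\omega)$, the first alternative of \eqref{eq20n}. In the second case $Z_{t\wedge\hat{\zeta}}(\omega)=Z_{\hat{\zeta}}(\omega)$, and I would split according to the first observation. If $\dot{\tau}_{X\setminus\hat{X}}(\omega)<\infty$, then $\hat{\zeta}(\omega)=\dot{\tau}_{X\setminus\hat{X}}(\omega)<\zeta(\omega)$, so $Z_{\hat{\zeta}}(\omega)=Y_{\hat{\zeta}}(\omega)\in\{0,d\}$ and, by continuity of $Y$ at $\hat{\zeta}(\omega)<\zeta(\omega)$, also $\lim_{s\nearrow\hat{\zeta}}\hat{Y}_s(\omega)=\lim_{s\nearrow\hat{\zeta}}Y_s(\omega)=Y_{\hat{\zeta}}(\omega)=Z_{\hat{\zeta}}(\omega)$, which matches the last two alternatives. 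If $\dot{\tau}_{X\setminus\hat{X}}(\omega)=\infty$, then $\hat{\zeta}(\omega)=\zeta(\omega)$, and the existence of $t\ge\hat{\zeta}(\omega)$ forces $\zeta(\omega)<\infty$; here the definition of $Z$ gives $Z_{\hat{\zeta}}(\omega)=Z_{\zeta}(\omega)$ equal to $u(0)=d$ if $\lim_{s\nearrow\zeta}Y_s(\omega)=d$ and to $v(0)=0$ if $\lim_{s\nearrow\zeta}Y_s(\omega)=0$, while $\hat{Y}_s(\omega)=Y_s(\omega)$ for $s<\zeta(\omega)$ yields $\lim_{s\nearrow\hat{\zeta}}\hat{Y}_s(\omega)=\lim_{s\nearrow\zeta}Y_s(\omega)\in\{0,d\}$ by the second observation, so \eqref{eq20n} holds again.

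Part (ii) I would deduce from (i): for $t<\hat{\zeta}(\omega)$ the identity (i) gives $Z_t(\omega)=\hat{Y}_t(\omega)\in\hat{X}=(0,d)$, so $\inf\{t\ge 0\,|\,Z_t(\omega)\in\{0,d\}\}\ge\hat{\zeta}(\omega)$; if $\hat{\zeta}(\omega)<\infty$ then $Z_{\hat{\zeta}}(\omega)\in\{0,d\}$ by (i), so the infimum equals $\hat{\zeta}(\omega)$; and if $\hat{\zeta}(\omega)=\infty$ then $\zeta(\omega)=\infty$ and $\dot{\tau}_{X\setminus\hat{X}}(\omega)=\infty$, hence $Z_t(\omega)=Y_t(\omega)\in\hat{X}$ for all $t\ge 0$ and the infimum is $+\infty=\hat{\zeta}(\omega)$. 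Since $P_x(\Lambda)=1$, both statements follow.

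The step I expect to be the main obstacle is the second preliminary observation, namely turning ``$Y_{\zeta-}=\Delta$ in $X_\Delta$'' into the existence of the real-valued left limit and its identification with a boundary point outside $X$, together with the debut-type argument showing that $\dot{\tau}_{X\setminus\hat{X}}$, when finite, is attained and strictly less than $\zeta$. The rest is bookkeeping with the definitions of $\hat{Y}$, $\hat{\zeta}$ and $Z$.
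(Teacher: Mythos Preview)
Your proof is correct. The only difference from the paper is the order of implication: the paper proves (ii) first directly from the definitions of $\hat{\zeta}$ and $Z$ together with $u(0)=d$, $v(0)=0$, and then obtains (i) in one stroke by observing that, once $\mathbbm{1}_{\{t<\hat{\zeta}\}}Z_t=\mathbbm{1}_{\{t<\hat{\zeta}\}}\hat{Y}_t$ is noted and $Z$ is continuous, the right-hand side of \eqref{eq20n} is exactly the process $Z$ stopped when it first hits $\{0,d\}$, which by (ii) is $Z_{t\wedge\hat{\zeta}}$. You instead establish (i) by an explicit case split and then read off (ii). The underlying pathwise analysis on $\Lambda$ is the same; the paper's ordering is slightly more economical because it replaces your case-by-case verification of (i) by the single observation ``right-hand side of \eqref{eq20n} $=$ $Z$ stopped at its first hit of $\{0,d\}$'', while your route makes the debut argument for $\dot{\tau}_{X\setminus\hat X}$ and the identification of $\lim_{s\nearrow\zeta}Y_s$ with a boundary point outside $X$ more visible.
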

\begin{proof} Due to $u(0),v(0)\in \{0,d\}$, part (ii) follows by the definition of $\hat{\zeta}$ and  $Z$. Since we have
	\[
	\mathbbm{1}_{\{t<\hat{\zeta}\}}Z_t\,=\,
	\mathbbm{1}_{\{t<\hat{\zeta}\}}Y_t\,=\,
	\mathbbm{1}_{\{t<\hat{\zeta}\}}\hat{Y}_t
	\]
	$P_x$-almost surely and $Z$ has continuous paths, the right-hand side of \eqref{eq20n} is nothing but the process $Z$ stopped as it hits the set $\{0,d\}$. Therefore, (i) is a consequence of (ii). 
\end{proof}
Finally, we consider  the enriched probability space $(\Omega^\dag,\mathfrak{A}^\dag,P^\dag)$ with the filtration  $\mathfrak{F}^\dag$.

\begin{cor}$(Z, \hat{\zeta})$ is a minimal local solution to \eqref{eq3n} with initial value  $x$.
\end{cor}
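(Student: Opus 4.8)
The plan is to combine Theorem~\ref{is_sol_max}, which already supplies a Brownian motion $W^\dag$ on $(\Omega^\dag,\mathfrak{A}^\dag,P^\dag)$ turning $Z$ into a (global) solution of \eqref{eq4n} with initial value $x$, with the identification of $\hat{\zeta}$ provided by Lemma~\ref{aux_statementIIn}. By pathwise uniqueness for \eqref{eq4n} the process $Z$ is \emph{the} unique solution of \eqref{eq4n} with initial value $x$ on the stochastic basis $(\Omega^\dag,\mathfrak{A}^\dag,P^\dag,\mathfrak{F}^\dag,W^\dag)$; hence, once we know that $(Z,\hat{\zeta})$ is a local solution to \eqref{eq3n}, Theorem~\ref{extension_thm}~(i) reduces minimality of $(Z,\hat{\zeta})$ to the identity $\hat{\zeta}=\inf\{t\ge 0\mid Z_t\in\{0,d\}\}$, which is exactly Lemma~\ref{aux_statementIIn}~(ii).

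So the first task is to check that $(Z,\hat{\zeta})$ satisfies the three conditions of Definition~\ref{local_sol} on the enriched basis. The process $Z$ is $\mathfrak{F}^{P_x}$-adapted with continuous paths by Lemma~\ref{aux_statements}~(i), hence $\mathfrak{F}^\dag$-adapted, and $\hat{\zeta}=\zeta\wedge\dot{\tau}_{X\setminus\hat{X}}$ is an $\mathfrak{F}$-stopping time, hence an $\mathfrak{F}^\dag$-stopping time. For condition (i) it remains to note that $\hat{\zeta}\le\tilde{\zeta}$ $P_x$-almost surely, which is clear from \eqref{eq30} since $\hat{\zeta}\le\zeta$ and $\zeta\le\tilde{\zeta}$ on $\Lambda$; therefore $Z_{t\wedge\hat{\zeta}}\in[0,d]$ for all $t\ge 0$ by Lemma~\ref{aux_statements}~(i). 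For condition (ii), since $Z$ solves \eqref{eq4n} and $Z_{s\wedge\hat{\zeta}}\in[0,d]$, on the time set $s\le\hat{\zeta}$ the coefficients reduce to $\mu(Z_s)=a-bZ_s$ and $\nu(Z_s)=\sigma\sqrt{Z_s(d-Z_s)}$ by the very definitions of $\mu$ and $\nu$ in Section~\ref{Sec_local_sol}; stopping \eqref{eq4n} at $\hat{\zeta}$ then yields precisely \eqref{eq6n} with the Brownian motion $W^\dag$, the integrands being understood as in Remark~\ref{wdef_rem}. For condition (iii) we use Lemma~\ref{aux_statementIIn}~(ii): as $x\in\hat{X}=(0,d)$ we have $Z_0=x\notin\{0,d\}$, so continuity of $Z$ together with the closedness of $\{0,d\}$ gives $Z_{\hat{\zeta}}\in\{0,d\}$ on $\{\hat{\zeta}<\infty\}$, whence $P_x(\{Z_{\hat{\zeta}}\notin\{0,d\}\}\cap\{\hat{\zeta}<\infty\})=0$. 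Thus $(Z,\hat{\zeta})$ is a local solution to \eqref{eq3n} with initial value $x$.

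Finally, applying Theorem~\ref{extension_thm}~(i) to $(Z,\hat{\zeta})$, with the associated solution of \eqref{eq4n} being $Z$ itself, minimality of $(Z,\hat{\zeta})$ is equivalent to $\hat{\zeta}=\inf\{t\ge 0\mid Z_t\in\{0,d\}\}$, which holds $P_x$-almost surely by Lemma~\ref{aux_statementIIn}~(ii). This proves the claim.

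I do not expect a genuine obstacle here, since all the substantial work is contained in Theorem~\ref{is_sol_max} and Lemmata~\ref{aux_statements} and \ref{aux_statementIIn}. The only points needing a little care are verifying that stopping the stochastic integral in \eqref{eq4n} at $\hat{\zeta}$ reproduces exactly the integrand of \eqref{eq6n}, and that the enriched filtration $\mathfrak{F}^\dag$ satisfies the usual conditions — the latter having already been recorded in the construction preceding Theorem~\ref{is_sol_max}.
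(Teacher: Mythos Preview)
Your proof is correct and follows essentially the same approach as the paper: invoke Theorem~\ref{is_sol_max} to get that $Z$ solves \eqref{eq4n} with respect to $W^\dag$, use Lemma~\ref{aux_statementIIn}~(ii) to identify $\hat{\zeta}$ with $\inf\{t\ge 0\mid Z_t\in\{0,d\}\}$, and conclude minimality via Theorem~\ref{extension_thm}~(i). The paper compresses this into two lines, while you spell out the verification of Definition~\ref{local_sol} for $(Z,\hat{\zeta})$ explicitly; this extra care is justified since Theorem~\ref{extension_thm}~(i) formally presupposes a local solution, and your checks of conditions (i)--(iii) are all sound.
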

\begin{proof}By Theorem \ref{is_sol_max} the process $Z$ is a solution to \eqref{eq4n} with initial value $x$ on $(\Omega^\dag,\mathfrak{A}^\dag,P^\dag)$ with respect to a Brownian motion $W^\dag$. The claim follows by Lemma \ref{aux_statementIIn} (ii) and Theorem \ref{extension_thm} (i). 
\end{proof}
\begin{rem}
	By  \eqref{eq20n} and the uniqueness in law statement from Corollary \eqref{uniq_laws} the properties of a general minimal local solution to \eqref{eq3n} can be derived from the properties of $\hat{\mathbf{M}}$.
\end{rem}
	\appendix
	\section{Appendix}
	
	\subsection{A localized Yamada-Watanabe condition}
	We provide a localized version of the Yamada Watanabe condition. The proof translates verbatim from the classical setting, see \cite[Theorem 20.3, p.374]{kallenberg1997foundations}, and is contained for convinience of the reader.
	\begin{lemma}\label{yamada_watanabe}
		Let $\mu, \nu \colon \mathbb{R}\to \mathbb{R}$ be mappings such that $\mu$ is Lipschitz and $\nu$ is $\frac{1}{2}$-H\"older continuous. Moreover, let $(\Omega, \mathfrak{A},P)$ be a probability space equipped with a filtration satisfying the usual conditions and $W$ a Brownian motion. If there are two adapted processes  $Y^{(1)}$ and $Y^{(2)}$ and a stopping time $\zeta$ such that
		\begin{enumerate}[label=(\roman*)]
			\item $Y^{(1)}_0=Y^{(2)}_0$,
			\item $Y^{(i)}_{\cdot \wedge \zeta}$ has continuous paths and 
			\item $Y^{(i)}_{t \wedge \zeta} = \int_0^{t \wedge \zeta}\mu(Y^{(i)})ds + \int_0^{t \wedge \zeta}\nu(Y^{(i)})dW_s$ for all $t\ge 0$, $i\in \{1,2\}$,
		\end{enumerate}
		then $Y^{(1)}_{t \wedge \zeta}=Y^{(2)}_{t \wedge \zeta}$ for all $t\ge 0$.
	\end{lemma}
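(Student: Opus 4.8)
The plan is to mimic the classical Yamada–Watanabe uniqueness argument from \cite[Theorem 20.3, p.374]{kallenberg1997foundations}, localizing everything to the interval $[0,\zeta]$. First I would set $D_t = Y^{(1)}_{t\wedge\zeta} - Y^{(2)}_{t\wedge\zeta}$, which by (ii) is a continuous process and by (i)–(iii) satisfies $D_0 = 0$ and
\[
D_t \,=\, \int_0^{t\wedge\zeta} \big(\mu(Y^{(1)}_s)-\mu(Y^{(2)}_s)\big)\,ds \,+\, \int_0^{t\wedge\zeta}\big(\nu(Y^{(1)}_s)-\nu(Y^{(2)}_s)\big)\,dW_s.
\]
The $\frac12$-Hölder continuity of $\nu$ means we cannot simply apply Gronwall to $\mathbb{E}|D_t|^2$; instead one uses the classical device of a sequence of smooth approximations $\phi_n$ to the absolute value. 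Concretely, I would fix a decreasing sequence $a_n\downarrow 0$ with $\int_{a_n}^{a_{n-1}} u^{-1}\,du = n$, choose continuous functions $\psi_n$ supported on $(a_n,a_{n-1})$ with $0\le\psi_n(u)\le \tfrac{2}{nu}$ and $\int\psi_n = 1$, and set $\phi_n(x) = \int_0^{|x|}\!\int_0^y \psi_n(u)\,du\,dy$. Then $\phi_n\in C^2$, $\phi_n(x)\to|x|$, $|\phi_n'|\le 1$, and $\phi_n''(x) = \psi_n(|x|)$.

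Next I would apply Itô's formula to $\phi_n(D_t)$. The drift term is controlled using $|\phi_n'|\le 1$ and the Lipschitz bound $|\mu(Y^{(1)}_s)-\mu(Y^{(2)}_s)|\le K|D_s|$, giving a contribution bounded by $K\int_0^{t\wedge\zeta}|D_s|\,ds$. The Itô correction term is $\tfrac12\int_0^{t\wedge\zeta}\psi_n(|D_s|)\,\big(\nu(Y^{(1)}_s)-\nu(Y^{(2)}_s)\big)^2\,ds$, and here the $\tfrac12$-Hölder bound $\big(\nu(Y^{(1)}_s)-\nu(Y^{(2)}_s)\big)^2 \le L^2|D_s|$ combined with $\psi_n(u)\le \tfrac{2}{nu}$ makes the integrand at most $\tfrac{L^2}{n}$, so this term vanishes as $n\to\infty$. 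The stochastic integral term, $\int_0^{t\wedge\zeta}\phi_n'(D_s)\big(\nu(Y^{(1)}_s)-\nu(Y^{(2)}_s)\big)\,dW_s$, is a local martingale; since the integrand is bounded (the processes live on a bounded set, or one localizes further with stopping times $T_m\uparrow\infty$), it is a true martingale with zero expectation. Taking expectations and letting $n\to\infty$ (dominated convergence, using $\phi_n(D_t)\le |D_t|$ which is integrable after the localization $T_m$) yields
\[
\mathbb{E}\,|D_{t\wedge T_m}| \,\le\, K\int_0^t \mathbb{E}\,|D_{s\wedge T_m}|\,ds.
\]
Gronwall's lemma then forces $\mathbb{E}|D_{t\wedge T_m}| = 0$ for all $t$, hence $D_{t\wedge T_m}=0$ a.s.; letting $m\to\infty$ and using continuity of paths gives $D_t\equiv 0$, i.e. $Y^{(1)}_{t\wedge\zeta}=Y^{(2)}_{t\wedge\zeta}$ for all $t\ge 0$.

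The main obstacle — or rather the main point requiring care — is the integrability needed to take expectations: a priori $\mu,\nu$ are only assumed Lipschitz resp. $\tfrac12$-Hölder on all of $\mathbb{R}$, with no growth bound beyond Lipschitz, and the processes $Y^{(i)}$ are only adapted, not assumed square-integrable. This is handled exactly as in the classical proof by introducing the localizing stopping times $T_m = \inf\{t\ge 0 : |Y^{(1)}_{t\wedge\zeta}| \vee |Y^{(2)}_{t\wedge\zeta}| \ge m\}$, which tend to infinity by path-continuity; on $[0,T_m]$ all the integrands are bounded, every local martingale is a genuine martingale, and the dominated convergence argument goes through. In the application to \eqref{eq4n} the processes are in fact confined to a bounded region, so this localization is essentially automatic, but stating it keeps the lemma self-contained.
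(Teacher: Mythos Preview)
Your proof is correct and follows the original Yamada--Watanabe approach using smooth approximations $\phi_n$ of the absolute value. The paper takes a different but equivalent route: instead of constructing $\phi_n$ and showing the It\^o correction term is $O(1/n)$, it invokes the local-time machinery directly. Writing $(L^x_t)$ for the local time of $D$, the occupation-time formula with $f(x)=1/|x|$ and the $\tfrac12$-H\"older bound on $\nu$ give $\int f(x)L^x_t\,dx \le C^2 t$, whence $E[L^0_t]=0$ by Fatou; then Tanaka's formula yields the semimartingale decomposition of $|D_t|$ with no local-time contribution, and taking expectations produces the same Gronwall inequality you reach.

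The two arguments are standard alternatives and essentially encode the same idea: your $\phi_n$ are precisely what one uses to \emph{construct} local time, and your vanishing It\^o term is the statement $L^0_t=0$. Your version is more self-contained (no appeal to local-time theory), while the paper's is shorter once that theory is assumed. The localization step is handled the same way in both---the paper also reduces to bounded $D$ by a stopping-time argument before anything else.
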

	\begin{proof}We define the process 
		\begin{equation*}
			D_t \,=\,	Y_{t\wedge\zeta}^{(1)}\,-\,Y_{t\wedge\zeta}^{(2)} \,=\,  \int_0^{t\wedge\zeta} \mu(Y^{(1)}_s)-\mu(Y^{(2)}_s)\,ds 
			\,+\, \int_0^{t\wedge\zeta}  \nu(Y^{(1)}_s)-\nu(Y^{(2)}_s)\,dW_s.
		\end{equation*}
		It suffices to show that $D=0$. By a localization argument we can assume that $D$ is uniformly bounded. Let $(L^x_t)_{t\ge 0, x\in \RR}$ be a cadlag version of the local time of $D$, for existence of such a version  see \cite[Theorem 19.4, p.352]{kallenberg1997foundations}. Then \cite[Theorem 19.5, p.353]{kallenberg1997foundations} yields for any $t\ge 0$ that 
		\[
		\int_{-\infty}^\infty f(x) L_t^x\, dx \,=\, 
		\int_0^t f(D_{s})\, d\left<D\right>_s
		\,=\, 
		\int_0^{t\wedge \zeta} f(Y_s^{(1)}-Y_s^{(2)}) (\nu(Y_s^{(1)})-\nu(Y_s^{(2)}))^2\, ds\,\le \, C^2t,
		\]
		where we choose
		$f(x)=\frac{1}{|x|}$ for $x\ne 0$ and $f(0)=1$ and $C$ to be the $\frac{1}{2}$-H\"older seminorm of $\nu$.
		By taking the expectation we obtain that
		\[
		\infty\,>\, E\left[
		\int_{-\infty}^\infty f(x) L_t^x\, dx
		\right] \, = \,  \int_{-\infty}^\infty f(x) E\left[
		L_t^x
		\right]\, dx.
		\]
		Due to the right-continuity of $L_t^x$ in $x$ we can employ Fatou's lemma to conclude  that
		\[
		E(L^0_t)\, \le \, \liminf_{x\searrow 0} E(L^x_t) \,=\,0.
		\]
		Using the defining property of the local time we obtain
		\[
		|D_t|\, =\, \int_0^{t\wedge \zeta}\sgn(D_s)(\mu(Y^{(1)}_s)-\mu(Y^{(2)}_s) )\,ds\,+\, \int_0^{t\wedge \zeta}\sgn(D_s)  (\nu(Y^{(1)}_s)-\nu(Y^{(2)}_s))\,dW_s,
		\]
		where the convention $\sgn(0)=-1$ is used.
		We note that the integrand of the stochastic integral is uniformly bounded by the boundedness of $D$ and the H\"older continuity of $\nu$. Therefore, the stochastic integral is a martingale and  taking the expectation yields that
		\[
		E(|D_t|)\, \le\, C\int_0^{t}E(|D_s|)\,ds,
		\]
		where we let here $C$ be  the Lipschitz coefficient of $\mu$.
		Since this holds for any $t\ge 0$ it is left to apply Gr\"onwall's Lemma.
	\end{proof}

	\subsection{Boundary values of hypergeometric functions}\label{app_BV}

	In this section we perform the tedious steps leading to Lemma \ref{prop_f1}.
	\begin{proof}[Proof of Lemma \ref{prop_f1} (i)]
		This is a direct consequence of the definition of $\xi_\lambda$.
	\end{proof}

	\begin{proof}[Proof of Lemma \ref{prop_f1} (ii)]
		First, we consider the case $\beta >-1$. To  make use of Lemma \ref{gauss_limit} we distinguish between different signs of $-\alpha$.
		If $\alpha<0$ the identity \eqref{eq17} applies and yields
		\begin{equation}\label{eq179}
			\lim_{x\nearrow d} \,\xi_\lambda(x)\,=\,\frac{\Gamma(\beta+1)\Gamma(-\alpha)}{\Gamma\left(\frac{-\alpha+\beta+1}{2}+\gamma\right)\Gamma\left(\frac{-\alpha+\beta+1}{2}-\gamma\right)}.
		\end{equation}
		To conclude that this limit is positive we first note that its numerator is positive by the assumptions on $\alpha, \beta$ and the fact that $\Gamma(z)>0$ for $z>0$. Furthermore, as a consequence of \eqref{eq21} we have that
		\[
		\left(
		\frac{-\alpha+\beta+1}{2}\right)^2-\gamma^2\, >\, \left(\frac{\alpha+\beta+1}{2}\right)^2-\gamma^2\,>\,0.
		\]
		It follows that either $\gamma$ is  purely imaginary or satisfies $0\le \gamma< \frac{-\alpha+\beta+1}{2}$.
		In the latter case positivity of the denominator follows as for the numerator.
		In the former case positivity follows instead by $\Gamma(\bar{z})= \overline{\Gamma(z)}$ and that $\Gamma$ is non-zero outside of the negative real axis.
		Next, we assume that $\alpha=0$. Then \eqref{eq18} applies and yields 
		\[
		\lim_{x\nearrow d}\frac{\xi_\lambda(x)}{-\log\left(1-\frac{x}{d}\right)}\,=\,
		\frac{\Gamma(\beta+1)}{\Gamma\left(
			\frac{\beta+1}{2}+\gamma\right)\Gamma\left(
			\frac{\beta+1}{2}-\gamma\right)}.
		\]
		By analogous arguments  as before we  conclude that the right-hand side is positive. Since $-\log\left(1-\frac{x}{d}\right)$ converges to infinity as $x\nearrow d$, it follows that  $\lim_{x\nearrow d}\xi_\lambda(x)= \infty$.
		Lastly, we assume $\alpha >0$. The identity \eqref{eq26}  gives us then
		\begin{equation*}
			\lim_{x\nearrow d}\frac{\xi_\lambda(x)}{\left(1-\frac{x}{d}\right)^{-\alpha}}\,=\,
			\frac{\Gamma(\beta+1)\Gamma(\alpha)}{\Gamma\left(
				\frac{\alpha+\beta+1}{2}+\gamma\right)\Gamma\left(
				\frac{\alpha +\beta+1}{2}-\gamma\right)}.
		\end{equation*}
		The same reasoning as before applies to argue that the right-hand side is positive. Since $\left(1-\frac{x}{d}\right)^{-\alpha}$ approaches infinity as $x\nearrow d$, we obtain  $\lim_{x\nearrow d}\xi_\lambda(x)= \infty$.
		
		Now we consider the case $\beta \le -1$, where it is sufficient to consider the hypergeometric part of $\xi_\lambda$. To make   use of Theorem \ref{gauss_limit} we distinguish between the signs of 
		$-\alpha$ again. If $\alpha <0$,
		\eqref{eq17} gives us
		\begin{equation*}\label{eq180}
			\lim_{x\nearrow d} \,\xi_\lambda(x)\,=\,\frac{\Gamma(1-\beta)\Gamma(-\alpha)}{\Gamma\left(\frac{-\alpha-\beta+1}{2}+\gamma\right)\Gamma\left(\frac{-\alpha-\beta+1}{2}-\gamma\right)}.
		\end{equation*}
		If $\alpha=0$, \eqref{eq18} applies and yields 
		\[
		\lim_{x\nearrow d}\frac{\xi_\lambda(x)}{-\log\left(1-\frac{x}{d}\right)}\,=\,
		\frac{\Gamma(1-\beta)}{\Gamma\left(
			\frac{1-\beta}{2}+\gamma\right)\Gamma\left(
			\frac{1-\beta}{2}-\gamma\right)}.
		\]
		Finally, if $\alpha> 0$, we get by \eqref{eq26} that
		\begin{equation*}
			\lim_{x\nearrow d}\frac{\xi_\lambda(x)}{\left(1-\frac{x}{d}\right)^{-\alpha}}\,=\,
			\frac{\Gamma(1-\beta)\Gamma(\alpha)}{\Gamma\left(
				\frac{\alpha-\beta+1}{2}+\gamma\right)\Gamma\left(
				\frac{\alpha -\beta+1}{2}-\gamma\right)}.
		\end{equation*}
		Analogous considerations as in the case $\beta>-1$ yield positivity of these three limits. In particular, for $\alpha \ge 0$ we can conclude  that $\lim_{x\nearrow d}\xi_\lambda(x)= \infty$.
	\end{proof}
	Before proceeding with the remaining statements we note that the product rule, termwise differentiation of the hypergeometric function  as well as the identities \eqref{eq21} and
	\begin{equation*}
		\left(\frac{\alpha-\beta+1}{2}\right)^2-\gamma^2=\left(\frac{\alpha+\beta+1}{2}-\beta\right)^2-\gamma^2\,=\,
		\frac{2\lambda }{\sigma^2}-\beta(\alpha+1)
	\end{equation*}
yield the explicit expression
 	\begin{equation*}\label{eq23}\xi_\lambda'(x)\,=\,\begin{cases*}
 		\frac{2\lambda}{\sigma^2d(\beta+1)}\,{}_2F_1
 		\left(
 		\frac{\alpha+\beta+3}{2}+\gamma,
 		\frac{\alpha+\beta+3}{2}-\gamma;\beta+2;\frac{x}{d}
 		\right),&$\beta >-1$,\\
 		\frac{\frac{2\lambda }{\sigma^2}-\beta(\alpha+1)}{d(1-\beta)}\left(\frac{x}{d}\right)^{-\beta}
 		{}_2F_1\left(
 		\frac{\alpha-\beta+3}{2}+\gamma,
 		\frac{\alpha-\beta+3}{2}-\gamma;2-\beta;\frac{x}{d}
 		\right)\\-\frac{\beta}{d} \left(\frac{x}{d}\right)^{-(\beta+1)}{}_2F_1\left(
 		\frac{\alpha-\beta+1}{2}+\gamma,
 		\frac{\alpha-\beta+1}{2}-\gamma;1-\beta;\frac{x}{d}\right),&$\beta\le -1$\end{cases*}\end{equation*}	
 for $x\in(0,d)$.

	\begin{proof}[Proof of Lemma \ref{prop_f1} (iii)]
		For $\beta >-1$ the claim follows, because $cm(x)$ converges to  $0$ as $x\searrow 0$.
		If $\beta\le -1$, the derivative of $\xi_\lambda$ consists out of two summands.
		For the first one we observe that
		\[\frac{\sigma^2x^{\beta+1}(d-x)^{\alpha+1}}{2d^{\alpha+\beta+1}}\,	\left(\frac{x}{d}\right)^{-\beta}
		\,{}_2F_1\left(
		\frac{\alpha-\beta+3}{2}+\gamma,
		\frac{\alpha-\beta+3}{2}-\gamma;2-\beta;\frac{x}{d}
		\right)\,\to \,0
		\]
		as $x\searrow 0$. Hence,
		\begin{align*}&\lim_{x\searrow 0}\,\xi_\lambda'cm(x)
			\\=\,&\lim_{x\searrow 0}\,-\frac{\beta \sigma^2(d-x)^{\alpha+1}}{2d^{\alpha+1}} \,{}_2F_1\left(
			\frac{\alpha-\beta+1}{2}+\gamma,
			\frac{\alpha-\beta+1}{2}-\gamma;1-\beta;\frac{x}{d}\right)\,=\,
			\frac{-\beta \sigma^2}{2}.
		\end{align*}
	\end{proof}
	\begin{proof}[Proof of Lemma \ref{prop_f1} (iv)]
		We consider   $\beta >-1$. In this case we have
		\begin{align*}&
			\lim_{x\nearrow d} \,\xi_\lambda'cm(x)\\=\;&
			\lim_{x\nearrow d} \,\frac{\lambda x^{\beta+1}(d-x)^{\alpha+1}}{d^{\alpha+\beta+2}(\beta+1)}\,{}_2F_1
			\left(
			\frac{\alpha+\beta+3}{2}+\gamma,
			\frac{\alpha+\beta+3}{2}-\gamma;\beta+2;\frac{x}{d}
			\right)\\=\;&
			\frac{\lambda }{\beta+1}\,\lim_{x\nearrow d}\,\left(
			1-\frac{x}{d}
			\right)^{\alpha+1}
			{}_2F_1
			\left(
			\frac{\alpha+\beta+3}{2}+\gamma,
			\frac{\alpha+\beta+3}{2}-\gamma;\beta+2;\frac{x}{d}
			\right).
		\end{align*}
		To make use of Theorem \ref{gauss_limit} we distinguish between  the signs of
		$-(\alpha+1)$.
		We assume first $\alpha>-1$, i.e. $-(\alpha+1)< 0$. Then \eqref{eq26} applies and yields
		\[
		\lim_{x\nearrow d} \,\xi_\lambda'cm(x)\,=\,\frac{\lambda \Gamma(\beta+2)\Gamma(\alpha+1)}{(\beta+1)\Gamma\left(
			\frac{\alpha+\beta+3}{2}+\gamma
			\right)
			\Gamma\left(
			\frac{\alpha+\beta+3}{2}-\gamma
			\right)}.
		\]
		The claimed identity follows since $\Gamma(\beta+2)=(\beta+1)\Gamma(\beta+1)$. The numerator of the limit is positive, again by to our assumptions on $\alpha, \beta$. Note that
		\[
		\left(
		\frac{\alpha+\beta+3}{2}\right)^2-\gamma^2\, >\, \left(\frac{\alpha+\beta+1}{2}\right)^2-\gamma^2\,>\,0
		\]
		due to \eqref{eq21}. Hence either $\gamma$ is purely imaginary or $0\le \gamma < \frac{\alpha+\beta+3}{2}$ and positivity of the denominator follows as in the proof of Lemma \ref{prop_f1} \ref{r_boundary}.
		Next we assume that $\alpha=-1$, i.e. $-(\alpha+1)=0$.
		Then \eqref{eq18}
		gives us that
		\[
		\lim_{x\nearrow d}\frac{{}_2F_1
			\left(
			\frac{\beta+2}{2}+\gamma,
			\frac{\beta+2}{2}-\gamma;\beta+2;\frac{x}{d}
			\right)}{-\log\left(1-\frac{x}{d}\right)}\,=\,
		\frac{ \Gamma(\beta+2)}{\Gamma\left(
			\frac{\beta+2}{2}+\gamma
			\right)
			\Gamma\left(
			\frac{\beta+2}{2}-\gamma
			\right)}.
		\]
		Positivity of the right-hand side follows as before and consequently we have $\lim_{x\nearrow d} \xi_\lambda'cm(x)=\infty$.
		Lastly, we assume that $\alpha<-1$, i.e. $-(\alpha+1)>0$.
		Applying \eqref{eq17} results in
		\[
		\lim_{x\nearrow d} \,{}_2F_1
		\left(
		\frac{\alpha+\beta+3}{2}+\gamma,
		\frac{\alpha+\beta+3}{2}-\gamma;\beta+2;\frac{x}{d}
		\right)\,=\,\frac{\Gamma\left(\beta+2\right)\Gamma\left(-(\alpha+1)\right)}{\Gamma\left(\frac{-\alpha+\beta+1}{2}+\gamma\right)
			\Gamma\left(\frac{-\alpha+\beta+1}{2}-\gamma\right)}
		\] and
		analogous arguments as before yield that the right-hand side is positive. Since $\left(
		1-\frac{x}{d}
		\right)^{\alpha+1}$ approaches infinity as $x\nearrow d$ we conclude  $\lim_{x\nearrow d} \xi_\lambda'cm(x)=\infty$ also in this case.
		
		Now we consider $\beta\le -1$. Then we have
		\begin{align}\begin{split}\label{eq35}	
				&\lim_{x\nearrow d}\,
				\xi_\lambda'cm(x)\\=\;&\lim_{x\nearrow d}\,\left[\frac{2\lambda}{\sigma^2}-\beta(\alpha+1)\right]\frac{\sigma^2 (d-x)^{\alpha+1}}{2(1-\beta)d^{\alpha+1}}
				\,{}_2F_1\left(
				\frac{\alpha-\beta+3}{2}+\gamma,
				\frac{\alpha-\beta+3}{2}-\gamma;2-\beta;\frac{x}{d}
				\right)\\
				&-\,\frac{\sigma^2\beta(d-x)^{\alpha+1}}{2d^{\alpha+1}}\,{}_2F_1\left(
				\frac{\alpha-\beta+1}{2}+\gamma,
				\frac{\alpha-\beta+1}{2}-\gamma;1-\beta;\frac{x}{d}\right).
			\end{split}
		\end{align} We start by investigating the limit of
		\begin{gather}\label{eq199}-
			\,\frac{\sigma^2\beta}{2}\left(1-\frac{x}{d}\right)^{\alpha+1}\,{}_2F_1\left(
			\frac{\alpha-\beta+1}{2}+\gamma,
			\frac{\alpha-\beta+1}{2}-\gamma;1-\beta;\frac{x}{d}\right)
		\end{gather}
		as $x\nearrow d$. To 
		make use of Theorem \ref{gauss_limit} we distinguish between the signs of $-\alpha$.
		If $\alpha>0$, \eqref{eq26} yields that the limit
		\[\lim_{x\nearrow d}\,
		\left(1-\frac{x}{d}\right)^\alpha
		{}_2F_1\left(
		\frac{\alpha-\beta+1}{2}+\gamma,
		\frac{\alpha-\beta+1}{2}-\gamma;1-\beta;\frac{x}{d}\right)
		\]
		exists. Because of the additional factor $\left(1-\frac{x}{d}\right)$ the term \eqref{eq199} converges to $0$ as $x\nearrow d$. If $\alpha=0$, \eqref{eq18} yields that the limit
		\[
		\lim_{x\nearrow d}\,\frac{{}_2F_1\left(
			\frac{\alpha-\beta+1}{2}+\gamma,
			\frac{\alpha-\beta+1}{2}-\gamma;1-\beta;\frac{x}{d}\right)}{-\log\left(1-\frac{x}{d}\right)}
		\]
		exists and since $\lim_{x\nearrow d}\left(1-\frac{x}{d}\right)\log\left(1-\frac{x}{d}\right)=0$   we get that \eqref{eq199} converges to $0$ for $x\nearrow d$.
		If  $\alpha <0$, \eqref{eq17} implies that
		\begin{align}\begin{split}\label{eq195}&
				\lim_{x\nearrow d}\,{}_2F_1\left(
				\frac{\alpha-\beta+1}{2}+\gamma,
				\frac{\alpha-\beta+1}{2}-\gamma;1-\beta;\frac{x}{d}\right)\,\\=\;&\frac{\Gamma\left(1-\beta\right)\Gamma\left(-\alpha\right)}{\Gamma\left(\frac{-\alpha-\beta+1}{2}+\gamma\right)
					\Gamma\left(\frac{-\alpha-\beta+1}{2}-\gamma\right)}.
			\end{split}
		\end{align}
		For $-1<\alpha<0$ we  conclude as before that \eqref{eq199} converges to $0$ as $x\nearrow d$ because of the additional prefactor. For $\alpha=-1$ the term \eqref{eq199} instead converges to a real number which we do not specify  and instead denote by $y$ during this proof. For $\alpha<-1$  the prefactor  $\left(1-\frac{x}{d}\right)^{\alpha+1}$ converges to infinity as $x\nearrow d$. Since the right-hand side of \eqref{eq195} is positive by analogous arguments as before we see that \eqref{eq199} tends to to infinity as $x\nearrow d$ as well. We combine all cases in the following formula.
		\begin{equation}\label{eq183}\lim_{x\nearrow d}\frac{-\sigma^2\beta}{2}\left(1-\frac{x}{d}\right)^{\alpha+1}{}_2F_1\left(
			\frac{\alpha-\beta+1}{2}+\gamma,
			\frac{\alpha-\beta+1}{2}-\gamma;1-\beta;\frac{x}{d}\right)= 
			\begin{cases*}
				0,&$\alpha >-1$,\\
				y,&$\alpha=-1$,\\
				\infty,&$\alpha <-1$.
			\end{cases*}
		\end{equation}
		We proceed by investigating the limiting behavior of 
		\begin{equation}\label{eq200}
			\left[\frac{2\lambda}{\sigma^2}-\beta(\alpha+1)\right]\frac{\sigma^2 (d-x)^{\alpha+1}}{2(1-\beta)d^{\alpha+1}}
			\,{}_2F_1\left(
			\frac{\alpha-\beta+3}{2}+\gamma,
			\frac{\alpha-\beta+3}{2}-\gamma;2-\beta;\frac{x}{d}
			\right)\end{equation}
		as $x\nearrow d$ using Theorem \ref{gauss_limit} and therefore distinguish between the signs of $ -(\alpha+1)$.
		
		For $\alpha >-1$, i.e. $-(\alpha+1)<0$, we get by \eqref{eq26} that
		\begin{align*}
			&
			\lim_{x\nearrow d}\,\left[\frac{2\lambda}{\sigma^2}-\beta(\alpha+1)\right]\frac{\sigma^2 (d-x)^{\alpha+1}}{2(1-\beta)d^{\alpha+1}}
			{}_2F_1\left(
			\frac{\alpha-\beta+3}{2}+\gamma,
			\frac{\alpha-\beta+3}{2}-\gamma;2-\beta;\frac{x}{d}
			\right)\\
			=\;\;&\left[\frac{2\lambda}{\sigma^2}-\beta(\alpha+1)\right]\frac{\sigma^2\Gamma(2-\beta)\Gamma(\alpha+1)}{2(1-\beta)\Gamma\left(\frac{\alpha-\beta+3}{2}+\gamma\right)\Gamma\left(\frac{\alpha-\beta+3}{2}-\gamma\right)}.
		\end{align*}
		Substituting $\Gamma(2-\beta)=(1-\beta)\Gamma(1-\beta)$ together with \eqref{eq183} yields the claimed identity in this  case.
		Note that the fraction on the right-hand side is positive by analogous reasoning as before. Also the prefactor is positive in this particular case of $\alpha$ and $\beta$.
		
		Next, we consider the case $\alpha=-1$. Then \eqref{eq18} yields	\[
		\lim_{x\nearrow d}\,\frac{{}_2F_1\left(
			\frac{2-\beta}{2}+\gamma,
			\frac{2-\beta}{2}-\gamma;2-\beta;\frac{x}{d}
			\right)}{-\log\left(1-\frac{x}{d}\right)}\,=\,\frac{\Gamma\left(2-\beta\right)}{\Gamma\left(\frac{2-\beta}{2}+\gamma\right)\Gamma\left(\frac{2-\beta}{2}+\gamma\right)}.
		\]
		By analogous arguments as before we  conclude that this limit is positive.
		Since also the prefactor of the expression \eqref{eq200} is positive due to $\alpha=-1$ it follows that \eqref{eq200} tends to infinity as $x\nearrow d$. Together with \eqref{eq183} this implies that \eqref{eq35} equals infinity in this case.
		
		It remains to consider $\alpha <-1$, i.e. $-(\alpha+1)>0$. 
		Then \eqref{eq17}  yields
		\[
		\lim_{x\nearrow d}\,{}_2F_1\left(
		\frac{\alpha-\beta+3}{2}+\gamma,
		\frac{\alpha-\beta+3}{2}-\gamma;2-\beta;\frac{x}{d}
		\right)\,=\,\frac{\Gamma\left(2-\beta\right)\Gamma(-(\alpha+1))}{\Gamma\left(\frac{-\alpha-\beta+1}{2}+\gamma\right)\Gamma\left(\frac{-\alpha-\beta+1}{2}+\gamma\right)}.
		\]
		Analogously as before we get that the right-hand side of the above equality is positive. By $\lim_{x\nearrow d}(d-x)^{\alpha+1}= \infty$  it is sufficient 
		to show $\frac{2\lambda}{\sigma^2}-\beta(\alpha+1)>0$ to conclude that \eqref{eq200} tends to infinity as $x\nearrow d$. We assumed in this particular case that \eqref{ass_r2} holds which implies that
		\[
		\frac{2\lambda}{\sigma^2}-\beta(\alpha+1)\,>\,
		\left(
		\frac{\alpha+\beta+1}{2}
		\right)^2 -\beta(\alpha+1)\,=\,\left(
		\frac{\beta-(\alpha+1)}{2}
		\right)^2\,\ge \,0.
		\]
		The claimed identity follows by employing \eqref{eq183}.
	\end{proof}

	\section*{Acknowledgments}
	The second author thanks Mark Veraar for pointing out relevant literature on Hardy's inequality.
	
	\printbibliography
	
\end{document}